\def\refer#1{~\ref{#1}}
\def\refeq#1{~(\ref{#1})}
\def\ccite#1{~\cite{#1}}
\def\longformule#1#2{
\displaylines{ \qquad{#1} \hfill\cr \hfill {#2} \qquad\cr } }
\def\inte#1{
\displaystyle\mathop{#1\kern0pt}^\circ }
\let\pa=\partial
\let\al=\alpha
\let\b=\beta
\let\d=\delta
\let\e=\varepsilon
\let\lam=\lambda
\let\r=\rho
\let\s=\sigma
\let\f=\frac
\let\vf=\varphi
\let\p=\psi
\let\D=\Delta
\let\Om=\Omega
\let\wt=\widetilde
\let\wh=\widehat
\def\cB{{\mathcal B}}
\def\cC{{\mathcal C}}
\def\cD{{\mathcal D}}
\def\cE{{\mathcal E}}
\def\cF{{\mathcal F}}
\def\cG{{\mathcal G}}
\def\cL{{\mathcal L}}
\def\cM{{\mathcal M}}
\def\cR{{\mathcal R}}
\def\cS{{\mathcal S}}
\def\cT{{\mathcal T}}
\def\cV{{\mathcal V}}
\def\cW{{\mathcal W}}
\def\grad{\nabla}
\def\la{\lambda}
\def\dH{\dot{H}}
\def\dB{\dot{B}}
\def\h{\frak h}
\def\virgp{\raise 2pt\hbox{,}}
\def\cdotpv{\raise 2pt\hbox{;}}
\def\eqdefa{\buildrel\hbox{\footnotesize def}\over =}
\def\Id{\mathop{\rm Id}\nolimits}
\def\C{\mathop{\mathbb C\kern 0pt}\nolimits}
\def\DD{\mathop{\mathbb D\kern 0pt}\nolimits}
\def\EE{\mathop{{\mathbb E \kern 0pt}}\nolimits}
\def\K{\mathop{\mathbb K\kern 0pt}\nolimits}
\def\N{\mathop{\mathbb N\kern 0pt}\nolimits}
\def\Q{\mathop{\mathbb Q\kern 0pt}\nolimits}
\def\R{{\mathop{\mathbb R\kern 0pt}\nolimits}}
\def\SS{\mathop{\mathbb S\kern 0pt}\nolimits}
\def\ZZ{\mathop{\mathbb Z\kern 0pt}\nolimits}
\def\TT{\mathop{\mathbb T\kern 0pt}\nolimits}
\def\PP{\mathop{\mathbb P\kern 0pt}\nolimits}
\newcommand{\ds}{\displaystyle}
\newcommand{\Z}{{\ZZ}}
\def\dv{\mbox{\rm div}}
\def\dive{\mathop{\rm div}\nolimits}
\def\no{\noindent}
\def\na{\nabla}
\def\p{\partial}
\def\uh{v^{\rm h}}
\def\h{{\rm h}}
\def\v{{\rm v}}
\def\nh{\nabla_{\rm h}}
\def\uapp{ u^{\rm h}_{\e, \rm app}}
\def \uapp {u_{\e,{\rm app}} }
\def\Piapp {\Pi_{\e,{\rm app}} }
\newcommand{\w}[1]{\langle {#1} \rangle}
\newcommand{\beq}{\begin{equation}}
\newcommand{\eeq}{\end{equation}}
\newcommand{\ben}{\begin{eqnarray}}
\newcommand{\een}{\end{eqnarray}}
\newcommand{\beno}{\begin{eqnarray*}}
\newcommand{\eeno}{\end{eqnarray*}}
\newcommand{\andf}{\quad\hbox{and}\quad}
\newcommand{\with}{\quad\hbox{with}\quad}
\newcommand{\fauxatop} {\genfrac{}{}{0pt}{}{}{}}
\newtheorem{defi}{Definition}[section]
\newtheorem{thm}{Theorem}[section]
\newtheorem{lem}{Lemma}[section]
\newtheorem{rmk}{Remark}[section]
\newtheorem{col}{Corollary}[section]
\newtheorem{prop}{Proposition}[section]
\renewcommand{\theequation}{\thesection.\arabic{equation}}
\begin{document}
\title[Inhomogeneous incompressible viscous flows with slowly variable ]
{ Inhomogeneous incompressible viscous flows with slowly varying
initial data }
 \author[J.-Y. CHEMIN]{Jean-Yves Chemin}
\address [J.-Y. Chemin]%
{Laboratoire J.-L. Lions, UMR 7598 \\
Universit\'e Pierre et Marie Curie, 75230 Paris Cedex 05, FRANCE }
\email{chemin@ann.jussieu.fr}
\author[P. ZHANG]{Ping Zhang}%
\address[P. Zhang]
 {Academy of
Mathematics $\&$ Systems Science and  Hua Loo-Keng Key Laboratory of
Mathematics, The Chinese Academy of Sciences, CHINA}
\email{zp@amss.ac.cn}

\date{\today}

\begin{abstract}  The purpose of this paper is to provide a large class of initial data which generates  global
smooth solution of the  3-D  inhomogeneous incompressible
Navier-Stokes system in the whole space~$\R^3$. This class of data
is based on functions which vary slowly   in one direction. The idea
is that 2-D inhomogeneous Navier-Stokes system with large data is
globally well-posedness and we construct the 3-D approximate
solutions by the 2-D solutions with a parameter.  One of the key
point of this study is the investigation of the time decay
properties of the solutions to the  2-D inhomogeneous Navier-Stokes
system. We obtained the same optimal decay estimates as the
solutions of 2-D homogeneous Navier-Stokes system.
\end{abstract}

\maketitle

\noindent {\sl Keywords:}  Inhomogeneous Incompressible
Navier-Stokes Equations, Slow Variable, Decay Estimate, Anisotropic
Littlewood-Paley Theory\

\noindent {\sl AMS Subject Classification (2000):} 35Q30, 76D03  \

\setcounter{equation}{0}
\section{Introduction}

In this paper, we investigate the global well-posedness of 3-D
incompressible inhomogeneous Navier-Stokes system with large initial
data slowly varying in one space variable. In general, inhomogeneous
Navier-Stokes system in~$\R^d$  reads
\begin{equation*}
{\rm (INSdD)}\quad \left\{\begin{array}{c} \displaystyle
\p_t\r+u\cdot\na \r=0,
\\
\displaystyle \r\pa_t u + \r u\cdot\na u  -\D u + \nabla \Pi=0,\\
\displaystyle \dive u=0,
\\
\displaystyle  (\r, u)|_{t=0}=(\r_0, u_0).
\end{array}\right.
\end{equation*}
Here the unknown~$\rho$ is a function from~$[0,T]\times\R^d$ into
the interval~$]0,\infty[$ which represents the density of fluid at
time~$t$ and point~$x$\footnote{We do want to avoid vacuum}, the
unknown~$u=(u^1,\cdots,u^d)$ is a time dependent vector field
on~$\R^d$ which represents the velocity  of a particule of  fluid located at
position~$x$ and time~$t$  and~$\Pi$ is a function
from~$[0,T]\times\R^d$ to~$\R$ which represents the pressure  at
point~$x$ and time~$t$  which ensures the incompressibility of the
fluid. The choice of~$\R^d$ as a domain is a real simplification
because as we shall see later on the pressure  is
 uniquely determined by the divergence free condition  on
the vector field~$u$ (the case of periodic boundary condition i.e.
the flat torus~$\TT^d$ as a domain  also works).

Let us notice that in the case when~$\rho_0\equiv 1$, the
system~(INSdD) turns out to be the homogeneous incompressible
Navier-Stokes system. We have to keep in mind that the system of
(INSdD) is more complex than this one.

 This system~(INSdD) can be used as a model to describe  a  fluid that is incompressible but has
nonconstant density. Basic examples are mixture of incompressible
and non reactant flows, flows with complex structure (e.g. blood
flow or model of rivers), fluids containing a melted substance, etc.
 \smallbreak

First of all, this equation satisfies some {\it a priori} estimates.
Let us first study the {\it a priori} estimate on the density.
It is classical to consider the density~$\rho$ as a perturbation of
the homogeneous density arbitrarily  chosen to be equal to~$1$. Let
us introduce the notation
$$
\varrho \eqdefa \rho-1
$$
which will be used all along this text.

\medbreak This system has three major basic  features. First of all,
the incompressibility expressed by the fact that the vector
field~$u$ is divergence free gives \beq \label{transformrho}
 \forall p \in [1,\infty]\,,\ \|\varrho(t)\|_{L^p}
=\|\varrho_0\|_{L^p}\andf \|\rho(t)\|_{L^\infty}
=\|\rho_0\|_{L^\infty}. \eeq Moreover, the second equation
of~(INSdD), called the momentum equation, implies a control of the
total kinetic energy which is formally expressed by \beq \label
{kineticenergy} \frac 12 \int_{\R^d}\rho(t,x)|u(t,x)|^2 dx
+\int_{0}^t\|\nabla u(t')\|_{L^2}^2 dt' =\frac 12
\int_{\R^d}\rho_0(x)|u_0(x)|^2 dx. \eeq This third  basic feature is
the scaling invariance.
 Indeed, if~$(\rho,
u ,\Pi)$ is a solution of~(INSdD) on~$[0,T]\times \R^d$,
then~$(\rho, u ,\Pi)_\lam$ defined by
\beq
\label{scaling}
 (\rho, u
,\Pi)_\lam(t,x) \eqdefa \bigl(\rho(\lam^2t, \lam x) , \lam u
(\lam^2t, \lam x) ,\lam^2 \Pi(\lam^2t,\lam x)\bigr) \eeq is also a
solution of~(INSdD) on $[0,\lam^{-2} T]\times \R^d$. This leads to the
notion of critical regularity.

\medbreak Based on the energy estimate\refeq {kineticenergy},  J.
Simon constructed in\ccite{Simon} global weak solutions of (INSdD)
with finite energy (see also the book by P.-L. Lions \cite{Lions96}
for the variable viscosity case).

In the case of smooth data with no vacuum, Lady\v zenskaja and
Solonnikov first addressed in \cite{LS} the question of unique
solvability of (INSdD). More precisely, they considered the system
(INSdD) in a bounded domain $\Om$ with homogeneous Dirichlet
boundary condition for~$u.$ Under the assumptions that $u_0\in
W^{2-\frac2p,p}(\Om)$ $(p>d)$ is divergence free and vanishes on~$\p\Om$ and that $\r_0$ belongs to~$C^1(\Om)$ is bounded away from zero, then
they  proved in \cite{LS}:
\begin{itemize}
\item the global well-posedness in dimension $d=2;$
\item the local well-posedness in dimension $d=3;$
\item the  global well-posedness  if in addition $u_0$ is small in $W^{2-\frac2p,p}(\Om)$.
\end{itemize}

More  recently,  M. Paicu, Z. Zhang and the second author  proved
in\ccite {PZZ1} the following well-posedness result for (INS3D) with
small data.

\begin{thm}
\label  {smalldata}
{\sl Let us consider an  initial data $(\rho_0,u_0)$ in~$L^\infty(\R^3)\times H^1(\R^3)$. Let us assume that for some  positive constant~$C_0$ ,
\beno
C^{-1}_0\le \rho_0(x)\le C_0.
\eeno
Then there exists a constant $\varepsilon_0>0$ depending
only on~$C_0$ such that if $\|u_0\|_{L^2}\|\na u_0\|_{L^2}\leq \varepsilon_0$, then the system~(INS3D) has a unique global solution $(\rho,u).$
}
\end{thm}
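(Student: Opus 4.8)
The plan is to follow the standard route for global regularity under a scaling invariant smallness hypothesis: build a family of smooth approximate solutions, prove on them \emph{a priori} bounds that are uniform in the regularization and global in time, and then pass to the limit; the smallness of $\|u_0\|_{L^2}\|\na u_0\|_{L^2}$ is exactly what will close the estimates, and the fact that this product is invariant under the scaling\refeq{scaling} is the reason one should expect it to be the right quantity. First I would fix the approximate problem (mollify $u_0$ and regularize the transport equation for $\r$ so that everything is smooth and $\r(t,\cdot)$ stays bounded above and below by constants comparable to $C_0$, consistently with\refeq{transformrho}). For each approximation the energy identity\refeq{kineticenergy} gives $\|\sqrt\r\,u(t)\|_{L^2}^2 + 2\int_0^t\|\na u(t')\|_{L^2}^2\,dt' \le C_0\|u_0\|_{L^2}^2$, so in particular $\int_0^\infty\|\na u(t)\|_{L^2}^2\,dt$ is bounded by a fixed multiple of $\|u_0\|_{L^2}^2$.

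The heart of the matter is a global bound for $\|\na u(t)\|_{L^2}$. For that I would test the momentum equation with $\pa_t u$; since $\dive\pa_t u=0$ the pressure drops out and
\[
\f12\f{d}{dt}\|\na u\|_{L^2}^2 + \|\sqrt\r\,\pa_t u\|_{L^2}^2 = -\int_{\R^3}\r\,(u\cdot\na u)\cdot\pa_t u \le \f12\|\sqrt\r\,\pa_t u\|_{L^2}^2 + \f{C_0}{2}\|u\cdot\na u\|_{L^2}^2 .
\]
To control the right-hand side I would read the momentum equation at frozen time as a stationary Stokes system $-\D u + \na\Pi = -\r(\pa_t u + u\cdot\na u)$, $\dive u=0$, whose $L^2$ regularity theory (usable precisely because $C_0^{-1}\le\r\le C_0$) gives $\|\na^2u\|_{L^2}\lesssim C_0^{3/2}\|\sqrt\r\,\pa_t u\|_{L^2} + C_0\|u\cdot\na u\|_{L^2}$; combining this with the Gagliardo--Nirenberg and Sobolev bounds $\|u\cdot\na u\|_{L^2}\le\|u\|_{L^6}\|\na u\|_{L^3}\lesssim\|\na u\|_{L^2}^{3/2}\|\na^2u\|_{L^2}^{1/2}$ and absorbing the $\pa_t u$ and $\na^2u$ terms by Young's inequality yields the closed differential inequality
\[
\f{d}{dt}\|\na u\|_{L^2}^2 + \|\sqrt\r\,\pa_t u\|_{L^2}^2 + \|\na^2u\|_{L^2}^2 \le C(C_0)\,\|\na u\|_{L^2}^6 .
\]

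Integrating and inserting the energy bound $\int_0^\infty\|\na u\|_{L^2}^2\,dt\le C_0\|u_0\|_{L^2}^2$ gives, with $M(T)\eqdefa\sup_{[0,T]}\|\na u(t)\|_{L^2}^2$, the inequality $M(T)\le\|\na u_0\|_{L^2}^2 + C(C_0)\|u_0\|_{L^2}^2 M(T)^2$; since $\|u_0\|_{L^2}^2\|\na u_0\|_{L^2}^2=(\|u_0\|_{L^2}\|\na u_0\|_{L^2})^2\le\varepsilon_0^2$, a routine continuity argument shows that if $\varepsilon_0$ is small in terms of $C_0$ only, then $M(T)\le 2\|\na u_0\|_{L^2}^2$ for all $T>0$, whence also $\int_0^\infty(\|\sqrt\r\,\pa_t u\|_{L^2}^2+\|\na^2u\|_{L^2}^2)\,dt<\infty$. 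These bounds are uniform in the regularization, so I would pass to the limit — strong compactness in space for $u$ from the parabolic gain, weak-$*$ convergence of $\r$ in $L^\infty$ — to produce a global solution with the stated regularity; the very same bounds rule out finite-time blow-up of $\|\na u\|_{L^2}$, which is what upgrades a local solution to a global one.

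For uniqueness I would compare two solutions $(\r_i,u_i)$ with the same data: the equation for $\delta u=u_1-u_2$ is treated by an $L^2$ energy estimate exploiting the bounds $\na u_i\in L^\infty(L^2)$, $u_i\in L^2(H^2)$ obtained above, while the density difference $\delta\r=\r_1-\r_2$ is only transported and so must be measured in a negative-order space such as $H^{-1}$ (or handled by passing to Lagrangian coordinates) to dodge the derivative loss. I expect the two genuinely delicate points to be: (i) the bookkeeping in the $H^1$ estimate — arranging the powers of $\|\na u\|_{L^2}$ and the constants $C_0$ so that it is exactly the scaling invariant product $\|u_0\|_{L^2}\|\na u_0\|_{L^2}$ that must be small, which forces one through the stationary Stokes estimate with merely bounded density and its dependence on $C_0$; and (ii) the uniqueness argument, where the $H^{-1}$ (or Lagrangian) control of $\delta\r$ has to be balanced against the loss in the $\delta u$ equation, the density being no smoother than $L^\infty$.
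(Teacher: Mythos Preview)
The paper does not prove this theorem: it is stated as a known result from\ccite{PZZ1} (M.~Paicu, P.~Zhang and Z.~Zhang), quoted only to set the context for the main result. There is therefore no ``paper's own proof'' to compare against.

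That said, your outline is essentially the argument of\ccite{PZZ1}. The key steps --- the basic energy identity\refeq{kineticenergy}, the $H^1$ estimate obtained by testing with $\pa_t u$, the use of the stationary Stokes problem at frozen time to trade $\|\na^2 u\|_{L^2}$ against $\|\sqrt\r\,\pa_t u\|_{L^2}$ and the convection term, and the closure via a continuity argument on $\sup_t\|\na u(t)\|_{L^2}^2$ exploiting the smallness of the scaling-invariant product $\|u_0\|_{L^2}\|\na u_0\|_{L^2}$ --- are exactly the ingredients used there. Your identification of the two delicate points (the precise bookkeeping so that only the scaling-invariant quantity needs to be small, and the uniqueness argument with merely $L^\infty$ density) is accurate; in\ccite{PZZ1} uniqueness is indeed handled in Lagrangian coordinates rather than by an $H^{-1}$ estimate on $\d\r$, which is the cleaner of the two options you mention.
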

Let us notice that smallness condition in Theorem \ref{smalldata} is
scaling invariant. Moreover, the fact that in dimension two, the
system~(INS2D) is globally well-posed is related to the fact that in
dimension  two, the quantity
$$
\frac 12 \int_{\R^2}\rho(t,x)|u(t,x)|^2 dx +\int_{0}^\infty\|\nabla
u(t')\|_{L^2}^2 dt'
$$
is scaling invariant under the scaling transformation \eqref{scaling}.

\medbreak In this text, we shall consider slowly varying initial
data i.e.  a family of initial data of the form
 \beq
  \label
{thinsslowvareq1} (\rho_{0,\e,\eta}, u_{0,\e,\eta}) \eqdefa
\bigl(1+\eta [{\varsigma}_0]_\e, ([v_0^\h]_\e,0)\bigr), \eeq
 where~$\e$ and~$\eta$
are two positive real parameters, ~$\varsigma_0$ is a smooth
function, and~$v_0^\h$ is a smooth divergence free 2D  vector field
which depends on a real parameter~$z$. All along this text, we use
the notation, for a function~$f$ on~$\R^3$,
$$
[f]_\e (x_\h,x_3)\eqdefa f(x_\h,\e x_3)\with x=(x_\h,x_3)=(x_1,x_2,x_3).
$$

Here we are interested in the size of the initial data. We do not
intent to solve~(INS3D) for rough initial data but simply we want to
exhibit a large class of initial data which are ``large" in the
sense that they do not satisfies any previous smallness hypothesis
that ensures global existence of regular solutions.
The main theorem of this text is the following.

\begin{thm}
\label {insslowvar}
 {\sl Let us consider initial
profiles~$\varsigma_0$ and~$v^\h_0$ which are functions and vector
fields in~$\cS(\R^3)$
 such that~$\dive_\h v_0^\h =0$ and such that for any~$z$ in~$\R$ and any~$j$ in~$\{1,2\}$
\beq \label {S7eq0.1} \int_{\R^2} \varsigma_0(x_\h,z) v_0^\h(x_\h,z)
\,dx_\h=0 \andf \int_{\R^2} x_j\varsigma_0(x_\h,z) v_0^\h(x_\h,z)
\,dx_\h=0. \eeq Then there exists two positive constants~$\eta_0$
and~$\e_0$ which depend on norms of~$\varsigma_0$ and~$v_0^\h$ such
that  if~$\eta \leq \eta_0$ and~$\e\leq \e_0$,  the initial date
defined by\refeq {thinsslowvareq1} generates  a unique global smooth
solution of~(INS3D). }
\end{thm}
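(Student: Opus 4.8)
The plan is to construct an approximate solution to (INS3D) from the family of 2-D inhomogeneous Navier--Stokes flows parametrized by the slow variable $z = \e x_3$, and then to close a nonlinear stability estimate showing that the true solution stays close to this approximation for all times, uniformly in $\e$ and $\eta$ small. First I would set $z=\e x_3$ and, for each fixed $z$, let $(\varrho(t,x_\h,z), v^\h(t,x_\h,z), \nabla_\h p)$ solve the 2-D inhomogeneous Navier--Stokes system with data $(\eta \varsigma_0(\cdot,z), v_0^\h(\cdot,z))$. By the 2-D global well-posedness theory (the analogue in dimension two of Theorem~\ref{smalldata}, which needs no smallness), this solution is global and smooth; the crucial input, highlighted in the abstract, is that it enjoys the \emph{same optimal time-decay estimates} as the homogeneous 2-D Navier--Stokes flow, e.g.\ $\|v^\h(t)\|_{L^2}\lesssim \w{t}^{-\f12+\delta}$ and corresponding decay for $\nabla v^\h$, $\partial_z v^\h$, higher $x_\h$-derivatives, etc., with constants depending only on norms of $v_0^\h$ and $\varsigma_0$ (and affine in $\eta$ for the density). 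Then the approximate 3-D velocity is $\uapp = \bigl([v^\h]_\e, 0\bigr)$ together with $\Piapp = [p]_\e$ and $1+\eta[\varsigma]_\e$; note $\dive \uapp = \e\, [\dive_\h v^\h]_\e \cdot(\text{something}) = 0$ since $v^\h$ is horizontal divergence free for each $z$, so no divergence corrector is needed at leading order.

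Next I would plug $\uapp$ into (INS3D) and compute the error source term $F_\e$. Because the only $x_3$-dependence enters through $z=\e x_3$, every $x_3$-derivative produces a factor $\e$; hence $F_\e$ consists of terms like $\e^2 \partial_z^2 [v^\h]_\e$ (the missing vertical Laplacian) and $\e$-order transport terms, all of which are $O(\e)$ in the relevant space-time norms precisely because of the decay estimates on the 2-D solution (the decay is what makes $\int_0^\infty$ of these quantities finite uniformly in $\e$). The two orthogonality conditions \eqref{S7eq0.1} — vanishing of $\int \varsigma_0 v_0^\h\,dx_\h$ and of its first moments — should be used exactly here: they guarantee that the slowly-varying density perturbation does not create a nondecaying or logarithmically growing contribution in the 2-D momentum balance, i.e.\ they are the cancellation conditions that upgrade the decay of $v^\h$ from the generic inhomogeneous rate to the optimal homogeneous rate. (This is the point the abstract advertises as ``one of the key points.'')

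Then I would set up the perturbation: write $u = \uapp + w$, $\rho = 1 + \eta[\varsigma]_\e + \sigma$, $\Pi = \Piapp + q$, derive the system satisfied by $(\sigma, w, q)$, which is (INS3D) linearized around $\uapp$ plus the forcing $F_\e$ plus quadratic terms in $w$. I would run a fixed-point / continuation argument in an anisotropic function space adapted to the problem — something like $L^\infty_t(H^{1/2}) \cap L^2_t(H^{3/2})$ in $x_\h$ with $L^\infty$ or Besov-type control in $x_3$, matching the anisotropic Littlewood--Paley framework mentioned in the keywords and the spaces used in \cite{PZZ1}. The estimate to close has the schematic form $\|w\|_X \lesssim \|F_\e\|_{(\text{forcing norm})} + \|\uapp\|_{(\cdot)}\|w\|_X + \|w\|_X^2$, and since $\|F_\e\| \lesssim \e$ while the linear coefficient $\|\uapp\|$ is bounded (not small, but controlled), one gets a global-in-time a priori bound $\|w\|_X \lesssim \e$ for $\e \le \e_0$ by the standard bootstrap, provided $\eta$ is also small enough to keep the density perturbation (which multiplies $\partial_t u$ and the inertial term) under control. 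The positivity lower bound on $\rho$ needed to make the estimates legitimate follows from $\|\eta[\varsigma]_\e + \sigma\|_{L^\infty}$ being small, which is ensured by $\eta \le \eta_0$, $\e \le \e_0$ together with $\|\sigma\|_{L^\infty} \lesssim \e$.

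The main obstacle will be the global-in-time closure: unlike the small-data Theorem~\ref{smalldata}, here the background $\uapp$ is genuinely large, so the linearized operator is not a small perturbation of the heat semigroup and one cannot simply absorb the transport/stretching terms $w\cdot\nabla\uapp$ and $\uapp\cdot\nabla w$. One must instead exploit the structure — the $2+1$ splitting, the fact that $\uapp$ has no vertical component and slow vertical variation, and the time-integrability coming from the 2-D decay estimates — to show the ``bad'' terms are in fact integrable in time with a small ($O(\e)$ or bounded) total contribution. Establishing the 2-D optimal decay estimates themselves, for the \emph{inhomogeneous} system with large data and under the moment conditions \eqref{S7eq0.1}, is the technical heart on which everything rests; I expect that to require a careful energy-method bootstrap à la Schonbek (Fourier splitting) adapted to the variable-density setting, controlling $\varrho$ in $L^2$ and the velocity in weighted-in-time Sobolev norms simultaneously. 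The anisotropic Littlewood--Paley bookkeeping to handle the $x_3$-direction at the level of the 3-D remainder equation is the remaining piece of real work, but it is essentially the same technology as in \cite{PZZ1} applied to an equation with an $O(\e)$ source.
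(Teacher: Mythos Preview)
Your overall architecture is correct and matches the paper: build the approximate solution from the family of 2-D inhomogeneous flows, prove optimal time decay for these via a Wiegner/Schonbek Fourier-splitting argument (this is indeed where the mean-zero and first-moment conditions \eqref{S7eq0.1} enter, to gain a factor $|\xi_\h|$ at low frequencies), and close a bootstrap for the remainder in anisotropic Besov norms. However there is one genuine gap and one underestimated difficulty.

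\medskip
\textbf{The genuine gap: the vertical pressure residual.} When you plug $\uapp=([v^\h]_\e,0)$ and $\Piapp=[\Pi^\h]_\e$ into the 3-D momentum equation, the third component of the residual is $\e[\partial_z\Pi^\h]_\e$, not a transport or Laplacian term. You assert this is $O(\e)$ in ``the relevant space-time norms'', but it is not. Split the 2-D pressure as in \eqref{decomppressureh}: $\Pi^\h=\Pi^\h_L+\Pi^\h_Q$ with $\Pi^\h_L=-\Delta_\h^{-1}\dive_\h\partial_t(\varrho^\h v^\h)$. The quadratic piece $\partial_z\Pi^\h_Q$ does lie in $L^1(\R^+;\cB^{0,\frac12}_2)$, but $\partial_z\Pi^\h_L$ does \emph{not}: it is a homogeneous Fourier multiplier of order $-1$ applied to a product, and a product of two $L^2_\h$ functions is at best in $L^1_\h\hookrightarrow\dot B^{-1}_{2,\infty}$ horizontally, so after the multiplier you land in $\dot B^0_{2,\infty}$, not in $\dot B^0_{2,1}$. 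Note $\Pi^\h_L\equiv 0$ in the homogeneous case, which is exactly why the argument of \cite{cg3} goes through there without this obstruction. The paper resolves this by enlarging the approximate solution with a linear corrector $w_\e$ solving the Stokes-type system \eqref{S2eq0}, designed so that $\partial_z\Pi^\h_L$ appears as forcing for $w_\e^3$; the point is that the operator $e^{(t-t')\Delta_\e}\partial_z\Delta_\e^{-1}\dive_\h$ acting on $\partial_t(\varrho^\h v^\h)$ can be integrated by parts in time and then estimated (Proposition~\ref{estimwdetaila}), whereas treating $\partial_z\Pi^\h_L$ directly as a source in the remainder equation cannot work. Your scheme has no mechanism to absorb this term.

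\medskip
\textbf{The underestimated difficulty: the density remainder.} Writing $\rho=1+\eta[\varsigma]_\e+\sigma$ and hoping for $\|\sigma\|_{L^\infty}\lesssim\e$ uniformly in $t$ is too optimistic. In the paper's normalisation $\sigma=\e\, b_\e$, and $b_\e$ solves a transport equation \eqref{transportequationb} whose forcing contains $R_\e\cdot\nabla[a^\h]_\e$, which is $O(1)$, not small. The outcome (Proposition~\ref{propbRt}) is that $b_\e$ \emph{grows} like $\w{t}^{1/2}$ or $\w{t}$; this growth is compensated only because it multiplies $[\Delta_\h v^\h-\nabla_\h\Pi^\h]_\e$, which decays like $\w{t}^{-2}$ by the 2-D estimates. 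Making this work requires a further decomposition $b_\e=\overline b_\e+\wt b_\e$ and a propagation-of-anisotropic-regularity lemma (Proposition~\ref{propageanisoh}) that exploits the absence of vertical transport in the $\overline b_\e$ equation. This is considerably more delicate than the ``$\|\sigma\|\lesssim\e$'' step you sketch.
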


Let us make  some comments about this theorem. Slowly varying data
has been introduced by I. Gallagher and the first author in\ccite
{cg3}  in the case of homogenenous incompressible Navier-Stokes
equations, i.e. the case when~$\rho\equiv 1$. The above theorem is
proved in\ccite {cg3} in this case.  The motivation of this work was
to provide a large class of examples of initial data which are large
(in the homogeneous incompressible Navier-Stokes, it means
essentially that the~$B^{-1}_{\infty,\infty}$ norm of the initial
data defined by \beq \label {definB-1elem}
\|a\|_{\dot B^{-1}_{\infty,\infty}} \eqdefa \sup_{t>0} t^{\f12} \|e^{t\D}
a\| _{L^\infty}. \eeq is large), which is  the case here,
because~$\|[v_0^\h]_\e \|_{B^{-1}_{\infty,\infty}}$ has the same
size as the profile~$v_0^\h$. The idea of the proof in \ccite {cg3}
was to use that homogeneous 2D incompressible  Navier-Stokes
equation with initial data $v_0^\h(\cdot ,z)$ is globally well-posed
and  then to prove the real solution was close (in some appropriated
way) to~$[v^\h]_\e$.

Slowly  varying turns out to be a useful tool to study the set~$\cG$
of initial data  in the space~$\dot H^{\frac 12}(\R^3)$  which
generates unique global smooth solutions to 3-D homogeneous
Navier-Stokes system. Since the work in \cite{GIP} by I. Gallagher,
D. Iftimie and F. Planchon, it is known that this set is open and
connected. In\ccite {cgz}, I. Gallagher and the two authors used
slowing varying initial data to prove that through each point
of~$\cG$ passes an uncountable number of arbitrarily long segments
which are included in~$\cG$.

The study of initial data in the homogeneous case as presented above
can be qualified as ``well-prepared" using the language of singular
perturbation theory. The ``ill-prepared" case has been studied  by
I. Gallagher, M. Paicu  and the first author  in\ccite {cgp}; they
proved  that the initial data
$$
\Bigl( [w_0^\h]_\e,\frac 1 \e [w_0^3]_\e\Bigr)
$$
generates a unique global smooth solution of the homogenous
incompressible Navier-Stokes equation when the profile~$w$ is a
divergence free vector field and  which is small in a Banach space
of analytic function with respect to the vertical variable.

Let us see why the result of Theorem\refer {insslowvar} is in some
sense a ``ill-prepared" result.  In order to explain this, we recall
the precise definition of the Besov norms from \cite{BCD} for
instance.
\begin{defi}
\label {S0def1} {\sl  Let us consider a smooth function~$\vf $
on~$\R,$ the support of which is included in~$[3/4,8/3]$ such that
$$
\forall
 \tau>0\,,\ \sum_{j\in\Z}\varphi(2^{-j}\tau)=1\andf \chi(\tau)\eqdefa 1 - \sum_{j\geq
0}\varphi(2^{-j}\tau) \in \cD([0,4/3]).
$$
Let us define
$$
\Delta_ja=\cF^{-1}(\varphi(2^{-j}|\xi|)\widehat{a}),
 \andf S_ja=\cF^{-1}(\chi(2^{-j}|\xi|)\widehat{a}).
$$
Let $(p,r)$ be in~$[1,+\infty]^2$ and~$s$ in~$\R$. We define the Besov norm by
$$
\|a\|_{\dB^s_{p,r}}\eqdefa\big\|\big(2^{js}\|\Delta_j
a\|_{L^{p}}\big)_j\bigr\|_{\ell ^{r}(\ZZ)}.
$$
}
\end{defi}

We remark that in the particular case when $p=r=2,$  the Besov
spaces~$\dB^s_{p,r}$ coincides with the classical homogeneous
Sobolev spaces $\dH^s$.

 The
result of Theorem\refer {insslowvar}  is of ``ill-prepared" type
because of Inequality\refeq {insslowvarcommentseq1} and that in the
case when~$ (s,p,r)=(-1,\infty,\infty),$ it coincides with
Definition\refeq {definB-1elem}. Yet we do not require any analytic
assumption on the initial data.

First of all, let us notice that if a divergence vector field the
component of which are integrable is mean free. Thus,
Hypothesis\refeq {S7eq0.1} implies in particular that \beq
\label{1.7dfg} \forall z\in \R\,,\ \int_{\R^2}
\bigl(1+\eta\varsigma_0(x_\h,z)\bigr) v_0^\h(x_\h,z) \,dx_\h=0. \eeq
Let us notice that the hypothesis about the momentum
of~$\varsigma_0v^\h_0$ ensures in particular
that~$\varsigma_0v^\h_0$ belongs to the anisotropic
space~$\cB^{-1,\frac 32}_2$ (see forthcoming Definition\refer
{anibesov}). Following observations of the first author and I.
Gallagher in\ccite  {cg3}, it is easy to prove that \beq \label
{insslowvarcommentseq1} \| [\varsigma_0]_\e\|_{\dB^{\frac 3 p}_{p,1}
(\R^3)} \gtrsim  \e^{-\frac 1p} \andf \| [v_0]_\e\|_{\dB^{-1+\frac 3
p}_{p,1} (\R^3)} \gtrsim \e^{-\frac 1p}.
 \eeq

All the  well-posedness results  of (INS3D) for small data requires
that for $p$ in~$]1,6[$
$$
 \| [v_0]_\e\|_{\dB^{-1+\frac 3
p}_{p,1} (\R^3)}<< 1 \quad\hbox{with or without}\quad  \eta \|
[\varsigma_0]_\e\|_{\dB^{\frac 3 p}_{p,1}}<<1.
$$
One may check the references \cite{Abidi, AP, AGZ2, AGZ3, D1, DM12,
dm2, DZ14, HPZ, PZ2, PZZ1, PZZ3} for details.

Let us also mention that  M. Paicu and the two authors proved this
theorem in~\cite{CPZ1} in the case when~$\eta \leq \e^{\s}$
with~$\s>\f14$.

\smallskip

Let us complete this section by the notations of the paper:

 For $a\lesssim
b$, we mean that there is a uniform constant $C,$ which may be
different on different lines, such that $a\leq Cb$.  We denote by
$(a|b)_{L^2}$ the $L^2(\R^d)$ inner product of $a$ and $b.$ For~$X,
X_1$  Banach spaces,  $T$ a positive real number and $q$
in~$[1,+\infty],$ we denote the norm $\|\cdot\|_{X\cap
X_1}=\|\cdot\|_{X}+\|\cdot\|_{X_1}$ and $L^q_T(X)$ for the set of
measurable functions on $[0,T]$ with values in $X,$ such that
$t\longmapsto\|f(t)\|_{X}$ belongs to~$L^q([0,T]).$ We denote
$$
L^p_T(L^q_{\rm h}(L^r_{\rm v}))= L^p([0,T]; L^q(\R_{x_{\rm h}};L^r(\R_{z})))
$$
 with~$x_{\rm
h}=(x_1,x_2),$ and $\nabla_{\rm h}=(\p_{x_1},\p_{x_2}),$ $\D_{\rm
h}=\p_{x_1}^2+\p_{x_2}^2.$  Finally  $\D_\e$ stands for $\D_{\rm
h}+\e^2\p_z^2,$  $\na_\e$ for $(\na_\h, \e \p_z),$ and
$\|f\|_{X_\h}$ for the $X$ norm of $f$ in the horizontal variable
$x_\h.$

\setcounter{equation}{0}
\section {Structure and main ideas of the proof}
\label {structureproof}

Because we shall consider seemingly  perturbations of the reference
density $1,$ it is natural to set
$$
a\eqdefa \frac 1 \rho-1 = -\frac {\varrho} {1+\varrho}
 $$
 so that System
(INSdD) translates into
\begin{equation*}
{\rm (INSdD)}\quad \left\{\begin{array}{c} \displaystyle \p_t a
+u\cdot\na a=0,
\\
\displaystyle \pa_t u +  u\cdot\na u  - (1+a) \bigl( \D u - \nabla \Pi)=0,\\
\displaystyle \dive u=0, 
\\
\displaystyle  (a, u)|_{t=0}=(a_0, u_0).
\end{array}\right.
\end{equation*}

Even if our main motivation comes from  dimension~$3$, we shall
consider this system in both~$\R^2$ and~$\R^3$. In~$\R^3$, we
use systematically the notation~$x=(x_\h,x_3)$ and~$x=(x_\h,z)$ in
the case when~$z$ represents~$\e x_3$.

 For proving Theorem\refer{insslowvar}, we follow the idea of\ccite{cg3}, namely,  using the fact
  that the two dimensional
  incompressible inhomogeneous Navier-Stokes system is globally well-posed. We shall construct the approximate
  solutions of (INS3D) with initial data slowly varying in one space
  variable
  in the following way.  Let us denote by~$(a^\h,v^\h,\Pi^\h)$ the (global) solution of~(INS2D)  with initial data
~$\bigl(a_0(\cdot,z), v_0^\h(\cdot,z)\bigr),$ that is
\begin{equation}
\label {INS2dParameter}
\left\{\begin{array}{c} \displaystyle \p_t a^\h+\uh\cdot\na_{\rm
h}a^\h=0,  
\\
\displaystyle \pa_t \uh + \uh\cdot\na_{\rm h}\uh -(1+a^\h)(\D_{\rm h} \uh-\grad_{\rm h} \Pi^\h\bigr)=0,\\
\displaystyle \dv_{\rm h}\,\uh =0, 
 \\
\displaystyle a^\h|_{t=0}=a_0(x_{\rm h},z),\quad v^{\rm
h}|_{t=0}=v_0^{\rm h}(x_{\rm h},z),
\end{array}\right.
\end{equation}
which can also be equivalently written as
\begin{equation}
\label {INS2dParameterb}
\left\{\begin{array}{c} \displaystyle \p_t\r^\h+v^{\rm
h}\cdot\na_{\rm
h}\r^\h=0, 
\\
\displaystyle \pa_t (\r^\h v^{\rm h}) +  \dive_\h(\r^\h v^{\rm h}\otimes v^{\rm h}) -\D_{\rm h} v^{\rm h}+\grad_{\rm h} \Pi^\h=0,\\
\displaystyle \dv_{\rm h}\,v^{\rm h}=0,  
\\
\displaystyle \r^\h|_{t=0}=1+\eta\varsigma_0(x_{\rm h},z),\quad
v^{\rm h}|_{t=0}=v_0^{\rm h}(x_{\rm h},z).
\end{array}\right.
\end{equation}
where~$ \r^\h\eqdefa\ds  \f1{1+a^\h}$ and~$\ds a_0=-\f{
\varsigma_0}{1+\eta\varsigma_0}\eta$. As in\ccite{cg3}, we
consider~$([a^\h]_\e, [v^\h]_\e, [\Pi^\h]_\e)$ as the  first
approximation of the solution to (INS3D) and let us write   the
solution~$(a_\e,u_\e, \wt \Pi_\e)$ as
$$
(a_\e,u_\e, \wt \Pi_\e) = \bigl([a^\h]_\e, ([v^\h]_\e,0), [\Pi^\h]_\e\bigr) + \e (b_\e, \overline R_\e, \overline \Pi_\e).
$$
It is easy to observe that
$$
 \pa_t \overline R_\e +  [v^\h]_\e\cdot\na_\h \overline R_\e +\overline R_\e\cdot\nabla( [v^\h]_\e,0) +\e \overline R_\e\cdot\nabla \overline R_\e - (1+a_\e) \bigl( \D \overline R_\e- \nabla \overline \Pi_\e)=  -\overline E_\e,
$$
where the error term~$\overline E_\e$ is given by
\beq
\label {structureproofeq1}
\begin{split}
&\overline E_\e  \eqdefa  \frac 1 \e \Bigl(\bigl[ v_t^\h +v^\h\cdot\nabla_\h v^\h -(1+a^\h) (\D_\h v^\h-\nabla_\h \Pi^\h)\bigr]_\e,0\Bigr)\\
&\qquad\qquad{} - \e(1+a_\e)\bigl([\partial_z^2v^\h]_\e,0\bigr) -
(1+a_\e) (0,[\partial_z\Pi^\h]_\e)- b_\e\bigl([\D_\h
v^\h-\nabla_\h\Pi^\h]_\e,0\bigr).
\end{split}
\eeq At this stage, we need to define precisely the norms we shall
use to measure the size of all the terms above. As already commented
in the introduction, this point is crucial.  Let us define the
anisotropic Besov norms.
\begin{defi}
\label {anibesov}
{\sl Let us consider two functions~$\vf$ and~$\chi$ given by
Definition\refer {S0def1} and let us define the operators of
localization in horizontal and vertical frequencies by \beno
&\Delta_k^{\rm h}a=\cF^{-1}(\varphi(2^{-k}|\xi_{\rm
h}|)\widehat{a}),\qquad
\Delta_\ell^{\rm v}a =\cF^{-1}(\varphi(2^{-\ell}|\zeta  |)\widehat{a}),\\
&S^{\rm h}_ka=\cF^{-1}(\chi(2^{-k}|\xi_{\rm h}|)\widehat{a}),
\qquad\ S^{\rm v}_\ell a =
\cF^{-1}(\chi(2^{-\ell}|\zeta  |)\widehat{a})
\eeno
Now let us define the norm we are going to use in this text. For~$p$ in~$[1,\infty]$, and~$(s,s')$ in~$\R^2$, we define
$$
\|a\|_{\cB^{s,s'}_p}\eqdefa \sum_{(k,\ell)\in\Z^2} 2^{ks+\ell s'}
\|\D_k^{\rm h}\D_\ell^{\rm v}a\|_{L^p}.
$$
We shall also use the following norm on  force~$f$ which  involves
the action of the heat  flow. Let~$p$ be in~$]3,4[$, and~$T$ a
positive time, we define
\beno
 \|f\|_{\cF_p(T)} &\eqdefa& \Bigl \|\int_0^t e^{(t-t')\D} f(t')dt' \Bigr\|_{X(T)}\with\\
 \|f\|_{X(T)}
 &  \eqdefa&
\|f\|_{L^4_T(\cB^{-\f12+\f2p,\f1p}_p)}+\|f\|_{L^2_T(\cB^{\f2p,\f1p}_p)}+\|\p_3f\|_{L^{\f43}_T(\cB^{-\f12+\f2p,\f1p)}_p}\\
&&{}+\|\na
f\|_{L^1_T(\cB^{\f2p,\f1p}_p)}+\|\p_3^2f\|_{L^1_T(\cB^{-1+\d+\f3p,-\d}_p)}\quad\mbox{for}\
\ \d\in\bigl]0,1-3/p\bigr[. \eeno }
\end{defi}
Let us make some comments about this definition. We first address
that the norms of~$\cB^{s,s'}_p$ are homogeneous with respect to the
vertical variable. More precisely, we have, for any real number~$s$ and any~$p$ in~Ê$[1,\infty]$,
 \beq
 \label {homovertBss'}
  \|[a]_\e\|_{\cB^{s,s'}_p} \sim \e^{s'-\frac 1p}
\|a\|_{\cB^{s,s'}_p}.
\eeq
In particular, the norm~$\|\cdot\|_{\cB_p^{s,\frac 1p} }$ is invariant under the
vertical dilation.

We now investigate the relation between~$L^1$ norm in time with
value in some anisotropic Besov spaces and the norm~$\cF_p(T)$. For
any~$p$ in~$[1,\infty]$, and for any~$(\al,\b)$ in~$\R^2$ such that
$$
\al+\b=-1+\frac 3 p,\quad\ \al \leq -1+\d+\f3p \andf  \b \leq \f1p\,\virgp
$$
then we have \beq \label {cFL1Besovaniso} \|f\|_{\cF_p(T)} \leq
C_{\al,\b} \|f\|_{L^1([0,T];\cB^{\al,\b}_p)}. \eeq We postpone its
proof in the Appendix.

We also use frequently some law of product in particular (see Lemma
2.3 of \ccite{CPZ1})
\beq
\label {lawofproductanisobasic}
\|ab\|_{\cB^{s_1+s_1'-\frac 2 p,s_2+s'_2-\frac 1 p }_p}\lesssim
\|a\|_{\cB^{s_1,s_2}_p}\|b\|_{\cB^{s_1',s'_2}_p}
\eeq
where the two sums~$s_1+s_1'$ and~$s_2+s_2'$ are positive and~$s_1$ and~$s'_1$ (resp.~$s_2$ and~$s_2'$) are less than or equal to~$ 2/p$ (resp.~$ 1/{p}$).


\medbreak Now let us analyze the constraints  we have for the choice
of norms for the different terms in the external force given by
\eqref{structureproofeq1}. For those which  are purely of the
form~$[f]_\e$,  there is in fact no choice. Indeed, since no
negative power of~$\e$ appears, the choice of norm to the
space~$\cB^{\s,\frac 1p}_p$  is mandatory. This space must be~$L^1$
in time because we want~$R_\e$ to be in $L^1$ in time because of the
control of the transport equation. Then the scaling determines the
index~$\s$ of the horizontal regularity. The space must be
$$
L^1_t(\cB^{-1+\frac 2 p ,\frac 1 p }_p).
$$
Let  us see whether  the term~$[\partial_z\Pi^\h]_\e$ which appears
in\refeq {structureproofeq1} belongs to this space or not.
Let us compute this horizontal pressure.  Applying the horizontal
divergence to the momentum equation of\refeq {INS2dParameterb}, we
write that
$$
-\D_\h \Pi^\h = \dive_\h ( \pa_t (\r^\h v^{\rm h})) +
\dive_\h\dive_\h(\r^\h v^{\rm h}\otimes v^{\rm h}).
$$
Using the fact that~$v^\h$ is divergence free, we infer that for
$\varrho^\h\eqdefa \r^\h-1,$ \beq \label{decomppressureh}
\begin{split}
&\qquad\qquad\qquad\Pi^\h = \Pi^\h_{L} +\Pi^\h_Q \with \\
\Pi^\h_{L}  &\eqdefa -\D_\h^{-1}  \dive_\h \bigl( \pa_t (\varrho^\h
v^{\rm h})\bigr)\andf \Pi^\h_Q  \eqdefa -\D_\h^{-1}
\dive_\h\dive_\h(\r^\h v^{\rm h}\otimes v^{\rm h}).
\end{split}
\eeq It will be possible to prove that the
term~$[\partial_z\Pi_Q^\h]_\e$  belongs to~$L^1(\R^+;\cB^{0,\frac
12}_2)$; this will be a consequence  of  Sections\refer
{decayvh}--\ref {decayL2partialz2}. On the other hand, it is not
possible to prove that~$\partial_z\Pi_L$  belongs to the Besov
space~$\dB^0_{2,1}$ horizontally. Indeed, it is equivalent  to the
fact  that a   homogeneous Fourier multiplier  of order~$-1$ applied
to a product belongs to~$\dB^0_{2,1}$ in the horizontal variable.
The lowest possible regularity  of a product is~$L^1$. But the
space~$L^1$ is included in~$\dB^{-1}_{2,\infty}$ in dimension two
and even not  in the homogeneous Sobolev space~$\dot H^{-1}$.
In order to bypass this difficulty, we introduce  a correction term.
In order to define it, let us consider the vector
field~$w_\e(t,x_\h,z)$ the solution~of
\begin{equation}
\label{S2eq0}
\left\{\begin{array}{c} \displaystyle \p_t w_\e^\h
- \D_\e w_\e^\h = -\nh \Pi^1 _\e, 
\\
\displaystyle \pa_t w_\e^3
-\D_\e w_\e^3= -\e^2\p_z \Pi^1_\e+\p_z \Pi_L^\h,\\
\displaystyle \dv w_\e =0 \andf  {w_\e}_{|t=0}=0.
\end{array}\right.
\end{equation}
Let us introduce the following Ansatz. We search the
solution~$(a_\e,u_\e,\wt \Pi_\e)$ of (INS3D) of the form
\beq
\label{S2eq0aq}
\begin{split}
(a_\e,u_\e,\wt \Pi_\e) & =\bigl([a^\h]_\e, \uapp , \Piapp \bigr)+\e (b_\e,R_\e,\Pi_\e)
\with\\
\bigl(\uapp , \Piapp \bigr) & \eqdefa  \bigl( ([\uh]_\e,0),
[\Pi^\h]_\e\bigr)+  \e\bigl((\e[w_\e^\h]_\e,
[w_\e^3]_\e),\e[\Pi_\e^1]_\e\bigr).
\end{split}
\eeq Then $(R_\e, \na\Pi_\e)$ solves the system
 \begin{equation*}
{\rm (INS3D)_{\rm e}}
\left\{
\begin{array}{c}
\displaystyle
\!\! \pa_t R_\e +  u_{\e,{\rm app}}\cdot\na R_\e +R_\e\cdot\nabla \uapp +\e R_\e\cdot\nabla R_\e - (1+a_\e) \bigl( \D R_\e- \nabla \Pi_\e)=  -E_\e,\\
\displaystyle \dive R_\e=0 \andf R_\e |_{t=0}=0.
\end{array}\right.
\end{equation*}
where the error term~$E_\e$ is given by \beq \label
{firstdefinErrorterm} E_\e \eqdefa \frac 1\e \bigl( \partial_t \uapp
+\uapp\cdot\nabla \uapp-(1+a_\e) \bigl(\D\uapp-\nabla \Pi_{\e,{\rm
app}})\bigr). \eeq Of course the key point then is the estimate of
this error term.  Let us  analyze~it.  First we write that
$$
\displaylines {
 E_\e   =  \frac 1 \e \Bigl(\bigl[ \partial_t
v^\h +v^\h\cdot\nabla^\h v^\h -(1+a^\h) (\D_\h v^\h-\nabla_\h
\Pi^\h)\bigr]_\e,0\Bigr)\cr
   + \bigl[ v^\h\cdot\nabla_\h(\e w^\h_\e ,w^3_\e )+\e
w_\e\cdot\nabla (v^\h,0)+\e^2w_\e\cdot\nabla(\e  w^\h_\e ,w^3_\e )
\bigr]_\e\cr
   -\e(1+a_\e)\bigl([\partial_z^2v^\h]_\e,0\bigr) - b_\e\bigl([\D_\h
v^\h-\nabla_\h\Pi^\h]_\e,0\bigr)\cr
   +\bigl[\partial_t (\e w^\h_\e ,w^3_\e ) -\D_\e (\e w^\h_\e ,w^3_\e
) -\e \nabla_\e\Pi^1_\e - (0,
\partial_z\Pi^\h_L)\bigr]_\e\cr
   - a_\e \bigl[ \D_\e (\e w^\h_\e ,w^3_\e ) -(0,\partial_z \Pi^\h)-
\e \nabla_\e \Pi^1_\e \bigr]_\e - \bigl(0,
[\partial_z\Pi_Q^\h]_\e\bigr).
}
$$
By definition of~$(a^\h,v^\h,\Pi^\h)$ and~$(w_\e,\Pi^1_\e )$, we can write
\beq
\label  {analyzeErrortermeq1}
\begin{split}
E_\e & = \sum_{\ell=1}^4 E_\e^\ell\with \\
E_\e^1& \eqdefa
[ v^\h\cdot\nabla_\h(\e w^\h_\e ,w^3_\e )
- (0, \partial_z\Pi_Q^\h)+ \e w_\e\cdot\nabla (v^\h,0)+\e^2w_\e\cdot\nabla(\e  w^\h_\e ,w^3_\e ) \bigr]_\e,\\
E_\e^2 & \eqdefa
 -\e (1+a_\e)\bigl([\partial_z^2v^\h]_\e,0\bigr), \cr
E_\e^3 & \eqdefa
  -a_\e \bigl[ \D_\e (\e w^\h_\e  ,w^3_\e ) -(0,\partial_z\Pi^\h)-\e\nabla_\e\Pi^1_\e \bigr]_\e\andf\\
E_\e^4 & \eqdefa
 -b_\e \bigl([\D_\h v^\h-\nabla_\h\Pi^\h]_\e,0\bigr).
\end{split}
\eeq

Let us  remark that the term~$v^\h$ is ubiquitous in the error
term~$E_\e$, even in~$w_\e$ because ~$\Pi_L^\h$ depends on this
vector field~$v^\h$. Thus the property of~$v^\h$  are crucial for
the understanding  of the error term~$E_\e$.

Section\refer {decayvh} is devoted to the systematic study of
 the time decay of~$v^\h$. This section is  devoted to the 2D case
 and can be of independent interest. We generalize the decay in time
 estimates
 obtained in the case of homogeneous  Navier-Stokes equation by M. Wiegner in\ccite {Wiegner}
(see also the works\ccite{brandolese}, \ccite{HM06},
\ccite{Schonbek}, \ccite{Schonbek2} and see \cite{CZ6} for the
application of this method to a singular perturbation of the~2D Navier-Stokes
system). We remark that to obtain this optimal time decay estimate
for $v^\h,$ we need to use a completely new formulation (see \eqref
{estimuflatdemoeq2} below) of the inhomogeneous Navier-Stokes
system.

\medbreak

As it can be observed in the term~$E^2_\e$, we need~$L^1$ in time
estimate  of term that involves second derivative of~$v^\h$ with
respect to the vertical variable~$z$. This is the purpose of
Sections  \refer {decayL2partialz} and \refer{decayL2partialz2}. A
first consequence of this  study is the following proposition,  the
proof of which will be presented   in Section \ref{Besovvh}.

\begin{prop}
\label {consequenceSection345} {\sl Under the hypothesis of
Theorem\refer {insslowvar},  we have
$$
\longformule{
\|v^\h\|_{L^2(\R^+;\cB^{1,\frac 12}_2)} + \|\p_z
v^\h\|_{L^2(\R^+;\cB^{1,\frac 12}_2)} +\|\nabla v^\h\|_{L^1(\R^+;
\cB^{1,\frac 12}_{2})}
}
{ {}
+\|\p_zv^\h\|_{L^1(\R^+;\cB^{\f34,\f34}_2)}+
\|\partial_z\Pi^\h\|_{L^1(\R^+;\cB^{\f12,\frac 12}_2)}\leq \cC_0.
}
$$
 Here and in all that follows, we always denote $\cC_0$ to be a
positive constant which depends  norms of the two profiles~$\varsigma_0$
and~$v_0^\h$ of the initial data and which may be changed from line
to line.}
\end{prop}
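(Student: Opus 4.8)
The plan is to derive all five estimates from the time‑decay analysis of the 2D inhomogeneous system~\eqref{INS2dParameter} carried out in Sections~\ref{decayvh}--\ref{decayL2partialz2}, reading everything with the vertical variable~$z$ as a parameter. The starting point is the basic energy identity for~\eqref{INS2dParameterb}, which controls $\|v^\h(t)\|_{L^2_\h}$ and $\|\nabla_\h v^\h\|_{L^2(\R^+;L^2_\h)}$ uniformly in~$z$, together with the conservation $\|\varrho^\h(t)\|_{L^p_\h}=\|\eta\varsigma_0(\cdot,z)\|_{L^p_\h}$ from~\eqref{transformrho}. Because the profiles lie in~$\cS(\R^3)$ and satisfy the vanishing‑moment hypothesis~\eqref{S7eq0.1}, the data $v_0^\h(\cdot,z)$ and $a_0(\cdot,z)$ have, for each~$z$, all the regularity and decay needed to run Wiegner's method; the hypothesis~\eqref{S7eq0.1} is precisely what places $\varsigma_0 v_0^\h$ in $\cB^{-1,3/2}_2$ and gives the optimal $t^{-1}$‑type decay rather than $t^{-1/2}$. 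First I would record, for each fixed~$z$, the pointwise‑in‑time decay bounds $\|v^\h(t,\cdot,z)\|_{L^2_\h}\lesssim \w{t}^{-1}$, $\|\nabla_\h v^\h(t,\cdot,z)\|_{L^2_\h}\lesssim \w{t}^{-3/2}$, and the corresponding bound for $\p_z v^\h$ obtained by differentiating the new formulation~\eqref{estimuflatdemoeq2} of the system in~$z$ (the $z$‑derivative solves a linear inhomogeneous 2D system with source built from $v^\h$ and $\p_z a^\h$, to which the same decay machinery applies).

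Next I would upgrade these $L^2_\h$‑in‑space, decay‑in‑time bounds to the anisotropic Besov norms $\cB^{s,s'}_2$. Since $\|\cdot\|_{\cB^{s,s'}_2}$ is (up to the $\ell^1$ summation in the vertical frequency) an $L^2$‑based norm with horizontal weight $2^{ks}$ and vertical weight $2^{\ell s'}$, and since the profiles are Schwartz, the vertical localizations $\D^\v_\ell$ are controlled by $\|\p_z^N v^\h\|$ for large~$N$, giving fast decay in~$\ell$; the horizontal localizations are controlled by interpolating the energy bound on $\nabla_\h v^\h$ with the decay bound. Integrating in time: the factor $\w{t}^{-3/2}$ (for $\|\nabla_\h v^\h\|$ and $\|\p_z v^\h\|$ with a horizontal derivative) is integrable on $\R^+$, which yields the $L^1(\R^+;\cB^{1,1/2}_2)$ bound on $\nabla v^\h$; the factor $\w{t}^{-1}$ squared is integrable, yielding the $L^2(\R^+;\cB^{1,1/2}_2)$ bounds on $v^\h$ and $\p_z v^\h$. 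The mixed‑regularity term $\|\p_z v^\h\|_{L^1(\R^+;\cB^{3/4,3/4}_2)}$ is obtained by the same interpolation, distributing the available horizontal and vertical smoothness as $3/4$ each and checking the resulting time weight is again integrable.

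For the pressure term I would use the decomposition~\eqref{decomppressureh}, $\Pi^\h=\Pi^\h_L+\Pi^\h_Q$. For the quadratic part, $\Pi^\h_Q=-\D_\h^{-1}\dive_\h\dive_\h(\r^\h v^\h\otimes v^\h)$ is a horizontal Fourier multiplier of order zero applied to $\r^\h v^\h\otimes v^\h$, so by the product law~\eqref{lawofproductanisobasic} and boundedness of $\r^\h$ in $L^\infty$ one bounds $\|\Pi^\h_Q\|_{\cB^{1/2,1/2}_2}$ by $\|v^\h\|_{\cB^{3/4,1/2}_2}^2$ (times $\|\r^\h\|_{\cB^{?}}$), whose time integral is finite by the $L^2$‑in‑time estimates just proved. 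For the linear part $\Pi^\h_L=-\D_\h^{-1}\dive_\h\p_t(\varrho^\h v^\h)$, I would eliminate the time derivative using the density equation $\p_t\varrho^\h=-v^\h\cdot\nabla_\h\varrho^\h=-\dive_\h(\varrho^\h v^\h)$ together with the momentum equation for $\p_t v^\h$, turning $\p_t(\varrho^\h v^\h)$ into a sum of terms with one more horizontal derivative but no time derivative, each estimable by~\eqref{lawofproductanisobasic} and the decay bounds; the order‑$-1$ multiplier $\D_\h^{-1}\dive_\h$ then compensates that extra derivative, landing in $\cB^{1/2,1/2}_2$ with integrable time profile.

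The main obstacle, as already flagged in the text, is the linear pressure part: a homogeneous Fourier multiplier of order $-1$ applied to an $L^1_\h$ product need not lie in $\dB^0_{2,1}$ horizontally, so one cannot naively put $\p_z\Pi^\h_L$ in the desired space, and the $z$‑derivative makes this worse since it hits $\varrho^\h$ and $v^\h$ inside the product. Controlling it requires genuinely exploiting the gain from $\D_\h^{-1}\dive_\h$ in $\cB^{1/2,1/2}_2$ (where the negative horizontal index $1/2<1$ leaves room) rather than in $\dB^0_{2,1}$, combined with the sharp decay of $\p_z v^\h$ and of $v^\h$ from Sections~\ref{decayL2partialz}--\ref{decayL2partialz2}; this is exactly why those decay sections, and the correction field $w_\e$ of~\eqref{S2eq0}, are needed, and it is the delicate point of the whole argument. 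Collecting the four groups of estimates and absorbing all constants into~$\cC_0$ completes the proof. \ef
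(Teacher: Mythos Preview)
Your treatment of the velocity terms is on the right track: the paper indeed converts the pointwise-in-time $L^2_\h$ decay of $v^\h,\p_z v^\h,\p_z^2 v^\h$ (Theorems~\ref{S3thm2}, \ref{SAprop1}, \ref{SBprop1}) into anisotropic Besov decay via the interpolation Lemma~\ref{S0lem3} (which needs only one $z$-derivative, not arbitrary $\p_z^N$), then integrates in time; see~\eqref{S7eq12} and~\eqref{consequenceSection345eq1}. Your decay exponents are a bit loose---for instance $\|\nabla_\h v^\h(t)\|_{\cB^{1,1/2}_2}$ needs the rate $\w t^{-2}\log^{1/4}\w t$ coming from second-order decay, not $\w t^{-3/2}$---but the scheme is the same.

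For the pressure you have misread the structure of the argument. The paper does \emph{not} use the decomposition $\Pi^\h=\Pi^\h_L+\Pi^\h_Q$ here; that splitting was introduced in Section~\ref{structureproof} only to explain why $\p_z\Pi^\h_L$ fails to lie in $L^1(\R^+;\cB^{0,1/2}_2)$, which is the space needed for the \emph{error term} $E_\e$ and is what motivates the corrector $w_\e$. For the present proposition the target space is $\cB^{1/2,1/2}_2$, and the paper instead takes the pressure equation from the $a^\h$-formulation~\eqref{INS2dParameter},
\[
\D_\h\Pi^\h=-\dive_\h\bigl(a^\h(\nabla_\h\Pi^\h-\D_\h v^\h)\bigr)-\dive_\h\dive_\h(v^\h\otimes v^\h),
\]
applies the product law~\eqref{lawofproductanisobasic}, and uses the smallness $\|a^\h\|_{L^\infty_t(\cB^{1,1/2}_2)}\le\cC_0\eta$ from~\eqref{S7eq12qwqpa} to \emph{absorb} the $\Pi^\h$ term on the right (this is~\eqref{S5eq10}); the $\p_z$-derivative is then handled by the same absorption. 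Your proposed substitution of the momentum equation into $\p_t(\varrho^\h v^\h)$ would also bring back a $\nabla_\h\Pi^\h$ term through $\varrho^\h\p_t v^\h$, so you would need the same smallness-of-$\eta$ absorption---a step you do not mention. And the corrector $w_\e$ plays no role whatsoever in this proposition; invoking it here conflates two distinct estimates.
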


The estimate obtained in Sections \refer{decayvh}, \refer
{decayL2partialz} and \refer{decayL2partialz2} allow to prove the
following  proposition.
\begin{prop}
\label {estimwdetail} {\sl Let~$(w_\e,\Pi^1_\e)$ be the solution of
the System \eqref{S2eq0}, then we have
$$
\longformule{
\e\|(\e w^\h_\e,w_\e^3)\|_{ L^4(\R^+;\cB^{\f12,\frac 12}_2)}+
\|(\e w^\h_\e,w_\e^3)\|_{ L^2(\R^+;\cB^{1,\frac 12}_2)}
}
{{}
+\|\nabla
_\e (\e w_\e^\h,w_\e^3)\|_{L^2(\R^+;\cB^{0,\frac 12}_{2})} +\|\nabla
_\e (\e w_\e^\h,w_\e^3)\|_{L^1(\R^+;\cB^{1,\frac 12}_{2})}   \leq
\cC_0.
}
$$
Moreover for any~$\al$ in~$ ]0,1[$, we have
$$
\bigl\| \D_\e (\e w_\e^\h ,w_\e^3)
-\e\nabla_\e\Pi_\e^1\bigr\|_{L^1(\R^+;\cB_2^{\al,\frac 12})} \leq
C_\al \cC_0.
 $$}
\end{prop}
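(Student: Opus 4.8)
The plan is to treat the two blocks of the system~\eqref{S2eq0} separately, estimating $w_\e^\h$ and $w_\e^3$ by the $\cF_p$-type machinery already set up, and then to handle the last inequality by eliminating the pressure. First I would localize in vertical frequency: writing $w_\e=\sum_\ell \D_\ell^\v w_\e$, each vertical block satisfies a heat equation of the form $\p_t \D_\ell^\v w_\e - (\D_\h+\e^2\p_z^2)\D_\ell^\v w_\e = \D_\ell^\v F_\e$, where the forcing is built from $\nh\Pi^1_\e$ and $\p_z\Pi_L^\h$. The point is that the full Laplacian $\D_\e=\D_\h+\e^2\p_z^2$ provides, on the block of vertical size $2^\ell$, a damping comparable to $2^{2k}+\e^22^{2\ell}$ in the horizontal-vertical frequency box $(k,\ell)$; using the scaling identity~\eqref{homovertBss'} one can pass all $\e$-powers onto the profiles, so that the anisotropic Besov norms of $[w_\e]_\e$ are controlled in an $\e$-free way by the corresponding norms of the ``unscaled'' fields. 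Then the inhomogeneous heat estimate $\|f\|_{\cF_p(T)}\le C\|f\|_{L^1_T(\cB^{\al,\b}_p)}$ from~\eqref{cFL1Besovaniso}, applied with $p=2$ and the right choice of $(\al,\b)$ on each summand, reduces everything to controlling $\nh\Pi^1_\e$ and $\p_z\Pi^\h_L$ in $L^1$ in time with values in suitable $\cB^{\cdot,\frac12}_2$ spaces.

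The main analytic input is then the control of these two pressure terms. For $\p_z\Pi^\h_L$: recalling the decomposition~\eqref{decomppressureh}, $\Pi^\h_L=-\D_\h^{-1}\dive_\h\p_t(\varrho^\h v^\h)$, so $\p_z\Pi^\h_L$ is a horizontal Fourier multiplier of order $-1$ (hence bounded, after one horizontal derivative, on the anisotropic Besov scale in the usual range) applied to $\p_z\p_t(\varrho^\h v^\h)$. Using the transport equation for $\varrho^\h$ and the momentum equation to rewrite $\p_t v^\h$, this reduces to bilinear-in-$v^\h$ (and linear-in-$\varrho^\h$, or rather $\eta\varsigma_0$) expressions to which the anisotropic product law~\eqref{lawofproductanisobasic} applies; the needed norms of $v^\h$, of $\p_z v^\h$ and of $\p_z\Pi^\h$ are exactly those furnished by Proposition~\ref{consequenceSection345}. (The whole reason $w_\e$ was introduced is that $\p_z\Pi^\h_L$ lives only in $\dB^{-1}_{2,\infty}$ horizontally, not in $\dB^0_{2,1}$; absorbing it into a corrector is precisely what converts it, after solving the heat equation, into the controllable quantities on the left side.) For $\nh\Pi^1_\e$: $\Pi^1_\e$ is itself determined by the divergence-free constraint $\dive w_\e=0$ applied to~\eqref{S2eq0}, i.e. $\Pi^1_\e$ solves an elliptic problem whose source is $\p_z\Pi^\h_L$ (and lower-order terms in $w_\e$), so $\nh\Pi^1_\e$ is again a $0$-order multiplier applied to data of the same type, and is estimated by the same bounds. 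One then collects the four norms on the left, each coming out as $\le\cC_0$.

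For the last inequality, the idea is that $\D_\e(\e w_\e^\h,w_\e^3)-\e\nabla_\e\Pi^1_\e$ is, by the very equations~\eqref{S2eq0}, equal to $\p_t(\e w_\e^\h,w_\e^3)-(0,\p_z\Pi^\h_L)$. Hence one needs $\p_t w_\e\in L^1(\R^+;\cB^{\al,\frac12}_2)$ and $\p_z\Pi^\h_L\in L^1(\R^+;\cB^{\al,\frac12}_2)$ for every $\al\in\,]0,1[$; the second is already obtained above (indeed $\p_z\Pi^\h_L$ is the problematic term but one full horizontal derivative below $\al=1$ is affordable), and for the first one rewrites $\p_t w_\e=\D_\e w_\e+(\text{pressure terms})$ and uses the $L^1_T(\cB^{1,\frac12}_2)$ bound on $\nabla_\e w_\e$ from the first part together with the $L^1$-pressure bounds, the loss of regularity from $1$ to $\al$ absorbing the borderline multiplier. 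I expect the hard part to be the bookkeeping in the pressure estimate for $\p_z\Pi^\h_L$: one must verify that after substituting $\p_t v^\h$ from the momentum equation every resulting term (including the ones carrying a factor $a^\h$) has two positive horizontal/vertical regularity sums so that~\eqref{lawofproductanisobasic} is legitimate, and that the time-integrability indices match those provided by Proposition~\ref{consequenceSection345} — in particular the $L^2\times L^2\to L^1$ pairings for the quadratic terms and the $L^\infty\times L^1$ pairings involving $\varsigma_0$.
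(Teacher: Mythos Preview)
Your plan contains a genuine gap for the first inequality. You correctly recall that $\partial_z\Pi^\h_L$ lies only in $\dB^{-1}_{2,\infty}$ horizontally, not in $\dB^{0}_{2,1}$; yet your scheme still treats $\partial_z\Pi^\h_L$ as the forcing in the Duhamel formula and tries to gain one derivative by heat smoothing. That cannot close the $L^2(\R^+;\cB^{1,\frac12}_2)$ estimate uniformly in~$\e$. Concretely, on the block $(k,\ell)$ the $L^1\to L^2$ gain from $e^{t\D_\e}$ is $(2^{2k}+\e^22^{2\ell})^{-1/2}$, and substituting $\partial_z\Pi^\h_L\in L^1_t(\cB^{\al,\frac12}_2)$ (which is the best your substitution of $\partial_t v^\h$ via the momentum equation can give, with $\al>0$) leaves a factor $2^{k(1-\al)}(2^{2k}+\e^22^{2\ell})^{-1/2}$ in front of a summable sequence; as $\e\to 0$ this behaves like $2^{-k\al}$ and blows up at low horizontal frequencies. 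The endpoint $\al=0$ would be exactly what is needed, and is exactly what fails for $\partial_z\Pi^\h_L$.

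The missing idea, and what the paper actually does, is to integrate by parts \emph{in time} inside the Duhamel integral. Writing $w^3_\e$ (and $w^\h_\e$) explicitly via $\D_\e^{-1}$ and $\D_\h^{-1}$, one gets an integrand containing $\partial_t(\varrho^\h v^\h)$; integration by parts converts this into (i) a boundary term at $t=0$ involving $\varrho_0 v_0^\h$, (ii) a direct term $\varrho^\h v^\h(t)$ with no time derivative, and (iii) an integral term in which the $\partial_t$ has been traded for an extra $\D_\e$, absorbed by the heat kernel. The point is that $\varrho^\h v^\h$ (no time derivative) sits in $L^2(\R^+;\cB^{0,\frac32}_2)\cap L^1(\R^+;\cB^{1,\frac32}_2)$ by the product law and Lemma~\ref{S7lem4.5}, and this is precisely the horizontal regularity needed to close. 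Note also that the boundary term (i) requires $\varrho_0 v_0^\h\in\cB^{-1,\frac32}_2$, which is where the first-moment hypothesis~\eqref{S7eq0.1} on $\varsigma_0 v_0^\h$ enters; your plan never invokes that hypothesis, which is another symptom of the gap. For the second inequality of the proposition your idea is essentially right and matches the paper: there one only needs $\partial_t(\varrho^\h v^\h)\in L^1(\R^+;\cB^{-1+\al,\frac32}_2)$ for $\al\in\,]0,1[$, and no integration by parts is necessary.
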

The proof of this proposition is the purpose of Section\refer {estimw}.

\medbreak Using law of product\refeq {lawofproductanisobasic}, the
above two propositions imply that \beq \label {estimE1}
\|E^1_\e\|_{L^1(\R^+;\cB^{0,\f12}_2)} \leq \cC_0, \eeq which is the
content of Corollary \ref{estimE2E3}.

 The two terms~$E_\e^2$ and~$E_\e^3$ are of a different nature. They contain of course  terms which are rescaled functions
   of~$(a^\h, v^\h,\Pi^\h)$ and~$w_\e$  multiplied by  the function~$a_\e$. Their control demands estimates on the function~$a_\e$.
 This requires the following  induction hypothesis.

Let $p$ be in~$ ]3,4[$ and ~$\cR_0$ be a positive real number which
will be chosen large enough later on, we define~$\overline T_\e $ as
 \beq
 \label {defininductiontime1}
\overline T_\e\eqdefa \sup\bigl \{ t<T^\star_\e\,/\  \|R_\e\|_{L^4_t(\cB^{-\f12+\frac 2 p,\frac1p}_p)}  +\|\na  R_\e\|_{L^1_t(\cB^{\frac 2 p,\frac1p}_p)}
 \leq  \cR_0\bigr\}
 \eeq
 where~$T^\star_\e$ denotes is the life span of the regular solution of~(INS3D) associated with the initial
  data~$\bigl(1+\eta [\varsigma_0]_\e, ([v^\h_0]_\e,0)\bigr)$.
 Under the above induction hypothesis,  the regularity of~$a_\e$ is controlled thanks to  the following proposition.
  \begin{prop}
\label {fulltransport} {\sl Let $(u_\e)_\e$ be a family of
divergence free vector fields and~$a_0$ a function in~$L^p$ with
derivatives also in~$L^p$. Let us consider the family~$(a_\e)_\e$ of
the solutions to \beq \label{fulltransporteq1}
\left\{\begin{array}{c} \displaystyle \p_ta_\e+u_\e\cdot\na a_\e=0,\\
\displaystyle {a_\e}_{|t=0}=[a_0]_\e.
\end{array}\right.
\eeq Then for any $s$ in~$ ]0,1-1/p[,$ we have
\beno
\|a_\e(t)\|_{\cB^{s,\f1p}_p}  & \lesssim &
\|a_0\|_{L^p}^{1-s-\f1p}\|\nabla a_0\|_{L^p}^{s+\f1p} \exp \Bigl(C\int_0^t U_\e(t') dt'\Bigr)
\with \\
U_\e(t') & \eqdefa &  \|\nabla_{\rm h} u^{\rm h}_\e(t')\|_{L^\infty}
+  \frac 1 \e \|\partial_3u^{\rm h}_\e (t')\|_{L^\infty}+\e
\|\nabla_{\h} u^3_\e(t')\|_{L^\infty}. \eeno }
\end{prop}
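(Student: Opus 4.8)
The plan is to estimate the anisotropic Besov norm $\|a_\e(t)\|_{\cB^{s,\f1p}_p}$ by a standard Littlewood--Paley analysis of the transport equation \eqref{fulltransporteq1}, exploiting the scaling identity \eqref{homovertBss'} to reduce matters to a ``unit vertical frequency'' problem and then running a commutator estimate in the spirit of Danchin's transport estimates in Besov spaces. First I would rescale: since $a_\e$ solves a transport equation with the rescaled datum $[a_0]_\e$ and the rescaled field $u_\e$, it is convenient to undo the dilation in the vertical variable. Writing $a_\e(t,x_\h,x_3)=\tilde a_\e(t,x_\h,\e x_3)$ would move the $\e$ into the vertical derivative in the transport operator, i.e. into a field of the form $(u_\e^\h, \e^{-1}u_\e^3)$ acting with $\na_\e$, which is precisely why the three terms in $U_\e$ appear: the horizontal Lipschitz norm of $u_\e^\h$, the $\e^{-1}$-weighted vertical-derivative norm $\f1\e\|\p_3 u_\e^\h\|_{L^\infty}$ coming from $\p_z$ hitting $v^\h$, and the $\e$-weighted horizontal-gradient norm of $u_\e^3$. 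Equivalently one works directly with $\D_k^\h\D_\ell^\v a_\e$ and keeps track of the $\e$ factors by hand; I would present it in whichever of the two forms is least cluttered.

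The core estimate is as follows. Apply $\D_k^\h\D_\ell^\v$ to \eqref{fulltransporteq1}; the localized function $a_{\e,k,\ell}\eqdef\D_k^\h\D_\ell^\v a_\e$ satisfies
\[
\p_t a_{\e,k,\ell}+u_\e\cdot\na a_{\e,k,\ell}=-[\D_k^\h\D_\ell^\v,\,u_\e\cdot\na]a_\e.
\]
Since $\dive u_\e=0$, the transport part preserves the $L^p$ norm, so
\[
\|a_{\e,k,\ell}(t)\|_{L^p}\le \|\D_k^\h\D_\ell^\v[a_0]_\e\|_{L^p}+\int_0^t\bigl\|[\D_k^\h\D_\ell^\v,\,u_\e\cdot\na]a_\e(t')\bigr\|_{L^p}\,dt'.
\]
The commutator is estimated by the usual Bony-decomposition bookkeeping: splitting $u_\e\cdot\na = u_\e^\h\cdot\na_\h + u_\e^3\p_3$ and handling the horizontal and vertical transport separately, one gets a bound of the form $c_{k,\ell}\,2^{-ks-\ell/p}\,U_\e(t')\,\|a_\e(t')\|_{\cB^{s,1/p}_p}$ with $(c_{k,\ell})\in\ell^1(\Z^2)$ of norm $1$, where the loss of one horizontal derivative in $\na_\h$ is absorbed by $\|\na_\h u_\e^\h\|_{L^\infty}$, the vertical derivative $\p_3$ acting on $a_\e$ is paid for by $\f1\e\|\p_3 u_\e^\h\|_{L^\infty}$ after restoring the $\e$-scaling (and symmetrically the $\na_\h$ acting when $u_\e^3$ transports is paid by $\e\|\na_\h u_\e^3\|_{L^\infty}$); here one uses that $s\in\,]0,1-1/p[$ and $1/p<1$ so that all the regularity indices that appear are admissible (strictly between $0$ and the smoothing threshold). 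Multiplying by $2^{ks+\ell/p}$ and summing over $(k,\ell)\in\Z^2$ gives
\[
\|a_\e(t)\|_{\cB^{s,1/p}_p}\le \|[a_0]_\e\|_{\cB^{s,1/p}_p}+C\int_0^t U_\e(t')\,\|a_\e(t')\|_{\cB^{s,1/p}_p}\,dt',
\]
and Gronwall's lemma yields $\|a_\e(t)\|_{\cB^{s,1/p}_p}\le \|[a_0]_\e\|_{\cB^{s,1/p}_p}\exp\bigl(C\int_0^t U_\e\,dt'\bigr)$.

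It remains to estimate the datum. By the scaling identity \eqref{homovertBss'}, $\|[a_0]_\e\|_{\cB^{s,1/p}_p}\sim \e^{1/p-1/p}\|a_0\|_{\cB^{s,1/p}_p}=\|a_0\|_{\cB^{s,1/p}_p}$, so the vertical dilation is genuinely harmless here; then one controls $\|a_0\|_{\cB^{s,1/p}_p}$ by interpolation between $L^p$ and $W^{1,p}$. Concretely, $\cB^{0,0}_p\hookleftarrow L^p$ with room to spare and $\cB^{1,1}_p\hookleftarrow W^{1,p}$ in the sense needed, and the mixed Besov scale interpolates as $\|a_0\|_{\cB^{s,1/p}_p}\lesssim \|a_0\|_{L^p}^{1-s-1/p}\|\na a_0\|_{L^p}^{s+1/p}$ for $s+1/p\in\,]0,1[$, i.e. for $s\in\,]-1/p,1-1/p[$, which covers the stated range $s\in\,]0,1-1/p[$. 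Combining the two displays gives exactly the claimed inequality. The main obstacle is purely bookkeeping: getting the $\e$-powers to land correctly in the three pieces of $U_\e$ when one commutes the anisotropic projector with the rescaled transport field, and making sure every regularity index produced by the Bony decomposition stays in the admissible window $]0,1[$ (both horizontally and vertically) so that the commutator estimate is licit; once the scaling is tracked honestly, the rest is the standard Danchin-type argument. \ef
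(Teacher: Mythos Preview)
Your strategy is workable but differs from the paper's, which takes a shorter route. You propose to rescale vertically and then run an anisotropic commutator estimate directly on the $\cB^{s,\f1p}_p$ norm. The paper also rescales first, setting $\wt a_\e(t,x_\h,z)\eqdefa a_\e(t,x_\h,z/\e)$ and $\wt u_\e\eqdefa\bigl(u_\e^\h(\cdot,z/\e),\e u_\e^3(\cdot,z/\e)\bigr)$ so that $\wt a_\e$ is transported by $\wt u_\e$ with datum~$a_0$, and observes that $\|\na\wt u_\e\|_{L^\infty}\sim U_\e$ (the divergence-free condition absorbs $\p_3u_\e^3$ into $\|\na_\h u_\e^\h\|_{L^\infty}$). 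But then, rather than doing anisotropic Bony bookkeeping, the paper simply invokes the known \emph{isotropic} transport estimate (Theorem~3.14 of\ccite{BCD}) in $\dB^{s+\f1p}_{p,1}(\R^3)$, obtaining $\|\wt a_\e(t)\|_{\dB^{s+\f1p}_{p,1}}\le\|a_0\|_{\dB^{s+\f1p}_{p,1}}\exp\bigl(C\int_0^tU_\e\bigr)$, and concludes via the embedding $\|\cdot\|_{\cB^{s,\f1p}_p}\lesssim\|\cdot\|_{\dB^{s+\f1p}_{p,1}}$, the dilation invariance $\|a_\e\|_{\cB^{s,\f1p}_p}\sim\|\wt a_\e\|_{\cB^{s,\f1p}_p}$, and the interpolation $\|a_0\|_{\dB^{s+\f1p}_{p,1}}\lesssim\|a_0\|_{L^p}^{1-s-\f1p}\|\na a_0\|_{L^p}^{s+\f1p}$. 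The whole proof is a few lines; your route requires proving the anisotropic commutator estimate from scratch.

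One point in your writeup needs repair. You present the core commutator estimate for the \emph{unrescaled} $a_\e$ transported by $u_\e$ and assert the outcome is $U_\e$. That is not what the direct estimate yields: commuting $\D_k^\h\D_\ell^\v$ with $u_\e\cdot\na$ produces the full Lipschitz norm $\|\na u_\e\|_{L^\infty}$, in which for instance $\|\na_\h u_\e^3\|_{L^\infty}$ appears without the favourable factor~$\e$. The $\e$ weights in $U_\e$ arise only through the rescaling, and your phrase ``after restoring the $\e$-scaling'' glosses over this. If you keep your approach, run the commutator estimate on $\wt a_\e$ transported by $\wt u_\e$ (so that $\|\na\wt u_\e\|_{L^\infty}\sim U_\e$ is what enters) and transfer back via the dilation invariance of $\cB^{s,\f1p}_p$ at the end.
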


\begin{proof}
Let us change the variable by defining
$$
\wt a_\e (t,x_\h,z) \eqdefa a_\e \Bigl(t,  x_\h,\frac z \e\Bigr) \andf \wt
u_\e (t,x_\h,z)\eqdefa \biggl(u_\e^\h \Bigl( t,x_\h,\frac z \e\Bigr), \e
u_\e^3  \Bigl( t,x_\h,\frac z \e\Bigr)  \biggr)\cdotp
$$
The transport equation \eqref{fulltransporteq1} becomes
$$
\left\{\begin{array}{c} \displaystyle \p_t\wt a_\e+\wt u_\e\cdot\na \wt a_\e=0,\\
\displaystyle \wt a|_{t=0}=a_0.
\end{array}\right.
$$
Let us remark that, because~$u_\e$ is divergence free, we
have~$\|\nabla \wt u_\e\|_{L^\infty} \sim U_\e(t)$. It is well-known
that {\sl isotropic} Besov  norms with index less than~Ê$1$ are
propagated   by the Lipschitz norm of the convection velocity. More
precisely (see for instance Theorem~3.14 of\ccite{BCD}), we have for
$s$ in~$ \left]0, 1-1/p\right[,$ \beno
 \|\wt a_\e(t)\|_{\dot B^{s+\frac 1 p}_{p,1}(\R^3)}
& \leq&
 \|a_0\|_{\dot B^{s+\frac 1 p}_{p,1}(\R^3)} \exp\Bigl(C \int_0^t \|\nabla \wt u_\e(t')\|_{L^\infty} dt'\Bigr)\\
& \leq&
 \|a_0\|_{\dot B^{s+\frac 1 p}_{p,1}(\R^3)} \exp\Bigl(C \int_0^t U_\e(t') dt'\Bigr).
\eeno
As we have (see Lemma 4.3 of \cite{CZ5} for instance)
$$
\|a\|_{\cB^{s,\frac 1 p} _{p}} \lesssim \|a\|_{\dot B^{s+\frac 1
p}_{p,1}(\R^3)} \andf
 \|a_0\|_{\dot B^{s+\frac 1 p}_{p,1}(\R^3)}  \lesssim \|a_0\|_{L^p}^{1-s-\frac 1p} \|\nabla a_0\|_{L^p}^{s+\frac1p},
  $$
 the proposition is proved because~$\|a(t)\|_{\cB^{s,\frac 1 p} _{p}} \sim \|\wt a(t)\|_{\cB^{s,\frac 1 p} _{p}}$.
\end{proof}

 Under the induction hypothesis \eqref {defininductiontime1}, we have the following corollary.
  \begin{col}
\label {fulltransport0} {\sl Let $(a_\e,u_\e, \wt{\Pi}_\e)$ be the
smooth enough solution of (INS3D) on  the maximal time interval~$[0,T^\star_\e [.$ Then  for
any~$s$ in~$]0,1-1/p[,$ a constant~$C$ exists such that, for
any~time~$t$ less than~$\overline T_\e $ defined by\refeq {defininductiontime1}, we~have
$$
\|a_\e(t)\|_{\cB^{s,\f1p}_p}   \leq
 \cC_0\eta \exp
(C\cR_0).
$$
}
\end{col}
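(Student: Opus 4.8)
The plan is to apply Proposition~\ref{fulltransport} with initial datum $a_0 = -\eta\varsigma_0/(1+\eta\varsigma_0)$, so that the output of the proposition is directly
$$
\|a_\e(t)\|_{\cB^{s,\f1p}_p} \lesssim \|a_0\|_{L^p}^{1-s-\f1p}\|\nabla a_0\|_{L^p}^{s+\f1p}\exp\Bigl(C\int_0^t U_\e(t')\,dt'\Bigr).
$$
For $\eta$ small enough (less than $\eta_0$), the factor $\|a_0\|_{L^p}^{1-s-\f1p}\|\nabla a_0\|_{L^p}^{s+\f1p}$ is bounded by $\cC_0\eta$: indeed $a_0 = -\eta\varsigma_0 + O(\eta^2)$ in every $L^p$ norm, so both $\|a_0\|_{L^p}$ and $\|\nabla a_0\|_{L^p}$ are $\lesssim \eta$ times norms of $\varsigma_0$, and the two exponents sum to $1$. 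So the whole prefactor is $\cC_0\eta$. Hence everything reduces to showing that $\int_0^t U_\e(t')\,dt' \leq C\cR_0$ for $t\leq\overline T_\e$, with $C$ absolute; this is the only place the induction hypothesis\refeq{defininductiontime1} enters.

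The next step is therefore to estimate $\int_0^{\overline T_\e} U_\e(t')\,dt'$, where $u_\e = \uapp + \e R_\e$ and, by the Ansatz\refeq{S2eq0aq}, $\uapp = ([v^\h]_\e,0) + \e^2[w_\e^\h]_\e\,e_\h + \e[w_\e^3]_\e\,e_3$. Recalling that
$$
U_\e(t') = \|\nabla_\h u^\h_\e(t')\|_{L^\infty} + \frac 1\e\|\p_3 u^\h_\e(t')\|_{L^\infty} + \e\|\nabla_\h u^3_\e(t')\|_{L^\infty},
$$
I would expand each term along the three contributions. For the rescaled two-dimensional profile $[v^\h]_\e$: $\nabla_\h[v^\h]_\e = [\nabla_\h v^\h]_\e$ and $\frac1\e\p_3[v^\h]_\e = [\p_z v^\h]_\e$; since $u^3$ contributes nothing from this term, these are controlled in $L^1(\R^+;L^\infty)$ by $\|\nabla_\h v^\h\|_{L^1(\R^+;L^\infty_\h(L^\infty_\v))}$ and $\|\p_z v^\h\|_{L^1(\R^+;L^\infty)}$, which by the anisotropic Besov embedding $\cB^{1,\frac12}_2 \hookrightarrow L^\infty$ (and $\cB^{\f34,\f34}_2\hookrightarrow L^\infty$) are bounded by the quantities appearing in Proposition~\ref{consequenceSection345}, hence by $\cC_0$. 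Note the crucial point that the dangerous $\frac1\e$ factor is exactly absorbed by the anisotropic rescaling $\frac1\e\p_3[f]_\e = [\p_z f]_\e$, leaving no negative power of $\e$. For the correction terms $\e^2[w_\e^\h]_\e$ and $\e[w_\e^3]_\e$: the same computation gives, e.g., $\frac1\e\p_3(\e^2[w_\e^\h]_\e) = \e^2[\p_z w_\e^\h]_\e$ and $\e\nabla_\h(\e[w_\e^3]_\e) = \e^2[\nabla_\h w_\e^3]_\e$, so each carries a nonnegative power of $\e$ and is controlled, via $\cB^{1,\frac12}_2\hookrightarrow L^\infty$ and the same embedding for $\nabla_\e(\e w_\e^\h,w_\e^3)$, by the bounds of Proposition~\ref{estimwdetail}, hence by $\cC_0$ (for $\e\leq\e_0\leq1$). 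Finally the $\e R_\e$ term: $\frac1\e\p_3(\e R_\e^\h) = \p_3 R_\e^\h$ and similarly for the other components, so $\int_0^{\overline T_\e} U_\e$ picks up a contribution $\lesssim \|\nabla R_\e\|_{L^1_{\overline T_\e}(L^\infty)}$; bounding $L^\infty$ by $\cB^{\f2p,\f1p}_p$ (the Besov embedding $\cB^{\f2p,\f1p}_p\hookrightarrow L^\infty$ for $p>3$, equivalently $\frac3p<1$) and invoking the definition of $\overline T_\e$ gives $\lesssim\cR_0$.

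Collecting, $\int_0^{\overline T_\e} U_\e(t')\,dt' \leq \cC_0 + C\cR_0 \leq C'\cR_0$ once $\cR_0$ is chosen large relative to $\cC_0$ (which is legitimate since $\cR_0$ is a free parameter fixed later). Substituting into the conclusion of Proposition~\ref{fulltransport} yields
$$
\|a_\e(t)\|_{\cB^{s,\f1p}_p} \leq \cC_0\eta\,\exp(C\cR_0)
$$
for all $t\leq\overline T_\e$, which is exactly the claim. The only genuinely delicate point is the bookkeeping of powers of $\e$ in $U_\e$: one must verify that after the anisotropic rescaling every term carries a nonnegative power of $\e$, which is the whole reason the definition of $U_\e$ is weighted by $\frac1\e$ on $\p_3 u^\h_\e$ and by $\e$ on $\nabla_\h u^3_\e$ — no analytic difficulty remains once this is checked, everything else being the Besov embeddings into $L^\infty$ together with the already-established Propositions~\ref{consequenceSection345} and~\ref{estimwdetail} and the induction hypothesis.
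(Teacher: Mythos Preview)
Your proof is correct and follows essentially the same approach as the paper's: apply Proposition~\ref{fulltransport}, then bound $\int_0^t U_\e$ by splitting $u_\e$ into the $[v^\h]_\e$, $w_\e$, and $R_\e$ contributions, invoking Propositions~\ref{consequenceSection345} and~\ref{estimwdetail} for the first two and the induction hypothesis~\eqref{defininductiontime1} for the third. One cosmetic remark: rather than arguing that $\cC_0 + C\cR_0 \leq C'\cR_0$ by taking $\cR_0$ large, it is cleaner to write $\exp(C(\cC_0+\cR_0)) = e^{C\cC_0}e^{C\cR_0}$ and absorb the first factor into the profile-dependent constant $\cC_0$, which is exactly what the paper does implicitly.
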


\begin{proof} In view of \eqref{S2eq0aq}, we have
\beno \int_0^tU_\e(t')dt'\lesssim \int_0^t\bigl(\|\na
v^\h(t')\|_{L^\infty}+\e^2\|\na w_\e(t')\|_{L^\infty}+\|\na
R_\e(t')\|_{L^\infty}\bigr)dt', \eeno which together with
Propositions \ref{consequenceSection345} and \ref{estimwdetail}
ensures that \beno \int_0^tU_\e(t')dt'\lesssim \cC_0+\cR_0. \eeno
Then applying Proposition \ref{fulltransport}, we conclude the proof
of the corollary.
\end{proof}

With Corollary \ref{fulltransport0}, we can establish the estimates
of the terms~$E^2_\e$ and~$E^3_\e.$ Indeed for any $p$ in~$]3,4[,$
$q\in \bigl]p/(p-2), 2p/(p-1)\bigr[$ and $\d$
in~$\bigl]1/q-1/p,1-3/p\bigr[,$ so that $-1+\d+1/p+2/q,
1+1/q-1/p-\d\in ]0,1[.$  Then it follows from \eqref{homovertBss'}
and Lemma \ref{lemBern}
 that
\beno \e\|[\p_z^2v^\h]_\e\|_{L^1_T(\cB^{-1+\d+\f3{p},-\d}_p)} &\leq&
C\e^{1-\d-\f1p}\|\p_zv^\h\|_{L^1_T(\cB^{-1+\d+\f3{p},1-\d}_p)}\\
&\leq& C\e^{1-\d-\f1p}\|\p_zv^\h\|_{L^1_T(\cB^{-1+\d+\f1p+\f2{q},
1+\f1{q}-\f1p-\d}_{q})}. \eeno Applying  the law of product \eqref
{lawofproductanisobasic},  gives
 \beno
\e\|a_\e[\p_z^2v^\h]_\e\|_{L^1_T(\cB^{-1+\d+\f3{p},-\d}_p)} \leq
C\e^{1-\d-\f1p}\|a_\e\|_{L^\infty_T(\cB^{\f2p,\f1p}_p)}\|\p_zv^\h\|_{L^1_T(\cB^{-1+\d+\f1p+\f2{q},
1+\f1{q}-\f1p-\d}_{q})}. \eeno
 Therefore, thanks to \eqref{S7eq8} of Lemma \ref{S7lem4.5} and  Corollary\refer{fulltransport0}, we conclude \beq\label{estiE2}
\|E^2_\e\|_{L^1_T(\cB^{-1+\d+\f3{p},-\d}_p)}\leq \cC_0
\e^{1-\d-\f1p}\exp(C\cR_0). \eeq Similarly we deduce from the law of
product \eqref {lawofproductanisobasic} that
$$\longformule{\bigl\|a_\e \bigl[ \D_\e (\e w^\h_\e  ,w^3_\e ) -(0,\partial_z\Pi^\h)-\e\nabla_\e\Pi^1_\e \bigr]_\e\|_{L^1_T(\cB^{-1+\f2p,\f1p}_p)}
\leq \|a_\e\|_{L^\infty_T(\cB^{\f1p,\f1p}_p)}}{{}\times
\Bigl(\|\D_\e (\e w^\h_\e  ,w^3_\e
)-\e\nabla_\e\Pi^1_\e\|_{L^1_T(\cB^{-1+\f3p,\f1p}_p)}+\|\p_z\Pi^\h\|_{L^1_T(\cB^{-1+\f3p,\f1p}_p)}\Bigr),
}
$$ which together with   Propositions \ref{estimwdetail} and \ref
{consequenceSection345}, and \eqref{S0eq2} ensures that
 \beq\label{estiE3}
\|E^3_\e\|_{L^1_T(\cB^{-1+\f2p,\f1p}_p)}\leq \cC_0\eta\exp(C\cR_0).
\eeq

   The term~$E_\e^4$ is much more difficult to be treated. It is indeed here that we encounter
    the difficulty of our method (which comes from the  framework of parabolic system) due to the transport equation.
     Let us investigate the equation on~$b_\e$ which is
\beq \label {transportequationb} \p_tb_\e+u_\e\cdot\na b_\e
+R_\e\cdot\nabla [a^\h]_\e+\e[w_\e\cdot\na a^\h]_\e=0\with
{b_\e}_{|t=0}=0. \eeq

The control  of~$b_\e$ is given by the following proposition.
\begin{prop}
\label {propbRt} {\sl Under the induction hypothesis~\refeq
{defininductiontime1},  we can decompose~$b_\e=\overline b_{\e}+\wt
b_{\e}$ such that, for any~$p$ in~$]3,4[,$   there holds, for
any~$t$ less than~$\overline T_\e $,
\ben
\label{4.13} \|\overline
b_\e(t) \|_{\cB^{\f2p,\f1p}_p} &\leq & \cC_0\eta(1+\cR_0)
\w{t}^{\frac12}
\andf\\
\label{4.18} \|\wt b_\e(t)\|_{L^p} &\leq &
\cC_0\e^{1-\f1p}(1+\cR_0)^2\w{ t}. \een }
\end{prop}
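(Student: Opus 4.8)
The plan is to split the source term $-\bigl(R_\e\cdot\na[a^\h]_\e+\e[w_\e\cdot\na a^\h]_\e\bigr)$ in \eqref{transportequationb} according to powers of~$\e$. Using $\na[a^\h]_\e=\bigl([\na_\h a^\h]_\e,\e[\p_z a^\h]_\e\bigr)$ and the identity $\e[w_\e\cdot\na a^\h]_\e=\bigl(\e[w_\e^\h]_\e,[w_\e^3]_\e\bigr)\cdot\na[a^\h]_\e$ (both immediate from the definitions), this source equals $F_1+F_2$ with
\[
F_1\eqdefa-\bigl(R_\e^\h+\e[w_\e^\h]_\e\bigr)\cdot[\na_\h a^\h]_\e,\qquad F_2\eqdefa-\e R_\e^3[\p_z a^\h]_\e-\e[w_\e^3\p_z a^\h]_\e,
\]
the point being that $F_2$ carries a genuine factor~$\e$ whereas $F_1$ is of size~$O(1)$. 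I would then define $\overline b_\e$ by the \emph{two-dimensional} transport equation $\p_t\overline b_\e+([v^\h]_\e,0)\cdot\na\overline b_\e=F_1$, $\overline b_\e|_{t=0}=0$, and set $\wt b_\e\eqdefa b_\e-\overline b_\e$; subtracting the equations, $\wt b_\e$ solves
\[
\p_t\wt b_\e+u_\e\cdot\na\wt b_\e=F_2-\bigl(u_\e-([v^\h]_\e,0)\bigr)\cdot\na\overline b_\e,\qquad\wt b_\e|_{t=0}=0,
\]
and, since $u_\e-([v^\h]_\e,0)=\e\bigl(\e[w_\e^\h]_\e,[w_\e^3]_\e\bigr)+\e R_\e$ by \eqref{S2eq0aq}, both terms on the right carry a factor~$\e$.

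To prove \eqref{4.13}: the two-dimensional analysis of Sections~\ref{decayvh}--\ref{decayL2partialz2} propagates the regularity of the Schwartz datum $a_0=-\eta\varsigma_0/(1+\eta\varsigma_0)$ along the smooth velocity $v^\h$, so every Besov norm of $a^\h$ we shall use is of size $\lesssim\cC_0\eta$; thus $F_1$ is $R_\e^\h+\e[w_\e^\h]_\e$ multiplied by a fixed smooth function. Interpolating the two parts of the induction hypothesis \eqref{defininductiontime1}---namely $\na R_\e\in L^1_t(\cB^{\f2p,\f1p}_p)$, hence $R_\e\in L^1_t(\cB^{1+\f2p,\f1p}_p)$, and $R_\e\in L^4_t(\cB^{-\f12+\f2p,\f1p}_p)$---gives $\|R_\e\|_{L^2_t(\cB^{\f2p,\f1p}_p)}\lesssim\cR_0$, while Proposition~\ref{estimwdetail} controls $\e w_\e^\h$; by the law of product \eqref{lawofproductanisobasic} and the scaling \eqref{homovertBss'} one then gets $\|F_1\|_{L^2_t(\cB^{\f2p,\f1p}_p)}\lesssim\cC_0\eta(1+\cR_0)$. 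Next I would de-anisotropize $\overline b_\e$ as in the proof of Proposition~\ref{fulltransport}, turning its equation into a genuine transport equation on $\R^3$ driven by the divergence-free field $(v^\h,0)$, whose gradient lies in $L^1(\R^+;L^\infty)$ with norm $\lesssim\cC_0$ \emph{independently of~$\cR_0$} (Proposition~\ref{consequenceSection345}); since $p$ in~$]3,4[$ forces $\f2p+\f1p=\f3p<1$, the space $\cB^{\f2p,\f1p}_p$ corresponds to a sub-Lipschitz Besov space on $\R^3$, so the standard transport estimate applies \emph{without} an exponential factor and yields
\[
\|\overline b_\e(t)\|_{\cB^{\f2p,\f1p}_p}\lesssim\cC_0\int_0^t\|F_1(t')\|_{\cB^{\f2p,\f1p}_p}\,dt'\leq\cC_0\|F_1\|_{L^2_t(\cB^{\f2p,\f1p}_p)}\w{t}^{1/2}\leq\cC_0\eta(1+\cR_0)\w{t}^{1/2}.
\]
Running the same scheme with $R_\e\in L^1_t(\cB^{1+\f2p,\f1p}_p)$ instead of the interpolated bound gives the stronger $\overline b_\e\in L^\infty(\R^+;\cB^{1+\f2p,\f1p}_p)$ with norm $\lesssim\cC_0\eta(1+\cR_0)$, which I use below.

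To prove \eqref{4.18}: transport by the divergence-free $u_\e$ preserves every $L^p$ norm, so $\|\wt b_\e(t)\|_{L^p}\leq\int_0^t\bigl\|F_2-(u_\e-([v^\h]_\e,0))\cdot\na\overline b_\e\bigr\|_{L^p}\,dt'$. Each summand of $F_2$ is $\e$ times a rescaled function, so $\|[f]_\e\|_{L^p}=\e^{-1/p}\|f\|_{L^p}$ produces the prefactor $\e^{1-\f1p}$, and using the $L^2_t$-control of $R_\e^3$ and of $w_\e^3$ (Proposition~\ref{estimwdetail}) and the boundedness of $\na a^\h$ one finds $\int_0^t\|F_2\|_{L^p}\lesssim\cC_0\e^{1-\f1p}(1+\cR_0)\w{t}^{1/2}$. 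For the commutator, $u_\e-([v^\h]_\e,0)$ has size $\e^{1-\f1p}(1+\cR_0)$ in $L^2_t(L^p)$---again by the $[\cdot]_\e$-scaling and the interpolated bound on $R_\e$---while $\na\overline b_\e$ is bounded in $L^\infty(\R^+;L^\infty)$ by $\cC_0(1+\cR_0)$, thanks to $\cB^{1+\f2p,\f1p}_p\hookrightarrow L^\infty$, $\cB^{\f2p,\f1p}_p\hookrightarrow L^\infty$ and the strengthened bound on $\overline b_\e$; Hölder in time then controls $\int_0^t\|(u_\e-([v^\h]_\e,0))\cdot\na\overline b_\e\|_{L^p}$ by $\cC_0\e^{1-\f1p}(1+\cR_0)^2\w{t}$, which dominates and proves \eqref{4.18}.

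The main obstacle is the \emph{polynomial} (not exponential) dependence on $\cR_0$ in \eqref{4.13}: the naive identity $b_\e=(a_\e-[a^\h]_\e)/\e$ together with Corollary~\ref{fulltransport0} only gives the factor $\e^{-1}\exp(C\cR_0)$, so one is forced to transport $\overline b_\e$ by the \emph{slow} velocity $([v^\h]_\e,0)$, whose Lipschitz norm is time-integrable with an $\cR_0$-independent bound. The price is the commutator $(u_\e-([v^\h]_\e,0))\cdot\na\overline b_\e$ in the $\wt b_\e$-equation, and controlling it in $L^p$ forces $\na\overline b_\e\in L^\infty$---i.e. $\overline b_\e$ in the above-Lipschitz space $\cB^{1+\f2p,\f1p}_p$---which is affordable only because $a^\h$ is smooth and $\na R_\e\in L^1_t(\cB^{\f2p,\f1p}_p)$, the above-Lipschitz transport of $\overline b_\e$ being itself legitimate precisely because the transporting velocity $([v^\h]_\e,0)$ is $\cR_0$-independently Lipschitz. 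Keeping these requirements---polynomiality in $\cR_0$, the $\e^{1-\f1p}$ gain, and the $\w{t}$-type growth---simultaneously compatible, while applying the anisotropic product laws only with indices inside their allowed range, is the delicate part.
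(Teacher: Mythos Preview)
Your decomposition is essentially the paper's (it uses three pieces $b_{1,\e},b_{2,\e},b_{3,\e}$ with $\overline b_\e=b_{2,\e}$, $\wt b_\e=b_{1,\e}+b_{3,\e}$, and keeps the whole $\e[w_\e\cdot\na a^\h]_\e$ in the $\overline b_\e$--source, but this is a minor variant). However, your argument for \eqref{4.13} has a real gap. The ``de-anisotropize'' trick from the proof of Proposition~\ref{fulltransport} proceeds by propagating, \emph{after the vertical rescaling}, the \emph{isotropic} norm $\dot B^{3/p}_{p,1}(\R^3)$ and then using the one-sided embedding $\dot B^{3/p}_{p,1}\hookrightarrow\cB^{2/p,1/p}_p$. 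But after rescaling, your source becomes $R_\e^\h(t,x_\h,z/\e)\cdot\na_\h a^\h(t,x_\h,z)$, and $R_\e^\h$ is a genuinely three-dimensional function controlled by \eqref{defininductiontime1} only in \emph{anisotropic} norms; its isotropic $\dot B^{3/p}_{p,1}$ norm after a vertical dilation by $1/\e$ is not bounded uniformly in~$\e$ (there is no reverse embedding $\cB^{2/p,1/p}_p\hookrightarrow\dot B^{3/p}_{p,1}$: test on a function localized at horizontal frequency $2^k$ and vertical frequency $2^\ell$ with $\ell\gg k$). The paper supplies the missing step as Proposition~\ref{propageanisoh}: since the transport by $([v^\h]_\e,0)$ is purely horizontal with $z$ as parameter, one propagates the \emph{horizontal} Besov regularity $(\dot B^{2/p}_{p,1})_\h$ for each fixed $z$, and recovers the vertical $1/p$-regularity by a finite-difference argument on $\d_{-z}\overline b_\e$; this is why \eqref{propageanisoheq1} has the extra term involving $\|\na_\h(\text{source})\|_{L^\infty_\v(L^1_t(\dot B^{2/p}_{p,1})_\h)}$.

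There is a second gap in the $\wt b_\e$ estimate. The contribution $\e R_\e$ to $u_\e-([v^\h]_\e,0)$ is \emph{not} of the form $\e[\cdot]_\e$, so the scaling $\|[f]_\e\|_{L^p}=\e^{-1/p}\|f\|_{L^p}$ does not apply to it, and the induction hypothesis gives no $L^2_t(L^p)$ control of $R_\e$ anyway (positive-index homogeneous anisotropic Besov spaces do not embed into $L^p$). The paper does the H\"older split the other way: $R_\e$ goes into $L^\infty_x$ via the interpolation inequality~\eqref{interpolBesovanisolinfty}, yielding $\int_0^t\|R_\e\|_{L^\infty}\,dt'\lesssim\cR_0\,t^{1/2}$, and it is $\|\na b_{2,\e}(t)\|_{L^p}$ that carries the factor $\e^{-1/p}$, obtained by a direct $L^p$ estimate on the transport equation for $\na b_{2,\e}$ (see \eqref{propbRtdemoeq1a}--\eqref{propbRtdemoeq2}). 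This also avoids your above-Lipschitz bound $\overline b_\e\in\cB^{1+2/p,1/p}_p$, which the de-anisotropize argument would anyway fail to deliver for the same reason as above.
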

Let us notice that the norms of ~$b_\e$ grows in time. As we
need~$L^1$ in time control on the remainder term $R_\e$, it seems a
disaster. In fact, it is compensated by the time decay of~$v^\h$
established in Section\refer  {decayvh} to Section
\ref{decayL2partialz2}. The proof of this proposition is the purpose
of Section\refer {control_b}. Then we can obtain the following
estimate for $E^4_\e=E^{4,1}_\e+E^{4,2}_\e$,
 \beq
 \label{estiE4}
\|E^{4,1}_\e\|_{{L^1_T(\cB^{-1+\f2p,\f1p}_p)}}\leq \cC_0\eta(1+\cR_0)\andf
\|E^{4,2}_\e\|_{{L^1_T(\cB^{-1+\d+\f3p,-\d}_p)}}\leq
\cC_0\e^{1-\f1p}(1+\cR_0)^2,
\eeq which is the content of Corollary
\ref{Sectcolcontrolb}.

\medbreak

Now we are in position to  solve globally the coupled system~${\rm
(INS3D)_{\rm e}}$ with \eqref {transportequationb}. Let us think
this system as a perturbation of a semi-linear parabolic system. We
first compute~$\nabla \Pi_\e$. The point is that the resolution
operator of the elliptic system
$$
\dive \bigl((1+a)\nabla \Pi- f\bigr) =0
$$
 can be written as
$$
\D \Pi +\dive (a\nabla \Pi) =\dive f
$$
and then \beq \label{eqpressure} (\Id-\cM_a )\nabla \Pi  =\nabla
\D^{-1} \dive f\with \cM_a g\eqdefa  -\nabla \D^{-1}\dive (ag) \eeq
It is obvious that if~$a$ is a bounded function, the
operator~$\cM_a$ is a bounded linear  operator from~$(L^2(\R^d))^d$
into itself and that
$$
\|\cM_a g \|_{L^2} \leq \|a\|_{L^\infty} \|g\|_{L^2}.
$$
Thus, if~$\|a\|_{L^\infty}$ is less than~$1$, the
operator~$\Id-\cM_a$ is invertible on~$(L^2(\R^d))^d,$ and \beq
\label{eqpressure+} \nabla \Pi  = (\Id-\cM_a)^{-1} \nabla
\D^{-1}\dive f. \eeq This leads to the following definition of the
modified Leray projection operator on divergence free vector field.
\begin{defi}
\label{definmodifiedLerayproj} {\sl Let~$a$ be a bounded function
with the~$L^\infty$ norm of which is less than~$1$. We can define
the modified Leray projection operator on divergence free vector
fields associated with~$a$ (denoted~$\PP_a$) by
$$
\PP_a f\eqdefa f- (1+a)(\Id-\cM_a)^{-1} (\nabla\D^{-1}\dive f).
$$
}
\end{defi}
Let us remark that it is a bounded operator on ~$(L^2(\R^d))^d$  and
that if the function~$a$ is identically equal to~$0$, then the
operator~$\PP_a$ is the classical Leray projection operator on
divergence free vector fields.

Moreover, in the case when the~$L^\infty$ norm of~$a_\e$ is less
than~$1$, the system~${\rm (INS3D)_{\rm e}}$  can be equivalently
reformulated as
\begin{equation}
\label{S1eq5}
 \left\{\begin{array}{c}
\partial_tR_\e  -\D R_\e  =\PP_{a_\e}  \bigl(a_\e\D R_\e - \dive \bigl(u_{\e,{\rm app}}\otimes R_\e +R_\e\otimes \uapp +\e R_\e\otimes  R_\e\bigr)-E_\e\bigr),\\
\displaystyle \dive R_\e=0\andf {R_\e}|_{t=0}=0.
\end{array}
\right.
\end{equation}

We shall conclude the proof of Theorem\refer {insslowvar}  in
Section\refer {conclusif}  by proving that the solution of the
coupled system of Equations \eqref {transportequationb} and
\eqref{S1eq5} is global provided that~$\eta$ and~$\e$ are small
enough.

\setcounter{equation}{0}
\section{Decay estimates for  2D  flows}
\label {decayvh} In this section, we investigate the decay
properties of  the global regular solution $(\r, u, \na \Pi)$ of two
dimensional incompressible inhomogeneous Navier-Stokes system
(INS2D).

In this section, we use the following   notations:
$$
\displaylines { E_0(t)  \eqdefa \|\sqrt {\rho} u (t)\|_{L^2} ^2\,,\
E_1(t) \eqdefa \|\nabla u(t) \|_{L^2}^2 \,,\ E_2(t) \eqdefa \|\sqrt
{\rho} \partial_t u(t) \|_{L^2} ^2 +\|\nabla u(t) \|_{L^2}^4,\cr
E_3(t) \eqdefa \|\nabla \partial_t u(t) \|_{L^2} ^2 +E_2^{\frac
32}(t) +E_2(t)\|\nabla \rho(t)\|^2_{L^\infty},\ E_i\eqdefa E_i(0),\
i=0,1,2,3,\cr
 \cC(E_0) {\ \rm is\
an\ increasing\ function\ of}\ E_0,\cr a\lesssim b \Longrightarrow
a\leq \cC(E_0) b. }
$$
Moreover, in this section, we denote~$v_t\eqdefa \partial_t v$ and we denote by~$x$ a generic point of~$\R^2$.
The main result of this section is the following  theorem.
\begin{thm}
\label{ins2Ddecay} {\sl Let us consider  the smooth solution $(\r,
u,\na\Pi)$ of (INS2D) associated with the  initial data~$(\rho_0, u_0).$ In
addition we assume that
 \beq
\label {ins2Ddecahyp1} U_0 \eqdefa \int_{\R^2} |x| \,|u_0(x)|
dx<\infty, \quad \int_{\R^2} \rho_0u_0(x) dx =0,\andf \frac 34\leq
\rho_0(x) \leq \frac 5 4 \cdotp \eeq For  any~$T$ greater than or
equal~$T_0(\r_0,u_0)$ with
$$
T_0(\r_0,u_0)\eqdefa \max\Bigl\{\frac {U_0} {E_0},
\|\varrho_0\|_{L^2} ^2\Bigr\},
$$
we have the following decay property for the total kinetic energy
 \beq
\label {ins2Ddecayeq10} \|u(t)\|^2_{L^2}   \lesssim    E_0 \langle t
\rangle _{{}_T}^{-2} . \eeq
For higher order  derivatives of $u,$ we
get for $T$ large enough,
 \ben
 && \label {ins2Ddecayeq20} \|\nabla
  u(t)\|_{L^2}+\|  u(t)\|_{L^\infty}    \lesssim  E_1^{\f12}  \w{t}_{{}_T}^{-\f32},\\
&&\label {ins2Ddecayeq3} \| \partial_t u (t)\|_{L^2}+ \|\nabla^2
u(t)\|_{L^2}+\|\nabla \Pi(t)\|_{L^2}  \lesssim  E_2^{\f12}  \w{t}_{{}_T}^{-2}, \\
&&\label {ins2Ddecayeq5} \|\nabla  \partial_t u (t)\|_{L^2}\lesssim
E_3^{\f12} \w{t}_{{}_T}^{-\frac 5 2} \andf \|\nabla^3
u(t)\|_{L^2}+\|\nabla^2 \Pi (t)\|_{L^2}  \lesssim E_3^{\f12}
 \w{t}_{{}_T}^{-\frac 5 2}\log \w{t}_{{}_T}, \\
&&\label {ins2Ddecayeq6} \|\na u(t)\|_{L^\infty}\lesssim
E_1^{\f14}E_3^{\f14} \w{t}_{{}_T}^{-2}\log^{\f12} \w{t}_{{}_T}.
 \een
 Here and in the rest of this section, we always denote
 $$
 \langle \tau\rangle \eqdefa
 (e+\tau)\,,\langle t\rangle_{{}_T} \eqdefa   \langle t/T \rangle\andf  h_T(t)\eqdefa h (t/T).
 $$
 }
 \end{thm}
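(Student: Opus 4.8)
The plan is to run the Fourier--splitting method of Schonbek and Wiegner (as in \cite{Wiegner, Schonbek, Schonbek2, HM06, brandolese}) in the inhomogeneous setting. Since $3/4\le\rho_0\le5/4$ is propagated in time by \eqref{transformrho}, one has $\|\sqrt\rho u(t)\|_{L^2}^2\sim\|u(t)\|_{L^2}^2$ throughout and $\|\varrho(t)\|_{L^2}=\|\varrho_0\|_{L^2}$; I take as known the \emph{a priori} bounds stating that $E_1(t),E_2(t),E_3(t)$ are finite and satisfy $E_i(t)\le\mathcal C(E_0)E_i$, which follow from the usual differentiated energy estimates for (INS2D). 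The backbone is the basic energy identity
$$
\frac{d}{dt}\|\sqrt\rho u(t)\|_{L^2}^2+2\|\nabla u(t)\|_{L^2}^2=0,
$$
obtained by testing the momentum equation against $u$, together with its differentiated analogues governing $\frac{d}{dt}E_1$, $\frac{d}{dt}E_2$, $\frac{d}{dt}E_3$.

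First I would prove \eqref{ins2Ddecayeq10}. The key observation is that $\rho u$ solves $\partial_t(\rho u)+\dive(\rho u\otimes u)-\Delta u+\nabla\Pi=0$; integrating in space and using $\int\rho_0u_0=0$ gives $\int_{\R^2}\rho u(t,x)\,dx=0$ for all $t$, i.e. $\widehat{\rho u}(0,t)=0$. Combined with $U_0=\int|x|\,|u_0|\,dx<\infty$ and the new formulation \eqref{estimuflatdemoeq2} of the momentum equation, this yields on low frequencies a bound of the type $|\widehat{\rho u}(\xi,t)|\lesssim|\xi|\bigl(U_0+\int_0^t\|u(s)\|_{L^2}^2\,ds+(\text{lower-order density and pressure remainders})\bigr)$. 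Splitting frequency space into $\{|\xi|\le g(t)\}\cup\{|\xi|>g(t)\}$ with $g(t)^2\sim\bigl(T\langle t/T\rangle\bigr)^{-1}$, the dissipation controls the high frequencies and one arrives at a differential inequality
$$
\frac{d}{dt}\|\sqrt\rho u\|_{L^2}^2+\frac{k}{T\langle t/T\rangle}\|\sqrt\rho u\|_{L^2}^2\lesssim\frac{k}{T\langle t/T\rangle}\int_{|\xi|\le g(t)}|\widehat{\rho u}(\xi,t)|^2\,d\xi
$$
with $k$ as large as one wishes. One first closes a bootstrap giving the generic $L^1\to L^2$ rate $\|u(t)\|_{L^2}^2\lesssim E_0\langle t\rangle_{{}_T}^{-1}$; feeding this back (so that $\int_0^t\|u\|_{L^2}^2\,ds$ stays bounded) and iterating the splitting once more upgrades it to $\|u(t)\|_{L^2}^2\lesssim E_0\langle t\rangle_{{}_T}^{-2}$. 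The role of the rescaled weight $\langle t\rangle_{{}_T}=\langle t/T\rangle$ with $T\ge T_0=\max\{U_0/E_0,\|\varrho_0\|_{L^2}^2\}$ is precisely that it makes the nonlinear contribution (which is of size $\int_0^\infty\|u\|_{L^2}^2\sim E_0T$) and the density contribution subcritical relative to the dissipation, absorbing the otherwise large factors $\mathcal C(E_0)$, $U_0/E_0$ and $\|\varrho_0\|_{L^2}^2$ on the initial time layer.

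The remaining estimates \eqref{ins2Ddecayeq20}--\eqref{ins2Ddecayeq6} are obtained by iterating the same scheme at higher order. Differentiating the momentum equation and applying the Fourier--splitting argument successively to $\nabla u$, to $\partial_t u$ (both of which are still divergence free), and then to $\nabla\partial_t u$ --- each time inserting the already-established lower-order decay into the commutator and nonlinear source terms, and paying one extra factor $g(t)\sim\langle t\rangle_{{}_T}^{-1/2}$ per derivative --- produces the $L^2$ decay of $\nabla u$, $\partial_t u$ and $\nabla\partial_t u$ at the rates $\langle t\rangle_{{}_T}^{-3/2}$, $\langle t\rangle_{{}_T}^{-2}$, $\langle t\rangle_{{}_T}^{-5/2}$. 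The bounds on $\nabla^2u$, $\nabla^3u$, $\nabla\Pi$ and $\nabla^2\Pi$ are then read off from the stationary Stokes system $-\Delta u+\nabla\Pi=-\rho(\partial_t u+u\cdot\nabla u)$, $\dive u=0$, by elliptic regularity and the two-dimensional product laws, and the $L^\infty$ bounds on $u$ and $\nabla u$ from the two-dimensional Agmon/Gagliardo--Nirenberg inequalities; it is the borderline two-dimensional interpolation and Calder\'on--Zygmund estimates that produce the logarithmic factors in \eqref{ins2Ddecayeq5} and \eqref{ins2Ddecayeq6}.

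The hard part is the $L^2$ decay, specifically the low-frequency control of $\widehat{\rho u}$. In the homogeneous case one works directly with the divergence-free velocity and removes the pressure by the Leray projection; here the mean-free quantity admitting a workable Duhamel representation is $\rho u$, which is \emph{not} divergence free, so the pressure survives --- and, as already noted in Section \ref{structureproof} for $\Pi^\h_L$, a Fourier multiplier of order $-1$ applied to a mere $L^1$ product need not lie in $\dot H^{-1}$ even in dimension two. The new formulation \eqref{estimuflatdemoeq2} is tailored to bypass precisely this obstruction; verifying that it yields a low-frequency bound on $\widehat{\rho u}$ whose remainder is genuinely dominated by the lower-order decay, and that the two-step bootstrap then closes with no loss in the exponent, is the technical core of the section. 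The higher-order iterations, and ultimately Proposition \ref{consequenceSection345}, are then lengthy but essentially mechanical repetitions of the same mechanism.
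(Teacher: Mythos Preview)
Your plan for the $L^2$ decay \eqref{ins2Ddecayeq10} is essentially the paper's, but with two inaccuracies. First, formula \eqref{estimuflatdemoeq2} is a representation of $\widehat u$, not of $\widehat{\rho u}$: the paper estimates $\|u_\flat\|_{L^2}^2$ directly (Lemma~\ref{estimuflat}), and the term $|\cF(\varrho u)(t,\xi)|^2$ appearing in \eqref{LemmedecayWiegnerdemoeq4} is absorbed into $\tfrac14\|\sqrt\rho u\|_{L^2}^2$ precisely because $\|\varrho_0\|_{L^\infty}\le\tfrac14$; this is where the hypothesis $3/4\le\rho_0\le5/4$ enters, and you do not mention this absorption. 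Second, the bootstrap is not two steps but four: the paper first gets $\|\sqrt\rho u\|_{L^2}^2\lesssim E_0\log^{-2}\langle t\rangle_T$ via $g^2(\tau)=3(\langle\tau\rangle\log\langle\tau\rangle)^{-1}$, then $\langle t\rangle_T^{-1}$, then $\langle t\rangle_T^{-2}\log^3\langle t\rangle_T$, and only then $\langle t\rangle_T^{-2}$. A direct jump from $\langle t\rangle_T^{-1}$ to $\langle t\rangle_T^{-2}$ does not close because $\int_0^t\|\sqrt\rho u\|_{L^2}\,dt'$ grows like $t^{1/2}$ at the $\langle t\rangle^{-1}$ stage.

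The real gap is in your treatment of the higher-order estimates. You propose to iterate the Fourier-splitting argument on $\nabla u$, $\partial_t u$, $\nabla\partial_t u$. The paper does \emph{not} do this, and in fact the text preceding Lemma~\ref{estimenergyH2Linearise} explains why the naive space-differentiation route fails: differentiating the momentum equation in $x$ produces $\nabla\rho$ terms whose control requires $\nabla u\in L^1_t(L^\infty)$, which is exactly what one is trying to prove. The paper's mechanism for transferring decay upward is \emph{not} Fourier splitting but the time-shifted energy inequality \eqref{LemDecayH1fondeq3} of Lemma~\ref{DecayH1fond},
\[
t\|\nabla v(t)\|_{L^2}^2 \lesssim \|v(t/2)\|_{L^2}^2 + t\int_{t/2}^t\|f(t')\|_{L^2}^2\,dt',
\]
applied first with $v=u$, $f=0$ to get \eqref{ins2Ddecayeq2}, and then analogously (via Lemma~\ref{estimenergyH2Linearise} and Corollary~\ref{estimH2u}) to $\partial_t u$ for \eqref{ins2Ddecayeq3}. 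Crucially, the second-order step differentiates in \emph{time}, not space, precisely to avoid $\nabla\rho$; only after $\|\nabla u\|_{L^1_t(L^\infty)}$ is under control (Corollary~\ref{S7col1}) does the paper differentiate in space to reach $\nabla^3 u$ and $\nabla^2\Pi$ in Proposition~\ref{DecayD4}, and the $\log$ loss in \eqref{ins2Ddecayeq5}--\eqref{ins2Ddecayeq6} comes from the endpoint estimate \eqref{S7eq6demoeq4} of $\|\nabla\rho\,u_t\|_{L^2}$, not from Calder\'on--Zygmund. Relatedly, your assumption that $E_3(t)\le\cC(E_0)E_3$ is ``known'' is circular: $E_3$ contains $\|\nabla\rho(t)\|_{L^\infty}^2$, which is only bounded once \eqref{S7col1eq1} and hence the $E_2$-level decay are established.
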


\subsection{Global energy estimates for the linearized system}
\label {GlobalEnergy}

 Let $(\r, u, \na\Pi)$ be a global
classical solution of (INS2D). We first consider some  basic energy
estimate for linearized equations of 2-D inhomogeneous
incompressible Navier-Stokes system
\begin{equation*}
{\rm (LINS2D)}\left\{\begin{array}{c} \displaystyle \p_t\r +u\cdot\na\r =0, \\
\displaystyle \r \pa_t  v  + \r   u \cdot\nabla  v  -\D   v +\grad   \Pi_v =f+L(t)v,\\
\displaystyle \dv \, u   = \dv \, v =0, \\
\displaystyle \r |_{t=0}=\r_0,\quad v|_{t=0}=v_0.
\end{array}\right. 
\end{equation*}

\no$\bullet$ \underline{$L^2$ energy estimate}

If the operator~$L$ is such that~$\|L(t)\|_{\cL(L^2)}$ belongs
to~$L^1(\R^+)$, then by multiplying ~(LINS2D) by the quantity
$$
\exp\Bigl( -\int_0^t\|L(t')\|_{\cL(L^2)} dt'\Bigr)
$$
reduces  to the case when~$L(t)$ is a non positive operator in the
sense that~$(L(t)v|v)_{L^2}$ is non positive. Then the operator~Ê$L$
can be ignored in the energy estimates. We assume this from now on.

We shall assume that  all the vectors fields and functions are
smooth in time with value in any Sobolev space.

First of all, let us  notice that  the  energy estimate, obtained by
taking into account to the fact that  the vector field~$u$ is
divergence free, writes
\beq
\label {Energybasic}
\f12\f{d}{dt}\|\sqrt{\r }v(t)\|_{L^2}^2+\|\na v(t)\|_{L^2}^2\leq
(f|v)_{L^2},
\eeq
By integration this gives
\beno
 \f12\|\sqrt{\r }v(t)\|_{L^2}^2+\int_{t_0} ^t \| \nabla v(t')\|_{L^2}^2\,dt'
 & \leq &
\f 12 \|\sqrt{\r }v(t_0)\|_{L^2}^2 + \int_{t_0}^ t
(f(t')|v(t'))_{L^2}\\
 & \leq &
 \f 12 \|\sqrt{\r }v(t_0)\|_{L^2}^2 + \int_{t_0}^ t
\|\f{f}{\sqrt{\r}}(t')\bigr\|_{L^2} \|\sqrt{\r }v(t')\|_{L^2}  dt'.
\eeno From this, we deduce that for any non negative~$t_0$ and
any~$t$ greater than~$t_0$ \beq \label {Energybasicinteg}
 \f12\|\sqrt{\r }v\|_{L^\infty([t_0,t];L^2)}^2+\int_{t_0} ^t \| \nabla v(t')\|_{L^2}^2\,dt'\leq
\|\sqrt{\r }v(t_0)\|_{L^2}^2 +2\Bigl( \int_{t_0}^ t
\bigl\|\f{f}{\sqrt{\r}}(t')\bigr\|_{L^2} dt'\Bigr)^2. \eeq In
particular, since  $(\r, u, \na\Pi)$ is a classical solution of
(INS2D), the above argument leads to \beq \label{zp1}
\f12\|\sqrt{\r}u(t)\|_{L^2}^2+\int_{t_0}^t\|\na
u(t')\|_{L^2}^2\,dt'=\f12\|\sqrt{\r}u(t_0)\|_{L^2}^2\quad
\mbox{for}\ \ 0\leq t_0\leq t, \eeq which implies in particular that
\beq \label{zp2} \int_{t_0}^t\|u(t')\|_{L^4}^4\,dt'\lesssim
\|u\|_{L^\infty([t_0,t];L^2)}^2\|\na
u\|_{L^2([t_0,t];L^2)}^2\lesssim \|u(t_0)\|_{L^2}^4.
 \eeq

Moreover, the fact that  the vector field~$u$ is divergence free
implies that \beq \label {Transportbasic} \min_{x\in \R^2} \rho(t,x)
= \min_{x\in \R^2} \rho_0(x) \andf \forall p\in [1,\infty]\,,\
\|\varrho(t)\|_{L^p} =  \|\varrho_0\|_{L^p}.
\eeq

\no$\bullet$ \underline{The estimates for the first order derivatives}

  The basic result  is the
following lemma.

\begin{lem}
\label {DecayH1fond} {\sl Let~$\rho,$~$u$,~$v$ and~$f$
satisfy~(LINS2D). Then for any non negative~$t_0$ and $t$ with ~$t$
greater than or equal to~$t_0$, we have, \beq\begin{split} &\|\na
v(t)\|_{L^2}^2+\int_{t_0}^t\Bigl(\|\sqrt{\r }\p_t v(t') \|_{L^2}^2+
\|\nabla^2 v(t') \|_{L^2}^2+ \|\nabla \Pi_v(t')
\|_{L^2}^2\Bigr) dt' \\
&\qquad\qquad\qquad\qquad\quad \leq C\Bigl( \| \nabla
v(t_0)\|_{L^2}^2+
 \int_{t_0}^t \|f(t')\|_{L^2}^2 dt' \Bigr) \exp \bigl(C\|u(t_0)\|_{L^2}^4
 \bigr). \end{split} \label{LemDecayH1fondeq1}
 \eeq
Moreover some  decay on~$\nabla v$ can be obtained through the
following inequalities. If~$s$ is a positive real number, we have.
 \beq\begin{split} & \w{t}_{{}_T}^s\|\na
v(t)\|_{L^2}^2+\int_{t_0}^t\w{t'}_T^s\Bigl(\|\sqrt{\r }\p_t v(t')
\|_{L^2}^2+  \|\nabla^2 v(t') \|_{L^2}^2+ \|\nabla \Pi_v(t')
\|_{L^2}^2\Bigr) dt'\\
&\qquad \leq C\Bigl( \w{t_0}^s_T\|\na v(t_0)\|_{L^2}^2+ \int_{t_0}^t
\w{t'}_T^{s-1}\| \nabla
v(t')\|_{L^2}^2\frac{dt'}T+  \int_{t_0}^t \w{t'}_T^{s}\|f(t')\|_{L^2}^2 dt' \Bigr)\\
 &\qquad\qquad\qquad\qquad\qquad\qquad\qquad\qquad\qquad\qquad\qquad\qquad\qquad\times \exp \bigl(C\|u(t_0)\|_{L^2}^4
 \bigr), \end{split} \label{LemDecayH1fondeq2}
 \eeq
 and
 \beq \label{LemDecayH1fondeq3}
t\|\nabla v (t)\|_{L^2}^2 \leq C \Bigr(\|v(t/2)\|^2_{L^2} +
t\int_{t/ 2}^t  \|f(t')\|_{L^2}^2dt'\Bigr)
  \exp\bigl(C\|u(t/2)\|_{L^2}^4\bigr).
 \eeq}
\end{lem}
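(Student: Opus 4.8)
The plan is to run the standard $\dot H^1$ energy estimate for the linearized system, testing the momentum equation of (LINS2D) against $\partial_t v$. After the reduction described just above the lemma (multiplying the equation by $\exp(-\int_0^t\|L(t')\|_{\cL(L^2)}dt')$, or simply absorbing $L(t)v$ into the force and controlling $\|L(t)v\|_{L^2}\le\|L(t)\|_{\cL(L^2)}\|v\|_{L^2}$ through the $L^2$ bound \eqref{Energybasicinteg}), we may assume $L\equiv 0$. Taking the $L^2(\R^2)$ inner product of the momentum equation with $\partial_t v$, the pressure term drops because $\dive v\equiv 0$ forces $\dive\partial_t v=0$, the viscous term produces $\frac12\frac{d}{dt}\|\nabla v\|_{L^2}^2$, and the density factor is harmless since $\min_x\rho(t,x)=\min_x\rho_0(x)$ by \eqref{Transportbasic}. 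One is left with
\[
\|\sqrt\rho\,\partial_t v(t)\|_{L^2}^2+\tfrac12\tfrac{d}{dt}\|\nabla v(t)\|_{L^2}^2
=(f\,|\,\partial_t v)_{L^2}-(\rho\,u\cdot\nabla v\,|\,\partial_t v)_{L^2}.
\]

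The first term on the right is bounded by $\frac18\|\sqrt\rho\,\partial_t v\|_{L^2}^2+C\|f\|_{L^2}^2$. For the convection term I would use Hölder and the two dimensional Gagliardo--Nirenberg inequality $\|\nabla v\|_{L^4}^2\lesssim\|\nabla v\|_{L^2}\|\nabla^2 v\|_{L^2}$, which gives a bound by $\|u\|_{L^4}\|\nabla v\|_{L^2}^{1/2}\|\nabla^2 v\|_{L^2}^{1/2}\|\sqrt\rho\,\partial_t v\|_{L^2}$. To close this one recovers $\|\nabla^2 v\|_{L^2}$ and $\|\nabla\Pi_v\|_{L^2}$ from the stationary Stokes estimate applied to $-\Delta v+\nabla\Pi_v=f-\rho\,\partial_t v-\rho\,u\cdot\nabla v$, namely $\|\nabla^2 v\|_{L^2}+\|\nabla\Pi_v\|_{L^2}\lesssim\|f\|_{L^2}+\|\sqrt\rho\,\partial_t v\|_{L^2}+\|u\|_{L^4}\|\nabla v\|_{L^4}$, and absorbs $\|u\|_{L^4}\|\nabla v\|_{L^4}\le\frac12\|\nabla^2 v\|_{L^2}+C\|u\|_{L^4}^2\|\nabla v\|_{L^2}$. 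Substituting this back and applying Young's inequality repeatedly to soak up all powers of $\|\nabla^2 v\|_{L^2}^{1/2}$ and $\|\sqrt\rho\,\partial_t v\|_{L^2}$ leaves the differential inequality
\[
\tfrac{d}{dt}\|\nabla v\|_{L^2}^2+\|\sqrt\rho\,\partial_t v\|_{L^2}^2+\|\nabla^2 v\|_{L^2}^2+\|\nabla\Pi_v\|_{L^2}^2\le C\|u\|_{L^4}^4\|\nabla v\|_{L^2}^2+C\|f\|_{L^2}^2.
\]
Grönwall's lemma between $t_0$ and $t$, combined with the purely two dimensional a priori bound $\int_{t_0}^t\|u(t')\|_{L^4}^4\,dt'\lesssim\|u(t_0)\|_{L^2}^4$ from \eqref{zp2}, then yields \eqref{LemDecayH1fondeq1}.

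For the weighted estimate \eqref{LemDecayH1fondeq2} I would multiply the above differential inequality by $\w{t'}_{{}_T}^{s}$ and use $\frac{d}{dt'}\bigl(\w{t'}_{{}_T}^{s}\|\nabla v\|_{L^2}^2\bigr)=\w{t'}_{{}_T}^{s}\frac{d}{dt'}\|\nabla v\|_{L^2}^2+\frac{s}{T}\w{t'}_{{}_T}^{s-1}\|\nabla v\|_{L^2}^2$, which produces exactly the extra term $\int_{t_0}^t\w{t'}_{{}_T}^{s-1}\|\nabla v(t')\|_{L^2}^2\,\frac{dt'}{T}$ on the right-hand side; the same Grönwall argument applies verbatim. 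For \eqref{LemDecayH1fondeq3}, by the mean value theorem I would pick a time $s_0\in[t/2,t]$ with $\|\nabla v(s_0)\|_{L^2}^2\le\frac2t\int_{t/2}^t\|\nabla v(t')\|_{L^2}^2\,dt'$; the $L^2$ energy inequality \eqref{Energybasicinteg} on $[t/2,t]$ bounds the latter by $\frac Ct\bigl(\|v(t/2)\|_{L^2}^2+t\int_{t/2}^t\|f(t')\|_{L^2}^2\,dt'\bigr)$ (Cauchy--Schwarz on the $f$ contribution), and applying \eqref{LemDecayH1fondeq1} from $t_0=s_0$ up to $t$, together with the energy monotonicity $\|u(s_0)\|_{L^2}\le\|u(t/2)\|_{L^2}$ that follows from \eqref{zp1}, gives \eqref{LemDecayH1fondeq3}.

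The main obstacle is the convection term $(\rho\,u\cdot\nabla v\,|\,\partial_t v)_{L^2}$: a direct estimate costs one too many derivatives of $v$ and cannot be absorbed into the dissipation. The resolution is the interplay between the stationary Stokes estimate, which trades $\|\nabla^2 v\|_{L^2}$ for $\|\sqrt\rho\,\partial_t v\|_{L^2}$, $\|f\|_{L^2}$ and $\|\nabla v\|_{L^2}$, and the genuinely two dimensional bound \eqref{zp2}, which keeps the Grönwall exponent finite and independent of $t$; everything else is routine Young-inequality bookkeeping.
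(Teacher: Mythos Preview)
Your proposal is correct and follows essentially the same route as the paper: test against $\partial_t v$, invoke the stationary Stokes estimate (which the paper isolates as Lemma~\ref{Echanget2x}) to trade $\|\nabla^2 v\|_{L^2}$ for $\|\sqrt\rho\,\partial_t v\|_{L^2}$, absorb via Young, and apply Gr\"onwall together with \eqref{zp2}. The only genuine difference is in \eqref{LemDecayH1fondeq3}: the paper multiplies the differential inequality by $(t-t_0)$, integrates, bounds $\int_{t_0}^t\|\nabla v\|_{L^2}^2$ by \eqref{Energybasicinteg}, and then sets $t_0=t/2$; you instead use an averaging argument to locate a good time $s_0\in[t/2,t]$ and then apply \eqref{LemDecayH1fondeq1} from $s_0$. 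Both arguments are equally short and yield the same conclusion.
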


\begin{proof}
 Multiplying the momentum part of~(LINS2D)  by $ v_t$ and
integrating the resulting equation over $\R^2,$ we obtain
$$
\|\sqrt{\r } v_t\|_{L^2}^2+\f12\f{d}{dt}\|\na
v(t)\|_{L^2}^2=-\bigl( \r   u \cdot\na v | \p_t
v\bigr)_{L^2}+(f| v_t)_{L^2} .
$$
As~$\rho$ lies between~$1/2$ and~$2$, we get
$$
\frac 3 4 \|\sqrt{\r } v_t\|_{L^2}^2+\f12\f{d}{dt}\|\na
v(t)\|_{L^2}^2\leq 4\|(u \cdot\na v)(t)\|_{L^2}^2+4\|f(t)\|_{L^2}^2
.
$$
It is important now to make precise the idea in the framework of
Stokes problem, one time derivative of $v$ is equivalent to two
space derivatives of $v.$ Here, as the equation is nonlinear, the
equivalent inequality will be nonlinear one.   This is described by
the following lemma.
\begin{lem}
\label {Echanget2x} {\sl Let~$(\r,v,\na\Pi_v)$ be a solution
of~(LINS2D). Then we have, for any~$p$ in the interval~$]1,\infty[$,
$$
 \|\nabla^2v\|_{L^p}+\|\na  \Pi_v \|_{L^p}  \leq
C\bigl(\|\sqrt{\r} v_t\|_{L^p} + \|u\|_{L^{2p}}^{2} \|\nabla
v\|_{L^2}+\|f\|_{L^p}\bigr).
$$
In the case when~$p$ equals to~$2$, we have the opposite inequality
$$
\|\sqrt \r v_t\|_{L^2} ^2 \leq C\bigl(\|\nabla^2v\|^2_{L^2} +
\|u\|_{L^4}^4\|\nabla v\|_{L^2}^2+\|f\|_{L^2}^2\bigr) .
$$}
\end{lem}

\begin{proof}
Observing that
\begin{equation*}
{\rm (SSE)}\quad \left\{\begin{array}{c}
\displaystyle  -\D   v +\grad   \Pi_v =f-\r \pa_t  v - \r u \cdot \nabla  v ,\\
\displaystyle  \dv \, v =0,
\end{array}\right.
\end{equation*}
we deduce from the classical estimate on Stokes operator that for
any $p$ in~$ ]1,\infty[$
\beq
\label {Echanget2xdemoeq1}
\begin{split}
 \|\nabla^2v\|_{L^p}+\|\na  \Pi_v \|_{L^p} \leq & C\bigl(\|f\|_{L^p}+\|\r  v_t\|_{L^p}+\|\r    u\cdot \nabla  v
 \|_{L^p}\bigr)
 \\
 \leq & C\bigl(\|f\|_{L^p}+\|\sqrt{\r} v_t\|_{L^p}+\|u\|_{L^{2p}} \|\nabla v\|_{L^{2p}}\bigr).
\end{split}
\eeq
By using the 2-D interpolation inequality that
 \beq
 \label {1.14a}
 \|a\|_{L^{2p}(\R^2) }\leq
C_p\|a\|_{L^2(\R^2)}^{\f12 }\|\nabla a\|_{L^p(\R^2)}^{ \f 12}, \eeq
we get
 \beno
 \|\nabla^2v\|_{L^p}+\|\nabla  \Pi_v \|_{L^p}
   \leq  \f 12  \| \nabla^2  v \|_{L^p  } +C\bigl(\|f\|_{L^p}+ \|\sqrt{\r}v_t\|_{L^p}+\|  u\|^2_{L^{2p}} \| \nabla  v \|_{L^2}\bigr).
\eeno
This proves the first inequality.  Because~$v_t$ is divergence
free,  we get, by taking the~$L^2$ scalar product of~$v_t$
with~$(SSE)$, that
\beno \|\sqrt  \r v_t\|_{L^2} ^2
& = &(f|v_t)_{L^2}+  (\D v|\p_tv)_{L^2}- \bigl(\sqrt \rho u\cdot\na v\big |\sqrt\rho v_t \bigr)_{L^2}\\
& \leq &
\Bigl(\sqrt 2 \bigl(\|f\|_{L^2}+ \|\D v\|_{L^2} \bigr)
 + C \|u\|_{L^4} \|\nabla v\|_{L^2}^{\frac 12}
\|\nabla^2 v\|^{\frac 1 2}_{L^2}\Bigr)\|\sqrt \r v_t\|_{L^2}.
\eeno
H\"older inequalities implies that
$$
\|\sqrt  \r v_t\|_{L^2} ^2  \leq \frac 12 \|\sqrt  \r v_t\|_{L^2}
^2 + C \bigl(\|f\|_{L^2}^2+ \|\nabla^2 v\|_{L^2}^2+C
 \|u\|_{L^4}^2\|\nabla v\|_{L^2}\|\nabla^2 v\|_{L^2}\bigr).
 $$
This  leads to the second inequality and the lemma is proved.
 \end{proof}

\noindent{\it Continuation of the proof of Lemma\refer
{DecayH1fond}} Applying the above lemma in the case when~$p$ equals to~$2,$ we
obtain  that
\beq
\label {1.3b}
\f{d}{dt}\|\na v(t)\|_{L^2}^2+\|\sqrt{\r
} v_t\|_{L^2}^2+\frac 1{C }\bigl(  \|\nabla^2v\|_{L^2}^2+ \|\na
\Pi_v \|_{L^2}^2\bigr)\leq C \bigl(\|f\|_{L^2}^2+ \|u\|_{L^4}^4
\|\nabla v\|_{L^2} ^2\bigr).
\eeq
Gronwall lemma implies that, for any non negative~$t_0$ and $t$ such that~$t$ is greater than or equal to~$t_0$,
$$
\longformule{ \|\na v(t)\|_{L^2}^2+\int_{t_0}^t\Bigl(\|\sqrt{\r
} v_t(t') \|_{L^2}^2+\frac 1{C }  \|\nabla^2 v(t') \|_{L^2}^2
+\frac 1{C} \|\na  \Pi_v(t') \|_{L^2}^2\Bigr) dt'
 }
 {
 {} \leq C\Bigl( \|\na v(t_0)\|_{L^2}^2+
 \int_{t_0}^t \|f(t')\|_{L^2}^2 dt' \Bigr) \exp \Bigl(C\int_{t_0}^t  \|u(t')\|_{L^4}^4 dt'\Bigr)
 }
$$
which together with \eqref{zp2} implies \eqref{LemDecayH1fondeq1}.

 Let
us prove \eqref{LemDecayH1fondeq2}. For any positive~$s$, by multiplying
$ \w{t}_{{}_T}^s$ to \eqref{1.3b}, we get \beno
 &&\f{d}{dt}\bigl(\w{t}^s_T\|\na v(t)\|_{L^2}^2\bigr)+\w{t}^s_T\bigl(\|\sqrt{\r
}\p_t v\|_{L^2}^2+ \|\nabla^2 v\|_{L^2}^2+ \|\na \Pi_v
\|_{L^2}^2\bigr)\\
&&\qquad\qquad\qquad\leq C \bigl(\w{t}^s_T\|f\|_{L^2}^2+
\|u\|_{L^4}^4 \w{t}^s_T\|\nabla v\|_{L^2} ^2+\frac 1 T \w{t}^{s-1}_T\|\na
v\|_{L^2}^2\bigr).
 \eeno Applying Gronwall's Lemma and using
\eqref{zp2} leads to \eqref{LemDecayH1fondeq2}.

Similarly from Inequality\refeq{1.3b}, we deduce that for any
non-negative~$t_0$ and $t$ such that~$t$ is greater  than or equal  to~$t_0$,
 \beno
\f{d}{dt}\bigl((t-t_0)\|\na v(t)\|_{L^2}^2\bigr)   &= & \|\na v(t)\|_{L^2}^2+ (t-t_0)\f{d}{dt}\|\na v(t)\|_{L^2}^2 \\
 & \lesssim  & \|\na v(t)\|_{L^2}^2 + (t-t_0)  \|f(t)\|_{L^2}^2+
\|u(t)\|_{L^4}^4 (t-t_0)\|\nabla v(t)\|_{L^2} ^2.
\eeno
Applying Gronwall's inequality yields
$$
{
 (t-t_0)\|\na v(t)\|_{L^2}^2 \lesssim  \int_{t_0}^t\bigl(\|\na v(t')\|_{L^2}^2+(t'-t_0)
 \|f(t')\|_{L^2}^2\bigr)\,dt'
 }
 {
 {}
 \exp\Bigl(C\int_{t_0} ^t  \|u(t')\|_{L^4}^4\,dt'\Bigr),
 }
 $$
 which together the energy estimate on~$v$, namely Inequality\refeq{Energybasicinteg}, ensures
 $$
\longformule{ (t-t_0)\|\nabla v (t)\|_{L^2}^2 \lesssim
\biggr(\|v(t_0)\|^2_{L^2} + \Bigl(\int_{t_0}^t  \|f(t')\|_{L^2} dt'
\Bigr)^2+ \int_{t_0}^t (t'-t_0)  \|f(t')\|_{L^2}^2\,dt'\biggr)
 }
 {
 {}
\times  \exp\Bigl(C\int_{t_0} ^t  \|u(t')\|_{L^4}^4\,dt'\Bigr).
 }
 $$
 Taking $t_0$ equal to~$t/2$ in the above inequality gives
 \beno t\|\nabla v (t)\|_{L^2}^2 \lesssim
\biggr(\|v(t/2)\|^2_{L^2} +  t\int_{t/2}^t
\|f(t')\|_{L^2}^2\,dt'\biggr)
  \exp\Bigl(C\int_{t/2} ^t  \|u(t')\|_{L^4}^4\,dt'\Bigr).
 \eeno
This together with \eqref{zp2}  concludes the proof of
\eqref{LemDecayH1fondeq3}.
\end{proof}

\subsection{Sharp~$L^2$  decay estimates}
The purpose of this subsection is to prove the following
proposition.
\begin{prop}
\label {propDecayL2H1}
 {\sl Let $T_0(\r_0,u_0)$ be given by Theorem \ref{ins2Ddecay} and
$
 T_1(\r_0,u_0)\eqdefa \max\bigl\{T_0,
{E_0}/{E_1}\bigr\}. $ Then under the assumptions of Theorem
\ref{ins2Ddecay}, there holds \eqref{ins2Ddecayeq10} for $T\geq
T_0.$ And for $T\geq T_1,$
 we have following decay estimate
 \ben
\label {ins2Ddecayeq2} \|\nabla u(t)\|^2_{L^2}   \lesssim  E_1
\langle t \rangle _{{}_T}^{-3}. \een }
\end{prop}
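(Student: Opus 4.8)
The plan is to run the Fourier--splitting method of Schonbek and Wiegner in the inhomogeneous setting. I would first record two consequences of the system. From the energy identity \eqref{zp1}, $\frac{d}{dt}\|\sqrt{\rho}\,u(t)\|_{L^2}^2=-2\|\nabla u(t)\|_{L^2}^2$. Next, writing the momentum equation in conservation form $\partial_t(\rho u)+\dive(\rho u\otimes u)+\nabla\Pi=\Delta u$: integrating over $\R^2$ gives the conservation of momentum $\int_{\R^2}\rho u(t)\,dx\equiv\int_{\R^2}\rho_0u_0\,dx=0$ by \eqref{ins2Ddecahyp1}, while applying the Leray projector $\PP$ (so $\PP u=u$, $\PP\nabla\Pi=0$, $\PP\Delta u=\Delta u$) gives $\partial_t u-\Delta u=-\partial_t\PP(\varrho u)-\PP\dive(\rho u\otimes u)$. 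Integrating this by parts in time in the $\partial_s\PP(\varrho u)$ term, and using $\PP u_0+\PP(\varrho_0u_0)=\PP(\rho_0u_0)$, yields the mild formula
\[
u(t)=e^{t\Delta}\PP(\rho_0u_0)-\PP(\varrho u)(t)-\int_0^t\Delta e^{(t-s)\Delta}\PP(\varrho u)(s)\,ds-\int_0^te^{(t-s)\Delta}\PP\dive(\rho u\otimes u)(s)\,ds .
\]

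For the $L^2$ estimate, with a frequency threshold $R(t)^2=\beta/(t+T)$ one has $\|\nabla u\|_{L^2}^2\ge R^2\bigl(\|u\|_{L^2}^2-\int_{|\xi|\le R}|\widehat u|^2\,d\xi\bigr)$, and since $\tfrac34\le\rho\le\tfrac54$ forces $\|u\|_{L^2}^2\ge\tfrac45\|\sqrt\rho\,u\|_{L^2}^2$, so
\[
\frac{d}{dt}\|\sqrt\rho\,u(t)\|_{L^2}^2\le-\frac{8\beta}{5(t+T)}\|\sqrt\rho\,u(t)\|_{L^2}^2+\frac{2\beta}{t+T}\int_{|\xi|\le R(t)}|\widehat u(t,\xi)|^2\,d\xi .
\]
The heart of the matter is to estimate the low--frequency mass with the mild formula. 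The linear part is the good term: $\rho_0u_0$ is mean free by \eqref{ins2Ddecahyp1}, so $|\widehat{\PP(\rho_0u_0)}(\xi)|\le|\xi|\,\|\,|x|\rho_0u_0\|_{L^1}\lesssim|\xi|\,U_0$, whence $\int_{|\xi|\le R}e^{-2t|\xi|^2}|\widehat{\PP(\rho_0u_0)}(\xi)|^2\,d\xi\lesssim U_0^2/(t+T)^2$ (this is where the finiteness of the first moment $U_0$ produces the sharp rate, exactly as in Wiegner's treatment of the homogeneous case). The term $\PP(\varrho u)(t)$ is a harmless perturbation of the damping: $\int_{|\xi|\le R}|\widehat{\PP(\varrho u)}(t,\xi)|^2\,d\xi\le\|\varrho u(t)\|_{L^2}^2\le\|\varrho_0\|_{L^\infty}^2\|u(t)\|_{L^2}^2\le\tfrac1{16}\|u(t)\|_{L^2}^2$, which after multiplication by $\tfrac{2\beta}{t+T}$ is absorbed by the damping because $\|\varrho_0\|_{L^\infty}\le\tfrac14$; here $\tfrac34\le\rho_0\le\tfrac54$ is crucial. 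The two remaining integrals vanish at $\xi=0$ and are handled by their low--frequency behaviour together with $L^1$--in--time bounds deduced from the energy estimate $\int_0^\infty\|\nabla u\|_{L^2}^2\,dt\le\tfrac12E_0$, the conservation $\|\varrho(t)\|_{L^p}=\|\varrho_0\|_{L^p}$, and the current bootstrap bound on $\|u(s)\|_{L^2}$. Inserting these into the differential inequality and integrating, I would close the $L^2$ decay by a bootstrap on the decay exponent, starting from the a priori bound $\|u(t)\|_{L^2}^2\le\tfrac43E_0$ furnished by \eqref{zp1}. The conditions $\|\varrho_0\|_{L^2}^2\le T$ and $U_0/E_0\le T$, i.e. $T\ge T_0$, are used precisely to keep the density and first--moment contributions subordinate to the damping; passing from $\langle t\rangle$ to $\langle t/T\rangle$ just amounts to starting the decay ``clock'' late enough. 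This gives \eqref{ins2Ddecayeq10} for $T\ge T_0$.

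The gradient estimate then follows cheaply from the $L^2$ decay and Lemma~\ref{DecayH1fond}. I would apply \eqref{LemDecayH1fondeq3} with $t_0=t/2$ and $f\equiv0$: $t\|\nabla u(t)\|_{L^2}^2\le C\|u(t/2)\|_{L^2}^2\exp\bigl(C\|u(t/2)\|_{L^2}^4\bigr)$; since $\|u(t/2)\|_{L^2}^4\le(\tfrac43E_0)^2$ the exponential is $\le\cC(E_0)$, and by \eqref{ins2Ddecayeq10} together with $\langle t/(2T)\rangle\ge\tfrac12\langle t/T\rangle$ one has $\|u(t/2)\|_{L^2}^2\lesssim E_0\langle t/T\rangle^{-2}$, so $\|\nabla u(t)\|_{L^2}^2\lesssim\tfrac{E_0}{t}\langle t/T\rangle^{-2}$ for $t>0$. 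For $t\ge T$ one has $\tfrac1t\langle t/T\rangle\le\tfrac{1+e}{T}$, hence $\|\nabla u(t)\|_{L^2}^2\lesssim\tfrac{E_0}{T}\langle t/T\rangle^{-3}\le(1+e)E_1\langle t/T\rangle^{-3}$ using $T\ge E_0/E_1$; for $t\le T$ one uses instead the uniform bound $\|\nabla u(t)\|_{L^2}^2\lesssim E_1$ of \eqref{LemDecayH1fondeq1} together with $\langle t/T\rangle^{-3}\gtrsim1$. Combining, \eqref{ins2Ddecayeq2} holds for $T\ge T_1=\max\{T_0,E_0/E_1\}$.

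The main obstacle is the $L^2$ decay --- the low--frequency analysis and the closure of the bootstrap. The subtle point, absent in the homogeneous case, is that the density is only a bounded, not a small, perturbation of $1$: while $\|\varrho\|_{L^\infty}$ is small (which is what makes the $\PP(\varrho u)(t)$ term harmless), the $L^2$ norm of $\varrho$ is not, and it enters the low--frequency analysis of the time--integral terms with an unfavourable sign; it is exactly to render these terms subordinate that the argument must be run for $t\gtrsim T_0$, which is the role of the threshold time --- and of the weight $\langle t/T\rangle$ --- in the statement. (The $t^{-2}$ rate for $\|u\|_{L^2}^2$ forces, as in Wiegner, the mean--free and first--moment hypotheses \eqref{ins2Ddecahyp1}, here on $\rho_0u_0$ rather than on $u_0$.)
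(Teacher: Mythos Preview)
Your proposal is correct and follows essentially the same approach as the paper. The paper packages the Fourier--splitting step as a separate lemma (Lemma~\ref{LemmedecayWiegnerbasic}) and the low--frequency estimate as Lemma~\ref{estimuflat}, then carries out the bootstrap explicitly in four passes with successive choices $g^2(\tau)=3(\langle\tau\rangle\log\langle\tau\rangle)^{-1}$, $\langle\tau\rangle^{-1}$, $2\langle\tau\rangle^{-1}$, $\alpha\langle\tau\rangle^{-1}$ with $\alpha\in]2,3[$; your sketch identifies all the same ingredients (the mild formula~\eqref{estimuflatdemoeq2}, the mean--free/first--moment gain on $\rho_0u_0$, the smallness of $\|\varrho_0\|_{L^\infty}$ for the $\PP(\varrho u)(t)$ term, and the role of $T\ge T_0$), and your treatment of the gradient decay via~\eqref{LemDecayH1fondeq3} and~\eqref{LemDecayH1fondeq1} is exactly what the paper does.
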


\begin{proof}
It  is based on the method introduced
by M. {Wiegner}  in\ccite  {Wiegner} in order to study the decay of
the energy of the classical Navier-Stokes system in two space
dimension (see also\ccite{Schonbek, Schonbek2}).
 The idea is to use a cut off in the frequency space adapted to  time.
 More precisely, let us consider a positive constant~$T$  (which can be understood as a scaling parameter which has the dimension of  time), and~$g$ any  positive real function  defined on~$\R^+$ such that
\beq
\label {LemmedecayWiegnerdemoeq2b}
g^2(\tau)\leq 3 \langle \tau\rangle^{-1} \with \langle \tau\rangle \eqdefa (e+\tau).
\eeq
 Let us define
\beq
\label {LemmedecayWiegnerdemoeq2a}
\begin{split}
 & S_T (t)\eqdefa \bigl\{\xi\in\R^2,\ \ \sqrt T|\xi| \leq\sqrt  2   g_T(t) \bigr\} \andf v_\flat (t) \eqdefa \cF^{-1}
\bigl({\bf 1}_{S_T(t)} \wh v(t)\bigr).
\end{split}
\eeq   Here we adapt this method to the  inhomogenenous  case through the following lemma.
\begin{lem}
\label {LemmedecayWiegnerbasic} {\sl Let~$(\rho, u, v)$  solve
~$(LINS2D)$. Then we have
$$
 \f12\f{d}{dt}\|\sqrt \r v(t)\|_{L^2}^2+\frac 1 T  g^2_T(t)\|\sqrt \r v(t)\|^2_{L^2}\\
\leq \frac 2 T   g^2_T(t)\|v_\flat(t)\|_{L^2}^2+(f(t)|v(t))_{L^2} .
$$
}
\end{lem}

\begin{proof}  As~$v(t)-v_\flat(t)$ and~$v_\flat(t)$ are orthogonal in all Sobolev spaces, we get in particular that
$$
\|\nabla v(t) \|_{L^2}^2 = \|\nabla v_\flat(t)\|_{L^2}^2+\|\nabla
(v(t)-v_\flat(t))\|_{L^2}^2.
$$
By defintion of~$S_T(t)$ and again by the orthogonality
between~$v(t)-v_\flat(t)$ and~$v_\flat(t)$, we get
\beno
\|\nabla (v(t)-v_\flat(t))\|_{L^2}^2 & \geq & \f2T g^2_T(t) \|v(t)-v_\flat(t)\|_{L^2} ^2 \\
& \geq & \f 2 T g^2_T(t) \|v(t)\|_{L^2}^2 -\f2T g^2_T(t)
\|v_\flat(t)\|_{L^2}^2. \eeno As~$\rho(t,x)$ is less than or equal
to~$2$, the energy estimate\refeq {Energybasic} implies the lemma.
\end{proof}

The interest of this lemma is that the term on the left is typically
a term that creates decay. Of course,  the control of the
term~$v_\flat$ associated with (very)  low  frequencies is the term
that tends to prevent the decay. It must be estimated in a careful
way.  Writing a general theory with external force seems too
ambitious.  We are going to restrict ourselves to two cases: the
case when~$u=v$ and $f\equiv 0,$ namely, the case of solution
of~(INS2D), and later on the case  of a family of
solution~$v^\h(\cdot,z)$ of~(INS2D) where $z$ is a real parameter
and then~$v$ represents derivatives of ~$v^\h(\cdot,z)$  with
respect to the parameter~$z$.
\begin{lem}
\label {estimuflat} {\sl Under the hypothesis of Proposition
\refer{propDecayL2H1}, we have, for any~$T$ greater than or equal
to~$T_0$,
$$
\longformule{ \|u_\flat(t)\|_{L^2}^2 \leq \frac 14 \|\sqrt \r
u(t)\|_{L^2}^2 + CE_0^2 \langle t\rangle_{{}_T}^{-2} } {{} +
Cg_{T}^6(t) \Bigl(\int_0^t \|\sqrt\r u (t')\|_{L^2} \frac {dt'}
T\Bigr)^2 +Cg_{T}^4(t) \Bigl(\int_0^t \|\sqrt\r u (t')\|^2_{L^2}
\frac {dt'} T\Bigr)^2\cdotp }
$$
}
\end{lem}

\begin{proof}
It relies on the rewriting of  the momentum  equation of~(INS2D) as
 \beq \label
{estimuflatdemoeq1}
\partial_t u -\Delta u +\nabla \Pi = - \partial_t (\varrho u) -\dive \bigl(\r   u \otimes u\bigr).
\eeq
If~$\PP$ denotes the Leray projection on divergence free vector fields on~$\R^2$, the above relation writes in term of Fourier transform
$$
\longformule{ \wh u(t,\xi) =e^{-t|\xi|^2 }\wh
u_0(\xi)-\int_0^te^{-(t-t')|\xi|^2}\p_t\cF\mathbb{P}(\varrho u
)(t',\xi)\,dt' } { {} -\int_0^t  e^{-(t-t')|\xi|^2}\cF\PP(\dive (\r
u \otimes u)\bigr)(t',\xi)\,dt'.
 }
$$
By integration by parts in time, we get that
$$
\longformule{ \int_0^te^{-(t-t')|\xi|^2} \p_t\cF\mathbb{P}(\varrho u
)(t',\xi)dt'  = \cF\mathbb{P}(\varrho u)(t,\xi) } { {} -
e^{-t|\xi|^2}\cF\mathbb{P}(\varrho_0 u_0)(\xi) -\int_0^t
e^{-(t-t')|\xi|^2}|\xi|^2\cF\mathbb{P}(\varrho u)(t',\xi)\,dt'.
 }
$$
Let us notice that in the integral term, we exchange one time
derivative for two space derivatives. This gives the following key
formula \beq \label  {estimuflatdemoeq2}
\begin{split}
&\wh u(t,\xi) =e^{-t|\xi|^2 }\cF\PP(\r_0 u_0)(\xi)-\cF\mathbb{P}(\varrho u)(t,\xi) \\
+\int_0^t  e^{-(t-t')|\xi|^2}|\xi|^2&\cF\mathbb{P}(\varrho u
)(t',\xi)\,dt'-\int_0^t  e^{-(t-t')|\xi|^2}\cF\PP(\dive (\r u
\otimes u )\bigr)(t',\xi)\,dt'.
\end{split}
\eeq
Because~$\PP$ decreases the modulus of the Fourier transform, we get for any~$t$ and~$\xi$,
\beq
\label  {LemmedecayWiegnerdemoeq4}
\begin{split}
&|\wh u(t,\xi)|^2\leq 2e^{-2t|\xi|^2}\bigl|\cF(\r_0
u_0)(\xi)\bigr|^2 +2\bigl|\cF (\varrho u)(t,\xi)\bigr|^2
\\
&\qquad\quad {} +2|\xi |^4\Bigl(\int_0^t\bigl|\cF\bigl(\varrho u
)\bigr)(t',\xi)\bigr|\,dt'\Bigr)^2
+2|\xi|^2\Bigl(\int_0^t\bigl|\cF(\r   u \otimes
u)(t',\xi)\bigr|\,dt'\Bigr)^2.
\end{split}
\eeq
To estimate~$\|u_\flat(t)\|_{L^2},$ we have to integrate the
above inequality over~$S_T(t)$. In order to do it, we make pointwise estimates in the Fourier variable.

First, let us observe that~$u_0$ and thus~$\rho_0 u_0$ belongs
to~$L^1(\R^2,|x|dx)$. Because of the fact that ~$\rho_0 u_0$ is mean
free, we infer that
$$
|\cF(\rho_0u_0) (\xi) |   \leq |\xi| \
\|D_\xi \cF (\rho_0 u_0)\|_{L^\infty}\leq  |\xi|  \int_{\R^2}
\rho_0(x) |u_0(x)|\, |x|\, dx.
$$
 By integration on~$S_T$, this gives, because~$T$ is greater than~$T_0$
and~$g^2(\tau)\leq 3\langle \tau\rangle^{-1}$, \beq \label
{LemmedecayWiegnerdemoeq5} \int_{S_T(t)}
e^{-2t|\xi|^2}\bigl|\cF(\r_0 u_0)(\xi)\bigr|^2 d\xi \lesssim E_0^2
\langle t\rangle_{{}_T}^{-2} \cdotp \eeq Let us  observe that,
thanks to\refeq{Transportbasic}, we get, for~$T$ greater than or
equal to~$T_0$
\beno
\bigl |\cF(\varrho u)(t',\xi)\bigr|  & \leq & \|\varrho(t')\|_{L^2} \|u(t')\|_{L^2}\nonumber\\
 & \leq &  \|\varrho_0\|_{L^2} \|u(t')\|_{L^2}\leq   T^{\frac
12}_0\|u(t')\|_{L^2} . \eeno From this, we infer that, \beq \label
{LemmedecayWiegnerdemoeq6} \int_{S_T(t)} |\xi
|^4\Bigl(\int_0^t\bigl|\cF(\varrho u)(t',\xi)\bigr|\,dt'\Bigr)^2
d\xi \lesssim  g^6_T(t) \Bigl(\int_0^t\|\sqrt \r u (t') \|_{L^2}
\frac {dt'} T\Bigr)^2\cdotp
 \eeq
Along the same lines, we get that
$$
\bigl|\cF(\r   u \otimes u)(t',\xi)\bigr| \lesssim
\|\sqrt{\r}u(t')\|_{L^2}^2.
$$
Thus we get \beq \label  {LemmedecayWiegnerdemoeq7} \int_{S_T(t)}
|\xi|^2\Bigl(\int_0^t\bigl|\cF(\r   u \otimes
u)(t',\xi)\bigr|\,dt'\Bigr)^2d\xi \lesssim g^4_T(t)
\Bigl(\int_0^t\|\sqrt \r u (t') \|_{L^2}^2 \frac {dt'}
T\Bigr)^2\cdotp
\eeq
Because of the hypothesis on~$\rho_0$,  we obviously have
\beno
 \int_{S_T(t)} 2\bigl|\cF
(\varrho u)(t,\xi)\bigr|^2\,d\xi
 & \leq &  4 (2\pi)^2 \|\varrho\|^2_{L^\infty} \|\sqrt \r u(t)\|^2_{L^2}\\
 & \leq & 4 (2\pi)^2 \|\varrho_0\|^2_{L^\infty} \|\sqrt \r
  u(t)\|^2_{L^2}
\leq  \frac 14  (2\pi)^2 \|\sqrt \r u(t)\|^2_{L^2}.
 \eeno
 Together with estimates\refeq  {LemmedecayWiegnerdemoeq5}--(\ref{LemmedecayWiegnerdemoeq7}), we achieve the proof of the lemma.
\end{proof}

\noindent
{\it Continuation of  the proof of Proposition \ref {propDecayL2H1}. }
 The above lemmas give immediately that
 \beq \label
{LemmedecayWiegnerdemoeq3}
 \begin{split}
\f{d}{dt}\|\sqrt \r &u(t)\|_{L^2}^2+\frac 1 T g^2 _T(t)\|\sqrt \r u(t)\|^2_{L^2}
\lesssim \frac 1 T   E_0^2 \langle t\rangle_{{}_T}^{-3} \\
&{}
+
\frac 1 T  g_T^8(t) \Bigl(\int_0^t \|\sqrt\r u (t')\|_{L^2} \frac {dt'} T\Bigr)^2
+\frac 1 T g^6_T(t) \Bigl(\int_0^t \|\sqrt\r u (t')\|^2_{L^2} \frac {dt'} T\Bigr)^2\cdotp
\end{split}
\eeq Let us define \beq \label {LemmedecayWiegnerdemoeq33} G (\tau )
\eqdefa \exp  \Bigl( \int_0^\tau  g^2(\tau') d\tau'\Bigr). \eeq The
above formula  writes after integration \beq \label
{LemmedecayWiegnerdemoeq3b}
 \begin{split}
\|\sqrt \r &u(t)\|_{L^2}^2G_T(t) -E_0
\lesssim   E_0^2 \int_0^t \langle t'\rangle_{{}_T}^{-3}G _T(t') \frac {dt'} T  \\
&{}
+
\int_0^t g_T^8(t') G _T(t')\Bigl(\int_0^{t'} \|\sqrt\r u (t'')\|_{L^2} \frac {dt''} T\Bigr)^2 \frac {dt'} T \\
&+\int_0^t g_T^6(t') G _T(t') \Bigl(\int_0^{t'} \|\sqrt\r u (t'')\|^2_{L^2} \frac {dt''} T\Bigr)^2 \frac {dt'} T \,\cdotp
\end{split}
\eeq

Now we iterate  this inequality several times to get the final decay
estimates of $u$ given by Proposition \refer{propDecayL2H1}. Let us
first choose the function~$g$ as
$$
g^2(\tau ) =  3 \bigl(\langle \tau\rangle  \log\langle \tau\rangle\bigr)^{-1} \quad \hbox{which gives}\quad  G(\tau) = \log ^3\langle \tau\rangle.
$$
Using that~$\|\sqrt\r u(t)\|_{L^2}^2$ is less than or equal to the
initial energy~$E_0$, Inequality\refeq{LemmedecayWiegnerdemoeq3b}
writes \beno \|\sqrt \rho u (t)\|_{L^2}^2 \log^3 \langle t\rangle
_{{}_T}
  - E_0  & \lesssim  &
E_0(1+E_0) \int_0^t  \langle t'\rangle _{{}_T}
 ^{-1}  \frac {dt'} T\\
& \lesssim & E_0(1+E_0) \log  \langle t\rangle _{{}_T} .
\eeno
We deduce that
\beq
\label {smalldecay}
\|\sqrt \r u(t)\|_{L^2}^2 \log^2  \langle t\rangle _{{}_T}
 \lesssim E_0 (   1+ E_0).
\eeq Now let us plug this estimate into
Inequality\refeq{LemmedecayWiegnerdemoeq3b} with the choice $
g^2(\tau) =\w{\tau}^{-1},$ which gives $ G(\tau) = \w{\tau}.$ This
leads to
$$
\longformule{
\|\sqrt \r u(t)\|_{L^2}^2\langle t\rangle _{{}_T}
 -E_0
\lesssim   E_0^2 \int_0^t \langle t'\rangle _{{}_T}
^{-2} \frac {dt'} T
+
\int_0^t  \langle t'\rangle _{{}_T}
^{-3} \Bigl(\int_0^{t'} \|\sqrt\r u (t'')\|_{L^2} \frac {dt''} T\Bigr)^2 \frac {dt'} T
}
{
{}
+\int_0^t \langle t'\rangle _{{}_T}
^{-2} \Bigl(\int_0^{t'} \|\sqrt\r u (t'')\|^2_{L^2} \frac {dt''} T\Bigr)^2
\frac {dt'} T \cdotp
}
$$
Let us define~$V(t)\eqdefa \ds \sup_{t'\leq t}  \bigl(\|\sqrt \r
u(t')\|_{L^2}^2\langle t'\rangle _{{}_T}\bigr).
 $ We get
\beno V(t) -E_0
 & \lesssim  &   E_0^2 \int_0^t \langle t'\rangle _{{}_T}
^{-2} \frac {dt'} T
+
(1+E_0)^2\int_0^t  \langle t'\rangle _{{}_T}
^{-2}  H_T(t') V(t') \frac {dt'} T  \with\\
H(\tau) & \eqdefa & 1 +\Bigl( \int_0^{\tau} \langle \tau'\rangle
 ^{-\frac 12} \log ^{-1}    \langle \tau'\rangle
d\tau'\Bigr)^2.
\eeno
As we have that~$\ds H(\tau) \leq C\bigl (1+ \langle \tau\rangle _{{}_T}
  \log ^{-2}    \langle \tau\rangle _{{}_T}\bigr),$
 the function~$\langle t'\rangle _{{}_T}
^{-2}  H_T(t')$ is integrable and then  Gronwall lemma gives
 $$
 \|\sqrt \r u(t)\|_{L^2}^2\langle t\rangle _{{}_T}
 \leq \cC(E_0) E_0.
 $$
Let us plug this estimate into\refeq{LemmedecayWiegnerdemoeq3b} and
choose~$ g^2(\tau) =2\w{\tau}^{-1}$ which gives~$ G(\tau) =
\w{\tau}^2.$ We infer that \beno \|\sqrt \r u(t)\|_{L^2}^2\langle
t\rangle _{{}_T} ^2-E_0
 & \leq  & E_0^2 \int_0^t \langle t'\rangle _{{}_T}
^{-1} \frac {dt'} T + \cC(E_0)E_0\int_0^t  \langle t'\rangle _{{}_T}
^{-1} \log^2  \langle t'\rangle _{{}_T}
 \frac {dt'} T \\
& \leq  & \cC(E_0)E_0\log^3  \langle t\rangle _{{}_T} . \eeno
Finally resuming the above estimate
into\refeq{LemmedecayWiegnerdemoeq3b} once again with the
choice~$g^2(\tau)=\al\w{\tau}^{-1},$ for $\al\in ]2,3[$ gives
$G(\tau)=\w{\tau}^\al$ and
$$
\longformule{
\|\sqrt \r u(t)\|_{L^2}^2\langle t\rangle _{{}_T} ^\al-E_0
  \lesssim  E_0^2 \int_0^t \langle t'\rangle _{{}_T}
^{\al-3} \frac {dt'} T + \cC(E_0)E_0\int_0^t  \langle t'\rangle
_{{}_T} ^{\al-4} \log^5  \langle t'\rangle _{{}_T}
 \frac {dt'} T}{{}+ \cC(E_0)E_0\int_0^t  \langle t'\rangle _{{}_T}
^{\al-5} \log^6  \langle t'\rangle _{{}_T}
 \frac {dt'} T\, \virgp}
  $$
  which implies the Estimate\refeq{ins2Ddecayeq10}.

Let us prove \refeq {ins2Ddecayeq2}. Applying
\eqref{LemDecayH1fondeq3}  with~$v=u$ and~$f\equiv 0$, and
Inequality\refeq{ins2Ddecayeq10}, we get \beno t\|\nabla u
(t)\|_{L^2}^2 \lesssim \|u(t/2)\|_{L^2}^2 \lesssim E_0\langle
t\rangle_{{}_T}^{-2}, \eeno which write, in the case when~$T$ is
greater than~$T_1$,
$$
\|\nabla u (t)\|_{L^2}^2 \lesssim \frac {E_0} T \f{T}t\langle
t\rangle_{{}_T}^{-2} \lesssim E_1\f{T}t \langle
t\rangle_{{}_T}^{-2}.
$$
Moreover,  Inequality \eqref{LemDecayH1fondeq1} implies that~$ \|\nabla u(t)
\|_{L^2}\lesssim \|\nabla u_0\|_{L^2}^2$ which proves
\eqref{ins2Ddecayeq2}.
\end{proof}

\subsection {Decay estimates for the second and third derivatives of $u$}

The main idea which seems to be the simplest one at the first glance
consists in the differentiation of the momentum equation of (INS2D)
with respect to the space variables and then trying to apply result
of the previous subsections. Nevertheless for this particular
system, this quite natural idea fails. The reason is due to the fact
that term of the type~$\na_x\rho$ will appear in this process. Their
control demands a control of the norm which is ~$L^1$ in time with
value in~$Lip$ in space for the vector field~$u$. This control
cannot be assumed and has to be proved. The main idea to overcome
this difficulty consists in differentiating the momentum  equation
of (INS2D) with respect to the time variable. As shown by
Lemma\refer {Echanget2x}, this represents the estimate of the second
space derivatives of $u.$

All the results of this subsection relies on the following lemma.

\begin{lem}
\label {estimenergyH2Linearise} {\sl Let~$(\rho,u,v)$ solve
~(LINS2D). Then we have, for any positive constant~$\cT$,
$$
\displaylines {\frac 12\frac d {dt} \|\sqrt \r v_t(t)\|^2_{L^2}
+\frac 34 \|\nabla v_t\|_{L^2}^2 \leq (f_t|v_t)_{L^2} +CF_{1,
\cT}(u(t)) \|\sqrt \r v_t(t)\|_{L^2} ^2 +CF_{2,\cT}(u(t),v(t))\cr {}
+C \|\nabla v_t(t)\|_{L^2} ^{\frac 12}\|\nabla  u_t(t)\|_{L^2}
^{\frac 12}  \|\nabla v(t) \|_{L^2}
 \|\sqrt \r v_t(t)\|_{L^2} ^{\frac 12}\|\sqrt \r u_t(t)\|_{L^2} ^{\frac 12}.
 }
$$
with
\beno
F_{1,\cT} (u) & \eqdefa  &   \|u\|_{L^4}^4 + \frac 1 {\cT} \|u\|_{L^2}^2 \|\nabla ^2u\|_{L^2}^2 \andf \\
F_{2,\cT}(u,v)& \eqdefa & \|\nabla u\|_{L^2}\|\nabla ^2u\|_{L^2}
\|\nabla v\|_{L^2}\|\nabla ^2v\|_{L^2} +C \|u\|_{L^2} \|\nabla
v\|_{L^2} ^2 \|\nabla^2 u\|^2_{L^2}+\cT \|\nabla^2v\|_{L^2}^2.
\eeno}
\end{lem}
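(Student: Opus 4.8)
The plan is to differentiate the momentum equation of~(LINS2D) with respect to the time variable and then perform an energy estimate on the resulting equation tested against~$v_t$. Writing that equation in the form~$\r v_t+\r u\cdot\na v-\D v+\na\Pi_v=f$ and applying~$\p_t$ gives
$$
\p_t(\r v_t)+\p_t(\r u\cdot\na v)-\D v_t+\na\Pi_{v,t}=f_t.
$$
Taking the~$L^2(\R^2)$ inner product with~$v_t$: the pressure term disappears since~$\dv v_t=0$; the Laplacian produces~$\|\na v_t\|_{L^2}^2$; and using the density equation in the form~$\r_t=-\dv(\r u)$ together with the identity~$(\r\,u\cdot\na v_t\,|\,v_t)_{L^2}=\f12\int_{\R^2}\r_t|v_t|^2$, all the terms that do not involve~$u_t$ or~$f_t$ reorganize into~$\f12\f d{dt}\|\sqrt\r v_t\|_{L^2}^2+\|\na v_t\|_{L^2}^2$ plus the single term~$-(\r_t\,(v_t+u\cdot\na v)\,|\,v_t)_{L^2}$. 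Thus one arrives at
$$
\f12\f d{dt}\|\sqrt\r v_t\|_{L^2}^2+\|\na v_t\|_{L^2}^2=(f_t|v_t)_{L^2}-\bigl(\r\,u_t\cdot\na v\,\big|\,v_t\bigr)_{L^2}-\bigl(\r_t\,(v_t+u\cdot\na v)\,\big|\,v_t\bigr)_{L^2}.
$$
Since the dissipation coefficient is~$1$ while the claimed inequality only asks for~$\f34\|\na v_t\|_{L^2}^2$ on the left, there is a fixed budget of~$\|\na v_t\|_{L^2}^2$ available to absorb supercritical pieces of the two nonlinear terms on the right.

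I would first dispose of~$-(\r\,u_t\cdot\na v\,|\,v_t)_{L^2}$. Since~$3/4\le\r\le 5/4$, it is bounded by~$C\int_{\R^2}|u_t|\,|\na v|\,|v_t|\,dx\le C\|u_t\|_{L^4}\|v_t\|_{L^4}\|\na v\|_{L^2}$, and the two-dimensional Gagliardo--Nirenberg inequality~$\|w\|_{L^4}\lesssim\|w\|_{L^2}^{\f12}\|\na w\|_{L^2}^{\f12}$ applied to~$u_t$ and~$v_t$, together with the equivalence between~$\|w\|_{L^2}$ and~$\|\sqrt\r w\|_{L^2}$, produces exactly the last term of the claimed inequality, namely~$C\|\na v_t\|_{L^2}^{\f12}\|\na u_t\|_{L^2}^{\f12}\|\na v\|_{L^2}\|\sqrt\r v_t\|_{L^2}^{\f12}\|\sqrt\r u_t\|_{L^2}^{\f12}$.

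The core of the argument, and the step I expect to be the main obstacle, is the estimate of~$\cJ\eqdefa-(\r_t\,(v_t+u\cdot\na v)\,|\,v_t)_{L^2}$. The guiding constraint — the one explained in the paragraph preceding the lemma — is that no factor~$\|\na\r\|_{L^\infty}$ may appear; this forces us to use~$\r_t=-\dv(\r u)$ rather than~$\r_t=-u\cdot\na\r$ and to integrate by parts so that the surviving derivative always falls on the smooth factors~$v_t$,~$v$,~$u$. The~$|v_t|^2$ part of~$\cJ$ then becomes~$-2(\r\,u\cdot\na v_t\,|\,v_t)_{L^2}$, which is~$\le C\|u\|_{L^4}\|v_t\|_{L^2}^{\f12}\|\na v_t\|_{L^2}^{\f32}$; one Young inequality absorbs~$\f18\|\na v_t\|_{L^2}^2$ into the left-hand side and leaves~$C\|u\|_{L^4}^4\|\sqrt\r v_t\|_{L^2}^2$, which is the~$\|u\|_{L^4}^4$ summand of~$F_{1,\cT}$. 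The~$(u\cdot\na v)\cdot v_t$ part, after integration by parts, expands into a sum of trilinear and quadrilinear expressions in~$u$,~$v$,~$v_t$ and their first and second derivatives; each of these is estimated by two-dimensional Gagliardo--Nirenberg inequalities (such as~$\|w\|_{L^4}\lesssim\|w\|_{L^2}^{\f12}\|\na w\|_{L^2}^{\f12}$ and~$\|w\|_{L^\infty}^2\lesssim\|w\|_{L^2}\|\na^2 w\|_{L^2}$) combined with the nonlinear Stokes estimates of Lemma\refer{Echanget2x}, which permit trading~$\|\sqrt\r v_t\|_{L^2}$ against~$\|\na^2 v\|_{L^2}$ and conversely, so that everything lands inside~$CF_{1,\cT}(u)\|\sqrt\r v_t\|_{L^2}^2+CF_{2,\cT}(u,v)$. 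The single scale-critical term in this expansion — of the schematic type~$\int_{\R^2}|u|^2\,|\na^2v|\,|v_t|\,dx\lesssim\|u\|_{L^2}\|\na^2u\|_{L^2}\|\na^2v\|_{L^2}\|\sqrt\r v_t\|_{L^2}$ — is precisely the one for which the free parameter~$\cT$ is needed: splitting it by Young's inequality as~$\f1{2\cT}\|u\|_{L^2}^2\|\na^2u\|_{L^2}^2\|\sqrt\r v_t\|_{L^2}^2+\f\cT2\|\na^2v\|_{L^2}^2$ simultaneously accounts for the~$\f1\cT\|u\|_{L^2}^2\|\na^2u\|_{L^2}^2$ inside~$F_{1,\cT}$ and the~$\cT\|\na^2v\|_{L^2}^2$ inside~$F_{2,\cT}$.

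Finally I would collect all the~$\e\|\na v_t\|_{L^2}^2$ contributions generated by the various Young inequalities, fixing the thresholds so that their total does not exceed~$\f14$, which leaves~$\f34\|\na v_t\|_{L^2}^2$ on the left and gives the asserted estimate. The delicate point is entirely this last piece of bookkeeping: the Gagliardo--Nirenberg interpolation exponents in the quadrilinear terms must be chosen so that, after each Young inequality, the factors that survive regroup exactly into the functions~$F_{1,\cT}$ and~$F_{2,\cT}$ written in the statement — and, crucially, never into anything involving~$\|\na\r\|_{L^\infty}$.
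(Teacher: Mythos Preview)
Your approach is correct and essentially identical to the paper's: differentiate the momentum equation in time, use $\r_t=-\dive(\r u)$ and integrate by parts so that no $\nabla\r$ ever appears, then split $(\wt f_t|v_t)$ into exactly the five pieces you describe (the paper labels them $\cE_1$--$\cE_5$) and handle each with the 2D interpolation inequalities $\|w\|_{L^4}^2\lesssim\|w\|_{L^2}\|\nabla w\|_{L^2}$ and $\|w\|_{L^\infty}^2\lesssim\|w\|_{L^2}\|\nabla^2 w\|_{L^2}$, Young-splitting the $\cE_3$ term with the parameter~$\cT$ precisely as you write.

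One small correction: you do \emph{not} need Lemma\refer{Echanget2x} here. The factors~$\|\nabla^2 v\|_{L^2}$ in~$F_{2,\cT}$ are not obtained by trading~$\|\sqrt\r v_t\|_{L^2}$ for space derivatives; they appear directly, because the integration by parts of $\bigl(\dive(\r u)\,u\!\cdot\!\nabla v\,\big|\,v_t\bigr)$ produces the term $\cE_3=-(\r(u\otimes u)\!:\!\nabla^2 v\,|\,v_t)$ in which~$\nabla^2 v$ is already present. Likewise~$\cE_2=-(\r(u\!\cdot\!\nabla u)\!\cdot\!\nabla v\,|\,v_t)$ yields the $\|\nabla u\|_{L^2}\|\nabla^2 u\|_{L^2}\|\nabla v\|_{L^2}\|\nabla^2 v\|_{L^2}$ summand of~$F_{2,\cT}$ by pure Gagliardo--Nirenberg on~$\|\nabla u\|_{L^4}\|\nabla v\|_{L^4}$. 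The Stokes lemma enters only later, in the applications of this energy inequality.
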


Before applying and then proving  this lemma,  let us make some
comments about it. First of all, the parameter~$\cT$ is a scaling
parameter the role of which will appear in a while. Inequalities\refeq {zp2} and\refeq{LemDecayH1fondeq1} implies that, for any positive~$t $,
 \beq
\label {estimenergyH2Lineariseeq1}
\int_t^\infty F_{1,\cT} (u(t)) dt
\leq \| u(t) \|_{L^2}^4 + \frac {\|\nabla u(t)\|_{L^2}^2 } \cT\,
\cC(E_0) .
\eeq
In the same spirit, it can easily be infered that, for
any positive~$t $, \beno
\int_t^\infty F_{2,\cT} (u(t'),v(t'))  dt'  & \leq &  \|\nabla u\|_{L^\infty([t,\infty[;L^2)}\|\nabla^2 u\|_{L^2([t,\infty[\times\R^2)}\\
 &&\qquad\qquad{}\times\|\nabla v\|_{L^\infty([t,\infty[;L^2)}\|\nabla ^2v\|_{L^2([t,\infty[\times\R^2)} \\
&&\!\!\!\!\!\!\!\!\!\!\!\!\!\!\!\!\!\!\!\!\!\!\!\!\!\!\!\!\!\!\!\!\!\!\!\!\!\!\!\!\!\!\!\!\!\!\!\!\!
{}+C \|u\|_{L^\infty([t,\infty[;L^2)}  \|\nabla
v\|_{L^\infty([t,\infty[;L^2)} ^2 \|\nabla^2
u\|^2_{L^2([t,\infty[\times\R^2)}
 +\cT \|\nabla^2v\|_{L^2([t,\infty[\times\R^2)}^2.
\eeno
Inequality \eqref{LemDecayH1fondeq1} implies that  for any positive~$t $,
\beq \label {estimenergyH2Lineariseeq2}
\begin{split}
&\int_t^\infty F_{2,\cT} (u(t'),v(t'))  dt'
\lesssim\bigl(  \|\nabla v(t)\|_{L^2}^2+\int_t^\infty\|f(t')\|_{L^2}^2\,dt'\bigr)\\
&\qquad\qquad\qquad\qquad\qquad\qquad\qquad{}\times\bigl( \|\nabla
u(t)\|_{L^2}^2 + \|u(t) \|_{L^2} \|\nabla u(t)\|^2_{L^2} +\cT\bigr).
\end{split}
\eeq

Let us establish now the following corollary.
\begin{col}
\label {estimH2u} {\sl Let~$(\r,u,\na\Pi)$ be  a smooth enough
solution of~(INS2D), then we have for any positive~$t_0$ and any~$t$
greater than or equal to~$t_0$,
 $$
  \|\sqrt \r u _t(t)\|^2_{L^2} +\|\nabla^2 u (t)\|_{L^2} ^2 +\|\nabla \Pi(t)\|_{L^2}^2  +\int_{t_0}^t  \|\nabla u _t(t')\|^2_{L^2}  dt'
  \lesssim E_2(t_0).
 $$}
\end{col}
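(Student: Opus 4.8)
The plan is to apply Lemma~\ref{estimenergyH2Linearise} with $v=u$ and $f\equiv0$, and then to close the estimate by a Gronwall argument in which all the coefficients are, thanks to the energy bounds of the previous two subsections, integrable in time with constants independent of~$t_0$. With $v=u$ and $f_t\equiv0$ the inequality of Lemma~\ref{estimenergyH2Linearise} reads
$$
\f12\f{d}{dt}\|\sqrt\r u_t(t)\|_{L^2}^2+\f34\|\na u_t(t)\|_{L^2}^2\leq CF_{1,\cT}(u(t))\,\|\sqrt\r u_t(t)\|_{L^2}^2+CF_{2,\cT}(u(t),u(t))+C\|\na u_t(t)\|_{L^2}\|\na u(t)\|_{L^2}\|\sqrt\r u_t(t)\|_{L^2},
$$
and the last term is absorbed into $\f34\|\na u_t(t)\|_{L^2}^2$ by Young's inequality, at the price of adding $C\|\na u(t)\|_{L^2}^2\|\sqrt\r u_t(t)\|_{L^2}^2$ to the right-hand side; this leaves $\f12\|\na u_t(t)\|_{L^2}^2$ on the left.

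Next I would fix the scaling parameter $\cT\eqdef\|\na u(t_0)\|_{L^2}^2$ (if it vanishes, then $u\equiv0$ and there is nothing to prove). By Inequality~\refeq{LemDecayH1fondeq1} with $v=u$ and $f\equiv0$, one has $\|\na u(t)\|_{L^2}^2\lesssim\|\na u(t_0)\|_{L^2}^2$ for $t\geq t_0$, hence $\|\na u(t)\|_{L^2}^2/\cT\lesssim1$; combining this with~\refeq{estimenergyH2Lineariseeq1} and the energy identity~\refeq{zp1} gives
$$
\int_{t_0}^\infty\Bigl(F_{1,\cT}(u(t'))+\|\na u(t')\|_{L^2}^2\Bigr)\,dt'\lesssim1.
$$
In the same spirit, from~\refeq{estimenergyH2Lineariseeq2} with $f\equiv0$ and $\|u(t_0)\|_{L^2}\lesssim E_0^{1/2}$, the chosen value of $\cT$ yields
$$
\int_{t_0}^\infty F_{2,\cT}(u(t'),u(t'))\,dt'\lesssim\|\na u(t_0)\|_{L^2}^4\leq E_2(t_0);
$$
the point is that the summand $\cT\|\na^2u\|_{L^2}^2$ of $F_{2,\cT}$ integrates, via~\refeq{estimenergyH2Lineariseeq2}, into a multiple of $\|\na u(t_0)\|_{L^2}^2\cdot\cT=\|\na u(t_0)\|_{L^2}^4$, which is bounded by $E_2(t_0)$ only because $\cT$ has been taken proportional to $\|\na u(t_0)\|_{L^2}^2$.

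Then Gronwall's lemma applied on $[t_0,t]$ to the (absorbed) differential inequality for $\|\sqrt\r u_t(t)\|_{L^2}^2$ gives $\|\sqrt\r u_t(t)\|_{L^2}^2\lesssim E_2(t_0)$ for every $t\geq t_0$, since $\|\sqrt\r u_t(t_0)\|_{L^2}^2\leq E_2(t_0)$, the integrated source is $\lesssim E_2(t_0)$, and the integrated coefficient is $\lesssim1$; integrating that inequality once more then yields $\int_{t_0}^t\|\na u_t(t')\|_{L^2}^2\,dt'\lesssim E_2(t_0)$. Finally, the bound on $\|\na^2u(t)\|_{L^2}^2+\|\na\Pi(t)\|_{L^2}^2$ follows from the first inequality of Lemma~\ref{Echanget2x} with $p=2$, $f\equiv0$, $v=u$, namely $\|\na^2u\|_{L^2}+\|\na\Pi\|_{L^2}\lesssim\|\sqrt\r u_t\|_{L^2}+\|u\|_{L^4}^2\|\na u\|_{L^2}$, after squaring and using the $2$-D interpolation $\|u\|_{L^4}^4\lesssim\|u\|_{L^2}^2\|\na u\|_{L^2}^2\lesssim E_0\|\na u\|_{L^2}^2$ together with $\|\na u(t)\|_{L^2}^4\lesssim\|\na u(t_0)\|_{L^2}^4\leq E_2(t_0)$. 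I expect the main obstacle to be precisely this bookkeeping around the scaling parameter $\cT$: it has to be chosen (of order $\|\na u(t_0)\|_{L^2}^2$) so that simultaneously the coefficient $\cT^{-1}\|u\|_{L^2}^2\|\na^2u\|_{L^2}^2$ entering $F_{1,\cT}$ stays time-integrable with a bound independent of $t_0$, and the source $\cT\|\na^2u\|_{L^2}^2$ entering $F_{2,\cT}$ integrates to something controlled by $E_2(t_0)$ and not merely by $\|\na u(t_0)\|_{L^2}^2$; reconciling these two requirements is the delicate point and is what forces the statement to be phrased in terms of $E_2(t_0)$.
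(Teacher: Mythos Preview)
Your proof is correct and follows essentially the same route as the paper: apply Lemma~\ref{estimenergyH2Linearise} with $v=u$, $f\equiv0$, absorb the cross term by Young's inequality to obtain the differential inequality~\refeq{estimH2udemoeq1}, choose $\cT=\|\nabla u(t_0)\|_{L^2}^2$, bound the time integrals of $\wt F_{1,\cT}$ and $F_{2,\cT}$ via~\refeq{estimenergyH2Lineariseeq1}--\refeq{estimenergyH2Lineariseeq2}, apply Gronwall, and finish with Lemma~\ref{Echanget2x}. Your discussion of why the choice of $\cT$ reconciles the two competing requirements is exactly the point.
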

\begin{proof}
Let us apply Lemma\refer {estimenergyH2Linearise} with~Ê$u=v$ and~$f\equiv 0$.  This gives
$$
\displaylines {\frac 12 \frac d {dt} \|\sqrt \r u_t(t)\|^2_{L^2}
+\frac 34 \|\nabla u_t(t)\|_{L^2}^2 \leq  CF_{1, \cT}(u(t)) \|\sqrt
\r u_t(t)\|_{L^2} ^2 +CF_{2,\cT}(u(t),u(t))\cr {} +C \|\nabla
u_t(t)\|_{L^2} \|\nabla u(t) \|_{L^2}  \|\sqrt \r u_t(t)\|_{L^2}.
 }
$$
Using the convexity inequality, this gives \beq \label
{estimH2udemoeq1} \frac d {dt} \|\sqrt \r u_t(t)\|^2_{L^2} +\|\nabla
u_t\|_{L^2}^2 \lesssim \wt F_{1, \cT}(u(t)) \|\sqrt \r
u_t(t)\|_{L^2} ^2 +F_{2,\cT}(u(t),u(t)). \eeq with~$\wt F_{1, \cT}
(w) \eqdefa F_{1,\cT} (w) +\|\nabla w\|_{L^2}^2$.  Then Gronwall
lemma implies that
 for any positive~$t_0$ and any~$t$ greater than or equal to~$t_0$,
  $$
\longformule{
  \|\sqrt \r u _t(t)\|^2_{L^2}  +\int_{t_0}^t  \|\nabla u _t(t')\|^2_{L^2}  dt'
  \lesssim \Bigl(  \|\sqrt \r u _t(t_0)\|^2_{L^2} +\int_{t_0}^\infty F_{2,\cT} (u(t'),u(t'))  dt' \Bigr)
  }
  {
  {}\times \exp \Bigl(\int_{t_0}^\infty \wt F_{1,\cT} (u(t'))  dt '\Bigr).
  }
$$
Inequalities \eqref{zp1}, \refeq {estimenergyH2Lineariseeq1} and
\refeq{estimenergyH2Lineariseeq2} applied with~$u=v$ gives
 \beno
 && \|\sqrt \r u _t(t)\|^2_{L^2}  +\int_{t_0}^t  \|\nabla u _t(t')\|^2_{L^2}  dt'\\
 && \qquad \qquad  \qquad{}\lesssim \Bigl(  \|\sqrt \r u _t(t_0)\|^2_{L^2} +(1+\|u(t_0)\|_{L^2})  \|\nabla u(t_0)\|_{L^2}^2 (\|\nabla u(t_0)\|_{L^2}^2+\cT) \Bigr)
\\
&&  \qquad \qquad \qquad \qquad \quad \qquad{}\times \exp \Bigl(C
\Bigl(\|\sqrt \r u(t_0) \|_{L^2}^2+ \| u(t_0) \|_{L^2}^4 + \frac
{\|\nabla u(t_0)\|_{L^2}^2 } \cT \cC(E_0)\Bigr)\Bigr). \eeno
Choosing~$\cT=  \|\nabla u(t_0)\|_{L^2}^2$ ensures \beno \|\sqrt \r
u _t(t)\|^2_{L^2} +\int_{t_0}^t  \|\nabla u _t(t')\|^2_{L^2}
dt'\lesssim E_2(t_0), \eeno which together with the first inequality
of Lemma \ref{Echanget2x} implies \beno \|\na^2
u(t)\|_{L^2}+\|\na\Pi(t)\|_{L^2}\lesssim
\|u_t(t)\|_{L^2}+\|u(t)\|_{L^2}\|\na u(t)\|_{L^2}^2\lesssim
E_2^{\f12}(t_0). \eeno This finishes the proof of the corollary.
\end{proof}

\noindent{\it Proof of Lemma\refer {estimenergyH2Linearise} }
By applying~$\p_t$ to the  the momentum part of~(LINS2D), we obtain
 \beq
\label {estimenergyH2Linearisedemoeq0} \begin{split}
 \r v_t+\r   u \cdot\na v_t-\D v_t+\nabla \p_t\Pi_v & = \wt f_t  \with
\wt f_t  \eqdefa   -\r_t v_t- \r_t   u \cdot\na v -\rho u_t \cdot
\nabla v+f_t .
 \end{split}
 \eeq
 Now let us observe  that  as~$\rho$ is transported by the flow of~$u$, we have~$
 \r_t=-\dive (\rho u).$ Thus the new external force  becomes
$$
\wt f_t =\dive (\rho u) v_t+\dive (\rho u)  u \cdot\na v -\rho u_t
\cdot \nabla v+f_t.
$$
Applying the basic energy estimate\refeq{Energybasic}, we get
$$
\f12\f{d}{dt}\|\sqrt{\r }v_t\|_{L^2}^2+\|\na v_t\|_{L^2}^2= (\wt
f_t|v_t)_{L^2}.
$$
The key point  consists in  estimating the term~$ (\wt
f_t|v_t)_{L^2}.$ It follows by integration by parts~that
\ben
(\wt f_t|v_t)_{L^2}
& =&  (f_t |v_t)_{L^2} +\sum_{i=1}^5 \cE_i(t)\with \nonumber\\
\cE_1(t) &\eqdefa &
-2( \r   u \cdot\na v_t| v_t )_{L^2}  \nonumber \\
\cE_2(t) &\eqdefa &
-\bigl(\r  (u\cdot\nabla u)\cdot \nabla v\big | v_t\bigr)_{L^2}  \nonumber\\
 \label{1.4}
 \cE_3(t) &\eqdefa &
-\bigl ( \r  (u\otimes u):\na^2 v |\ v_t)_{L^2}
\\
\cE_4(t) &\eqdefa &
-( \r  u\cdot \nabla v|u\cdot\nabla v_t)_{L^2}\andf  \nonumber \\
\cE_5(t) &\eqdefa & -(\r u _t\cdot\nabla v |v_t)_{L^2}\,.  \nonumber
\een By using the 2-D interpolation inequality \refeq{1.14a},  we
get
 \beq
 \label  {estimenergyH2Linearisedemoeq1}
  \begin{split}
|\cE_1(t)|   \lesssim &
 \|u\|_{L^4 }\|\na v_t \|_{L^2}\|v_t\|_{L^4} \\
\lesssim  & \|\na v_t \|^{\frac 32 } _{L^2} \|u\|_{L^4 }
\|v_t\|^{\frac 12}_{L^2} \leq \epsilon  \|\na v_t \|^{2 } _{L^2} + C_\epsilon
\|u\|^4_{L^4 } \|v_t\|^{2}_{L^2}.
\end{split}
 \eeq
Using again  2-D interpolation inequality\refeq {1.14a} yields
 \beno
|\cE_2(t)|  & \leq &
 \|u\|_{L^4 }\|\nabla u\|_{L^4} \|\nabla v\|_{L^4} \|v_t\|_{L^4} \\
&\lesssim  & \|\nabla u\|^{\frac 12} _{L^2}  \|\nabla^2 u\|^{\frac12} _{L^2} \|\nabla v\|^{\frac 12} _{L^2}  \|\nabla^2 v \|^{\frac12} _{L^2}  \|u\|_{L^4} \|v_t\|_{L^2} ^{\frac 12} \|\nabla v_t\|_{L^2}^{\frac 12}.
 \eeno
 H\"older inequality implies that
 \beq
\label  {estimenergyH2Linearisedemoeq2} \cE_2(t) \leq \epsilon \|\na
v_t \|^{2 } _{L^2}  + C_\epsilon \|u\|^4_{L^4 } \|v_t\|^{2}_{L^2} +C_\epsilon \|\nabla
u\| _{L^2}  \|\nabla^2 u\| _{L^2} \|\nabla v\|_{L^2} \|\nabla^2 v \|
_{L^2} .
 \eeq
Using   the  2-D interpolation inequality\beq \label {1.14b}
\|a\|_{L^\infty(\R^2)}\lesssim \|a\|_{L^2(\R^2)}^{\f12}\|\nabla^2
a\|_{L^2(\R^2)}^{\f12}, \eeq we get
 \beq \label  {estimenergyH2Linearisedemoeq3} \begin{split}
|\cE_3(t)|   \leq &
 \|u\|_{L^\infty }^2\|\na^2 v \|_{L^2}\|v_t\|_{L^2} \\
\lesssim  & \|\nabla^2 v\|_{L^2}  \|u\|_{L^2}\|\nabla^2
u\|_{L^2}\|v_t\|_{L^2}\lesssim  \cT   \|\na^2 v \|^{2 } _{L^2} +
\frac 1 {\cT}\|u\|_{L^2}^2  \|\na^2 u \|^{2 } _{L^2}
\|v_t\|^{2}_{L^2}. \end{split}
 \eeq
Similarly using  again 2-D interpolation inequality\refeq{1.14b} ,
we get
 \beq \label  {estimenergyH2Linearisedemoeq4} \begin{split}
|\cE_4(t)|   \leq &
 \|u\|_{L^\infty }^2\|\na v \|_{L^2}\|\nabla v_t\|_{L^2}\\
 \leq  & \epsilon \|\na
v_t\|_{L^2}^2 +C_\epsilon \|u\|^2_{L^2} \|\nabla^2 u\|^2_{L^2} \|\nabla v
\|^2_{L^2} .
\end{split}
 \eeq
 In order to estimate~$\cE_5$,  we use\refeq {1.14a} which gives
 $$
 |\cE_5(t) |\lesssim  \|\nabla v_t(t)\|_{L^2} ^{\frac 12}\|\nabla  u_t(t)\|_{L^2} ^{\frac 12}  \|\nabla v(t) \|_{L^2}
 \|\sqrt \r v_t(t)\|_{L^2} ^{\frac 12}\|\sqrt \r u_t(t)\|_{L^2} ^{\frac 12}.
 $$
 Together with Inequalities\refeq{estimenergyH2Linearisedemoeq1}--(\ref{estimenergyH2Linearisedemoeq4}), this yields Lemma\refer{estimenergyH2Linearise}.
\qed

\medbreak Now let us  investigate the decay properties of the second
order space derivatives or of one time derivative of $u.$ They are
describe by the following proposition.
\begin{prop}
\label {decayD2}
{\sl For any~$T$ greater than or equal to~$T_2(\r_0,u_0)\eqdefa
\max\bigl\{T_1, E_2/E_1\bigr\}$ and under the assumptions of
Theorem\refer{ins2Ddecay}, Inequality\refeq  {ins2Ddecayeq3} holds. }
\end{prop}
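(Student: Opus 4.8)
The plan is to run, for $u_t=\partial_tu$, the same Wiegner-type frequency-splitting argument that was used in the proof of Proposition~\ref{propDecayL2H1} for $u$ itself, taking as input the decay estimates \eqref{ins2Ddecayeq10} and \eqref{ins2Ddecayeq2} already proved for $\|u\|_{L^2}$ and $\|\nabla u\|_{L^2}$, together with the uniform bounds of Corollary~\ref{estimH2u}. One starts from the fact that $u_t$ satisfies the linearized system \eqref{estimenergyH2Linearisedemoeq0} with $v=u$, $f\equiv0$, which is of the form (LINS2D) (velocity $u$, unknown $u_t$, pressure $\partial_t\Pi$, force $\widetilde f_t$); applying Lemma~\ref{estimenergyH2Linearise} with $v=u$, $f\equiv0$ and absorbing the cross term by Young's inequality yields \eqref{estimH2udemoeq1}. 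Then, with the low-frequency truncation $S_T(t)$ of \eqref{LemmedecayWiegnerdemoeq2a} and $u_{t,\flat}\eqdef\cF^{-1}({\bf 1}_{S_T(t)}\widehat{u_t}(t))$, I would use the $L^2$-orthogonality of $u_{t,\flat}$ and $u_t-u_{t,\flat}$ and the bounds $\tfrac34\le\r\le\tfrac54$, exactly as in Lemma~\ref{LemmedecayWiegnerbasic}, to turn \eqref{estimH2udemoeq1} into a differential inequality of Wiegner type,
$$\frac d{dt}\|\sqrt{\r}u_t\|_{L^2}^2+\frac c{T}g_T^2(t)\|\sqrt{\r}u_t\|_{L^2}^2\lesssim \frac1{T}g_T^2(t)\|u_{t,\flat}\|_{L^2}^2+\widetilde F_{1,\cT}(u)\,\|\sqrt{\r}u_t\|_{L^2}^2+F_{2,\cT}(u,u).$$

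Two terms on the right have to be controlled. The contribution of $F_{2,\cT}(u,u)$ is, by Lemma~\ref{Echanget2x} and the decay \eqref{ins2Ddecayeq10}--\eqref{ins2Ddecayeq2}, a sum of a fast-decaying function of $t$ and a term of the form $\psi(t)\,\|\sqrt{\r}u_t\|_{L^2}^2$ with $\psi$ decaying (one also uses the tail estimate \eqref{estimenergyH2Lineariseeq2}), both of which the iteration below handles just as the quadratic forcing is handled in the proof of Proposition~\ref{propDecayL2H1}; the scaling parameter $\cT$ is fixed at the end to optimize the powers. The genuinely new difficulty is the low-frequency remainder $\|u_{t,\flat}\|_{L^2}$. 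For this I would rewrite the momentum equation of (INS2D) as in \eqref{estimuflatdemoeq1} and apply the Leray projection $\PP$, obtaining the pointwise identity $u_t+\PP(\varrho u_t)=\Delta u-\PP(\r u\cdot\nabla u)$, so that, on $S_T(t)$,
$$|\widehat{u_t}(t,\xi)|^2\lesssim |\cF(\varrho u_t)(t,\xi)|^2+|\xi|^4|\widehat u(t,\xi)|^2+|\cF(\r u\cdot\nabla u)(t,\xi)|^2.$$
Integrating over $S_T(t)$, whose measure is $\sim g_T^2(t)/T$: the density term is, as in Lemma~\ref{estimuflat}, bounded by $\|\varrho u_t\|_{L^2}^2\le\|\varrho_0\|_{L^\infty}^2\|u_t\|_{L^2}^2\le\tfrac1{16}\|u_t\|_{L^2}^2$, so it produces a term $\le\tfrac14\|\sqrt{\r}u_t\|_{L^2}^2$ which is absorbed into the decay term on the left with no largeness hypothesis; the term $|\xi|^4|\widehat u|^2$ contributes $\lesssim\frac{g_T^2(t)}{T}\|\nabla u(t)\|_{L^2}^2$ and the quadratic term $\lesssim\frac{g_T^2(t)}{T}\|u(t)\|_{L^2}^2\|\nabla u(t)\|_{L^2}^2$, both decaying fast in $t$ by \eqref{ins2Ddecayeq10}--\eqref{ins2Ddecayeq2}. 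The main obstacle, compared with the pure $L^2$ decay of $u$, is exactly that $u_t$ is one space derivative rougher, so its very low frequencies become harmless only after two space derivatives are transferred onto $u$ through $\Delta u$, and the inhomogeneous term $\PP(\varrho u_t)$ is controlled only because $\varrho$ is uniformly small.

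Putting these estimates together, one reaches an inequality of the form $\frac d{dt}\|\sqrt{\r}u_t\|_{L^2}^2+\frac c{T}g_T^2(t)\|\sqrt{\r}u_t\|_{L^2}^2\lesssim \Phi(t)+\widetilde F(u(t))\,\|\sqrt{\r}u_t\|_{L^2}^2$, where $\int_0^\infty\widetilde F(u)\,dt<\infty$ and $\Phi$ is a sum of terms of size $T^{-2}\w{t}_{{}_T}^{-k}$ with $k\ge4$ (after using $g_T^2\le3\w{t}_{{}_T}^{-1}$). From here I would proceed exactly as in the last part of the proof of Proposition~\ref{propDecayL2H1}: starting from the uniform bound $\|\sqrt{\r}u_t(t)\|_{L^2}^2\lesssim E_2$ (Corollary~\ref{estimH2u} with $t_0=0$), multiplying by $G_T(t)=\exp\!\bigl(\frac c{T}\int_0^tg_T^2\bigr)$ and iterating with $g^2(\tau)=\al\w\tau^{-1}$ and increasing $\al$, absorbing the logarithmic losses at each step, one reaches $\|\partial_tu(t)\|_{L^2}^2\lesssim E_2\w{t}_{{}_T}^{-4}$; the thresholds $T_0\le T_1\le T_2=\max\{T_1,E_2/E_1\}$ are precisely what is needed so that every contribution of $\Phi$, and of the transfer to $\|\nabla u\|_{L^2}$ in \eqref{ins2Ddecayeq2}, is dominated by $E_2\w{t}_{{}_T}^{-4}$. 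Finally, Lemma~\ref{Echanget2x} with $p=2$, $v=u$, together with the decay of $\|u\|_{L^2}$ and $\|\nabla u\|_{L^2}$, gives $\|\nabla^2u(t)\|_{L^2}+\|\nabla\Pi(t)\|_{L^2}\lesssim\|\sqrt{\r}u_t(t)\|_{L^2}+\|u(t)\|_{L^2}\|\nabla u(t)\|_{L^2}^2\lesssim E_2^{1/2}\w{t}_{{}_T}^{-2}$, which together with $\|\partial_tu\|_{L^2}\lesssim\|\sqrt{\r}u_t\|_{L^2}$ is exactly \eqref{ins2Ddecayeq3}.
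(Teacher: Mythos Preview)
Your route is different from the paper's, and more laborious. The paper does \emph{not} re-run a Wiegner frequency-splitting on $u_t$; it uses the same time-shift trick already encoded in \eqref{LemDecayH1fondeq3}. Starting from \eqref{estimH2udemoeq1}, it multiplies by $(t-t_0)$, applies Gronwall with \eqref{zp1} and \eqref{estimenergyH2Lineariseeq1}, bounds $\int_{t_0}^t (t'-t_0)F_{2,\cT}(u,u)\,dt'$ via \eqref{estimenergyH2Lineariseeq2}, chooses $\cT=\|\nabla u(t_0)\|_{L^2}^2$, and finally sets $t_0=t/2$. This leaves only $\|\nabla u(t/2)\|_{L^2}^2+t\|\nabla u(t/2)\|_{L^2}^4$ on the right, and the already-proved decay \eqref{ins2Ddecayeq2} immediately gives $\|\sqrt\rho u_t(t)\|_{L^2}^2\lesssim E_2\langle t\rangle_T^{-4}$ for $T\ge T_2$; the passage to $\|\nabla^2u\|_{L^2}+\|\nabla\Pi\|_{L^2}$ via Lemma~\ref{Echanget2x} is then the same as yours. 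No low-frequency analysis of $u_t$, no iteration, no $G_T$.

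Your Fourier-splitting argument looks workable in principle---the pointwise identity $u_t+\PP(\varrho u_t)=\Delta u-\PP(\rho u\cdot\nabla u)$ indeed lets you control $\|u_{t,\flat}\|_{L^2}$ and absorb the $\varrho u_t$ piece thanks to $\|\varrho_0\|_{L^\infty}\le\tfrac14$. The one place you should be more careful is the treatment of $F_{2,\cT}(u,u)$ and the choice of $\cT$: if $\cT$ is a fixed positive constant then the component $\cT\|\nabla^2u\|_{L^2}^2$, after substituting Lemma~\ref{Echanget2x}, produces a term $\cT\|\sqrt\rho u_t\|_{L^2}^2$ with $\cT$ \emph{not} decaying, so $\psi(t)$ is not integrable as you claim. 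You would need $\cT=\cT(t)$ decaying (which is legitimate since $\cT$ enters only through a Young splitting in the proof of Lemma~\ref{estimenergyH2Linearise}), and then balance it against the $\cT^{-1}$ in $F_{1,\cT}$; this can be made to close, but it is exactly the bookkeeping that the paper's $(t-t_0)$ trick with $\cT=\|\nabla u(t_0)\|_{L^2}^2$ and $t_0=t/2$ sidesteps in one line.
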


\begin{proof}
We follow the same lines as the proof of Inequality\refeq {LemDecayH1fondeq3}
with the following computations. Using Relation\refeq
{estimH2udemoeq1}, we get that for any positive~${t_0} $ and~$t$
such that~$t$ is greater than or equal to ~${t_0} $,
$$
\longformule{
\frac d {dt} \bigl( (t-{t_0}
) \|\sqrt \r u_t(t)\|^2_{L^2}\bigr) + (t-{t_0}
)\|\nabla u_t\|_{L^2}^2
 \lesssim \|\sqrt \r u_t(t)\|^2_{L^2}
 }
 {{}
+\wt F_{1, \cT}(u(t)) (t-{t_0} ) \|\sqrt \r u_t(t)\|_{L^2} ^2 +
(t-{t_0} )F_{2,\cT}(u(t),u(t)). }
$$
Gronwall lemma along with \eqref{zp1} and
\eqref{estimenergyH2Lineariseeq1} implies that
 \beq
\begin{split} (t-t_0) \|\sqrt \r u_t(t)\|^2_{L^2}
 \lesssim&\biggl(  \int_{{t_0}
} ^t\|\sqrt \r u_t(t')\|^2_{L^2}dt' +\int_{{t_0}}^t  (t'-{t_0}
)F_{2,\cT}(u(t'),u(t'))dt'\biggr)
 \\
 &\qquad\qquad\quad \times \exp \biggl( \|\sqrt \r u({t_0}
) \|_{L^2}^2+ \|u({t_0} ) \|_{L^2}^4 + \frac {\|\nabla u({t_0}
)\|_{L^2}^2 } \cT \biggr) \cdotp \end{split} \label{zp3}
\eeq
It follows from \eqref{estimenergyH2Lineariseeq2} that
 \beno
 \int_{{t_0}}^t
(t'-{t_0} )F_{2,\cT}(u(t'),u(t'))dt'
& \leq &
t \int_{{t_0}}^t F_{2,\cT}(u(t'),u(t'))dt'\\
&\lesssim &
t \bigl( \|\nabla u(t_0)\|^2_{L^2}+\cT\bigr)   \|\nabla u({t_0})\|^2_{L^2} .
\eeno
 Resuming the above estimates into
\eqref{zp3} and choosing~$\cT=  \ds   \|\nabla u(t_0)\|^2_{L^2}$
yields
\beno
(t-t_0)\|\sqrt{\r}u_t(t)\|_{L^2}^2\lesssim \|\na
u(t_0)\|_{L^2}^2+t\|\na u(t_0)\|_{L^2}^4.
 \eeno
 Taking $t_0$ equals to~$\f{t}2$
in the above inequality, then Inequality\refeq {ins2Ddecayeq2} of
Proposition\refer {propDecayL2H1} ensures~that
\beno
  \|\sqrt \r u_t(t)\|^2_{L^2}   & \lesssim & E_1 \frac T t  \frac 1T  \langle t\rangle_{{}_T}^{-3}
+ E_1^2 \langle t\rangle_{{}_T}^{-6} .
  \eeno
Using Corollary\refer {estimH2u}, we infer that for $t\geq T_2$
\beq\label{zp4}
  \|\sqrt \r u_t(t)\|^2_{L^2} \lesssim \bigl(  E_2 +
  E_1^2
  +E_1/T\bigr)  \langle
  t\rangle_{{}_T}^{-4} \lesssim E_2 \langle
  t\rangle_{{}_T}^{-4}.
\eeq While it follows from Lemma \ref{Echanget2x} that \beno
\|\na^2u(t)\|_{L^2}+\|\na\Pi(t)\|_{L^2}\lesssim
\|u_t(t)\|_{L^2}+\|u(t)\|_{L^2}\|\na u(t)\|_{L^2}^2, \eeno
 which together with Inequalities \eqref{ins2Ddecayeq10}, \refeq {ins2Ddecayeq2} and \eqref{zp4} leads to
 Inequality\refeq {ins2Ddecayeq3}. The proposition is proved.
\end{proof}

 \begin{col}
 \label {S7col1}
 {\sl
 Under the assumptions of Theorem\refer  {ins2Ddecay}, we have
 \ben
 \label  {S7col1eq0}
 \|u(t)\|_{L^\infty}  & \lesssim  & E_0^{\frac 14} E_2^{\frac 14}  \langle t\rangle_{{}_T}^{-\frac 32} \andf\\
\label  {S7col1eq1} \int_{t}^\infty \|\nabla u(t')\|_{L^\infty}  dt'
& \lesssim & \bigl(\sqrt{E_2}T\bigr)^{\f34} \langle
t\rangle_{{}_T}^{-1}+\bigl(\sqrt{E_2}T\bigr)^{\f14} \w{t}_{{}_T}^{-2}.
 \een
 Moreover, we have the following estimates on the density. For any~$ p$ in~$ [2,\infty]$, we have
\ben
\label {1.19b}
\|\nabla \rho(t)\|_{L^p}  & \lesssim &  \|\nabla \rho_0\|_{L^p},\\
\label {S7col1eq3}
\|\rho_t (t)\|_{L^p} & \lesssim &  \|\nabla \rho_0\|_{L^p}E_0^{\frac 14}E_2^{\frac 14} \langle  t\rangle_{{}_T}^{-\frac  3  2},\\
\label {S7col1eq4}
 \|\nabla^2 \rho(t)\|_{L^2}  & \lesssim &  \|\nabla^2  \rho_0\|_{L^2} +\|\nabla \rho_0\|_{L^\infty} E_2^{\frac 12} T  \andf\\
 \label {S7col1eq5}
  \|\nabla\rho_t(t)\|_{L^2}  & \lesssim & \bigl( E_2^{\f14}\|\nabla^2  \rho_0\|_{L^2} +\|\nabla \rho_0\|_{L^\infty}(
   E_2^{\frac 34} T+E_1^{\frac12})\bigr) \langle  t\rangle_{{}_T}^{-\frac  3  2}.
\een
}
\end{col}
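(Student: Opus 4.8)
The plan is to read off all six assertions from the $L^2$--type decay estimates \eqref{ins2Ddecayeq10}, \eqref{ins2Ddecayeq2}, \eqref{ins2Ddecayeq3} and \eqref{zp4} already at our disposal, together with the transport equation $\p_t\rho+u\cdot\na\rho=0$ and two--dimensional interpolation. First I would prove \eqref{S7col1eq0} and \eqref{S7col1eq3}: the Gagliardo--Nirenberg inequality \eqref{1.14b} gives $\|u(t)\|_{L^\infty}\lesssim\|u(t)\|_{L^2}^{1/2}\|\na^2u(t)\|_{L^2}^{1/2}$, so \eqref{ins2Ddecayeq10} together with the control of $\|\na^2u\|_{L^2}$ contained in \eqref{ins2Ddecayeq3} yields $\|u(t)\|_{L^\infty}\lesssim E_0^{1/4}E_2^{1/4}\w{t}_{{}_T}^{-3/2}$; and since $\p_t\rho=-u\cdot\na\rho$ one has $\|\rho_t(t)\|_{L^p}\le\|u(t)\|_{L^\infty}\|\na\rho(t)\|_{L^p}$, so \eqref{S7col1eq3} will follow once \eqref{1.19b} is known.

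The delicate point is \eqref{S7col1eq1}, since in two space dimensions $\dH^1$ does not embed into $L^\infty$, and the sharper bound \eqref{ins2Ddecayeq6} (which involves $E_3$) is not yet available at this stage. The plan is to fix $p$ in $]2,4[$ and use the Stokes estimate of Lemma \ref{Echanget2x} in the form $\|\na^2u\|_{L^p}\lesssim\|\sqrt{\rho}\,u_t\|_{L^p}+\|u\|_{L^{2p}}^2\|\na u\|_{L^2}$, then bound $\|u_t\|_{L^p}\lesssim\|u_t\|_{L^2}^{2/p}\|\na u_t\|_{L^2}^{1-2/p}$ and $\|u\|_{L^{2p}}^2\lesssim\|u\|_{L^2}^{2/p}\|\na u\|_{L^2}^{2-2/p}$ by Gagliardo--Nirenberg, and finally feed this into $\|\na u\|_{L^\infty}\lesssim\|\na u\|_{L^2}^{1-2/p}\|\na^2u\|_{L^p}^{2/p}$. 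This reduces everything to $\|u\|_{L^2}$, $\|\na u\|_{L^2}$ and $\|\sqrt{\rho}\,u_t\|_{L^2}$, which decay like $\w{t}_{{}_T}^{-1}$, $\w{t}_{{}_T}^{-3/2}$, $\w{t}_{{}_T}^{-2}$ by \eqref{ins2Ddecayeq10}, \eqref{ins2Ddecayeq2}, \eqref{zp4}, together with $\|\na u_t\|_{L^2}$, for which only the integral control $\int_t^\infty\|\na u_t(t')\|_{L^2}^2\,dt'\lesssim E_2\w{t}_{{}_T}^{-4}$ is available (Corollary \ref{estimH2u} combined with \eqref{zp4}). Integrating the resulting pointwise bound over $[t,\infty[$ with H\"older's inequality in time, using $\int_t^\infty\w{t'}_{{}_T}^{-\gamma}\,dt'\sim T\,\w{t}_{{}_T}^{1-\gamma}$, and then letting $p\to2^+$ to optimize the exponents, should produce exactly the two contributions $(\sqrt{E_2}\,T)^{3/4}\w{t}_{{}_T}^{-1}$ and $(\sqrt{E_2}\,T)^{1/4}\w{t}_{{}_T}^{-2}$. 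I expect this step, and in particular the bookkeeping of the interpolation exponents, to be the main obstacle.

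The density estimates \eqref{1.19b}, \eqref{S7col1eq4} and \eqref{S7col1eq5} would then be obtained by differentiating the transport equation. Applying $\na$ gives $\p_t\na\rho+u\cdot\na\na\rho=-\na u\cdot\na\rho$, hence $\|\na\rho(t)\|_{L^p}\le\|\na\rho_0\|_{L^p}\exp\bigl(C\int_0^t\|\na u(t')\|_{L^\infty}\,dt'\bigr)$, and the time integral is finite by \eqref{S7col1eq1}: this is \eqref{1.19b}. Applying $\na^2$ gives $\p_t\na^2\rho+u\cdot\na\na^2\rho=-2\na u\cdot\na^2\rho-\na^2u\cdot\na\rho$; estimating the last term by $\|\na^2u\|_{L^2}\|\na\rho\|_{L^\infty}$, Gronwall's lemma together with \eqref{1.19b} for $p=\infty$ and $\int_0^\infty\|\na^2u(t')\|_{L^2}\,dt'\lesssim E_2^{1/2}T$ (a consequence of \eqref{ins2Ddecayeq3}) yields \eqref{S7col1eq4}. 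Finally, writing $\na\rho_t=-\na u\cdot\na\rho-u\cdot\na^2\rho$, bounding $\|\na u\cdot\na\rho\|_{L^2}\le\|\na u\|_{L^2}\|\na\rho\|_{L^\infty}$ via \eqref{ins2Ddecayeq2} and \eqref{1.19b}, and $\|u\cdot\na^2\rho\|_{L^2}\le\|u\|_{L^\infty}\|\na^2\rho\|_{L^2}$ via \eqref{S7col1eq0} and \eqref{S7col1eq4}, gives the $\w{t}_{{}_T}^{-3/2}$ decay in \eqref{S7col1eq5}.
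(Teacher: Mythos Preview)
Your treatment of \eqref{S7col1eq0}, \eqref{1.19b}, \eqref{S7col1eq3}, \eqref{S7col1eq4} and \eqref{S7col1eq5} is correct and coincides with the paper's argument. The gap is in your plan for \eqref{S7col1eq1}.

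First, the interpolation inequality you write, $\|\na u\|_{L^\infty}\lesssim\|\na u\|_{L^2}^{1-2/p}\|\na^2 u\|_{L^p}^{2/p}$, is not scale-invariant in two dimensions: under $u\mapsto u(\lambda\,\cdot)$ the two sides scale differently unless $p=2$. The correct Gagliardo--Nirenberg exponent is $\theta=p/(2(p-1))$, i.e.\ $\|\na u\|_{L^\infty}\lesssim\|\na u\|_{L^2}^{1-\theta}\|\na^2 u\|_{L^p}^{\theta}$. Second, and more seriously, the step ``letting $p\to 2^+$ to optimize the exponents'' cannot work: the constant in any such inequality blows up as $p\to 2$, since $\dot H^1(\R^2)$ fails to embed into $L^\infty$. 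So no limiting argument is available, and for any fixed $p>2$ your chain of inequalities would produce exponents different from the stated $3/4$ and $1/4$.

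The paper fixes $p=4$ and uses a slightly different interpolation chain: starting from $\|\na u\|_{L^\infty}\lesssim\|\na u\|_{L^4}^{1/2}\|\na^2 u\|_{L^4}^{1/2}$, it bounds $\|\na u\|_{L^4}\lesssim\|\na u\|_{L^2}^{1/2}\|\na^2 u\|_{L^2}^{1/2}$ via \eqref{1.14a}, thereby bringing in the extra factor $\|\na^2 u\|_{L^2}$ whose pointwise decay \eqref{ins2Ddecayeq3} is already known. Combined with Lemma~\ref{Echanget2x} for $\|\na^2 u\|_{L^4}$ this yields
\[
\|\na u\|_{L^\infty}\lesssim \|\na u\|_{L^2}^{\frac14}\|\na^2 u\|_{L^2}^{\frac14}\|\sqrt\rho\,u_t\|_{L^2}^{\frac14}\|\na u_t\|_{L^2}^{\frac14}
+\|u\|_{L^2}^{\frac14}\|\na u\|_{L^2}^{\frac32}\|\na^2 u\|_{L^2}^{\frac14},
\]
after which H\"older in time with weights $(1/8,3/4,1/8)$ on the first term and $(3/4,1/4)$ on the second, together with \eqref{zp1}, \eqref{ins2Ddecayeq10}--\eqref{ins2Ddecayeq3} and Corollary~\ref{estimH2u}, gives exactly the two contributions in \eqref{S7col1eq1}. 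The key difference from your outline is that the paper interpolates $\|\na u\|_{L^4}$ rather than $\|\na u\|_{L^\infty}$ against $L^2$, so that the pointwise decay of $\|\na^2 u\|_{L^2}$ enters the estimate and produces the specific exponents.
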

\begin{proof}
Inequality\refeq{S7col1eq0} follows directly from
Inequalities \refeq{ins2Ddecayeq10} and \eqref{ins2Ddecayeq3}  and from the
 interpolation inequality\refeq{1.14b}. By using 2D interpolation inequality, we get,
by applying Lemma\refer{Echanget2x} with~$p$ equal to~$4$,  that
\beno
\|\nabla u(t) \|_{L^\infty}
& \leq &
C \|\nabla u (t)\|^{\frac 12}_{L^4}\|\nabla ^2 u(t)\|^{\frac 12}_{L^4}\\
& \leq &
C \|\nabla u (t)\|^{\frac 12}_{L^4} \bigl ( \|u_t\|_{L^4} ^{\frac 12} + C\|u\|_{L^8}\|\nabla u(t)\|_{L^2}^{\frac 12}\bigr).
\eeno
Using that~$\|u(t)\|_{L^8} \leq C \|u(t)\|_{L^2}^{\frac 14}\|\nabla u(t)\|_{L^2} ^{\frac 34}$,   we infer that
$$
\|\nabla u(t) \|_{L^\infty}  \leq C \|\nabla u(t)\|_{L^2}^{\frac 14}
\|\nabla^2 u (t)\|_{L^2}^{\frac 14} \|\sqrt \r u_t\|_{L^2}^{\frac
14} \|\nabla u_t\|_{L^2}^{\frac 14} +C \|\nabla u (t)\|_{L^2}^{\frac
32}  \|u(t)\|_{L^2}^{\frac 14}\|\nabla ^2 u(t)\|_{L^2}^{\frac 14}.
$$
Applying H\"older inequality with respectively
$\left(\f18,\f34,\f18\right)$ and $\left(\f34, \f14\right)$ gives
$$
\longformule{\int_t^\infty\|\na u(t')\|_{L^\infty}dt'\lesssim
\Bigl(\int_t^\infty\|\na
u(t')\|_{L^2}^2dt'\Bigr)^{\f34}\Bigl(\int_t^\infty\|u(t')\|_{L^2}\|\na^2
u(t')\|_{L^2}dt'\Bigr)^{\f14} }{{} +\Bigl(\int_t^\infty\|\na
u(t')\|_{L^2}^2dt'\Bigr)^{\f18}\Bigl(\int_t^\infty
\|\na^2u(t')\|_{L^2}^{\f13}\|\sqrt{\r}u_t(t')\|_{L^2}^{\f13}dt'\Bigr)^{\f34}\Bigl(\int_t^\infty\|\na
u_t(t')\|_{L^2}^2dt'\Bigr)^{\f18},} $$ which together with
\eqref{zp1}, \refeq{ins2Ddecayeq10}-\eqref{ins2Ddecayeq3} and
Corollary \ref{estimH2u} ensures \eqref {S7col1eq1}.

Inequality\refeq {1.19b} comes simply from the density equation
after differentiation which~is \beq\label{zp5}
\partial_t \nabla \rho +u\cdot \nabla \nabla \rho = -\nabla u\cdot \nabla \rho.
\eeq Gronwall lemma  and \eqref{S7col1eq1} allows to conclude the
$L^p$ estimate for~$\nabla \rho$. For the inequality on~$\rho_t$,  let us observe that, thanks to Inequalities\refeq{S7col1eq0} and \eqref{1.19b}, the transport equation implies that,
$$
\|\rho_t(t)  \|_{L^p}\leq \|u(t)\|_{L^\infty} \|\nabla
\rho(t)\|_{L^p} \lesssim  \|\nabla \rho_0\|_{L^p} E_0^{\frac 14}
E_2^{\frac 14}  \langle t\rangle_{{}_T}^{-\frac  3  2}.
$$
which is exactly the required inequality. In order to prove
Inequality\refeq  {S7col1eq4}, let us differentiate twice the
transport equation which gives \beq\label{zp5tr}
\partial_t \partial_j\partial_k \rho +u\cdot\nabla \partial_j\partial_k \rho = -\partial_k u\cdot\nabla \partial_j\rho
 -\partial_j u\cdot\nabla \partial_k\rho- \partial_j\partial_k u\cdot\nabla\rho.
\eeq Let us observe that \beno
\|\partial_k u(t)\cdot\nabla \partial_j\rho(t)\|_{L^2} & \leq &  \|\nabla u(t)\|_{L^\infty} \|\nabla^2\rho(t)\|_{L^2}\andf\\
\|\partial_j\partial_k u(t)\cdot\nabla\rho(t)\|_{L^2} & \leq &
\|\nabla^2 u(t)\|_{L^2} \|\nabla \rho(t)\|_{L^\infty}, \eeno so that
we obtain \beq \label {S7col1eq9} \f{d}{dt}\|\na^2\r(t)\|_{L^2}\leq
2\|\na
u(t)\|_{L^\infty}\|\na^2\r(t)\|_{L^2}+\|\na^2u(t)\|_{L^2}\|\na\r(t)\|_{L^\infty}.
\eeq Gronwall lemma along with Proposition \ref{decayD2} gives
Inequality\refeq {S7col1eq4}.
Finally it follows from Inequality~\eqref{zp5} that
\beno
\|\na\r_t(t)\|_{L^2}
\leq
\|u(t)\|_{L^\infty}\|\na^2\r(t)\|_{L^2}+\|\na u(t)\|_{L^2}\|\na\r(t)\|_{L^\infty}.
\eeno
Then Inequality~\eqref{S7col1eq5}
follows from Inequalities~\eqref{S7col1eq0}, \eqref{S7col1eq4} and
\eqref{ins2Ddecayeq2}.
\end{proof}

Let us remark that before Inequality\refeq {1.19b}, we never use any
regularity property for the density~$\rho$. From now on, we shall do
it in order to estimate the third derivatives of the velocity field.

\begin{prop}
\label {decayD3} {\sl For any~$T$ greater than or equal
to~$T_3(\r_0,u_0)\eqdefa \max\{T_2, E_2/E_3\},$ we have under the
assumptions of Theorem\refer{ins2Ddecay}, \beq\label{propdecayD3eq1}
\|\nabla  u_t (t)\|^2 _{L^2} +\int_t^\infty \Bigl(\|u_{tt}  (t')\|^2
_{L^2} + \|\nabla^2 u_t(t')\|^2_{L^2}+\|\nabla\partial_t \Pi
(t')\|_{L^2}^2\Bigr) dt' \lesssim E_3 \langle t \rangle_{{}_T}^{-5},
\eeq and for any nonnegative $t_0$ and any $t\geq t_0,$
\beq
\label{propdecayD3eq2}
\int_{t_0}^t\langle t'
\rangle_{{}_T}^{5_-} \Bigl(\|u_{tt}  (t')\|^2 _{L^2} + \|\nabla^2
u_t(t')\|^2_{L^2}+\|\nabla\partial_t \Pi (t')\|_{L^2}^2\Bigr) dt'
\lesssim (1+T) \bigl(1+\|\na
 \r_0\|_{L^\infty}^2\bigr)E_3. \eeq
 Here and in all that follows, $\frak{a}_-$
denotes any number strictly less than $\frak{a}.$}
\end{prop}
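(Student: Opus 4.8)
The strategy is that of the proof of Proposition\refer{decayD2}, carried out one order of derivative higher: differentiate once more in time, perform an energy estimate at the level of $\|\na u_t\|_{L^2}^2$, and close it by the same time-weight arguments. The new feature—announced just before Corollary\refer{S7col1}—is that the regularity estimates on the density gathered there are used in an essential way, and they account for the $\|\na\r_0\|_{L^\infty}$ dependence in\refeq{propdecayD3eq2}.

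\medskip\noindent\emph{Step 1: the basic differential inequality.}\quad By\refeq{estimenergyH2Linearisedemoeq0} with $v=u$ and $f\equiv 0$, the field $u_t$ solves
\beq\label{decayD3eqforut}
\r\,u_{tt}+\r\,u\cdot\na u_t-\D u_t+\na\partial_t\Pi=\wt f_t,\qquad
\wt f_t=\dive(\r u)\,u_t+\dive(\r u)\,u\cdot\na u-\r\,u_t\cdot\na u .
\eeq
As $u_{tt}$ is divergence free, the $L^2$ scalar product of\refeq{decayD3eqforut} with $u_{tt}$ eliminates the pressure and gives
\beq\label{decayD3energyid}
\tfrac12\tfrac{d}{dt}\|\na u_t(t)\|_{L^2}^2+\|\sqrt\r\,u_{tt}(t)\|_{L^2}^2=(\wt f_t|u_{tt})_{L^2}-(\r\,u\cdot\na u_t|u_{tt})_{L^2}.
\eeq
Viewing\refeq{decayD3eqforut} as a stationary Stokes system for $(u_t,\partial_t\Pi)$ with source $\wt f_t-\r\,u_{tt}-\r\,u\cdot\na u_t$, the classical Stokes estimate together with\refeq{1.14a},\refeq{1.14b} gives, exactly as in Lemma\refer{Echanget2x},
\beq\label{decayD3Stokes}
\|\na^2u_t\|_{L^2}+\|\na\partial_t\Pi\|_{L^2}\lesssim\|\sqrt\r\,u_{tt}\|_{L^2}+\cN(t),
\eeq
$\cN(t)$ being a combination of the lower order norms $\|u\|_{L^\infty},\|u_t\|_{L^2},\|\na u\|_{L^2},\|\na^2u\|_{L^2},\|\na\r\|_{L^\infty}$. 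One then estimates each term on the right of\refeq{decayD3energyid} by\refeq{1.14a},\refeq{1.14b}, H\"older and\refeq{decayD3Stokes}—the point of\refeq{decayD3Stokes} being to trade every factor $\|\na^2u_t\|_{L^2}^{1/2}$ for $\|\sqrt\r\,u_{tt}\|_{L^2}^{1/2}$ before applying Young's inequality—the density factors in $\wt f_t$ (recall $\dive(\r u)=-\r_t$) being controlled by\refeq{1.19b} and\refeq{S7col1eq3}, the velocity factors by\refeq{ins2Ddecayeq10},\refeq{ins2Ddecayeq2},\refeq{ins2Ddecayeq3},\refeq{S7col1eq0}. For instance the convection term obeys
$$
|(\r\,u\cdot\na u_t|u_{tt})_{L^2}|\lesssim\|u\|_{L^4}\|\na u_t\|_{L^2}^{\f12}\|\na^2u_t\|_{L^2}^{\f12}\|\sqrt\r\,u_{tt}\|_{L^2}\lesssim\varepsilon\bigl(\|\sqrt\r\,u_{tt}\|_{L^2}^2+\|\na^2u_t\|_{L^2}^2\bigr)+\|u\|_{L^4}^4\|\na u_t\|_{L^2}^2+\cN(t)^2 .
$$
Absorbing the small multiples of $\|\sqrt\r\,u_{tt}\|_{L^2}^2$ and $\|\na^2u_t\|_{L^2}^2$ (the latter available on the left via\refeq{decayD3Stokes}) we reach, for $T\geq T_2$,
\beq\label{decayD3diffineq}
\tfrac{d}{dt}\|\na u_t(t)\|_{L^2}^2+\|\sqrt\r\,u_{tt}(t)\|_{L^2}^2+\tfrac1C\bigl(\|\na^2u_t(t)\|_{L^2}^2+\|\na\partial_t\Pi(t)\|_{L^2}^2\bigr)\leq\cG_1(t)\|\na u_t(t)\|_{L^2}^2+\cG_2(t),
\eeq
with $\cG_1(t)\lesssim\|u(t)\|_{L^4}^4$, so that $\int_0^\infty\cG_1(t')\,dt'\lesssim\|u_0\|_{L^2}^4$ by\refeq{zp2}—\emph{a bound independent of $T$}—and with $\cG_2$ a finite sum of products of the decaying norms above, at most quadratic in $\|\na\r_0\|_{L^\infty}$, of decay $\w{t}_{{}_T}^{-6}$ or faster.

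\medskip\noindent\emph{Steps 2 and 3: closing the estimates.}\quad For\refeq{propdecayD3eq1} we argue as for\refeq{LemDecayH1fondeq3} and in Proposition\refer{decayD2}: multiplying\refeq{decayD3diffineq} by $(t-t_0)$, using $\int_{t_0}^t\|\na u_t(t')\|_{L^2}^2\,dt'\lesssim E_2(t_0)$ (Corollary\refer{estimH2u}) and the $T$-independent control of $\int_0^\infty\cG_1$, Gronwall's lemma yields $(t-t_0)\|\na u_t(t)\|_{L^2}^2\lesssim E_2(t_0)+\int_{t_0}^t(t'-t_0)\,\cG_2(t')\,dt'$. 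Choosing $t_0=t/2$, invoking Propositions\refer{propDecayL2H1} and\refer{decayD2} (which give $E_2(t/2)\lesssim E_2\w{t}_{{}_T}^{-4}$, using $E_2\geq E_1^2$), and treating small $t$ by propagating\refeq{decayD3diffineq} on a bounded interval, one obtains $\|\na u_t(t)\|_{L^2}^2\lesssim E_3\w{t}_{{}_T}^{-5}$ as soon as $T\geq T_3=\max\{T_2,E_2/E_3\}$; re-inserting this decay in\refeq{decayD3diffineq} and integrating from $t$ to $+\infty$ (the two-dimensional solution being global) controls $\int_t^\infty(\|u_{tt}\|_{L^2}^2+\|\na^2u_t\|_{L^2}^2+\|\na\partial_t\Pi\|_{L^2}^2)$ by the same quantity, which proves\refeq{propdecayD3eq1}. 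For\refeq{propdecayD3eq2} we instead multiply\refeq{decayD3diffineq} by $\w{t}_{{}_T}^{s}$ with $s$ strictly less than $5$; then $\partial_t\w{t}_{{}_T}^{s}=\frac sT\w{t}_{{}_T}^{s-1}$ produces the term $\frac sT\int_{t_0}^t\w{t'}_{{}_T}^{s-1}\|\na u_t(t')\|_{L^2}^2\,dt'$, bounded by $\cC E_3$ thanks to\refeq{propdecayD3eq1}, and Gronwall's lemma—again with the $T$-independent bound on $\int_0^\infty\cG_1$—leaves only $\int_0^\infty\w{t'}_{{}_T}^{s}\cG_2(t')\,dt'$, which, since $\cG_2=O(\w{t}_{{}_T}^{-6})$ and is at most quadratic in $\|\na\r_0\|_{L^\infty}$, is $\lesssim(1+T)(1+\|\na\r_0\|_{L^\infty}^2)E_3$ for $s<5$; this is\refeq{propdecayD3eq2}.

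\medskip The difficulty is entirely concentrated in Step 1: the cubic and quartic terms on the right of\refeq{decayD3energyid}—the convection term $(\r\,u\cdot\na u_t|u_{tt})_{L^2}$ and the three pieces of $(\wt f_t|u_{tt})_{L^2}$, including the two carrying $\na\r$—must be split, after using the Stokes estimate\refeq{decayD3Stokes}, so that no coefficient of $\|\na u_t\|_{L^2}^2$ remains other than a constant multiple of $\|u\|_{L^4}^4$ (whose integral is $\lesssim E_0^2$ regardless of $T$), every other contribution decaying fast enough to make the weighted integrals in Step 3 converge with precisely the advertised $(1+T)(1+\|\na\r_0\|_{L^\infty}^2)$ behaviour.
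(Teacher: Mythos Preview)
Your proposal is correct and takes essentially the same route as the paper. The only difference is organizational: the paper recognizes that $u_t$ is itself a solution of~(LINS2D) with external force $\wt f=-\rho_t u_t-\rho_t u\cdot\nabla u-\rho u_t\cdot\nabla u$ and simply invokes the pre-packaged Lemma~\ref{DecayH1fond} (all three inequalities\refeq{LemDecayH1fondeq1}--\refeq{LemDecayH1fondeq3}), so that the whole proof reduces to estimating $\|\wt f(t)\|_{L^2}^2$ pointwise and in $L^1_t$; you instead multiply the $u_t$-equation by $u_{tt}$ and redo that energy estimate by hand---but that is precisely how Lemma~\ref{DecayH1fond} was proved, so the two arguments coincide. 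One small correction: your $\cG_1$ cannot be only $\|u\|_{L^4}^4$, because the piece $(\rho\,u_t\cdot\nabla u\,|\,u_{tt})$ unavoidably produces a term $\|\nabla u\|_{L^2}^2\|\nabla u_t\|_{L^2}^2$ after Young's inequality; thus $\cG_1\lesssim\|u\|_{L^4}^4+\|\nabla u\|_{L^2}^2$, which is harmless since $\int_0^\infty\|\nabla u\|_{L^2}^2\,dt\leq \frac12 E_0$ is equally $T$-independent by\refeq{zp1}.
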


\begin{proof}
Relation\refeq{estimenergyH2Linearisedemoeq0} applies with~$v=u$
and~$f\equiv 0$ claims exactly that~$u_t$ is a solution of~(LINS2D)
with the external force
$$
\wt f\eqdefa -\rho_tu_t  - \rho_t u\cdot \nabla u -\rho u_t\cdot\nabla u.
$$
In order to apply Lemma\refer  {DecayH1fond}, we have to
estimate~$\|\wt f(t)\|_{L^2}$. H\"older inequality  and
interpolation inequality allow to write \beno
 \|\wt f \|_{L^2}^2
 & \leq & 2\|\r_t \|^2_{L^\infty} \|u_t \|_{L^2}^2+2\|\r_t
\|_{L^\infty}^2 \|u \|^2_{L^\infty}\|\na u\|_{L^2}^2
+4\|u_t \|_{L^4}^2\|\nabla u \|_{L^4}^2\\
& \lesssim  &
 \|\r_t \|^2_{L^\infty} \|u_t \|_{L^2}^2+\|\r_t
\|_{L^\infty}^2 \|u \|^2_{L^\infty}\|\na u\|_{L^2}^2 + \|\nabla u_t
\|_{L^2}\|u_t \|_{L^2}\|\nabla^2 u\|_{L^2} \|\nabla u \|_{L^2}.
\eeno
 Using Corollary\refer {S7col1} and Proposition\refer {decayD2}, we get
 \beq\label{propdecayD3eq3}
\begin{split}
\|\wt f(t)\|^2_{L^2}  \lesssim \|\nabla& \r_0\|_{L^\infty} ^2
E_2^{\frac 12}   \langle t \rangle_{{}_T}^{-3} \|u_t(t)\|_{L^2}^2
\\
&+ \|\nabla \r_0\|_{L^\infty} ^2 E_2 \langle t \rangle_{{}_T}^{-6}
\|\nabla u(t)\|_{L^2}^2 + E_2 \langle t \rangle_{{}_T}^{-4} \|\nabla
u_t (t)\|_{L^2}\|\nabla u(t) \|_{L^2}. \end{split} \eeq
 Cauchy-Schwarz inequality gives
\beno
\int_{t_0}^t \|\wt f(t')\|^2_{L^2} dt'  & \lesssim & \|\nabla
\r_0\|_{L^\infty} ^2 E_2^{\frac 12}   \langle t_0
\rangle_{{}_T}^{-3}\int_{t_0}^t \|u_t(t')\|_{L^2}^2dt'
\\
&&{}+ \|\nabla \r_0\|_{L^\infty} ^2
E_2   \langle t_0 \rangle_{{}_T}^{-6} \int_{t_0}^t  \|\nabla u(t')\|_{L^2}^2dt'\\
&&{} + E_2   \langle t_0 \rangle_{{}_T}^{-4} \Bigl( \int_{t_0}^t
\|\nabla u_t(t') \|^2_{L^2} dt' \Bigr)^{\frac 12} \Bigl(
\int_{t_0}^t \|\nabla u(t') \|_{L^2}^2 dt' \Bigr)^{\frac 12}.
\eeno
 Applying \eqref{LemDecayH1fondeq1} and Corollary\refer
{estimH2u} leads to
\begin{equation}\label{zp6}
\int_{t_0}^t \|\wt f(t')\|^2_{L^2} dt'  \lesssim \bigl(\|\nabla
\r_0\|_{L^\infty} ^2 E_2^{\frac 12} (E_1 +E_2^{\frac 12})+
E_2^{\frac 32}\bigr)   \langle t_0 \rangle_{{}_T}^{-6} \lesssim
E_3\w{t_0}^{-6}_T.
\end{equation}
Applying again \eqref{LemDecayH1fondeq1} gives \beq \label{zp7}
\begin{split}
& \|\nabla  u_t
(t)\|^2 _{L^2} +\int_{t_0}^\infty \bigl(\|u_{tt}  (t')\|^2 _{L^2} +
\|\nabla^2 u_t(t')\|^2_{L^2}+\|\nabla\partial_t \Pi
(t')\|_{L^2}^2\bigr) dt'\\
&\qquad\qquad\qquad\qquad\qquad
\qquad\qquad\qquad\qquad\quad\lesssim \|\na
u_t(t_0)\|_{L^2}^2+\int_{t_0}^\infty\|\wt{f}(t')\|_{L^2}^2\,dt'.
\end{split}
 \eeq
 In particular, \eqref{zp6} and \eqref{zp7} ensure
that \beq \label{zp8} \|\na u_t(t)\|_{L^2}^2\lesssim E_3. \eeq While
it follows from \eqref{LemDecayH1fondeq3}  and \eqref{zp6} that
\beno t\|\na u_t(t)\|_{L^2}^2&\lesssim &
\|u_t(t/2)\|_{L^2}^2+t\int_{t/2}^\infty
\|\wt{f}(t')\|_{L^2}^2\,dt'\\
&\lesssim& E_2 \w{t}_{{}_T}^{-4}+E_3 t \w{t}_{{}_T}^{-6},
 \eeno
 which together with \eqref{zp8} implies that if $T$ greater than or equal to~$E_2/E_3$
\beno \|\na u_t(t)\|_{L^2}^2\lesssim \max\bigl(E_3,{E_2}/T
\bigr) \w{t}_{{}_T}^{-5}+E_3 \w{t}_{{}_T}^{-6}\lesssim E_3 \w{t}_{{}_T}^{-5}. \eeno
Resuming the above estimate and \eqref{zp6} into \eqref{zp7} gives
\beno \int_{t}^\infty \bigl(\|u_{tt}  (t')\|^2 _{L^2} + \|\nabla^2
u_t(t')\|^2_{L^2}+\|\nabla\partial_t \Pi (t')\|_{L^2}^2\bigr)
dt'\lesssim E_3 \w{t}_{{}_T}^{-5}.
 \eeno
 This proves Inequality \eqref{propdecayD3eq1}.  \eqref{propdecayD3eq1} together with
 \eqref{propdecayD3eq3} implies that
 \beno
 \|\wt{f}(t)\|_{L^2}^2\lesssim
 \bigl(1+\|\na
 \r_0\|_{L^\infty}^2\bigr)E_3 \w{t}_{{}_T}^{-7}. \eeno

In order to prove \refeq{propdecayD3eq2}, we  apply
\eqref{LemDecayH1fondeq2} for  any $s$ less than~$5$ and $v$ equal
to~$u_t$ to get
$$
\longformule{ \langle t \rangle_{{}_T}^{5_-}\|\nabla u_t (t)\|^2
_{L^2} +\int_{t_0}^t\langle t' \rangle_{{}_T}^{5_-} \Bigl(\|u_{tt}
(t')\|^2 _{L^2} + \|\nabla^2 u_t(t')\|^2_{L^2}+\|\nabla\partial_t
\Pi (t')\|_{L^2}^2\Bigr) dt' } { {} \lesssim \langle t_0
\rangle_{{}_T}^{5_-}\|\nabla u_t (t_0)\|^2
_{L^2}+\int_{t_0}^t\w{t'}_T^{5_-}\|\wt{f}(t')\|_{L^2}^2dt'+\int_{t_0}^t\w{t'}^{4_-}_T\|\na
u_t(t')\|_{L^2}^2\f{dt'}T, }
$$
 which together with the fact that
\beno \int_{t_0}^t\w{t'}^{5_-}\|\wt{f}(t')\|_{L^2}^2dt' & \lesssim &
 \bigl(1+\|\na
 \r_0\|_{L^\infty}^2\bigr) E_3 T\andf \\
\int_{t_0}^t\w{t'}_T^{4_-}\|\na u_t(t')\|_{L^2}^2\f{dt'}T &\lesssim
& E_3\int_{t_0}^t\w{t'}_T^{-1_+}\f{dt'}T\lesssim E_3 \eeno
 leads to Estimate\refeq{propdecayD3eq2}. This completes the proof of the proposition.
\end{proof}

Now let us translate the control of~$\|\nabla u_t(t)\|_{L^2} $ in
term of control of~$\|\nabla ^3 u(t)\|_{L^2}$.
\begin{prop}
 \label{DecayD4} {\sl
Under the hypothesis of Theorem\refer  {ins2Ddecay}, for any $T\geq
T_3(\r_0,u_0),$ we have \ben &&\label{PropS7eq6eq1} \|\nabla^3
u(t)\|^2_{L^2}+\|\nabla^2 \Pi (t)\|^2_{L^2} \lesssim
\bigl(1+\|\na\r_0\|_{L^2}\bigr) E_3 \langle t\rangle _{{}_T}^{-
5}\log^2 \langle t\rangle_{{}_T} \andf \\
&&\label{PropS7eq6eq2} \qquad\qquad\|\nabla u(t)\|_{L^\infty}
\lesssim \bigl(1+\|\na\r_0\|_{L^2}\bigr)^{\f14} E_1^{\f14}E_3^{\f14}
\langle t\rangle _{{}_T}^{- 2}\log^{\f12} \langle t\rangle_{{}_T}.
\een}
\end{prop}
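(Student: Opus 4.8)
\noindent The two inequalities are tied together: \eqref{PropS7eq6eq2} is a direct consequence of \eqref{PropS7eq6eq1} via the two-dimensional interpolation inequality $\|\na u\|_{L^\infty}\lesssim\|\na u\|_{L^2}^{\f12}\|\na^3 u\|_{L^2}^{\f12}$ (Agmon's inequality applied to~$\na u$) together with the decay $\|\na u(t)\|_{L^2}^2\lesssim E_1\w{t}_{{}_T}^{-3}$ already obtained in \eqref{ins2Ddecayeq2}. So the heart of the matter is \eqref{PropS7eq6eq1}.

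\noindent To prove it, the plan is to read the momentum equation of (INS2D) as the stationary Stokes system $-\D u+\na\Pi=-\r\,(u_t+u\cdot\na u)$, $\dive u=0$, and to apply the classical elliptic estimate for the Stokes operator (as in the proof of Lemma\refer{Echanget2x}) to the system differentiated once in space, so as to get
\[
\|\na^3 u\|_{L^2}+\|\na^2\Pi\|_{L^2}\lesssim\bigl\|\na\bigl(\r\,u_t\bigr)\bigr\|_{L^2}+\bigl\|\na\bigl(\r\,(u\cdot\na u)\bigr)\bigr\|_{L^2}.
\]
Then one distributes the derivative. All the resulting pieces but one are harmless: $\|\r\,\na u_t\|_{L^2}\lesssim\|\na u_t\|_{L^2}\lesssim E_3^{\f12}\w{t}_{{}_T}^{-\f52}$ by \eqref{propdecayD3eq1}, while the contributions arising when the gradient hits $\r\,(u\cdot\na u)$, namely $\r\,\na u\otimes\na u$, $\r\,(u\cdot\na^2 u)$ and $(\na\r)\,(u\cdot\na u)$, are each bounded by H\"older and two-dimensional Gagliardo--Nirenberg inequalities in terms of $\|u\|_{L^\infty}$ (Corollary\refer{S7col1}), $\|\na u\|_{L^2}$ and $\|\na^2 u\|_{L^2}$ (Proposition\refer{decayD2}) and the density gradient bounds $\|\na\r(t)\|_{L^p}\lesssim\|\na\r_0\|_{L^p}$ of \eqref{1.19b}; invoking the decay rates of Theorem\refer{ins2Ddecay}, every one of these decays strictly faster than $\w{t}_{{}_T}^{-\f52}$.

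\noindent The only delicate term, and the origin both of the logarithm and of the factor $1+\|\na\r_0\|_{L^2}$, is $(\na\r)\cdot u_t$. Here the plan is to write $\|(\na\r(t))\cdot u_t(t)\|_{L^2}\le\|\na\r(t)\|_{L^2}\|u_t(t)\|_{L^\infty}\lesssim\|\na\r_0\|_{L^2}\,\|u_t(t)\|_{L^\infty}$, and to control $\|u_t(t)\|_{L^\infty}$ by a borderline two-dimensional logarithmic interpolation inequality of Brezis--Gallouet type, which bounds $\|u_t\|_{L^\infty}$ essentially by $\|\na u_t\|_{L^2}$ at the price of a factor $\log^{\f12}$ of a higher Sobolev norm of~$u_t$. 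Feeding in the sharp pointwise decays of $\|u_t\|_{L^2}$ and $\|\na u_t\|_{L^2}$ coming from Propositions\refer{decayD2} and\refer{decayD3} then yields $\|u_t(t)\|_{L^\infty}\lesssim E_3^{\f12}\,\w{t}_{{}_T}^{-\f52}\log^{\f12}\w{t}_{{}_T}$; as that higher Sobolev norm enters only under a logarithm, it suffices to have a crude polynomial-in-$t$ a priori bound on it, which follows from the global regularity of the solution (and, if an explicit one is wanted, from the weighted estimate \eqref{propdecayD3eq2} together with a Stokes estimate for the equation satisfied by~$u_t$). Collecting all the terms gives $\|\na^3 u(t)\|_{L^2}+\|\na^2\Pi(t)\|_{L^2}\lesssim(1+\|\na\r_0\|_{L^2})\,E_3^{\f12}\,\w{t}_{{}_T}^{-\f52}\log\w{t}_{{}_T}$, which is \eqref{PropS7eq6eq1}; and \eqref{PropS7eq6eq2} then follows as above.

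\noindent The step I expect to be the main obstacle is precisely this last one. In two space dimensions one cannot dominate $\|u_t\|_{L^\infty}$ by the quantities $\|u_t\|_{L^2}$ and $\|\na u_t\|_{L^2}$ --- the only ones enjoying sharp pointwise decay --- without losing a logarithm, and simultaneously one must route $\na\r$ through its $L^2$ norm in order to keep the clean dependence on $\|\na\r_0\|_{L^2}$; reconciling this logarithmic loss with the optimal $\w{t}_{{}_T}^{-\f52}$ time decay, for instance by letting the Lebesgue/Sobolev exponent used in the interpolation drift to the endpoint as $t\to\infty$, is the technical core of the argument.
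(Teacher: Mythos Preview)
Your overall scheme matches the paper's: differentiate the momentum equation, apply Lemma~\ref{Echanget2x} with~$v=\partial_j u$, and reduce to the single dangerous piece~$\partial_j\rho\,u_t$; deducing~\eqref{PropS7eq6eq2} from~\eqref{PropS7eq6eq1} via $\|\nabla u\|_{L^\infty}\lesssim\|\nabla u\|_{L^2}^{1/2}\|\nabla^3 u\|_{L^2}^{1/2}$ is also exactly what the paper does.

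The gap is in your treatment of~$\partial_j\rho\,u_t$. Splitting as $\|\nabla\rho\|_{L^2}\|u_t\|_{L^\infty}$ and invoking Brezis--Gallouet does not deliver the rate~$\langle t\rangle_{{}_T}^{-5/2}$, for two linked reasons. First, any Brezis--Gallouet inequality on~$\R^2$ carries a low-frequency term~$\|u_t\|_{L^2}\lesssim E_2^{1/2}\langle t\rangle_{{}_T}^{-2}$, which is \emph{slower} than the target and has weight one, not~$\epsilon$; to suppress it you must bring in~$\|\nabla^2 u_t\|_{L^2}$ with positive weight (for instance via $\|u_t\|_{L^\infty}\lesssim\|u_t\|_{L^2}^{1/2}\|\nabla^2 u_t\|_{L^2}^{1/2}$). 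Second, the pointwise bound on~$\|\nabla^2 u_t(t)\|_{L^2}$ this requires is not available: Proposition~\ref{decayD3} and~\eqref{propdecayD3eq2} give only time-integrated control, and the Stokes equation for~$u_t$ shifts the problem onto pointwise~$\|u_{tt}\|_{L^2}$, which is in the same situation. Obtaining a genuine pointwise bound means running the energy hierarchy one full step further --- doable, but substantial extra work, not a ``crude'' remark.

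The paper sidesteps all of this by applying the drifting-exponent idea you mention directly to the H\"older split of the \emph{product}, not to~$u_t$ alone: for each~$\epsilon\in\,]0,1[$,
\[
\|\partial_j\rho\,u_t\|_{L^2}\;\le\;\|\nabla\rho\|_{L^{2/(1-\epsilon)}}\,\|u_t\|_{L^{2/\epsilon}}\;\le\;\frac{C}{\epsilon}\,\|\nabla\rho\|_{L^{2/(1-\epsilon)}}\,\|u_t\|_{L^2}^{\epsilon}\,\|\nabla u_t\|_{L^2}^{1-\epsilon}.
\]
Now $\|\nabla\rho\|_{L^{2/(1-\epsilon)}}\le\|\nabla\rho_0\|_{L^2}^{1-\epsilon}\|\nabla\rho_0\|_{L^\infty}^{\epsilon}$ by~\eqref{1.19b}, with the~$L^\infty$ part absorbed into~$E_3$ (recall $E_3\ge E_2\|\nabla\rho_0\|_{L^\infty}^2$). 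Plugging in the sharp pointwise decays of~$\|u_t\|_{L^2}$ and~$\|\nabla u_t\|_{L^2}$ from Propositions~\ref{decayD2} and~\ref{decayD3} gives
\[
\|\partial_j\rho\,u_t\|_{L^2}\;\lesssim\;(1+\|\nabla\rho_0\|_{L^2})\,E_3^{1/2}\,\langle t\rangle_{{}_T}^{-5/2}\cdot\frac{1}{\epsilon}\,\langle t\rangle_{{}_T}^{\epsilon/2},
\]
and the choice~$\epsilon=\log^{-1}\langle t\rangle_{{}_T}$ yields the~$\log\langle t\rangle_{{}_T}$ factor. The key point: as~$\epsilon\to 0$ the weight on the slow-decaying~$\|u_t\|_{L^2}$ tends to zero, while~$\nabla\rho$, being uniformly bounded in \emph{every}~$L^p$, absorbs the compensating exponent shift at no cost. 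Only quantities already sharply controlled pointwise are used, and no~$H^2$ information on~$u_t$ is needed.
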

\begin{proof}
By differentiation of the momentum equation of (INS2D) with respect
to the space variables, we get, by using Leibnitz formula, that
$$
\Delta \partial_j u-\partial_j\nabla \Pi = \rho\, \partial_j u_t  +
\partial_j \r\, u_t +\partial_j \r \,u\cdot\nabla u + \r\, \partial_j
u\cdot\nabla u + \r u\cdot\nabla \partial_j u.
$$
Applying  Lemma\refer {Echanget2x} with~$v=\partial_j u$ and~$f= \partial_j \r\, u_t  + \partial_j \r\, u\cdot\nabla u+ \r\, \partial_j u\cdot\nabla u$ gives
$$
\|\Delta \partial_j u\|_{L^2} +\|\partial_j\nabla \Pi\|_{L^2} \leq
C\bigl(\|\nabla u_t\|_{L^2}+\|f\|_{L^2}+ \|u\|_{L^{4}}^{2} \|\nabla
^2 u\|_{L^2}\bigr).
$$
In view of Propositions\refer{propDecayL2H1}
 and\refer  {decayD3}, we infer that
\ben\nonumber
 \|\Delta \partial_j u(t)\|_{L^2} +\|\partial_j\nabla \Pi(t)\|_{L^2}
 &\lesssim&
 E_3^{\frac 12} \langle t\rangle_{{}_T}^{-\frac  5  2} +E_1^{\f12}E_2^{\f12} \w{t}_{{}_T}^{-\frac  9  2}+\|f(t)\|_{L^2}\\
 &\lesssim&
 E_3^{\frac 12} \langle t\rangle_{{}_T}^{-\frac  5  2} +\|f(t)\|_{L^2} \label{S7eq6demoeq1}.
\een Let us estimate~$\|f(t)\|_{L^2}$. We write that \beno
\| \partial_j \r\, u\cdot\nabla u\|_{L^2} & \leq & \|\nabla \r\|_{L^\infty} \|u\|_{L^\infty} \|\nabla u\|_{L^2}\andf\\
\| \r\, \partial_j u\cdot\nabla u\|_{L^2} & \leq & \|\r\|_{L^\infty}
\|\nabla u\|_{L^2}\|\nabla^2 u\|_{L^2}. \eeno Applying
Propositions\refer{propDecayL2H1}
 and\refer  {decayD3}, we infer that
 \beq \begin{split}
 \| \partial_j \r\, u\cdot\nabla u\|_{L^2} +\| \r\, \partial_j u\cdot\nabla u\|_{L^2}
 &\lesssim \bigl(\|\na\r_0\|_{L^\infty}E_1^{\f12}E_2^{\f14}+ E_1^{\f12}E_2^{\f12}\bigr) \langle t\rangle_{{}_T}^{-3}\\
 &\lesssim E_3^{\f12}\langle t\rangle_{{}_T}^{-3}.
 \label{S7eq6demoeq3} \end{split}
 \eeq
 The linear term~$\partial_j\r\, u_t$ is more delicate to estimate.
   Let us write that, using H\"older inequality and interpolation inequality, for any~$\epsilon$ in~$]0,1[$,
 \beno
 \|\partial_j\r u_t\|_{L^2} &  \leq &  \|\nabla \r\|_{L^{\frac 2 {1-\epsilon}}} \|u_t\|_{L^{\frac 2 \epsilon}}\\
 &\leq & \frac C \epsilon  \|\nabla \r\|_{L^{\frac 2 {1-\epsilon}}} \|u_t\|_{L^{2}}^\epsilon
 \|\nabla u_t\|_{L^2} ^{1-\epsilon}.
\eeno Using Propositions\refer {decayD2} and\refer {decayD3}, and
Estimates\refeq {1.19b}, we get that, for any~$\epsilon$ in~$]0,1[$,
$$
 \|\partial_j\r \,u_t\|_{L^2}   \leq (\|\nabla \rho_0\|^2_{L^\infty} E_2)^{\frac \epsilon 2 }  (\|\nabla \rho_0\|_{L^2}^2 E_3)^{\frac {1-\epsilon} 2}
\langle t \rangle_{{}_T}^{-\frac 52}\times \frac 1 \epsilon \langle
t \rangle_{{}_T}^{\frac \epsilon 2}.
$$
By convexity inequality we get that for any~$\epsilon$ in~$Ê]0,1[$,
$$
 (\|\nabla \rho_0\|^2_{L^\infty} E_2)^{\frac \epsilon 2 }  (\|\nabla \rho_0\|_{L^2}^2 E_3)^{\frac {1-\epsilon} 2}
 \leq
  \|\nabla \rho_0\|_{L^\infty} E_2^{\frac 12}+ \|\nabla \rho_0\|_{L^2} E_3^{\frac 12}
  \leq (1+\|\nabla \rho_0\|_{L^2})E_3^{\frac 12} .
$$
Thus, for any $\epsilon$ in~$]0,1[$,
\beq
\label{S7eq6demoeq4}
 \|\partial_j\r u_t\|_{L^2}   \lesssim (1+\|\nabla \rho_0\|_{L^2})E_3^{\frac 12}
\langle t \rangle_{{}_T}^{-\frac 52}\times \frac 1 \epsilon \langle
t \rangle_{{}_T}^{\frac \epsilon 2}.
\eeq
Choosing~$\epsilon$ equal to~$\log^{-1} \langle t \rangle_{{}_T}$ in \eqref{S7eq6demoeq4}, and
then substituting the resulting inequality and Inequality\refeq{S7eq6demoeq3}
into \eqref{S7eq6demoeq1} leads to \eqref{PropS7eq6eq1}.

Finally \eqref{PropS7eq6eq2} follows from   interpolation inequality
\eqref{1.14b}, and \eqref{ins2Ddecayeq2}, \eqref{PropS7eq6eq1}. This
finishes the proof of Proposition \ref{DecayD4}.
\end{proof}

By summarizing Propositions \ref{propDecayL2H1}, \ref{decayD2},
\ref{decayD3} and \ref{DecayD4}, we conclude the proof of Theorem
\ref{ins2Ddecay}.

\subsection {Decay of solutions to \eqref{INS2dParameterb}} Applying
 Theorem \ref{ins2Ddecay} to the System\refeq{INS2dParameterb} leads
 to the following theorem:

\begin{thm}
\label {S3thm2}
{\sl Let $(\r^\h, v^\h, \nh \Pi^\h)$ be  the smooth
 solution of \eqref{INS2dParameterb}. Then  under the
assumptions of  Theorem \ref{insslowvar}, we have
\beq
\label{S3thm2eq1}
\begin{split} &
\w{t}\|
v^\h(t,\cdot,z)\|_{L^2_\h}+\w{t}^{\f32}\bigl(\|\nh
v^\h(t,\cdot,z)\|_{L^2_\h}+\|v^\h(t,\cdot,z)\|_{L^\infty_\h}\bigr)\\
&+ \w{t}^{2}\bigl(\|v^\h_t(t,\cdot,z)\|_{L^2_\h}+ \|\nh^2
v^\h(t,\cdot,z)\|_{L^2_\h}+\|\nh
\Pi^\h(t,\cdot,z)\|_{L^2_\h}\bigr)\\
&+\w{t}^{\f52}\log^{-1}\w{t}\bigl(\|\nh^3v^\h(t,\cdot,z)\|_{L^2_\h}+\|\nh^2
\Pi^\h(t,\cdot,z)\|_{L^2_\h}\bigr)\\
&+ \w{t}^{\f52} \|\nh
v^\h_t(t,\cdot,z)\|_{L^2_\h}+\w{t}^{2}\log^{-\f12}\w{t}\|\nh
v^\h(t,\cdot,z)\|_{L^\infty_\h} \biggr)\leq \cC_0h(z),
\end{split}
\eeq and \beq\label{S3thm2eq8} \int_{t_0}^t\langle t'
\rangle_{{}_T}^{5_-} \Bigl(\|\p_{t}^2v^\h  (t')\|^2 _{L^2_\h} +
\|\nabla_\h^2 v^\h_t(t')\|^2_{L^2_\h}+\|\nabla_\h\partial_t \Pi^\h
(t')\|_{L^2_\h}^2\Bigr) dt' \lesssim \cC_0 h^2(z). \eeq We also have
 \ben
 \label{S3thm2eq2} &&\qquad\quad\|\nh^4
v^\h(\cdot,\cdot,z)\|_{L^1_t(L^2_\h)}\leq \cC_0h(z)\andf\\
\label{S3thm2eq3}
&&\|\varrho^h(t,\cdot,z)\|_{H^4_\h}+\w{t}^{\f32}\|\r_t(t,\cdot,z)\|_{H^3_\h}\leq
\cC_0\eta  h(z). \een Here and in what follows, we always denote
$\varrho^\h\eqdefa \r^h-1$ and $h(z)$ to be a generic positive
function which belongs to $L^2_{\rm v}\cap L^\infty_{\rm v}.$
 }
 \end{thm}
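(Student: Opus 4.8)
The plan is to apply, for each fixed value of the parameter $z\in\R$, the two-dimensional decay results of Theorem \ref{ins2Ddecay} together with Propositions \ref{decayD3} and \ref{DecayD4} to the solution $\bigl(\r^\h(\cdot,\cdot,z),v^\h(\cdot,\cdot,z),\nh\Pi^\h(\cdot,\cdot,z)\bigr)$ of \eqref{INS2dParameterb} in the horizontal variable $x_\h$, and then to repackage the outcome in the $z$-dependent form of \eqref{S3thm2eq1}--\eqref{S3thm2eq3}. First I would check the hypotheses \eqref{ins2Ddecahyp1}: choosing $\eta_0$ so that $\eta_0\|\varsigma_0\|_{L^\infty}\le1/4$ ensures $3/4\le\r_0^\h(x_\h,z)=1+\eta\varsigma_0(x_\h,z)\le5/4$ for every $z$ and every $\eta\le\eta_0$; since $v_0^\h(\cdot,z)$ is divergence free and integrable it is mean free, so the first identity of \eqref{S7eq0.1} gives $\int_{\R^2}\r_0^\h(x_\h,z)v_0^\h(x_\h,z)\,dx_\h=0$, i.e. \eqref{1.7dfg}; and $U_0(z)=\int_{\R^2}|x_\h|\,|v_0^\h(x_\h,z)|\,dx_\h<\infty$ because $v_0^\h\in\cS(\R^3)$. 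Reading $\p_tv^\h|_{t=0}$ off the Stokes relation $\r_0^\h\,\p_tv^\h|_{t=0}=\D_\h v_0^\h-\nh\Pi^\h|_{t=0}-\r_0^\h\,v_0^\h\cdot\nh v_0^\h$, one sees that the initial quantities $E_0(z),\dots,E_3(z)$ of Theorem \ref{ins2Ddecay} attached to the profile at height $z$ are bounded uniformly in $z$, while $E_0(z)^{1/2},\dots,E_3(z)^{1/2}$ and $\|\nh\varsigma_0(\cdot,z)\|_{L^\infty_\h}=\eta^{-1}\|\nh\r_0^\h(\cdot,z)\|_{L^\infty_\h}$ are rapidly decaying in $z$, hence lie in $L^2_{\rm v}\cap L^\infty_{\rm v}$.

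I would then fix once and for all a time $T\ge\sup_z T_3\bigl(\r_0^\h(\cdot,z),v_0^\h(\cdot,z)\bigr)$, finite by the above, so that $\w{t}_{{}_T}\sim\w{t}$ and every constant $\cC(E_0)$ below is bounded by $\cC\bigl(\sup_zE_0(z)\bigr)=:\cC_0$. With this choice, estimates \eqref{ins2Ddecayeq10}--\eqref{ins2Ddecayeq6} applied in the variable $x_\h$ give \eqref{S3thm2eq1}, the generic function $h(z)$ absorbing the bounded rapidly-decaying prefactors $E_0(z)^{1/2}$, $E_1(z)^{1/2}$, $E_2(z)^{1/2}$, $E_3(z)^{1/2}$ and $E_1(z)^{1/4}E_3(z)^{1/4}$; likewise \eqref{propdecayD3eq2} of Proposition \ref{decayD3} yields \eqref{S3thm2eq8}, since $(1+T)\bigl(1+\|\nh\r_0^\h(\cdot,z)\|^2_{L^\infty_\h}\bigr)E_3(z)\lesssim\cC_0h^2(z)$, the factor $1+T$ being a constant.

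It remains to obtain \eqref{S3thm2eq2} and \eqref{S3thm2eq3}, which I would get by a short bootstrap between the regularity of $v^\h$ and that of $\varrho^\h=\r^\h-1$. Differentiating the horizontal transport equation $\p_t\varrho^\h+v^\h\cdot\nh\varrho^\h=0$ up to order three and performing an $H^3_\h$ energy estimate — using commutator/Moser bounds, the fact that $\|\nh v^\h(\cdot,\cdot,z)\|_{L^1_t(L^\infty_\h)}\le\cC_0h(z)$ by \eqref{PropS7eq6eq2} (equivalently \eqref{S7col1eq1}), and that $\|\nh^3v^\h(\cdot,\cdot,z)\|_{L^1_t(L^2_\h)}<\infty$ by the $\w{t}^{-5/2}\log\w{t}$ decay in \eqref{S3thm2eq1} — gives, by Gronwall's lemma, $\|\varrho^\h(t,\cdot,z)\|_{H^3_\h}\lesssim\cC_0\eta h(z)$. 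Next, differentiating the Stokes relation $\D_\h v^\h-\nh\Pi^\h=\r^\h\bigl(\p_tv^\h+v^\h\cdot\nh v^\h\bigr)$ twice in $x_\h$ and invoking the Stokes estimate (as in Lemma \ref{Echanget2x}) bounds $\|\nh^4v^\h(t,\cdot,z)\|_{L^2_\h}$ by $\|\nh^2v^\h_t(t,\cdot,z)\|_{L^2_\h}$ plus products of already-controlled lower-order terms — involving at worst $\|\nh^2\varrho^\h\|_{L^4_\h}$, now available from the $H^3_\h$ bound, and $\|\p_tv^\h\|_{L^4_\h}$; combining with the weighted estimate $\int\w{t'}^{5_-}_{{}_T}\|\nh^2v^\h_t\|^2_{L^2_\h}\,dt'\lesssim\cC_0h^2(z)$ of \eqref{S3thm2eq8} and Cauchy--Schwarz in time (using $\w{t}^{-5/2+}\in L^2(\R^+)$) yields \eqref{S3thm2eq2}. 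Finally, redoing the $H^4_\h$ energy estimate for the transport equation with $\|\nh^4v^\h(\cdot,\cdot,z)\|_{L^1_t(L^2_\h)}\le\cC_0h(z)$ now in hand gives $\|\varrho^\h(t,\cdot,z)\|_{H^4_\h}\le\cC_0\eta\|\varsigma_0(\cdot,z)\|_{H^4_\h}\le\cC_0\eta h(z)$, and the bound on $\r_t=-v^\h\cdot\nh\varrho^\h$ follows from the product estimate $\|\r_t(t,\cdot,z)\|_{H^3_\h}\lesssim\|v^\h(t,\cdot,z)\|_{L^\infty_\h}\|\varrho^\h(t,\cdot,z)\|_{H^4_\h}+\|\nh^3v^\h(t,\cdot,z)\|_{L^2_\h}\|\nh\varrho^\h(t,\cdot,z)\|_{L^\infty_\h}+\cdots$ together with the $\w{t}^{-3/2}$ decay of $v^\h$ in $L^\infty_\h$ and of $\nh^3v^\h$ in $L^2_\h$ coming from \eqref{S3thm2eq1}. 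The main obstacle is exactly this last block: the decay rates of the top-order quantities are only borderline integrable in time, so they must be combined through Cauchy--Schwarz and two-dimensional Gagliardo--Nirenberg inequalities with care, and the coupling between the $H^4_\h$ density estimate and the $\nh^4v^\h$ estimate has to be unwound in the right order; throughout one must keep the constants, which depend only on the profile norms and are hence uniform in $z$, separate from the $z$-dependent factors, which are rapidly decaying and may therefore be folded into the generic function $h\in L^2_{\rm v}\cap L^\infty_{\rm v}$.
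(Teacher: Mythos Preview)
Your proposal is correct and follows essentially the same route as the paper: \eqref{S3thm2eq1} and \eqref{S3thm2eq8} are read off directly from Theorem~\ref{ins2Ddecay} and \eqref{propdecayD3eq2}, and then \eqref{S3thm2eq2}--\eqref{S3thm2eq3} are obtained by the bootstrap you describe (first $\|\varrho^\h\|_{H^3_\h}$ via the transport equation, then $\|\nh^4 v^\h\|_{L^1_t(L^2_\h)}$ via the twice-differentiated Stokes relation and \eqref{S3thm2eq8}, then $\|\varrho^\h\|_{H^4_\h}$ and finally $\|\r_t^\h\|_{H^3_\h}$). The only cosmetic difference is that, for the cross term $\nh^2\varrho^\h\cdot v^\h_t$, the paper uses the $\epsilon$-interpolation trick (as in \eqref{S7eq6demoeq4}) to extract a pointwise $\w{t}^{-5/2}\log\w{t}$ decay, whereas you simply pair $\|\nh^2\varrho^\h\|_{L^4_\h}$ with $\|v^\h_t\|_{L^4_\h}$; either yields the required $L^1_t$ bound.
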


 \begin{proof} \eqref{S3thm2eq1} and \eqref{S3thm2eq8} follows directly from Theorem
\ref{ins2Ddecay} and \eqref{propdecayD3eq2}. In order to prove
\eqref{S3thm2eq2},   we get by applying \eqref{S3thm2eq1} and
Theorem 3.14 of \cite{BCD} that \beq \label{S3thm2eq6}
\|\varrho(t)\|_{L^\infty_t(H^3_\h)}\lesssim \|\varrho_0
\|_{H^3_\h}\exp\Bigl(C\int_0^t\|\nh v^\h(t')\|_{H^2_\h}dt'\Bigr)\leq
\cC_0\eta  h(z). \eeq Whereas we deduce from the momentum equation
of \eqref{INS2dParameterb} and the classical estimates on Stokes
operator that \beno \|\nh^4
v^\h(t)\|_{L^2_\h}+\|\nh^3\Pi^\h(t)\|_{L^2_\h}\lesssim
\|\nh^2(\r^\h v_t^\h)(t)\|_{L^2_\h}+\|\nh^2(\r^\h v^\h\cdot\nh
v^\h)(t)\|_{L^2_\h}, \eeno which together with \eqref{S3thm2eq1} and
\eqref{S3thm2eq6} ensures that \beq \label{S3thm2eq5} \|\nh^4
v^\h(t)\|_{L^2_\h}+\|\nh^3\Pi^\h(t)\|_{L^2_\h}\leq
\cC_0\bigl(h(z)\w{t}^{-\frac 5 2}+\|\nh^2\r^\h v_t^\h(t)\|_{L^2_\h}+\|\nh^2v_t^\h(t)\|_{L^2_\h}\bigr).
\eeq Note that  for any~$\epsilon$ in~$]0,1[$,  we have \beno
\|\nh^2\r^\h v_t^\h\|_{L^2_\h}&\lesssim&\|\nh^2\r^\h\|_{L^{\f2{1-\epsilon}}_\h}\|v_t^\h\|_{L^{\f2\epsilon}_\h}\\
&\lesssim&
\f1{\epsilon}\|\nh^3\r^\h\|_{L^2_\h}^{\epsilon}\|\nh^2\r^\h\|_{L^2_\h}^{1-\epsilon}\|\p_tv^\h\|_{L^2_\h}^{\epsilon}\|\nh\p_tv^\h\|_{L^2_\h}^{1-\epsilon}.
\eeno Applying \eqref{S3thm2eq1}, \eqref{S3thm2eq6} and using a
similar derivation of \eqref{S7eq6demoeq4} gives
\beno
\|\nh^2\r^\h v_t^\h(t)\|_{L^2_\h}\leq
\cC_0h(z)\w{t}^{-\frac 5 2}\log\w{t}.
\eeno
Resuming the above estimate into \eqref{S3thm2eq5} and using \eqref{S3thm2eq8},
we infer \eqref{S3thm2eq2}.

With  \eqref{S3thm2eq1} and  \eqref{S3thm2eq2}, we deduce from
Theorem 3.14 of \cite{BCD} that \beq \label{S3thm2eq4}
\|\varrho^h\|_{L^\infty_t(H^4_\h)}\leq \|\varrho_0
\|_{H^4_\h}\exp\Bigl(C\int_0^t\|\nh v^\h(t')\|_{H^3_\h}dt'\Bigr)\leq
\cC_0\eta  h(z). \eeq Then by taking one more horizontal derivatives
to \eqref{zp5tr} and using \eqref{S3thm2eq1} and \eqref{S3thm2eq4},
we get
\beno
\|\nh^3\p_t\r^\h(t)\|_{L^2_\h}
 &\leq&
\|v^\h(t)\|_{L^\infty_\h}\|\nh^4\r^\h(t)\|_{L^2_\h}+3\|\nh
v^\h(t)\|_{L^\infty_\h}\|\nh^3\r^\h(t)\|_{L^2_\h}\\
&&{}
+3\|\nh^2v^\h(t)\|_{L^2_\h}\|\nh^2\r^\h(t)\|_{L^\infty_\h}+\|\nh^3v^\h(t)\|_{L^2_\h}\|\nh\r^\h(t)\|_{L^\infty_\h}\\
&\leq&
\cC_0\eta  h(z)\w{t}^{-\frac 3 2}.
\eeno
This together with \eqref{S7col1eq3} and \eqref{S3thm2eq4} ensures \eqref{S3thm2eq3}. This finishes the
 proof of Theorem\refer {S3thm2}.
\end{proof}

\setcounter{equation}{0}

\section{Decay estimates of $\p_z v^\h$}
\label {decayL2partialz}

In this section, we consider the family of solutions of
\eqref{INS2dParameterb},  which depends smoothly on a real parameter
denoted by~$z$. We skip the proof of the fact that, if the initial
data depends  smoothly on the parameter~$z$, so does the solution.
We concentrate  on decay estimates for the family~Ê$\partial_z v^\h$
which we denote by~$v^\h_z$ and also~$\partial_z\rho^\h $
by~$\rho^\h_z$. The purpose of this section is to prove that, under
suitable hypothesis  on the derivative of the family of initial
data, ~$v^\h_z$ has the same decay property as~$v^\h$.

\begin{thm}
\label {SAprop1}
{\sl Under the assumptions of Theorem
\ref{insslowvar},
 One has \beq
 \label{thmSAprop1eq1}
 \|\r^\h_z(t,\cdot,z)\|_{H^3_\h}+\w{t}^{\f32}\|\p_t\r^\h_z(t,\cdot,z)\|_{H^2_\h}\leq
 \cC_0\eta h(z).
 \eeq Moreover, $v^\h_z$ shares the same decay estimates of Inequalities \eqref{S3thm2eq1} and \eqref{S3thm2eq8} of Theorem\refer
 {S3thm2}.
 }
\end{thm}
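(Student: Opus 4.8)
The plan is to differentiate System~\eqref{INS2dParameterb} with respect to the parameter $z$ and to recognize that the pair $(\r^\h, v^\h_z)$, with $v^\h_z\eqdef\p_z v^\h$ and $\r^\h_z\eqdef\p_z\r^\h$, solves a linearized system of type (LINS2D). Differentiation gives
$$\p_t\r^\h_z+v^\h\cdot\nh\r^\h_z=-v^\h_z\cdot\nh\r^\h\virgp\quad \dive_\h v^\h_z=0\virgp$$
$$\r^\h\p_t v^\h_z+\r^\h v^\h\cdot\nh v^\h_z-\D_\h v^\h_z+\nh\Pi^\h_z=f_z+L(t)v^\h_z\virgp$$
where $f_z\eqdef-\r^\h_z\bigl(\p_t v^\h+v^\h\cdot\nh v^\h\bigr)=-\f{\r^\h_z}{\r^\h}\bigl(\D_\h v^\h-\nh\Pi^\h\bigr)$ and $L(t)w\eqdef-\r^\h\,w\cdot\nh v^\h$. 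Since $\|L(t)\|_{\cL(L^2_\h)}\le\|\r^\h\|_{L^\infty_\h}\|\nh v^\h(t)\|_{L^\infty_\h}$ and $\|\nh v^\h(t)\|_{L^\infty_\h}\le\cC_0h(z)\w t^{-2}\log^{\f12}\w t\in L^1(\R^+)$ by \eqref{S3thm2eq1}, the operator $L$ is harmless: it is absorbed exactly as at the start of Section \ref{GlobalEnergy}, by multiplying by $\exp\bigl(-\int_0^t\|L(t')\|_{\cL(L^2_\h)}\,dt'\bigr)$, a factor bounded above and below since $h\in L^\infty_{\rm v}$. The lemmas of Section \ref{decayvh} (Lemmas \ref{DecayH1fond}, \ref{LemmedecayWiegnerbasic}, \ref{estimuflat}, \ref{estimenergyH2Linearise}, and the arguments of Propositions \ref{propDecayL2H1}, \ref{decayD2}, \ref{decayD3}, \ref{DecayD4}) were all stated for a general (LINS2D), so they apply to $v^\h_z$ once $f_z$ is controlled. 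Moreover the hypotheses of Theorem \ref{ins2Ddecay} pass to $(\r^\h, v^\h_z)$: $v^\h_{0,z}\in\cS(\R^2)$ has finite $\int_{\R^2}|x_\h|\,|v^\h_{0,z}|\,dx_\h$, and differentiating \eqref{1.7dfg} in $z$ shows $v^\h_z(t,\cdot,z)$ has vanishing horizontal mean for every $t$ (it is the $z$-derivative of the mean-free field $v^\h$).

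The bounds on $v^\h_z$ and on $\r^\h_z$ are coupled (the source of the $\r^\h_z$ equation is linear in $v^\h_z$, while $f_z$ is linear in $\r^\h_z$), so I would close them by a continuity argument in time. Let $T^\#$ be the supremum of the times $t$ on which $v^\h_z$ satisfies \eqref{S3thm2eq1} and \eqref{S3thm2eq8} with $h$ replaced by $2h$; since everything is global and smooth and the bounds hold with room at $t=0$, $T^\#>0$. On $[0,T^\#]$, Theorem~3.14 of \cite{BCD} applied to the transport equation for $\r^\h_z$ — with $\|\varrho^\h(t)\|_{H^4_\h}\le\cC_0\eta h(z)$ from \eqref{S3thm2eq3}, the integrable decay $\|v^\h_z(t)\|_{L^\infty_\h}\le\cC_0h(z)\w t^{-\f32}$ from the bootstrap assumption, and $\int_0^\infty\|\nh v^\h(t)\|_{H^2_\h\cap L^\infty_\h}\,dt\le\cC_0h(z)$ from \eqref{S3thm2eq1} — gives $\|\r^\h_z(t)\|_{H^3_\h}\le\cC_0\eta h(z)$, the point being to estimate the source $v^\h_z\cdot\nh\r^\h$ in $H^3_\h$ by the Leibniz/tame product rule so that the (non-integrable) $L^2_\h$ norm of $v^\h_z$ never enters. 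Differentiating the equation once more in time and using the $L^\infty_\h$ decay of $v^\h,v^\h_z$ yields $\w t^{\f32}\|\p_t\r^\h_z(t)\|_{H^2_\h}\le\cC_0\eta h(z)$; this is \eqref{thmSAprop1eq1}. (I use $h^2\lesssim\|h\|_{L^\infty_{\rm v}}h$ freely.) With \eqref{thmSAprop1eq1}, the embedding $H^2(\R^2)\hookrightarrow L^\infty$ and \eqref{S3thm2eq1}--\eqref{S3thm2eq8} give $\|f_z(t)\|_{L^2_\h}\le\cC_0\eta h(z)^2\w t^{-2}$, and likewise for the time-derivative and weighted-in-time versions of $f_z$ entering the analogues of Corollary \ref{estimH2u} and Propositions \ref{decayD2}--\ref{DecayD4}; every such bound carries an extra factor $\eta$.

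Feeding these forcing estimates into the lemmas of Section \ref{decayvh} then reproduces, term by term, the estimates \eqref{S3thm2eq1} and \eqref{S3thm2eq8} for $v^\h_z$: Lemma \ref{DecayH1fond} propagates the decay of $\nh v^\h_z$; Lemma \ref{estimenergyH2Linearise}, run as in Corollary \ref{estimH2u} and Propositions \ref{decayD2}, \ref{decayD3}, \ref{DecayD4}, gives the decay of $\p_t v^\h_z$, $\nh^2 v^\h_z$, $\nh^3 v^\h_z$, $\nh v^\h_{z,t}$ and the weighted bound \eqref{S3thm2eq8} (the density terms occurring there are now controlled by the $\eta$-small, fast-decaying estimates \eqref{thmSAprop1eq1}); and the sharp $L^2_\h$ decay $\w t\|v^\h_z(t)\|_{L^2_\h}\le\cC_0 h(z)$ follows from Lemma \ref{LemmedecayWiegnerbasic} together with the version of Lemma \ref{estimuflat} for the forced system. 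Because all forcing contributions come with a factor $\eta$, choosing $\eta\le\eta_0$ small (depending only on the profiles) makes the constants obtained strictly better than the ones defining $T^\#$, so $T^\#=\infty$ and the theorem follows.

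The step I expect to be the real obstacle is the sharp $L^2_\h$ decay of $v^\h_z$. Contrary to the case of $v^\h$ itself, $\r^\h_0 v^\h_{0,z}=v^\h_{0,z}+\eta\,\varsigma_0\,\p_z v^\h_0$ is \emph{not} of vanishing horizontal mean: its mean equals $-\eta\int_{\R^2}\p_z\varsigma_0\,v^\h_0\,dx_\h$, which is $O(\eta)$ but generically nonzero, so a verbatim transcription of the proof of Lemma \ref{estimuflat} only yields a $\w t^{-1}$ bound for $\|v^\h_z(t)\|_{L^2_\h}^2$, losing a full power of $\w t$. One must instead write the analogue of the key formula \eqref{estimuflatdemoeq2} for $v^\h_z$ and use that, since $v^\h_z$ has exactly zero horizontal mean, the offending low-frequency part of the data cancels against those of $\cF\PP(\varrho^\h v^\h_z)(t,\cdot)$ and of $\int_0^t\cF\PP f_z(t',\cdot)\,dt'$; what is left is an $O(|\xi|)$ contribution, controlled by the weighted $L^1_\h$ norms of $\r^\h_0 v^\h_{0,z}$ and of $\varrho^\h(t)v^\h_z(t)$, together with an $O(t|\xi|^2)$ remainder of size $O(\eta)$ which, integrated over $S_T(t)$, is absorbed by taking $\eta$ small relative to the scaling time $T_0(\r^\h_0,v^\h_0)$. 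Once this is established, the Wiegner iteration of Proposition \ref{propDecayL2H1} runs unchanged for $v^\h_z$, and everything else proceeds as in Section \ref{decayvh}.
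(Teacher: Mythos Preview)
Your overall strategy is right and you correctly locate the sharp $L^2_\h$ decay of $v^\h_z$ as the crux. But your proposed resolution---a cancellation argument exploiting that $v^\h_z$ itself is mean-free---is roundabout, and the claim that what survives is ``an $O(|\xi_\h|)$ contribution plus an $O(t|\xi_\h|^2)$ remainder absorbed by small $\eta$'' is not substantiated. The paper's move is much cleaner and sidesteps the difficulty entirely: rather than writing a fresh Duhamel formula for $v^\h_z$ as a solution of (LINS2D), one differentiates the formula~\eqref{estimuflatdemoeq2} for $v^\h$ itself with respect to the parameter $z$. The initial-data term then becomes $e^{-t|\xi_\h|^2}\cF\PP\bigl(\partial_z(\rho_0 v_0^\h)\bigr)(\xi_\h)$, and since $\rho_0 v_0^\h(\cdot,z)$ is mean-free for every $z$ by~\eqref{1.7dfg}, so is $\partial_z(\rho_0 v_0^\h)$---hence $\bigl|\cF(\partial_z(\rho_0 v_0^\h))(\xi_\h)\bigr| \leq |\xi_\h|\,\bigl\||x_\h|\,\partial_z(\rho_0 v_0^\h)\bigr\|_{L^1_\h}$ directly, with no cancellation to track. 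The extra terms generated by the $z$-differentiation, such as $\cF\PP(\rho_z^\h v^\h)$ and the symmetrized quadratic pieces, are estimated one by one using only $\|\rho_z^\h\|_{L^2_\h}\leq 1$ and the known decay of $v^\h$; see~\eqref{S7thm1demoeq1}--\eqref{S7thm1demoeq7}. The Wiegner iteration then runs exactly as in Proposition~\ref{propDecayL2H1}.

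Two smaller remarks. The paper's continuation hypothesis is lighter than yours: it assumes only $\|\rho_z^\h\|_{L^\infty_t(L^2_\h\cap L^\infty_\h)} \leq 1$ (see~\eqref{Tstardef1}), which is all that is needed to bound $f_1$, and is closed at the end by $\|\rho_z^\h\| \leq \cC_0\eta \leq \tfrac12$ (Corollary~\ref{SAcol1}). Your bootstrap on the full decay of $v^\h_z$, closed by ``all forcing carries $\eta$'', overlooks that the Wiegner inequality~\eqref{S7thm1demoeq20} feeds $\bigl(\int_0^t\|v^\h_z\|_{L^2_\h}\,dt'\bigr)^2$ back into itself with no small factor; the paper avoids this by a direct two-step iteration rather than a bootstrap on $v^\h_z$. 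And while the higher-order estimates do follow the Section~\ref{decayvh} template, they are not quite mechanical: cross-terms such as $\rho_z^\h\,\partial_t^2 v^\h$ force the $\epsilon=\log^{-1}\w t$ interpolation of~\eqref{S7eq6demoeq4}; see the detailed computations in Propositions~\ref{SAlem2} and~\ref{SAlem3}.
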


In view of  Inequality\eqref{INS2dParameterb},  the quantity~$(\rho_z^\h,
v_z^\h, \Pi_z^\h)\eqdefa (\partial_z \rho^\h,
\partial_z v^\h,\partial_z \Pi^\h)$ satisfies the system in~$\R^+\times\R^2$
$$
\displaylines{ {\rm (D1INS2D)}\quad \left\{\begin{array}{c}
\displaystyle
 \p_t\r_z^\h+v^\h\cdot\nh \r_z^\h=-v^\h_z\cdot \nh \rho^\h,  \\
\displaystyle \r^\h\pa_t v^\h_z+ \r^\h v^\h\cdot\nh v^\h_z  -\D v^\h_z + \nh \Pi_z^\h=f_1 +L(t) v^\h_z,\\
\displaystyle \dive_\h v^\h_z=0, \\
\displaystyle  (\r_z, v^\h_z)|_{t=0}=(\eta\partial_z \varsigma_0,
\partial_z v_0^\h).
\end{array}\right.
\with\cr f_1= -\rho_z^\h v^\h_t -\rho_z^\h  v^\h\cdot\nh v^\h \andf
L(t) w \eqdefa - \rho^\h w\cdot  \nh v^\h. }
$$

\medbreak

Let us make some preliminary remarks about this system.   First of
all, we have that \beq\label{Lestimate} \|L(t)\|_{\cL(L^2_\h)} \leq
2\|\nh v^\h(t)\|_{L^\infty}. \eeq Thus thanks to\refeq{S3thm2eq1},
~$\|L(t)\|_{\cL(L^2_\h)}$  is integrable on $\R^+.$ According  to
the remark at the beginning of Section\refer{GlobalEnergy}, we
introduce
$$
\wt v^\h_z (t) \eqdefa v^\h_z(t) \exp \Bigl(-\int_0^t
\|L(t')\|_{L^\infty} dt'\Bigr).
$$
Then the energy estimate \eqref{Energybasic} gives
$$
\f12\frac d {dt} \|\sqrt \r^\h \wt
v^\h_z(t)\|_{L^2_\h}^2 + \|\nh \wt v^\h_z \|_{L^2_\h} ^2 \leq
(f_1|\wt v^\h_z)_{L^2_\h}.
$$

The external force~$f_1$ contains term with~$\rho_z$. We want of
course global estimate. But the control of~$L^p$ norm of~$\rho_z^\h$
demands the control of~$v^\h_z$ in~$L^1(\R^+;L^\infty)$ which will
be proved at the end. Thus we argue with a continuation argument.
More precisely, all the inequalities that follows are valid for~Ê$t$
less than~$T^\star_1$ defined by
\beq
\label {Tstardef1}
T^\star_1\eqdefa \sup \bigl\{ t\,/\,
\|\rho_z^\h\|_{L^\infty([0,t];L^2_\h\cap L^\infty_\h)} \leq 1
\bigr\}.
 \eeq

The first step of the study is the proof  that~$v^\h_z$ has the same
decay property as~$v^\h$ for the~$L^2_\h$ norm.  Then we apply
energy estimates of Section\refer {GlobalEnergy} to get the decay of
higher order derivatives.  The decay of~$v^\h_z$  in $L^2_\h$ norm
is given by the following proposition.

\begin{prop}
\label {decaydzL2} {\sl Under the assumptions of  Theorem
\ref{insslowvar}, for $t\leq T_1^\star,$ we have \ben\label{zp9}
\| v^\h_z(t)\|_{L^2_\h}^2  & \leq & \cC_0 h^2(z)  \langle t\rangle _{{}}^{-2} \andf\\
 \label{decaydzL2eq17}
 \| \nh v^\h_z(t)\|_{L^2_\h}^2  & \leq &  \cC_0
h^2(z) \langle t\rangle _{{}}^{-3}. \een}
\end{prop}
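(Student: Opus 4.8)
The plan is to mimic the two-dimensional decay machinery of Section~\ref{decayvh}, but now applied to the linearized system (D1INS2D) for $v^\h_z$, treating $f_1$ as an external force and $L(t)$ as an absorbable linear perturbation. The key structural observation is that (D1INS2D) is exactly of the form (LINS2D) with $\rho=\rho^\h$, $u=v^\h$, $v=v^\h_z$, $f=f_1$, so all the linear estimates of Section~\ref{GlobalEnergy}---in particular Lemma~\ref{DecayH1fond}, Lemma~\ref{LemmedecayWiegnerbasic}, and the Wiegner-type iteration---are available once we control $\|f_1(t)\|_{L^2_\h}$ and $\|L(t)\|_{\cL(L^2_\h)}$. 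The latter is handled by \eqref{Lestimate} together with \eqref{S3thm2eq1}, which shows $\|L(t)\|_{\cL(L^2_\h)}\in L^1(\R^+)$ with norm bounded by $\cC_0 h(z)$; hence, after the exponential reduction introduced before the proposition (replacing $v^\h_z$ by $\wt v^\h_z$), we may and do assume $L$ is nonpositive. All estimates are carried out for $t\le T_1^\star$ so that $\|\rho_z^\h\|_{L^\infty_t(L^2_\h\cap L^\infty_\h)}\le 1$.

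\textbf{Estimate of the forcing term.} The heart of the matter is to bound $f_1 = -\rho_z^\h v^\h_t -\rho_z^\h v^\h\cdot\nh v^\h$ in $L^2_\h$. First I would write, using H\"older and the 2D interpolation inequality \eqref{1.14b},
$$
\|f_1(t)\|_{L^2_\h}\le \|\rho_z^\h(t)\|_{L^\infty_\h}\bigl(\|v^\h_t(t)\|_{L^2_\h}+\|v^\h(t)\|_{L^\infty_\h}\|\nh v^\h(t)\|_{L^2_\h}\bigr),
$$
and then invoke the decay estimates \eqref{S3thm2eq1} of Theorem~\ref{S3thm2}: $\|v^\h_t(t)\|_{L^2_\h}\lesssim \cC_0 h(z)\w{t}^{-2}$ and $\|v^\h(t)\|_{L^\infty_\h}\|\nh v^\h(t)\|_{L^2_\h}\lesssim \cC_0 h^2(z)\w{t}^{-3}$. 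Combined with the bound $\|\rho_z^\h(t)\|_{L^\infty_\h}\le 1$ on $[0,T_1^\star]$, this yields $\|f_1(t)\|_{L^2_\h}\lesssim \cC_0 h(z)\w{t}^{-2}$, hence $f_1\in L^1(\R^+;L^2_\h)\cap L^2(\R^+;L^2_\h)$ with the appropriate time-weighted integrability: $\int_0^\infty \w{t}^{s}\|f_1(t)\|_{L^2_\h}^2\,dt<\infty$ for any $s<3$. Note also $\|f_1\|_{L^2_h}$ is itself bounded at $t=0$ by $\cC_0\eta h(z)$ using the Schwartz-class hypotheses on the profiles.

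\textbf{The $L^2$ decay and the low-frequency term.} For \eqref{zp9} I would run the Wiegner iteration of Lemma~\ref{LemmedecayWiegnerbasic} exactly as in the proof of Proposition~\ref{propDecayL2H1}, but now the key Fourier-space formula analogous to \eqref{estimuflatdemoeq2} must be rederived for $v^\h_z$: rewriting the momentum equation of (D1INS2D) as $\partial_t v^\h_z - \D v^\h_z + \nh\Pi^\h_z = -\partial_t(\varrho^\h v^\h_z) - \dive_\h(\varrho^\h v^\h_z\otimes v^\h) - \dive_\h(v^\h_z\otimes v^\h) + \dive_\h(\varrho^\h v^\h_z\otimes v^\h)_{\text{corrections}} + f_1$ and integrating by parts in time as before. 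The crucial point is the hypothesis \eqref{S7eq0.1}: differentiating it in $z$ shows that $\partial_z(\rho_0 v_0^\h)=\rho_z^\h(0)v_0^\h + \rho_0 v^\h_z(0)$ has vanishing integral and vanishing first moments, which gives the $|\xi|$-decay of the Fourier transform of the initial datum near $\xi=0$ that feeds the iteration. The low-frequency term $v^\h_{z,\flat}$ is estimated by the analogue of Lemma~\ref{estimuflat}, using $\|\varrho^\h(t)\|_{L^2_\h}\le\cC_0\eta h(z)$ and the $L^2$-energy bound on $v^\h_z$ together with the $L^2$-in-time control of $f_1$; then the successive choices $g^2(\tau)=\alpha\w{\tau}^{-1}$, $\alpha\in\{1,\ldots\}$ with the logarithmic refinement, exactly as in Section~3.2, bootstrap $\|v^\h_z(t)\|_{L^2_\h}^2\lesssim \cC_0 h^2(z)\w{t}^{-2}$. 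For \eqref{decaydzL2eq17} I apply \eqref{LemDecayH1fondeq3} with $v=v^\h_z$, $f=f_1$: $t\|\nh v^\h_z(t)\|_{L^2_\h}^2\lesssim \|v^\h_z(t/2)\|_{L^2_\h}^2 + t\int_{t/2}^t\|f_1(t')\|_{L^2_\h}^2\,dt'$, and the two terms are $\cC_0 h^2(z)\w{t}^{-2}$ and $\cC_0 h^2(z)\cdot t\cdot\w{t}^{-4}$ respectively, giving $\w{t}^{-3}$ after dividing by $t$; combined with the uniform bound from \eqref{LemDecayH1fondeq1} this yields \eqref{decaydzL2eq17}.

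\textbf{Main obstacle.} The delicate point I anticipate is the rigorous derivation of the Fourier formula \eqref{estimuflatdemoeq2}-analogue for $v^\h_z$ and the careful accounting of all the $\varrho^\h$-weighted terms, since the presence of the \emph{variable} density forces the integration-by-parts-in-time trick (trading one time derivative for two space derivatives) to be done on $\partial_t(\varrho^\h v^\h_z)$ rather than $\partial_t v^\h_z$, and one must check that each resulting term---$|\xi|^4$-weighted from $\varrho^\h v^\h_z$, $|\xi|^2$-weighted from the bilinear terms $\varrho^\h v^\h_z\otimes v^\h$ and $v^\h_z\otimes v^\h$, and the $f_1$ contribution---integrates over $S_T(t)$ to produce the right power of $g_T$. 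The bilinear terms involving $v^\h$ and $v^\h_z$ together are new (they have no exact counterpart in Lemma~\ref{estimuflat} where everything was quadratic in $u$), but they are in fact \emph{better} behaved because $v^\h$ already decays, so their contribution is controlled by $\int_0^t\|v^\h_z(t')\|_{L^2_\h}\|v^\h(t')\|_{L^2_\h}\,\frac{dt'}{T}$-type quantities which are uniformly bounded; this is the step where the decay of $v^\h$ from Theorem~\ref{S3thm2} is essential. Everything else is a faithful transcription of Sections~3.1--3.2 with $(\rho,u,v,f)$ replaced by $(\rho^\h,v^\h,v^\h_z,f_1)$.
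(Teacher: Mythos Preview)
Your proposal is correct and follows essentially the same route as the paper: estimate $\|f_1(t)\|_{L^2_\h}\lesssim\cC_0 h(z)\langle t\rangle^{-2}$, run Lemma~\ref{LemmedecayWiegnerbasic} with a low-frequency estimate for $v^\h_{z,\flat}$, iterate the Wiegner scheme, and then use \eqref{LemDecayH1fondeq3} for the gradient.

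One technical point worth noting: you propose to \emph{rederive} the Fourier representation formula from (D1INS2D) by rewriting it in divergence form with a $\partial_t(\varrho^\h v^\h_z)$ term (and you flag this as the main obstacle, with unspecified ``corrections''). The paper avoids this entirely: since the identity \eqref{estimuflatdemoeq2} already holds for $v^\h(\cdot,z)$ for each fixed $z$, one simply differentiates \eqref{estimuflatdemoeq2} with respect to the parameter $z$ and applies Leibniz to the products $\varrho^\h v^\h$ and $\rho^\h v^\h\otimes v^\h$. This yields the five terms $V_1,\dots,V_5$ directly, with no integration by parts in time to redo and no $f_1$ appearing in the low-frequency estimate (it only enters through the energy inequality \eqref{Energybasic}). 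This shortcut makes your anticipated obstacle disappear. Also, only the vanishing \emph{integral} of $\partial_z(\rho_0 v_0^\h)$ is used here (to get the $|\xi_\h|$ bound near the origin), not the vanishing first moments; the moment condition in \eqref{S7eq0.1} is needed elsewhere in the paper. Finally, because the forcing already decays like $\langle t\rangle^{-2}$, the bootstrap here is shorter than in Section~3.2---two passes with $g^2(\tau)=\alpha\langle\tau\rangle^{-1}$ for $\alpha\in]1,2[$ and then $\alpha\in]2,3[$ suffice, with no logarithmic step.
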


\begin{proof}
By the definition of~$f_1$ given by (D1INS2D),  and using
Inequality \eqref{S3thm2eq1} of Theorem\refer {S3thm2}, we get, for any~$t$ less than or equal to~$T^\star_1,$
\beq
\label {estimfondfD}
\|f_1(t)\|_{L^2_\h}  \leq  \| v^\h_t(t) \|_{L^2_\h} +
\|v^\h(t)\|_{L^\infty} \|\nh v^\h(t)\|_{L^2_\h}  \leq \cC_0  h(z)
\langle t\rangle_{{}}^{-2}.
 \eeq
 Then it
follows from Inequality~\eqref{Energybasicinteg} that for any
nonnegative $t_0$ and any time $t$ greater than or equal to~$t_0$
\ben \f12\|\sqrt{\r^\h}\wt
v^\h_z\|_{L^\infty([t_0,t];L^2_\h)}^2+\int_{t_0}^t\|\nh\wt v^\h_z
(t')\|_{L^2_\h}^2\,dt'&\leq&
\|\sqrt{\r}\wt v^\h_z (t_0)\|_{L^2_\h}^2+2\Bigl(\int_{t_0}^t\|f_1(t')\|_{L^2_\h}dt'\Bigr)^2 \nonumber\\
&\leq & \|\sqrt{\r_0}\wt
v^\h_z(t_0)\|_{L^2_\h}^2+\cC_0h^2(z)\w{t_0}^{-2}. \label{zp10} \een
This gives the bound of $\|\sqrt{\r^\h}\wt v^\h_z (t)\|_{L^2_\h}.$
In order to derive the decay of the~$L^2_\h$ norm of $v^\h_z,$ we
use Lemma\refer {LemmedecayWiegnerbasic} for $T=1,$ which gives that
\beno
 \f12\f{d}{dt}\|\sqrt \r^\h \wt v^\h_z(t)\|_{L^2}^2+ g^2(t)\|\sqrt \r^\h \wt v^\h_z(t)\|^2_{L^2}
\leq 2  g^2(t)\|\wt v^\h_{z,\flat}(t)\|_{L^2_\h}^2+(f_1(t)|\wt
v^\h_z(t))_{L^2_\h} , \eeno which  yields
 \beq \label {decaydzL2demoeq0}
 \f12\f{d}{dt}\|\sqrt \r^\h \wt v^\h_z(t)\|_{L^2_\h}^2+\frac 7{8 }  g^2(t)\|\sqrt \r^\h \wt v^\h_z(t)\|^2_{L^2_\h}
\leq 2    g^2(t)\|\wt
v^\h_{z,\flat}(t)\|_{L^2_\h}^2+2g^{-2}(t)\Bigl\|\f{f_1}{\sqrt{\r^\h}}(t)\Bigr\|_{L^2_\h}^2
. \eeq Then
  the main point is the estimate of~$\|\wt v^\h_{z,\flat}(t)\|_{L^2_\h}$.
We want to prove that, if we have that~Ê$g(\tau)\sim \langle
\tau\rangle^{-1}$ and $g(\tau) \leq \al \langle \tau\rangle^{-1}$
for $\al$ in~$ ]1,3[,$
\beq
\label {Estimateuzflat}
\begin{split}
&\|\wt v^\h_{z,\flat}(t)\|_{L^2_\h}^2    \leq \cC_0 h^2(z)\langle
t\rangle_{{}}^{-2} +\cC_0\langle t\rangle_{{}}^{-3}
\Bigl(\int_0^t\|\sqrt \r^\h \wt v^\h_z (t') \|_{L^2_\h} {dt'}
\Bigr)^2
\\
 &{}\qquad\qquad+2\|\varrho_0  \|_{L^\infty}^22\pi\|\sqrt{\r^\h}\wt v^\h_z(t)\|_{L^2_\h}^2+\cC_0 \langle t\rangle_{{}}^{-2}
\biggl (\int_0^t \langle t'\rangle^{-1}
\|\sqrt \r^\h \wt v^\h_z(t')\|_{L^2_\h}dt'\biggr)^2\cdotp
\end{split}
\eeq

In order to prove this inequality,  let us differentiate
Identity\refeq {estimuflatdemoeq2} with respect to the parameter~$z$
and then multiplying the resulting inequality by $\ds \exp
\Bigl(-\int_0^t \|L(t')\|_{L^\infty} dt'\Bigr).$ This gives \beq
\label {S7thm1demoeq1} |\wt v^\h_z(t,\xi_\h)| \leq e^{-t|\xi_\h|^2}
\bigl|\cF\PP\bigl(\partial_z(\r_0 v^\h_0)\bigr)(\xi_\h)\bigr| +\bigl
|\cF\mathbb{P} (\varrho^\h  \wt v^\h_z )(t,\xi_\h)\bigr|
+\sum_{\ell=1}^{5} |V_\ell(t,\xi_\h)|, \eeq with \beno
 V_1(t,\xi_\h) &\eqdefa & \cF\mathbb{P}(\rho_z^\h v^\h)(t,\xi_\h),
 \\
 V_2(t,\xi_\h) & \eqdefa& \int_0^t  e^{-(t-t')|\xi_\h|^2}|\xi_\h|^2\cF\mathbb{P}\bigl(\varrho^\h  \wt v^\h_z \bigr)(t',\xi_\h)\,dt',
 \\
 V_3(t,\xi_\h) & \eqdefa& \int_0^t  e^{-(t-t')|\xi_\h|^2}|\xi_\h|^2\cF\mathbb{P}(\r_z^\h  v^\h )(t',\xi_\h)\,dt',
 \\
  V_4(t,\xi_\h) & \eqdefa&\int_0^t  e^{-(t-t')|\xi_\h|^2}\cF\PP\bigl(\dive (\r_z^\h v^\h \otimes v^\h )\bigr)(t',\xi_\h)\,dt'
  \andf
  \\
    V_5(t,\xi_\h) & \eqdefa & \int_0^t  e^{-(t-t')|\xi_\h|^2}\cF\PP\bigl(\dive (\r^\h (\wt v^\h_z \otimes v^\h+v^\h \otimes \wt v^\h_z ) )\bigr)(t',\xi_\h)\,dt'.
\eeno
 During all the estimates of the above terms, we shall use that
the Leray projection operator on the divergence free vector
fields~$\PP$ decreases the pointwise values of the Fourier
transform. Following the lines for proving Inequality\refeq
 {LemmedecayWiegnerdemoeq5},  we use that
 $$
 \forall z\in \R\,,\ \int_{\R^2} \rho_0 v^\h_0(x_\h,z) dx_\h=0\quad\hbox{and thus}\quad
\forall z\in\R\,, \  \int_{\R^2} \partial_z (\rho_0 v^\h_0) (x_\h,z) dx_\h=0.
 $$
 This gives
 \beno
 e^{-t|\xi_\h|^2} \bigl|\cF\PP\bigl(\partial_z(\r_0 v^\h_0)\bigr)(\xi_\h)\bigr|
 & \leq &
  |\xi_\h| \,\|D_{\xi_\h} \cF(\partial_z(\r_0 v^\h_0))\|_{L^\infty_\h}\\
 & \leq &
   |\xi_\h| \,\| |x_\h| \partial_z(\r_0 v^\h_0)\|_{L^1(\R^2_\h)}.
 \eeno
 Using  Leibnitz formula in~$z$, we infer that
  $$
 \bigl| e^{-t|\xi_\h|^2 }\cF\PP\bigl(\partial_z(\r_0 v^\h_0)\bigr)(\xi_\h)\bigr| \leq U_1
 |\xi_\h|,
 $$  where $ U_1\eqdefa \|v^\h_0\|_{L^1(\R^2;|x|dx)}+
\|\partial_z v^\h_0\|_{L^1(\R^2;|x|dx)}.$
 And thus  by integration on~$S_1(t)$ given by Inequality\refeq{LemmedecayWiegnerdemoeq2a}, we~get
 \ben
\int_{S_1(t)} \bigl| \cF\PP\bigl(\partial_z(\r_0
v^\h_0)\bigr)(\xi_\h)\bigr|^2 d\xi_\h & \lesssim &   {U_1^2} \langle
t\rangle_{{}}^{-2} \,\cdotp
 \een
 It is obvious that
 \beq
\label {S7thm1demoeq2}
\int_{S_1(t)}\bigl |\cF\mathbb{P}\bigl(\varrho^\h \wt v^\h_z \bigr)(t,\xi_\h)\bigr|^2 d\xi_\h
 \leq
\|\varrho_0\|_{L^\infty}^2 (2\pi)^2 \|\sqrt \r^\h\wt v^\h_z(t)\|_{L^2_\h}^2.
 \eeq
Because~$t$ is less than~$T^\star_1$, we get, by using
Cauchy-Schwartz inequality, that \beno |V_1(t,\xi_\h)|\leq
\|\rho_z^\h v^\h(t)\|_{L^1_\h} \leq \|\rho_z^\h\|_{L^2_\h}
\|v^\h(t)\|_{L^2_\h}\leq
 \|v^\h(t)\|_{L^2_\h}. \eeno This gives, using
Inequality\refeq{S3thm2eq1} of Theorem\refer {S3thm2} and Inequality\refeq
{LemmedecayWiegnerdemoeq2b} about~$g$,
  \beq
 \label {S7thm1demoeq3} \int_{S_1(t)} |V_1(t,\xi_\h)|^2d\xi_\h \leq  \cC_0 h^2(z)\langle t\rangle_{{}}^{-3} .
\eeq Exactly like for Inequality\refeq {LemmedecayWiegnerdemoeq6},
we get \beq \label {S7thm1demoeq4} \int_{S_1(t)}
|V_2(t,\xi_\h)|^2d\xi_\h \lesssim  \langle t\rangle_{{}}^{-3}
\Bigl(\int_0^t\|\sqrt \r^\h\wt v^\h_z (t')
\|_{L^2_\h}{dt'}\Bigr)^2\cdotp \eeq Using that $\|\r_z(t)\|_{L^2_\h}
$ is less than or equal to~$1 $ and Inequality \refeq{S3thm2eq1}, we
get
$$
\bigl| \cF\mathbb {P}(\r_z^\h  v^\h )(t,\xi_\h)\bigr|\lesssim \|
\r_z^\h(t)\|_{L^2_\h} \|v^\h(t)\|_{L^2_\h}\lesssim \cC_0 h(z)\langle
t\rangle_{{}}^{-1} .
$$
We infer that \beq \label {S7thm1demoeq5} \int_{S_1(t)}
|V_3(t,\xi_\h)|^2d\xi_\h\leq \cC_0h^2(z)\int_{S_1(t)}|\xi_\h|^4
\log^2\langle t\rangle_{{}}d\xi_\h \leq \cC_0h^2(z)\langle
t\rangle_{{}}^{-3} \log^2\langle t\rangle_{{}}. \eeq Again as~$t$ is
less than~$T^\star_1$, we get, by using again Inequality
\refeq{S3thm2eq1}, that
$$
\bigl | \cF\PP\bigl(\dive (\r_z^\h v^\h \otimes v^\h
)\bigr)(t,\xi_\h)\bigr | \lesssim |\xi_\h|\|v^\h(t)\|^2_{L^2_\h}
\leq \cC_0h^2(z)|\xi_\h|\langle t\rangle^{-2}.
$$
Then we infer that
 \beq \label {S7thm1demoeq6} \int_{S_1(t)}
|V_4(t,\xi_\h)|^2d\xi_\h \leq \cC_0h^2(z)\int_{S_1(t)}
|\xi_\h|^2\Bigl( \int_0^t \langle t'\rangle^{-2}
 {dt'}\Bigr)^2d\xi_\h \leq \cC_0h^2(z)\langle
t\rangle_{{}}^{-2}\,\cdotp \eeq We get, by using again Inequality
\refeq{S3thm2eq1}, that \beno \bigl | \cF\PP\bigl(\dive (\r^\h (\wt
v^\h_z \otimes v^\h+v^\h \otimes \wt v^\h_z ) )\bigr)(t,\xi_\h) | &
\lesssim &
 |\xi_\h|\|v^\h(t)\|_{L^2_\h} \|\sqrt \r^\h\wt v^\h_z(t)\|_{L^2_\h} \\
& \leq & \cC_0\langle t \rangle _{{}}^{-1} |\xi_\h| \|\sqrt \r^\h
\wt v^\h_z(t')\|_{L^2_\h},
\eeno
 which implies
 \beq
 \label {S7thm1demoeq7}
  \int_{S_1(t)} |V_5(t,\xi_\h)|^2d\xi_\h \leq \cC_0
\langle t\rangle_{{}}^{-2} \biggl (\int_0^t \langle t'\rangle ^{-1}
\|\sqrt \r^\h \wt v^\h_z(t')\|_{L^2_\h}{dt'} \biggr)^2\cdotp \eeq
By summing up Inequalities \refeq {S7thm1demoeq1}--(\ref
{S7thm1demoeq7}), we get Inequality\refeq {Estimateuzflat}.

Then we infer from Inequalities\refeq  {decaydzL2demoeq0} and\refeq {Estimateuzflat} that
 \beq
 \label {S7thm1demoeq20}
 \begin{split} \f{d}{dt}\|\sqrt \r^\h &\wt v^\h_z(t)\|_{L^2_\h}^2+
g^2(t)\|\sqrt \r^\h \wt v^\h_z(t)\|^2_{L^2_\h}
  \leq \cC_0 h^2(z)\langle t\rangle_{{}}^{-3}
\\
& +\cC_0\langle t\rangle_{{}}^{-4} \Bigl(\int_0^t\|\sqrt \r^\h \wt
v^\h_z (t') \|_{L^2_\h}  {dt'} \Bigr)^2
 +\cC_0\w{t}^{-3}\Bigl (\int_0^t \langle t'\rangle^{-1}
\|\sqrt \r^\h \wt v^\h_z(t')\|_{L^2_\h}{dt'}\Bigr)^2\cdotp
\end{split}
\eeq
 Let $G
(\tau ) $ be given by \eqref{LemmedecayWiegnerdemoeq33}.  The above
formula writes after integration \ben \label{decaydzL2eq15} \|\sqrt
\r^\h \wt v^\h_z(t)\|_{L^2_\h}^2G(t) & \leq &  \|\sqrt \r^\h \wt
v^\h_z(0)\|_{L^2_\h}^2+ \cC_0  h^2(z) \int_0^t \langle
t'\rangle_{{}}^{-3}G(t'){dt'}
 \nonumber \\
&&\ + \cC_0\int_0^t\w{t'}^{-4} G(t')\Bigl(\int_0^{t'} \|\sqrt\r^\h
\wt v^\h_z
(t'')\|_{L^2_\h} {dt''} \Bigr)^2 {dt'}\\
&&\quad+\cC_0\int_0^t\w{t'}^{-3} G(t') \Bigl(\int_0^{t'}
\w{t''}^{-1}\|\sqrt\r^\h \wt v^\h_z (t'')\|_{L^2_\h}
{dt''}\Bigr)^2dt'. \nonumber \een Taking
$g^2(\tau)=\al\w{\tau}^{-1},$ for $\al$ in~$ ]1,2[,$ gives $G (\tau
)=\w{\tau}^\al. $ Then we get, by applying \eqref{zp10}, that \beno
\|\sqrt \r^\h \wt v^\h_z(t)\|_{L^2_\h}^2\w{t}^\al \leq \|\sqrt \r_0
v^\h_z(0)\|_{L^2_\h}^2 + \cC_0 h^2(z)\w{t}^{\al-1}, \eeno which
immediately implies \beno \|\sqrt \r^\h \wt v^\h_z(t)\|_{L^2_\h}^2
\leq \cC_0 h^2(z)\w{t}^{-1}. \eeno And thus we have \beno
&&\int_0^{t'} \|\sqrt\r^\h\wt v^\h_z (t'')\|_{L^2_\h} {dt''} \leq
\cC_0 h(z)\w{t'}^{\f12} \andf \int_0^{t'} \w{t''}^{-1}\|\sqrt\r^\h
\wt v^\h_z (t'')\|_{L^2_\h} {dt''} \leq \cC_0 h(z). \eeno Then
taking
 $g^2(\tau)=\al\w{\tau}^{-1},$ for $\al$ in~$]2,3[,$ in
 \eqref{decaydzL2eq15} gives rise to
\beno \|\sqrt \r^\h \wt v^\h_z(t)\|_{L^2_\h}^2\w{t}^\al \leq \|\sqrt
\r_0 v^\h_z(0)\|_{L^2_\h}^2 + \cC_0 h^2(z)\w{t}^{\al-2}, \eeno which
leads to
 \eqref{zp9}.

On the other hand,  for any $t_0\geq 0$ and any time $t\geq t_0,$ we
get, by applying Lemma \ref{DecayH1fond} and using
\eqref{estimfondfD}, that
\begin{equation}
\label{decaydzL2eq16}
\begin{split}
 &\|\nh \wt
v^\h_z(t)\|_{L^2_\h}^2+\int_{t_0}^t\Bigl(\|\sqrt{\r^\h}\p_t\wt
v^\h_z(t')\|_{L^2_\h}^2+\|\nh^2\wt
v^\h_z(t')\|_{L^2_\h}^2+\|\nh\wt \Pi_z^\h(t')\|_{L^2_\h}^2\Bigr)dt'\\
&\qquad\qquad\qquad\qquad\qquad\qquad\qquad\qquad\qquad\qquad\leq
C\|\na \wt v^\h_z(t_0)\|_{L^2}^2+\cC_0 h^2(z)\w{t_0}^{-3},
\end{split}
\end{equation}
and \beq \label{decaydzL2eq16ag} t \|\nh \wt
v^\h_z(t)\|_{L^2_\h}^2\leq C\| \wt
v^\h_z(t/2)\|_{L^2_\h}^2+\cC_0th^2(z)\w{t}^{-3}. \eeq Hence by
virtue of \eqref{zp9} and \eqref{decaydzL2eq16},
\eqref{decaydzL2eq16ag}, we achieve \eqref{decaydzL2eq17}. This
completes the proof of the proposition.
\end{proof}

\begin{prop}\label{SAlem2}
{\sl
Under the assumptions of  Theorem
\ref{insslowvar}, for $t\leq T_1^\star,$ we have
\ben
\label{SAeq2}
&&
 \|\p_tv^\h_z(t)\|_{L^2_\h}+ \|\nh^2
v^\h_z(t)\|_{L^2_\h}+\|\nh\Pi^\h_z(t)\|_{L^2_\h}\leq \cC_0h(z)\w{t}^{-2}\andf\\
&& \label{SAeq3} \qquad\qquad\|v^\h_z(t)\|_{L^\infty_\h} \leq
\cC_0h(z)\w{t}^{-\frac 3 2}. \een }
\end{prop}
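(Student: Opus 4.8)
The plan is to run, for $t<T^\star_1$, the second-order energy scheme used in Section~\ref{decayvh} for $v^\h$ (Lemma~\ref{estimenergyH2Linearise}, Corollary~\ref{estimH2u}, Proposition~\ref{decayD2}), now for the linearised system (D1INS2D) satisfied by $v^\h_z$, with convection velocity $v^\h$ and total source $f_1+L(t)v^\h_z$. On $[0,T^\star_1[$ one has $\|\rho^\h_z(t)\|_{L^2_\h}\le1$ and $\|\rho^\h_z(t)\|_{L^\infty_\h}\le1$, and the decay inputs are those of Theorem~\ref{S3thm2} for $(v^\h,\Pi^\h,\rho^\h)$, of~\eqref{thmSAprop1eq1} for $\rho^\h_z$ and $\partial_t\rho^\h_z$, of~\eqref{S7col1eq3} for $\partial_t\rho^\h$, and of Proposition~\ref{decaydzL2} for $v^\h_z$ and $\nh v^\h_z$. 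Moreover it suffices to prove $\|\partial_t v^\h_z(t)\|_{L^2_\h}\le\cC_0 h(z)\w{t}^{-2}$: applying Lemma~\ref{Echanget2x} to $(\rho^\h,v^\h_z,\nh\Pi^\h_z)$, whose external force is $f_1+L(t)v^\h_z$, gives $\|\nh^2 v^\h_z\|_{L^2_\h}+\|\nh\Pi^\h_z\|_{L^2_\h}\lesssim\|\sqrt{\rho^\h}\partial_t v^\h_z\|_{L^2_\h}+\|v^\h\|_{L^4_\h}^2\|\nh v^\h_z\|_{L^2_\h}+\|f_1+L(t)v^\h_z\|_{L^2_\h}$, and, using~\eqref{S3thm2eq1}, Proposition~\ref{decaydzL2}, the bound $\|L(t)v^\h_z\|_{L^2_\h}\le\|v^\h_z\|_{L^4_\h}\|\nh v^\h\|_{L^4_\h}$, and~\eqref{estimfondfD}, each term on the right is $\lesssim\cC_0 h(z)\w{t}^{-2}$; this yields~\eqref{SAeq2}, and then~\eqref{SAeq3} follows from the interpolation inequality~\eqref{1.14b}, $\|v^\h_z\|_{L^\infty_\h}\lesssim\|v^\h_z\|_{L^2_\h}^{1/2}\|\nh^2 v^\h_z\|_{L^2_\h}^{1/2}$, together with Proposition~\ref{decaydzL2}.

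To bound $\|\partial_t v^\h_z\|_{L^2_\h}$ I would differentiate the momentum equation of (D1INS2D) in time; then $w:=\partial_t v^\h_z$ again solves a (LINS2D)-type system with convection velocity $v^\h$, and taking the $L^2_\h$ scalar product with $w$ reproduces the terms $\cE_1,\dots,\cE_5$ appearing in the proof of Lemma~\ref{estimenergyH2Linearise} (with $\rho=\rho^\h$, $u=v^\h$, $v=v^\h_z$) together with the three new contributions $(\partial_t f_1\mid w)_{L^2_\h}$, $\big((\partial_t L(t))v^\h_z\mid w\big)_{L^2_\h}$ and $(L(t)w\mid w)_{L^2_\h}$, the last being $\lesssim\|\nh v^\h\|_{L^\infty}\|w\|_{L^2_\h}^2$ with $\|\nh v^\h\|_{L^\infty}\in L^1(\R^+)$. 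After estimating the remaining new terms this yields a differential inequality
$$
\frac{d}{dt}\|\sqrt{\rho^\h}w(t)\|_{L^2_\h}^2+\|\nh w(t)\|_{L^2_\h}^2\ \lesssim\ \Phi_1(t)\,\|\sqrt{\rho^\h}w(t)\|_{L^2_\h}^2+\Phi_2(t),
$$
with $\Phi_1\in L^1(\R^+)$ and $\Phi_2\in L^1(\R^+)$ decaying fast.

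From here the argument is exactly that of Section~\ref{decayvh}. A Gronwall estimate as in Corollary~\ref{estimH2u}, using $\int_0^\infty\Phi_1<\infty$, $\int_0^\infty\Phi_2<\infty$ and $\int_0^\infty\|w\|_{L^2_\h}^2<\infty$ (the last being~\eqref{decaydzL2eq16} taken at $t_0=0$, since the initial data is Schwartz), gives the global bound $\|\sqrt{\rho^\h}w(t)\|_{L^2_\h}^2+\int_0^t\|\nh w\|_{L^2_\h}^2\lesssim\cC_0 h^2(z)$. Then, multiplying the differential inequality by $(t-t_0)$, applying Gronwall, choosing $t_0=t/2$, and inserting the decay of $\|v^\h_z(t/2)\|_{L^2_\h}$ and $\|\nh v^\h_z(t/2)\|_{L^2_\h}$ from Proposition~\ref{decaydzL2} — just as in the passage from~\eqref{zp3} to~\eqref{zp4} — one obtains $\|w(t)\|_{L^2_\h}^2\lesssim\cC_0 h^2(z)\w{t}^{-4}$ for large $t$; combining with the global bound for the small-time range completes the proof.

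The main obstacle is the estimate of $\Phi_2$, that is, of the $L^2_\h$ norm of the part of $\partial_t(f_1+L(t)v^\h_z)$ not proportional to $w$, with the decay $\w{t}^{-5/2}$ that forces the rate $\w{t}^{-2}$ on $\|w\|_{L^2_\h}$. This requires assembling the decay of $v^\h_t,v^\h_{tt},\nh v^\h_t$ from~\eqref{S3thm2eq1} and~\eqref{S3thm2eq8}, of $\partial_t\rho^\h_z$ from~\eqref{thmSAprop1eq1}, of $\partial_t\rho^\h$ from~\eqref{S7col1eq3}, and of $v^\h_z,\nh v^\h_z$ from Proposition~\ref{decaydzL2}. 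The genuinely delicate term is $\rho^\h v^\h_z\cdot\nh v^\h_t$, coming from $\partial_t(L(t)v^\h_z)$, since $\nh v^\h_t$ does not by itself decay fast enough pointwise: one interpolates $\|v^\h_z\|_{L^\infty_\h}\lesssim\|v^\h_z\|_{L^2_\h}^{1/2}\|\nh^2 v^\h_z\|_{L^2_\h}^{1/2}$, re-expresses $\|\nh^2 v^\h_z\|_{L^2_\h}$ through $\|\sqrt{\rho^\h}w\|_{L^2_\h}$ by Lemma~\ref{Echanget2x}, and absorbs the extra $\|w\|_{L^2_\h}$ factor into the $\Phi_1\|\sqrt{\rho^\h}w\|_{L^2_\h}^2$ term by Young's inequality, the compensating source being controlled in the weighted $L^2$-in-time sense provided by~\eqref{S3thm2eq8}. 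Once this point is settled, the rest is a routine repetition of the computations of Section~\ref{decayvh}.
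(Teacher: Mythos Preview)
Your overall architecture is exactly that of the paper: apply Lemma~\ref{estimenergyH2Linearise} to (D1INS2D), estimate $(\partial_t f_1\mid w)_{L^2_\h}$ and $((\rho^\h v^\h_z\cdot\nh v^\h)_t\mid w)_{L^2_\h}$, close a differential inequality of the form you wrote, then run the Gronwall/$(t-t_0)$ argument as in Corollary~\ref{estimH2u} and Proposition~\ref{decayD2}; finally deduce the $\nh^2$ and pressure bounds via Lemma~\ref{Echanget2x} and \eqref{SAeq3} via~\eqref{1.14b}. That part is fine.

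There is, however, a logical circularity you must remove. You invoke \eqref{thmSAprop1eq1} to feed in the decay of $\partial_t\rho^\h_z$, but \eqref{thmSAprop1eq1} is the conclusion of Theorem~\ref{SAprop1}, whose proof (at the end of Section~\ref{decayL2partialz}) relies on Proposition~\ref{SAlem2} itself, via Corollary~\ref{SAcol1} and Proposition~\ref{SAlem3}. At the stage of Proposition~\ref{SAlem2} the only information available on $\rho^\h_z$ is the continuation hypothesis $\|\rho^\h_z\|_{L^2_\h\cap L^\infty_\h}\le 1$ from~\eqref{Tstardef1}; no pointwise bound on $\partial_t\rho^\h_z$ or $\nh\rho^\h_z$ is yet at hand. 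The paper avoids the issue by never estimating $\|\partial_t\rho^\h_z\|$: it uses the transport equation to write $\partial_t\rho^\h_z=-\dive_\h\partial_z(\rho^\h v^\h)$ and then integrates by parts inside the pairing, e.g.
\[
(\partial_t\rho^\h_z\, v^\h_t\mid w)_{L^2_\h}
=(\partial_z(\rho^\h v^\h)\cdot\nh v^\h_t\mid w)_{L^2_\h}
+(\partial_z(\rho^\h v^\h)\otimes v^\h_t\mid \nh w)_{L^2_\h},
\]
so that only $\|\rho^\h_z\|_{L^\infty_\h}\le 1$, $\|\rho^\h\|_{L^\infty_\h}$, and decay of $v^\h,v^\h_z,v^\h_t$ from Theorem~\ref{S3thm2} and Proposition~\ref{decaydzL2} are needed. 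You should modify your argument accordingly; once you do, your $\Phi_1,\Phi_2$ scheme goes through.

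A smaller point: for the ``delicate'' term $(\rho^\h v^\h_z\cdot\nh v^\h_t\mid w)_{L^2_\h}$ the paper takes the more direct route $\|v^\h_z\|_{L^4_\h}\|\nh v^\h_t\|_{L^2_\h}\|w\|_{L^4_\h}$, interpolates both $L^4$ norms by~\eqref{1.14a}, and Young-splits to obtain $\e\|\nh w\|_{L^2_\h}^2+C\|v^\h_z\|_{L^2_\h}^2\|w\|_{L^2_\h}^2+C\|\nh v^\h_z\|_{L^2_\h}\|\nh v^\h_t\|_{L^2_\h}^2$; the last term decays like $\w t^{-13/2}$ by~\eqref{S3thm2eq1} and~\eqref{decaydzL2eq17}. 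This is cleaner than routing through $\|v^\h_z\|_{L^\infty_\h}$ and Lemma~\ref{Echanget2x}, which would in any case require $\|\nh^2 v^\h_z\|_{L^2_\h}$ and thus couple back to the quantity being estimated.
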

\begin{proof}
In order to study the decay of the second space derivatives of
$v^\h_z,$ we need to use Lemma \ref{estimenergyH2Linearise}, which
gives
\ben
&&\f12\f{d}{dt}\|\sqrt{\r^\h}\p_tv^\h_z(t)\|_{L^2_\h}^2+\f34\|\nh\p_tv^\h_z\|_{L^2_\h}^2\leq
(\p_tf_1 | \p_tv^\h_z)_{L^2_\h}-\bigl((\r^\h v^\h_z\cdot\nh v^\h)_t
|
\p_tv^\h_z\bigr)_{L^2_\h} \nonumber\\
&&\qquad\qquad+CF_{1,\cT}(v^\h(t))\|\sqrt{\r^\h}\p_tv^\h_z(t)\|_{L^2_\h}^2+CF_{2,\cT}(v^\h(t),v^\h_z(t))
\label{Prop5.2eq1} \\
&&\qquad\qquad+C\|\nh\p_tv^\h_z(t)\|_{L^2_\h}^{\f12}\|\nh
v^\h_t(t)\|_{L^2_\h}^{\f12}\|\nh
v^\h_z(t)\|_{L^2_\h}\|\sqrt{\r^\h}\p_tv^\h_z(t)\|_{L^2_\h}^{\f12}\|\sqrt{\r^\h}v^\h_t(t)\|_{L^2_\h}^{\f12}.
\nonumber
\een

\no$\bullet$ \underline{The estimate of  $(\p_tf_1 |
\p_tv^\h_z)_{L^2_\h}.$}

 Note that \beno
\p_tf_1=-\p_t\r^\h_zv^\h_t-\r_z^\h\p_t^2v^\h-\p_t\r_z^\h
v^\h\cdot\nh v^\h-\r_z^\h v^\h_t\cdot\nh v^\h-\r_z^\h v^\h\cdot\nh
v^\h_t. \eeno By virtue of the transport equation of
\eqref{INS2dParameterb}, we get, by using integration by parts, that
\beno \bigl(\p_t\r_z^\h v^\h_t |
\p_tv^\h_z\bigr)_{L^2_\h}&=&-\bigl(\p_z\dive_\h(\r^\h v^\h)v^\h_t
| \p_tv^\h_z\bigr)_{L^2_\h}\\
&=&\bigl(\p_z(\r^\h v^\h)\cdot\nh v^\h_t |
\p_tv^\h_z\bigr)_{L^2_\h}+\bigl(\p_z(\r^\h v^\h)\otimes v^\h_t |
\nh\p_tv^\h_z\bigr)_{L^2_\h}.
\eeno
Then for $t$ less than or equal to~$T^\star_1,$
applying H\"older's inequality yields \beno \bigl|\bigl(\p_t\r_z^\h
v^\h_t | \p_tv^\h_z\bigr)_{L^2_\h}\bigr|&\leq
&\|\r_z^\h\|_{L^\infty}\|v^\h\|_{L^\infty_\h}\bigl(\|\nh
v^\h_t\|_{L^2_\h}\|\p_tv^\h_z\|_{L^2_\h}+\|\nh\p_tv^\h_z\|_{L^2_\h}\|v^\h_t\|_{L^2_\h}\bigr)\\
&&+\|\r^\h\|_{L^\infty}\|v^\h_z\|_{L^4_\h}\bigl(\|\nh
v^\h_t\|_{L^2_\h}\|\p_tv^\h_z\|_{L^4_\h}+\|\nh\p_tv^\h_z\|_{L^2_\h}\|v^\h_t\|_{L^4_\h}\bigr),
\eeno which together with \eqref{1.14a} and Young's inequality
ensures \beno \bigl|\bigl(\p_t\r_z^\h v^\h_t |
\p_tv^\h_z\bigr)_{L^2_\h}\bigr|&\leq&
\e\|\nh\p_tv^\h_z\|_{L^2_\h}^2+C_\e\Bigl((\|v^\h\|_{L^\infty_\h}+\|\nh
v^\h_z\|_{L^2_\h}^2)\|\p_tv^\h_z\|_{L^2_\h}^2+\|v^\h\|_{L^\infty_\h}^2\|v^\h_t\|_{L^2_\h}^2\nonumber\\
&&+(\|v^\h\|_{L^\infty_\h}+\|v^\h_z\|_{L^2_\h})\|\nh
v^\h_t\|_{L^2_\h}^2+\|v^\h_z\|_{L^2_\h}\|\nh
v^\h_z\|_{L^2_\h}\|v^\h_t\|_{L^2_\h}\|\nh v^\h_t\|_{L^2_\h}\Bigr).
\eeno Because of the induction hypothesis we get, for any
positive~$\epsilon>0,$ we have \beno \bigl|(\r_z^\h\p_t^2v^\h |
\p_tv^\h_z)_{L^2_\h}\bigr| &\leq &
\|\r_z^\h\|_{L^{\f2{1-\epsilon}}_\h}\|\p_t^2v^\h\|_{L^2_\h}\|\p_tv^\h_z\|_{L^{\f2{\epsilon}}_\h}\\
&\leq&
\f{C}{\epsilon} \|\nh\p_tv^\h_z\|_{L^2_\h}^{1-\epsilon} \|\p_tv^\h_z\|_{L^2_\h}^{\epsilon}
\|\p_t^2v^\h\|_{L^2_\h} \\
&\leq & C  \|\nh\p_tv^\h_z\|_{L^2_\h}^{1-\epsilon}  \langle t
\rangle^{-\epsilon} \|\p_tv^\h_z\|_{L^2_\h}^{\epsilon} \frac
{\langle t \rangle^{\epsilon} } \epsilon \|\p_t^2v^\h\|_{L^2_\h} .
\eeno
H\"older inequality implies that for any couple~$(\epsilon,\e)$ of positive real numbers, we have
$$
 \bigl|(\r_z^\h\p_t^2v^\h | \p_tv^\h_z)_{L^2_\h}\bigr|\leq \e\|\nh\p_tv^\h_z\|_{L^2_\h}^2+
 C_\e\Bigl(\w{t}^{-2}\|\p_tv^\h_z\|_{L^2_\h}^2+\frac {\langle t \rangle^{2\epsilon} }{ \epsilon^2} \|\p_t^2v^\h\|^2_{L^2_\h} \Bigr).
$$
Taking $2\epsilon=\log^{-1}\w{t}$ in the above inequality
gives rise to
\beq \label{Prop5.2eq1a}
 \bigl|(\r_z^\h\p_t^2v^\h | \p_tv^\h_z)_{L^2_\h}\bigr|\leq \e\|\nh\p_tv^\h_z\|_{L^2_\h}^2+
 C\Bigl(\w{t}^{-2}\|\p_tv^\h_z\|_{L^2_\h}^2+\log^2\w{t}\|\p_t^2v^\h\|_{L^2_\h}^2\Bigr).
 \eeq
 Using again the transport equation of \eqref{INS2dParameterb}  and integration by parts,
 one has
 $$\longformule{
 \bigl(\p_t\r_z^\h v^\h\cdot\nh v^\h | \p_tv^\h_z\bigr)_{L^2_\h}= \bigl(\p_z(\r^\h v^\h)\cdot\nh v^\h\cdot\nh v^\h |
 \p_tv^\h_z\bigr)_{L^2_\h}}{{}
+\bigl(\p_z(\r^\h v^\h)\cdot(v^\h\cdot\nh)\nh v^\h |
\p_tv^\h_z\bigr)_{L^2_\h} +\bigl(\p_z(\r^\h v^\h)\otimes
(v^\h\cdot\nh) v^\h | \nh\p_tv^\h_z\bigr)_{L^2_\h}.} $$
It is easy observe that  because of induction hypothesis\refeq {Tstardef1}, we have
\beno
&&\bigl|\bigl(\p_z(\r^\h v^\h)\cdot\nh
v^\h\cdot\nh v^\h |
 \p_tv^\h_z\bigr)_{L^2_\h}\bigr|
+\bigl|\bigl(\p_z(\r^\h v^\h)\cdot(v^\h\cdot\nh)\nh v^\h |
\p_tv^\h_z\bigr)_{L^2_\h}\bigr|\\
&&\quad{} \lesssim \Bigl(\|v^\h\|_{L^\infty_\h}\bigl(\|\nh
v^\h\|_{L^\infty_\h}\|\nh
v^\h\|_{L^2_\h}+\|v^\h\|_{L^\infty_\h}\|\nh^2v^\h\|_{L^2_\h}\bigr)+\|\nh
v^\h\|_{L^\infty_\h}^2\|v^\h_z\|_{L^2_\h}\Bigr)\|\p_tv^\h_z\|_{L^2_\h}\\
&&\qquad\qquad\qquad\qquad\qquad\qquad\qquad\qquad\qquad\qquad\qquad
+\|v^\h\|_{L^\infty_\h}\|v^\h_z\|_{L^4_\h}\|\nh^2v^\h\|_{L^2_\h}\|\p_tv^\h_z\|_{L^4_\h},
\eeno and
$$
\longformule{
 \bigl|\bigl(\p_z(\r^\h v^\h)\otimes
(v^\h\cdot\nh) v^\h| \nh\p_tv^\h_z\bigr)_{L^2_\h}\bigr|
}
{{}
\lesssim
\bigl(\|v^\h\|_{L^\infty_\h}^2\|\nh
v^\h\|_{L^2_\h}+\|v^\h_z\|_{L^2_\h}\|v^\h\|_{L^\infty_\h}\|\nh
v^\h\|_{L^\infty_\h}\bigr)\|\nh\p_tv^\h_z\|_{L^2_\h}.
}
$$
Hence by applying Young's inequality, one has
 \beno
 &&\bigl| \bigl(\p_t\r_z^\h v^\h\cdot\nh v^\h | \p_tv^\h_z\bigr)_{L^2_\h}\bigr|\leq
 \e
 \|\nh\p_tv^\h_z\|_{L^2_\h}^2+C\bigl(\|v^\h\|_{L^\infty_\h}+\|v^\h_z\|_{L^2_\h}^2+\|\nh
 v^\h\|_{L^\infty_\h}\bigr)\|\p_tv^\h_z\|_{L^2}^2\nonumber\\
 &&\qquad\quad\qquad{}
 +C\|v^\h\|_{L^\infty_\h}\bigl(\|\nh
 v^\h\|_{L^\infty_\h}^2\|\nh
 v^\h\|_{L^2_\h}^2+\|v^\h\|_{L^\infty_\h}^2\|\nh^2v^\h\|_{L^2_\h}^2\bigr)
 +C\|\nh v^\h\|_{L^\infty_\h}^3\|v^\h_z\|_{L^2_\h}^2\nonumber\\
&&\qquad\qquad\qquad{}+\|v^\h\|_{L^\infty_\h}^2\bigl(\|\nh
v^\h_z\|_{L^2_\h}\|\nh^2v^\h\|_{L^2_\h}^2+\|v^\h\|_{L^\infty_\h}^2\|\nh
v^\h\|_{L^2_\h}^2+\|\nh
v^\h\|_{L^\infty_\h}^2\|v^\h_z\|_{L^2_\h}^2\bigr). \eeno Similarly,
one has \beno \bigl|\bigl(\r_z^\h v^\h_t\cdot\nh v^\h |
\p_tv^\h_z\bigr)_{L^2_\h}\bigr|&\leq&
\|\r_z^\h\|_{L^\infty_\h}\|v^\h_t\|_{L^2_\h}\|\nh
v^\h\|_{L^4_\h}\|\p_tv^\h_z\|_{L^4_\h}\\
&\leq& \e \|\nh\p_tv^\h_z\|_{L^2_\h}^2+C\bigl(\|\nh
v^\h\|_{L^2_\h}^2\|\p_tv^\h_z\|_{L^2_\h}^2+\|v^\h_t\|_{L^2_\h}^2\|\nh^2v^\h\|_{L^2_\h}\bigr),
\eeno and \beno \bigl|\bigl(\r_z^\h v^\h\cdot\nh v^\h_t |
\p_tv^\h_z\bigr)_{L^2_\h}\bigr|&\leq&
\|\r_z^\h\|_{L^\infty_\h}\|v^\h\|_{L^\infty_\h}\|\nh
v^\h_t\|_{L^2_\h}\|\p_tv^\h_z\|_{L^2_\h}\\
&\leq& C\bigl(\|
v^\h\|_{L^\infty_\h}\|\p_tv^\h_z\|_{L^2_\h}^2+\|v^\h\|_{L^\infty_\h}\|\nh
v^\h_t\|_{L^2_\h}^2\bigr). \eeno This  together with Theorem
\ref{S3thm2} and Proposition \ref{decaydzL2} ensures that \beq
\label{Prop5.2eq4} \begin{split} \bigl|(\p_tf_1 |
\p_tv^\h_z)_{L^2_\h}\bigr|\ \leq & \ 3\e
 \|\nh\p_tv^\h_z\|_{L^2_\h}^2\\
 &\ +\cC_0\Bigl(\w{t}^{-\frac 3 2}\|\p_tv^\h_z\|_{L^2_\h}^2+h^2(z)\w{t}^{-6}+\log^2\w{t}\|\p_t^2v^\h_z\|_{L^2_\h}^2\Bigr).
 \end{split}
 \eeq

\no$\bullet$ \underline{The estimate of $\bigl((\r^\h v^\h_z\cdot\nh
v^\h)_t | \p_tv^\h_z\bigr)_{L^2_\h}.$ }

It is easy to observe that \beno \bigl|\bigl(\r_t^\h v^\h_z\cdot\nh
v^\h |
\p_tv^\h_z\bigr)_{L^2_\h}\bigr|&\leq&\|\r_t^\h\|_{L^\infty_\h}\|v^\h_z\|_{L^4_\h}\|\nh
v^\h\|_{L^4_\h}\|\p_tv^\h_z\|_{L^2_\h}\\
&\leq & C\|\nh
v^\h\|_{L^2_\h}\|\p_tv^\h_z\|_{L^2_\h}^2+C\|\r_t^\h\|_{L^\infty_\h}^2\|v^\h_z\|_{L^2_\h}\|\nh
v^\h_z\|_{L^2_\h}\|\nh^2 v^\h\|_{L^2_\h}, \eeno and
 \beno \bigl|\bigl(\r^\h
\p_tv^\h_z\cdot\nh v^\h | \p_tv^\h_z\bigr)_{L^2_\h}\bigr|\leq \|\nh
v^\h\|_{L^\infty_\h}\|\p_tv^\h_z\|_{L^2_\h}^2, \eeno and
 \beno \bigl|\bigl(\r^\h
v^\h_z\cdot\nh v^\h_t |
\p_tv^\h_z\bigr)_{L^2_\h}\bigr|&\leq&\|\r^\h\|_{L^\infty_\h}\|v^\h_z\|_{L^4_\h}\|\nh
v^\h_t\|_{L^2_\h}\|\p_tv^\h_z\|_{L^4_\h}\\
&\leq
&\e\|\nh\p_tv^\h_z\|_{L^2_\h}^2+C\|v^\h_z\|_{L^2_\h}^2\|\p_tv^\h_z\|_{L^2_\h}^2+C\|\nh
v^\h_z\|_{L^2_\h}\|\nh v^\h_t\|_{L^2_\h}^2. \eeno Then we infer from
Theorem \ref{S3thm2} and Proposition \ref{decaydzL2}  that
\beq
 \bigl|\bigl((\r^\h
v^\h_z\cdot\nh v^\h)_t | \p_tv^\h_z\bigr)_{L^2_\h}\bigr|\leq
\e\|\nh\p_tv^\h_z\|_{L^2_\h}^2+\cC_0\w{t}^{-\f32}\|\p_tv^\h_z\|_{L^2_\h}^2+\cC_0h^2(z)\w{t}^{-\bigl(\f{13}2\bigr)}.
\label{Prop5.2eq5} \eeq

\no$\bullet$ \underline{The closure of the energy estimate}

Applying Young's inequality ensures that
\beq
\begin{split} &C\|\nh\p_tv^\h_z\|_{L^2_\h}^{\f12}\|\nh
v^\h_t\|_{L^2_\h}^{\f12}\|\nh
v^\h_z\|_{L^2_\h}\|\sqrt{\r^\h}\p_tv^\h_z\|_{L^2_\h}^{\f12}\|\sqrt{\r^\h}v^\h_t\|_{L^2_\h}^{\f12}\\
&\qquad\quad\leq \e\|\nh\p_tv^\h_z\|_{L^2_\h}^2+C\|\nh
v^\h_z\|_{L^2_\h}^2\|\sqrt{\r^\h}\p_tv^\h_z\|_{L^2_\h}^2+C\|\nh
v^\h_t\|_{L^2_\h}\|\nh v^\h_z\|_{L^2_\h}\|v^\h_t\|_{L^2_\h}.
\end{split} \label{Prop5.2eq6}
 \eeq Resuming the Estimates
(\ref{Prop5.2eq4}-\ref{Prop5.2eq6}) into \eqref{Prop5.2eq1} leads to
\beq
\begin{split}
&\f{d}{dt}\|\sqrt{\r^\h}\p_tv^\h_z(t)\|_{L^2_\h}^2+\|\nh\p_tv^\h_z\|_{L^2_\h}^2\leq
\cC_0\bigl(\w{t}^{-\f32}+F_{1,\cT}(v^\h(t))\bigr)\|\sqrt{\r^\h}\p_tv^\h_z(t)\|_{L^2_\h}^2\\
&\qquad\qquad+CF_{2,\cT}(v^\h(t),v^\h_z(t))+\cC_0h^2(z)\w{t}^{-6}+C\log^2\w{t}\|\p_t^2v^\h(t)\|_{L^2_\h}^2.
\end{split}
\label{Prop5.2eq7}
 \eeq
Applying \eqref{estimenergyH2Lineariseeq1},
\eqref{estimenergyH2Lineariseeq2} for $u=v^\h,$ $v=v^\h_z$ and
$f(t)=f_1(t)$ and using \eqref{estimfondfD} gives \beno
\int_{t_0}^t F_{1,\cT}(v^\h(t'))dt'&\leq & \cC_0\left(
\|v^\h(t_0)\|_{L^2_\h}^4+{\|\nh
v^\h(t)\|_{L^2_\h}^2}\bigl/{\cT}\right)
\andf\\
\int_{t_0}^tF_{2,\cT}(v^\h(t),v^\h_z(t'))dt'&\leq& \cC_0\bigl(\|\nh
v^\h_z(t_0)\|_{L^2_\h}^2+h^2(z)\w{t_0}^{-3}\bigr)\\
&&\quad\times \left(\bigl(1+\|v^\h(t_0)\|_{L^2}\bigr)\|\nh
v^\h(t_0)\|_{L^2_\h}^2+\cT\right),\eeno from which, Theorem
\ref{S3thm2},  and taking $\cT=\|\na v^\h(t_0)\|_{L^2_\h}$ in the
 above inequality, we infer \beq \label{Prop5.2eq8} \begin{split} \int_{t_0}^t
F_{1,\cT}(v^\h(t'))dt'\leq & \cC_0 \andf\\
\int_{t_0}^tF_{2,\cT}(v^\h(t),v^\h_z(t'))dt'\leq & \cC_0
h^2(z)\w{t_0}^{-3}\|\nh v^\h(t_0)\|_{L^2_\h}^2. \end{split} \eeq
Thanks to \eqref{Prop5.2eq8}, we get, by applying Gronwall's Lemma
to \eqref{Prop5.2eq7}, that for any nonnegative~$t_0$ and any $t$
greater than or equal to~$ t_0,$ \beq \begin{split}
&\|\sqrt{\r^\h}\p_tv^\h_z(t)\|_{L^2_\h}^2+\int_{t_0}^t\|\nh\p_tv^\h_z(t')\|_{L^2_\h}^2dt'\leq
\cC_0h^2(z)\int_{t_0}^t\w{t'}^{-6}dt'\\
&\qquad\qquad\qquad\qquad+C\int_{t_0}^t\log^2\w{t'}\|\p_t^2v^\h(t')\|_{L^2_\h}^2dt'+\cC_0h^2(z)\w{t_0}^{-3}\|\nh
v^\h(t_0)\|_{L^2_\h}^2. \end{split} \label{Prop5.2eq9} \eeq

On the other hand, by multiplying Inequality~\eqref{Prop5.2eq7} by $t-t_0$ and
then applying Gronwall's Lemma, we obtain
$$
\longformule{ (t-t_0)\|\sqrt{\r^\h}\p_tv^\h_z(t)\|_{L^2_\h}^2\leq
\int_{t_0}^t\|\sqrt{\r^\h}\p_tv^\h_z(t')\|_{L^2_\h}^2dt'+\cC_0h^2(z)(t-t_0)\w{t_0}^{-3}\|\nh
v^\h(t_0)\|_{L^2_\h}^2 } { {} +
\cC_0h^2(z)\int_{t_0}^t(t'-t_0)\w{t'}^{-6}dt'+C\int_{t_0}^t\log^2\w{t'}(t'-t_0)\|\p_t^2v^\h(t')\|_{L^2_\h}^2dt'.
}
$$
Taking $t_0$ equal to~$ t/2$ in the above inequality and using again
\eqref{decaydzL2eq16}, we obtain \beno
t\|\sqrt{\r^\h}\p_tv^\h_z(t)\|_{L^2_\h}^2\leq \cC_0\Bigl(
h^2(z)\w{t}^{-3}+\w{t}^{-4_-}\int_{t/2}^t\w{t'}^{5_-}\|\p_t^2v^\h(t')\|_{L^2_\h}^2dt'\Bigr),
\eeno from which, \eqref{S3thm2eq8} and \eqref{Prop5.2eq9}, we
conclude \beq \label{Prop5.2eq11} \|\p_tv^\h_z(t)\|_{L^2_\h}\leq
\cC_0 h(z)\w{t}^{-2}. \eeq

On the other hand, it follows from Lemma \ref{Echanget2x} that $$
\longformule{
\|\nh^2v^\h_z(t)\|_{L^2_\h}+\|\nh\Pi_z^\h(t)\|_{L^2_\h}
\lesssim
\|\p_tv^\h_z(t)\|_{L^2_\h}+\|v^\h(t)\|_{L^4_\h}^2\|\nh
v^\h_z(t)\|_{L^2_\h}
}
{{}
+\|f_1(t)\|_{L^2_\h}+\|\nh
v^\h(t)\|_{L^\infty_\h}\|\nh v^\h_z(t)\|_{L^2_\h},
}
$$
from which,
\eqref{estimfondfD} and \eqref{S3thm2eq1}, we deduce \beno
\|\nh^2v^\h_z(t)\|_{L^2_\h}+\|\nh\Pi_z^\h(t)\|_{L^2_\h}\leq \cC_0
h(z)\w{t}^{-2}. \eeno Together with \eqref{Prop5.2eq11}, this proves
\eqref{SAeq2}.

Finally \eqref{SAeq3} follows from the interpolation Inequality
\eqref{1.14b}, \eqref{zp9} and \eqref{SAeq2}. This completes the
proof of the proposition.
\end{proof}

\begin{col}
\label{SAcol1} {\sl The time~$T^\star_1$ given by
Relation~\eqref{Tstardef1} equals $+\infty,$ and there hold
\beq\label{2.21qwe} \|\nh\r_z^\h(t)\|_{L^\infty_\h\cap
L^2_\h}\leq\cC_0\eta h(z) \andf \|\p_t\r^\h_z(t)\|_{L^\infty_\h\cap
L^2_\h}\leq\cC_0\eta h(z)\w{t}^{-\frac 3 2}.
 \eeq }
\end{col}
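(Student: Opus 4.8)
The plan is to close a continuation argument for $T^\star_1$ using the transport equation satisfied by $\rho^\h_z$, and then to differentiate that equation to obtain the bounds \eqref{2.21qwe}. First I would work on the interval $t\le T^\star_1$, where by Propositions~\ref{decaydzL2} and~\ref{SAlem2} all the decay estimates for $v^\h_z$ are available. Since $v^\h$ is divergence free, the density equation of (D1INS2D), namely $\p_t\rho^\h_z+v^\h\cdot\nh\rho^\h_z=-v^\h_z\cdot\nh\rho^\h$, gives for $p\in\{2,\infty\}$ and $t\le T^\star_1$
$$
\|\rho^\h_z(t)\|_{L^p_\h}\le\|\rho^\h_z(0)\|_{L^p_\h}+\int_0^t\|v^\h_z(t')\|_{L^\infty_\h}\,\|\nh\rho^\h(t')\|_{L^p_\h}\,dt'.
$$
Here $\|\rho^\h_z(0)\|_{L^p_\h}=\eta\|\p_z\varsigma_0(\cdot,z)\|_{L^p_\h}\le\cC_0\eta h(z)$ because $\varsigma_0\in\cS(\R^3)$; next $\|v^\h_z(t')\|_{L^\infty_\h}\le\cC_0h(z)\w{t'}^{-\frac32}$ by \eqref{SAeq3}; and $\|\nh\rho^\h(t')\|_{L^2_\h\cap L^\infty_\h}=\|\nh\varrho^\h(t')\|_{L^2_\h\cap L^\infty_\h}\le\cC_0\eta h(z)$ by \eqref{S3thm2eq3} of Theorem~\ref{S3thm2} together with the two-dimensional embedding $H^2_\h\hookrightarrow L^\infty_\h$. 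Since $\int_0^\infty\w{t'}^{-\frac32}\,dt'<\infty$, I obtain $\|\rho^\h_z(t)\|_{L^2_\h\cap L^\infty_\h}\le\cC_0\eta h(z)$ for all $t\le T^\star_1$.

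Next, since $h\in L^\infty_{\rm v}$ with a norm controlled by the profiles, I would fix $\eta_0$ so small that $\cC_0\eta_0\|h\|_{L^\infty_{\rm v}}<1$; then for $\eta\le\eta_0$ the bound just proved reads $\|\rho^\h_z(t)\|_{L^2_\h\cap L^\infty_\h}<1$ for every $t<T^\star_1$. As $\rho^\h_z$ is continuous in time with values in $L^2_\h\cap L^\infty_\h$, the definition \eqref{Tstardef1} forces $T^\star_1=+\infty$. Consequently the conclusions of Propositions~\ref{decaydzL2} and~\ref{SAlem2} hold on all of $\R^+$, and $\|\rho^\h_z(t)\|_{L^2_\h\cap L^\infty_\h}\le\cC_0\eta h(z)$ for every $t\ge0$.

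For the estimates \eqref{2.21qwe} I would apply $\nh$ to the transport equation for $\rho^\h_z$, which yields
$$
\p_t\nh\rho^\h_z+v^\h\cdot\nh(\nh\rho^\h_z)=-\nh v^\h\cdot\nh\rho^\h_z-\nh v^\h_z\cdot\nh\rho^\h-v^\h_z\cdot\nh^2\rho^\h,
$$
and then run the $L^p_\h$ estimate ($p\in\{2,\infty\}$) with Gronwall's lemma, using that $\nh v^\h\in L^1(\R^+;L^\infty_\h)$ by \eqref{S3thm2eq1}. It remains to check that the forcing $\nh v^\h_z\cdot\nh\rho^\h+v^\h_z\cdot\nh^2\rho^\h$ is integrable in time in $L^p_\h$. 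For $p=2$ this is immediate from \eqref{decaydzL2eq17}, \eqref{SAeq3} and the $H^4_\h$ bound on $\varrho^\h$, both terms being $\le\cC_0\eta h(z)\w{t'}^{-\frac32}$; for $p=\infty$ one uses in addition that $\nh v^\h_z\in L^1(\R^+;L^\infty_\h)$, which comes from the $L^\infty_\h$ decay of $\nh v^\h_z$ (obtained by one extra derivative in the scheme of Propositions~\ref{decaydzL2}--\ref{SAlem2}, equivalently as part of the remaining assertion of Theorem~\ref{SAprop1}). This gives $\|\nh\rho^\h_z(t)\|_{L^2_\h\cap L^\infty_\h}\le\cC_0\eta h(z)$. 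Finally, reading $\p_t\rho^\h_z=-v^\h\cdot\nh\rho^\h_z-v^\h_z\cdot\nh\rho^\h$ directly off the equation and inserting this bound together with $\|v^\h(t)\|_{L^\infty_\h}\le\cC_0h(z)\w{t}^{-\frac32}$ from \eqref{S3thm2eq1}, $\|v^\h_z(t)\|_{L^\infty_\h}\le\cC_0h(z)\w{t}^{-\frac32}$ from \eqref{SAeq3}, and $\|\nh\rho^\h\|_{L^2_\h\cap L^\infty_\h}\le\cC_0\eta h(z)$ gives $\|\p_t\rho^\h_z(t)\|_{L^2_\h\cap L^\infty_\h}\le\cC_0\eta h(z)\w{t}^{-\frac32}$, one power of $h$ being absorbed into the generic constant.

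The main obstacle is the mild circularity in the last step: controlling $\nh\rho^\h_z$ in $L^\infty_\h$ requires $\nh v^\h_z\in L^1(\R^+;L^\infty_\h)$, a bound of essentially the same strength as part of Theorem~\ref{SAprop1} itself. The argument therefore has to be arranged so that the genuinely self-contained piece --- the $L^2_\h\cap L^\infty_\h$ estimate on $\rho^\h_z$, hence $T^\star_1=+\infty$ --- is obtained first from Propositions~\ref{decaydzL2}--\ref{SAlem2} alone, and only afterwards are the higher-regularity bounds closed (or one interpolates $\|\nh v^\h_z\|_{L^\infty_\h}$ from one further application of those propositions). Everything else is a routine combination of transport and Gronwall estimates once the time-decay inputs of Theorem~\ref{S3thm2} and Propositions~\ref{decaydzL2}--\ref{SAlem2} are in hand.
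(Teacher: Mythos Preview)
Your approach is essentially the same as the paper's: transport estimate on $\rho^\h_z$ to close the continuation argument for $T^\star_1$, then differentiate the transport equation and use Gronwall for $\nh\rho^\h_z$, and finally read $\p_t\rho^\h_z$ directly off the equation.

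The one point worth sharpening is the step you flag as a ``mild circularity.'' In the paper this is resolved \emph{inside} the proof of the corollary and does not borrow from Theorem~\ref{SAprop1}: one applies Lemma~\ref{Echanget2x} to the momentum equation of (D1INS2D) with exponent $p=4$ to get
$$
\|\nh^2 v^\h_z\|_{L^4_\h}+\|\nh\Pi^\h_z\|_{L^4_\h}\lesssim \cC_0\bigl(h(z)\w{t}^{-\frac94}+h^{\frac12}(z)\w{t}^{-1}\|\nh\p_t v^\h_z\|_{L^2_\h}^{\frac12}\bigr),
$$
the right-hand side being controlled by Propositions~\ref{decaydzL2}--\ref{SAlem2} and \eqref{Prop5.2eq9}. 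Then the interpolation $\|\nh v^\h_z\|_{L^\infty_\h}\lesssim\|\nh v^\h_z\|_{L^2_\h}^{1/3}\|\nh^2 v^\h_z\|_{L^4_\h}^{2/3}$ gives $\|\nh v^\h_z\|_{L^1_t(L^\infty_\h)}\le\cC_0 h(z)$ directly. So there is no genuine circularity; your parenthetical suggestion (``interpolate $\|\nh v^\h_z\|_{L^\infty_\h}$ from one further application of those propositions'') is exactly the right fix, and the paper carries it out explicitly.
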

\begin{proof}
It follows from the transport equation of (D1INS2D),
\eqref{S3thm2eq3} and \eqref{SAeq3} that  \beno
\|\r_z^\h(t)\|_{L^2_\h\cap L^\infty_\h} \leq
\|\p_z\r_0(\cdot,z)\|_{L^2_\h\cap
L^\infty_\h}+\|\nh\r^\h\|_{L^\infty_t(L^2_\h\cap
L^\infty_\h)}\|v^\h_z\|_{L^1_t(L^\infty_\h)} \leq \cC_0\eta.
 \eeno Therefore as long as $\eta$ is
sufficiently small, we have \beno \|\r_z^\h(t)\|_{L^\infty_{\rm
v}(L^2_{\rm h})\cap L^\infty}\leq \f12\quad\mbox{for}\quad t\leq
T^\star_1. \eeno This in turn shows that $T^\star_1$ given by
Relation \eqref{Tstardef1}  equals $+\infty.$ Moreover, there hold
\eqref{zp9}, \eqref{decaydzL2eq17} and\refeq{SAeq2} and\refeq{SAeq3}
for any $t<\infty.$ Then we deduce from Lemma  \ref{Echanget2x} and
(D1INS2D) that \beno \|\nh^2v^\h_z\|_{L^4_\h}+\|\nh
\Pi_z^\h\|_{L^4_\h} & \leq &
C\bigl\|\bigl(\r^\h\p_tv^\h_z+\r^\h v^\h\cdot\nh v^\h_z+\r_z^\h v^\h_t+\p_z(\r^\h v^\h)\cdot\nh v^\h\bigr)\bigr\|_{L^4_\h}\\
&\leq &
C\Bigl(\|\p_t v^\h_z\|_{L^4_\h}+\|v^\h_t\|_{L^4_\h}+\|v^\h\|_{L^\infty_\h}\bigl(\|\nh v^\h_z\|_{L^4_\h}\\
&&\qquad\qquad\qquad\quad +\|\nh v^\h\|_{L^4_\h}\bigr)+
\|v^\h_z\|_{L^\infty_\h}\|\nh v^\h\|_{L^4_\h}\Bigr), \eeno from
which \eqref{1.14a},  \eqref{S3thm2eq1} and Propositions
\ref{decaydzL2} and \ref{SAlem2}, we infer \beno \|\nh^2
v^\h_z\|_{L^4_\h}+\|\nh \Pi_z^\h\|_{L^4_\h}\leq
\cC_0\bigl(h(z)\w{t}^{-\frac  9 4}+h^{\f12}(z)\w{t}^{-1}\|\nh\p_tv^\h_z\|_{L^2_\h}^{\f12}\bigr),
\eeno so that in view of  \eqref{decaydzL2eq17} and
\eqref{Prop5.2eq9}, we conclude \beq \begin{split}
 \|\nh v^\h_z\|_{L^1_t(L^\infty)}\leq &
C\int_0^t\|\nh v^\h_z(t')\|_{L^2}^{\f13}\|\nh^2 v^\h_z(t')\|_{L^4}^{\f23}dt'\\
\leq
&\cC_0\Bigl(h(z)+h^{\f23}(z)\int_0^t\w{t'}^{-\bigl(\f76\bigr)}\|\nh\p_t
v^\h_z(t')\|_{L^2}^{\f13}\,dt'\Bigr)\leq \cC_0h(z). \end{split}
\label{2.19af}\eeq

While by taking $\nh$ to the transport equation of (D1INS2D), one
has \beq \label{2.19mnb} \p_t\nh\r_z^\h+v^\h\cdot\nh\nh\r_z^\h=-\nh
v^\h\cdot\nh\r_z^\h-\nh v_z^\h\cdot\nh\r^\h-v^\h_z\cdot\nh\nh\r^h,
\eeq from which, we infer
$$
\longformule{
\|\nh\r_z^\h(t)\|_{L^2_\h\cap
L^\infty_\h}\leq
\Bigl(\|\p_z\nh\r_0(z)\|_{L^2_\h\cap L^\infty_\h}+\|\nh v^\h_z(t)\|_{L^1_t(L^\infty_\h)}\|\nh\r^\h\|_{L^\infty_t(L^2_\h\cap L^\infty_\h)}
}
{ {}
+\|v^\h_z\|_{L^1_t(L^\infty_\h)}\|\nh^2\r^\h\|_{L^\infty_t(L^2_\h\cap
L^\infty_\h)}\Bigr)\exp\bigl(\|\nh
v^\h\|_{L^1_t(L^\infty_\h)}\bigr),
}
$$ which together with \eqref{S3thm2eq2} and
\eqref{SAeq3}  ensures the first inequality of \eqref{2.21qwe}.

Finally it follows from the transport equation of (D1INS2D),
\eqref{S3thm2eq1} and \eqref{SAeq3} that \beno
\|\p_t\r_z^\h(t)\|_{L^2_\h\cap L^\infty_\h}&\leq&
\|\nh\r_z^\h(t)\|_{L^2_\h\cap
L^\infty_\h}\|v^\h(t)\|_{L^\infty_\h}+\|v^\h_z(t)\|_{L^\infty_\h}\|\nh\r^\h(t)\|_{L^2_\h\cap
L^\infty_\h}\\
&\leq& \cC_0 \eta h(z)\w{t}^{-\frac 3 2}. \eeno This
completes the proof of the Corollary.
\end{proof}

\begin{prop}\label{SAlem3} {\sl Under the assumptions of  Theorem
\ref{insslowvar}, one has  \ben &&\label{2.22} \|\nh\p_t
v^\h_z(t)\|_{L^2_\h}^2 +\int_{t}^\infty \bigl(\|\p_t^2v^\h_{z}\|^2
_{L^2_\h} + \|\nh^2\p_t
v^\h_z\|^2_{L^2_\h}+\|\nh\partial_t\Pi_z^\h\|_{L^2_\h}^2\bigr)
dt'\leq \cC_0
h^2(z)\w{t}^{-5},\\
&&\label{PropSAlem3eq2}\int_0^t\w{t'}^{5_-}\bigl(\|\p_t^2v^\h_{z}\|^2
_{L^2_\h} + \|\nh^2\p_t
v^\h_z\|^2_{L^2_\h}+\|\nh\partial_t\Pi^\h_z\|_{L^2_\h}^2\bigr) dt'\leq \cC_0 h^2(z),\\
&&\label{2.23} \|\nh^3
v^\h_z(t)\|_{L^2_\h}+\|\nh^2\Pi_z^\h(t)\|_{L^2_\h}\leq
\cC_0h(z)\w{t}^{-\f52}\log\w{t}. \een}
\end{prop}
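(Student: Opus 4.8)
The plan is to mimic the proofs of Propositions \ref{decayD3} and \ref{DecayD4}, applied now to the system (D1INS2D), feeding in the decay bounds already established in Theorem \ref{S3thm2}, Proposition \ref{decaydzL2}, Proposition \ref{SAlem2} and Corollary \ref{SAcol1} (the last of which also tells us that $T^\star_1=+\infty$, so all those estimates now hold for every $t$). Differentiating the momentum equation of (D1INS2D) in time and using $\partial_t\rho^\h=-\dive_\h(\rho^\h v^\h)$, exactly as in the derivation of \eqref{estimenergyH2Linearisedemoeq0}, one checks that $\partial_t v^\h_z$ solves a system of type (LINS2D) with $\rho=\rho^\h$, $u=v^\h$, pressure $\partial_t\Pi_z^\h$, a convection-type operator whose $\cL(L^2_\h)$ norm is bounded by $2\|\nh v^\h(t)\|_{L^\infty}$ (integrable by Theorem \ref{S3thm2}), and an external force $\wt f$ that collects $\partial_t f_1$, the commutator terms created by $\partial_t\rho^\h$ and by $v^\h_t\cdot\nh v^\h_z$, and $\partial_t\bigl(L(t)v^\h_z\bigr)$.

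To prove \eqref{2.22} and \eqref{PropSAlem3eq2} I would apply Lemma \ref{DecayH1fond} with $v=\partial_t v^\h_z$ and $f=\wt f$. First, \eqref{LemDecayH1fondeq1} gives the global bound $\|\nh\partial_t v^\h_z\|_{L^\infty(\R^+;L^2_\h)}^2+\int_0^\infty\bigl(\|\partial_t^2 v^\h_z\|_{L^2_\h}^2+\|\nh^2\partial_t v^\h_z\|_{L^2_\h}^2+\|\nh\partial_t\Pi_z^\h\|_{L^2_\h}^2\bigr)\,dt'\le\cC_0 h^2(z)$, for which one only needs $\int_0^\infty\|\wt f(t')\|_{L^2_\h}^2\,dt'\le\cC_0 h^2(z)$. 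Then \eqref{LemDecayH1fondeq3} with $t_0=t/2$, combined with $\|\partial_t v^\h_z(t/2)\|_{L^2_\h}^2\le\cC_0 h^2(z)\w{t}^{-4}$ from \eqref{SAeq2} and with a bound $\int_t^\infty\|\wt f(t')\|_{L^2_\h}^2\,dt'\le\cC_0 h^2(z)\w{t}^{-5}$, upgrades this to the rate $\w{t}^{-5}$ claimed in \eqref{2.22}, the extra power being gained from the factor $1/t$ just as in the proof of \eqref{propdecayD3eq1}. Estimate \eqref{PropSAlem3eq2} follows in the same fashion from the weighted inequality \eqref{LemDecayH1fondeq2} with exponent $s=5_-$, bootstrapped on the decay of $\|\nh\partial_t v^\h_z\|_{L^2_\h}^2$ just obtained.

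The core of the argument, and the step I expect to be the main obstacle, is the estimate of $\|\wt f\|_{L^2_\h}$. Most terms of $\wt f$ are genuinely bounded by $\cC_0 h^2(z)\w{t}^{-7/2}$, using the decay of $v^\h$, $v^\h_t$, $\nh v^\h_t$, $v^\h_z$, $\nh\rho^\h_z$ and $\partial_t\rho^\h_z$ coming from the quoted results, often after rewriting $\partial_t\rho^\h_z$ via the transport equation of (D1INS2D) and integrating by parts. The delicate term is $\rho^\h_z\,\partial_t^2 v^\h$, produced by $\partial_t f_1$: contrary to the purely two-dimensional situation of Proposition \ref{decayD3}, $\partial_t^2 v^\h$ carries no pointwise-in-time decay, only the integrated bounds of Proposition \ref{decayD3} and \eqref{S3thm2eq8} applied to \eqref{INS2dParameterb}. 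One therefore does not estimate this term pointwise but keeps it inside the time integrals of Lemma \ref{DecayH1fond}, bounding $\int_t^\infty\|\rho^\h_z\partial_t^2 v^\h\|_{L^2_\h}^2$ and $\int_0^t\w{t'}^{5_-}\|\rho^\h_z\partial_t^2 v^\h\|_{L^2_\h}^2$ by $\|\rho^\h_z\|_{L^\infty}\lesssim\cC_0\eta h(z)$ times those integrated estimates of $\partial_t^2 v^\h$; the smallness factor $\eta$ also leaves room to absorb the higher powers of $h(z)$ that surface.

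Finally, \eqref{2.23} follows from \eqref{2.22} exactly as \eqref{PropS7eq6eq1} follows from Proposition \ref{decayD3} in the proof of Proposition \ref{DecayD4}: one differentiates the momentum equation of (D1INS2D) in a horizontal variable and applies the Stokes-type inequality of Lemma \ref{Echanget2x} with $v=\partial_j v^\h_z$, $j\in\{1,2\}$, which yields $\|\nh^3 v^\h_z\|_{L^2_\h}+\|\nh^2\Pi_z^\h\|_{L^2_\h}\lesssim\|\nh\partial_t v^\h_z\|_{L^2_\h}+\|v^\h\|_{L^4_\h}^2\|\nh^2 v^\h_z\|_{L^2_\h}+\|g\|_{L^2_\h}$ with $g=\partial_j f_1-\partial_j\rho^\h\,\partial_t v^\h_z-\partial_j\rho^\h\,v^\h\cdot\nh v^\h_z-\rho^\h\,\partial_j v^\h\cdot\nh v^\h_z$. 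All terms of $g$ are controlled through Theorem \ref{S3thm2}, Proposition \ref{SAlem2}, Corollary \ref{SAcol1} and \eqref{2.22}; the only subtle ones are the linear terms $\partial_j\rho^\h\,\partial_t v^\h_z$ and $\partial_j\rho^\h_z\,v^\h_t$, for which one uses H\"older with exponents $2/(1-\epsilon)$ and $2/\epsilon$ and interpolates $\partial_t v^\h_z$ (resp.\ $v^\h_t$) between $L^2_\h$ and $\nh(\cdot)$ in $L^2_\h$, choosing $\epsilon\sim\log^{-1}\w{t}$, exactly as in \eqref{S7eq6demoeq4}. This is precisely what turns the bare rate $\w{t}^{-5/2}$ into the $\w{t}^{-5/2}\log\w{t}$ of \eqref{2.23}.
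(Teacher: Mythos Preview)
Your proposal is correct and follows essentially the same route as the paper: view $\partial_t v^\h_z$ as a solution of (LINS2D) with an explicit external force, apply Lemma~\ref{DecayH1fond} in its three forms \eqref{LemDecayH1fondeq1}, \eqref{LemDecayH1fondeq3}, \eqref{LemDecayH1fondeq2} to obtain \eqref{2.22} and \eqref{PropSAlem3eq2}, and then differentiate (D1INS2D) in a horizontal variable and use Lemma~\ref{Echanget2x} together with the $\epsilon\sim\log^{-1}\w t$ trick for the terms $\nh\rho^\h\,\partial_t v^\h_z$ and $\nh\rho^\h_z\,v^\h_t$ to get \eqref{2.23}. You also correctly isolate the one genuinely delicate term, $\rho^\h_z\,\partial_t^2 v^\h$, and the right remedy: keep it inside the time integral and invoke the integrated bounds \eqref{propdecayD3eq1} and \eqref{S3thm2eq8} for $\partial_t^2 v^\h$.

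One small remark: your worry about ``higher powers of $h(z)$'' and absorbing them via $\eta$ is unnecessary. In the paper $h(z)$ is a generic element of $L^2_\v\cap L^\infty_\v$ that may change from line to line, so products like $h^2$ or $h^3$ are simply relabelled as a new $h$. Concretely, the paper just uses $\|\rho^\h_z\|_{L^\infty_\h}\le 1$ (valid for all $t$ by Corollary~\ref{SAcol1}) together with $\int_{t_0}^\infty\|\partial_t^2 v^\h\|_{L^2_\h}^2\,dt'\le\cC_0 h^2(z)\w{t_0}^{-5}$, which already carries the factor $h^2(z)$; no smallness of $\eta$ is invoked at this step.
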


\begin{proof} In view of (D1INS2D), $\p_tv^\h_z$ is a solution of
(LINS2D) with external force $\wt f_1(t)$ given~by
\beno
\wt f_1(t)=-(\r^\h v^\h)_t\cdot\nh v^\h_z-(\r_z^\h v^\h_t)_t-(\r_z^\h
v^\h\cdot\nh v^\h)_t-(\r^\h v^\h_z\cdot\nh v^\h)_t, \eeno so that
\beno \|\wt f_1(t)\|_{L^2_\h}&\leq
&\|\r^\h\|_{L^\infty_\h}\bigl(\|\nh
v^\h\|_{L^\infty_\h}\|\p_tv^\h_z\|_{L^2_\h}+\|
v^\h_z\|_{L^\infty_\h}\|\nh v^\h_t\|_{L^2_\h}\bigr)\\
&&+\|\r_t^\h\|_{L^\infty_\h}\bigl(\| v^\h\|_{L^\infty_\h}\|\nh
v^\h_z\|_{L^2_\h}+\|
v^\h_z\|_{L^\infty_\h}\|\nh v^\h\|_{L^2_\h}\bigr)\\
&&+\|\r_z^\h\|_{L^\infty_\h}\bigl(\|\p_t^2v^\h\|_{L^2_\h}+\| \nh
v^\h\|_{L^\infty_\h}\| v^\h_t\|_{L^2_\h}+\|
v^\h\|_{L^\infty_\h}\|\nh v^\h_t\|_{L^2_\h}\bigr)\\
&&+\|\p_t\r_z^\h\|_{L^\infty_\h}\bigl(\| v^\h_t\|_{L^2_\h}+\|
v^\h\|_{L^\infty_\h}\|\nh v^\h\|_{L^2_\h}\bigr). \eeno Then from the
estimates in the previous sections, we deduce that for any
nonnegative $t_0$ and any time $t\geq t_0$ \beno \int_{t_0}^t\|\wt
f_1(t')\|_{L^2_\h}^2dt'\leq \cC_0
h^2(z)\w{t_0}^{-6}+\int_{t_0}^t\|\p_t^2v^\h(t')\|_{L^2_\h}^2dt'\leq
\cC_0 h^2(z)\w{t_0}^{-5}, \eeno where we used \eqref{propdecayD3eq1}
 in the last step. Then applying Lemma \ref{DecayH1fond} gives
 \beq
  \label{PropSAlem3eq1}
  \begin{split}
  &
 \|\nh
\p_tv^\h_z (t)\|^2 _{L^2_\h} +\int_{t_0}^\infty
\bigl(\|\p_t^2v^\h_{z}  (t')\|^2 _{L^2_\h} + \|\nh^2\p_t
v^\h_z(t')\|^2_{L^2_\h}+\|\nh\partial_t\Pi_z^\h
(t')\|_{L^2_\h}^2\bigr) dt'\\
&\qquad\qquad\qquad\qquad\qquad
\qquad\qquad\qquad\qquad\quad\lesssim \|\nh\p_t
v^\h_z(t_0)\|_{L^2_\h}^2+\cC_0h^2(z)\w{t_0}^{-5},
\end{split}
\eeq
and
\beno t\|\nh\p_tv^\h_z(t)\|_{L^2_\h}^2\lesssim
\|\p_tv^\h_z(t/2)\|_{L^2_\h}^2+t\int_{t/2}^\infty\|\wt
f_1(t')\|_{L^2_\h}^2dt'\leq \cC_0 h^2(z)\w{t}^{-4}.
 \eeno
 This
yields \beno \|\nh\p_tv^\h_z(t)\|_{L^2_\h}^2\leq \cC_0
h^2(z)\w{t}^{-5}. \eeno Taking $t_0$ equal to~$t$ in
Inequality~\eqref{PropSAlem3eq1} and resuming the above estimate
into the resulting inequality leads to Estimate~\eqref{2.22}. With
this inequality and \eqref{S3thm2eq8}, we conclude the proof of
\eqref{PropSAlem3eq2} by applying  \eqref{LemDecayH1fondeq2} for
$s=5_-$ and $v=\p_tv^\h_z.$

In order to prove \eqref{2.23}, we get, by differentiate (D1INS2D),
that
$$\longformule{ \D_\h\p_jv^\h_z-\p_j\nh\Pi_z^\h=-\p_j(\r^\h\p_tv^\h_z)-\p_j(\r^\h
v^\h\cdot\nh v^\h_z)}{{}-\p_j(\r_z^\h v^\h_t)-\p_j(\r_z^\h
v^\h\cdot\nh v^\h)-\p_j(\r^\h v^\h_z\cdot\nh v^\h).} $$ Applying
Lemma \ref{Echanget2x} gives
\beno
&&\|\nh^3v^\h_z\|_{L^2_\h}+\|\nh^2\Pi_z^\h\|_{L^2_\h}\lesssim
\|\nh\r^\h\p_tv^\h_z\|_{L^2_\h}+\|\nh\r_z^\h v^\h_t\|_{L^2_\h}+\|\r^\h\|_{L^\infty}\bigl(\|\nh\p_tv^\h_z\|_{L^2_\h}\\
&&\quad +\|\nh v^\h\|_{L^\infty_\h}\|\nh
v^\h_z\|_{L^2_\h}+\|v^\h\|_{L^\infty_\h}\|\nh^2v^\h_z\|_{L^2_\h}+\|v^\h_z\|_{L^\infty_\h}\|\nh^2v^\h\|_{L^2_\h}\bigr)\\
&&\quad+\|\nh\r^\h\|_{L^\infty_\h}\bigl(\|v^\h\|_{L^\infty_\h}\|\nh
v^\h_z\|_{L^2_\h}+\|v^\h_z\|_{L^\infty_\h}\|\nh
v^\h\|_{L^2_\h}\bigr)+\|\nh
\r_z^\h\|_{L^\infty_\h}\|v^\h\|_{L^\infty_\h}\|\nh
v^\h\|_{L^2_\h}\\
&&\quad +\|\r_z^\h\|_{L^\infty_\h}\bigl(\|\nh v^\h_t\|_{L^2_\h}+\|\nh
v^\h\|_{L^\infty_\h}\|\nh
v^\h\|_{L^2_\h}+\|v^\h\|_{L^\infty_\h}\|\nh^2v^\h\|_{L^2_\h}\bigr),
\eeno
from which and the previous decay estimates, we infer
$$
\|\nh^3v^\h_z\|_{L^2_\h}+\|\nh^2\Pi_z^\h\|_{L^2_\h}\lesssim
\|\nh\r^\h\p_tv^\h_z\|_{L^2_\h}+\|\nh\r_z^\h
v^\h_t\|_{L^2_\h}+\cC_0h(z)\w{t}^{-\frac 5 2}.
$$ Yet
along the same line to the proof of \eqref{S7eq6demoeq4} and using
\eqref{S3thm2eq3}, \eqref{2.21qwe}, we have \beno
\|\nh\r^\h\p_tv^\h_z(t)\|_{L^2_\h}+\|\nh\r_z^\h
v^\h_t(t)\|_{L^2_\h}\leq \cC_0 h(z)\w{t}^{-\f52}\log\w{t}. \eeno
This proves Inequality~\eqref{2.23} and finishes the proof of
Proposition \ref{SAlem3}.
\end{proof}

Let us now turn to the proof of Theorem \ref{SAprop1}.

\begin{proof}[Proof of Theorem \ref{SAprop1}] By summing up Propositions \ref{decaydzL2}, \ref{SAlem2}  and
\ref{SAlem3}, we conclude that $v^\h_z$ shares the same decay
estimates of Inequalities \eqref{S3thm2eq1} and \eqref{S3thm2eq8}.
It remains to prove \eqref{thmSAprop1eq1}.
 Indeed by virtue of \eqref{S3thm2eq2}, Theorem 3.14 of\ccite{BCD} and the transport equation of (D1INS2D), we~get
 \beq
 \label{thmSAprop1eq2}
 \begin{split}
 \|\r^\h_z(t)\|_{H^3_\h}\leq &
 C\Bigl(\|\p_z\r_0\|_{H^3_\h}+\int_0^t\bigl(\|\v_z^\h(t')\|_{L^\infty}\|\nh\r^\h(t')\|_{H^3_\h}\\
 &\quad+\|\nh
 v^h_z(t')\|_{H^2_\h}\|\nh\r^\h(t')\|_{L^\infty}\bigr)dt'\Bigr)\exp\Bigl(C\|\nh
 v^h\|_{L^1_t(H^2_\h)}\Bigr)\leq
 \cC_0\eta h(z). \end{split}
 \eeq
Then
 by taking one more
horizontal derivative to  \eqref{2.19mnb} and using
\eqref{thmSAprop1eq2}, we infer
\beno
\|\nh^2\p_t\r^\h_z(t)\|_{L^2_\h}
&\leq &
\|v^\h(t)\|_{L^\infty}\|\nh^3\r^\h_z(t)\|_{L^2_\h}+2\|\nh
v^\h(t)\|_{L^\infty}\|\nh^2\r^\h_z(t)\|_{L^2_\h}\\
&&{}+\|\nh^2v^\h(t)\|_{L^2_\h}\|\nh\r_z^\h(t)\|_{L^\infty}+\|\nh^2v^\h_z(t)\|_{L^2_\h}\|\nh\r^\h(t)\|_{L^\infty_\h}\\
&&{}+2\|\nh
v^\h_z(t)\|_{L^\infty}\|\nh^2\r^\h(t)\|_{L^2_\h}+\|v^\h_z(t)\|_{L^\infty}\|\nh^3\r^\h(t)\|_{L^2_\h}\\
&\leq & \cC_0\eta h(z)\w{t}^{-\frac 3 2}.
 \eeno
 This together with Inequality~\eqref{2.21qwe} and \eqref{thmSAprop1eq2} proves  Estimate (\refer{thmSAprop1eq1}).
\end{proof}

\setcounter{equation}{0}

\section{Decay estimates of $\p_{zz} v^\h$}
\label {decayL2partialz2}

Similar to Section \ref{decayL2partialz}, let us denote $\p_z^2v^\h$
by $v^\h_{zz}$ and also $\p_z^2\r^\h$ by $\r_{zz}^\h$ and
$\p_z^2\Pi^\h$ by $\Pi_{zz}^\h.$ Then $(\r_{zz}^\h,v_{zz}^\h,
\Pi_{zz}^\h)$ solves the system in $\R^+\times\R^2$
$$
\displaylines{ {\rm (D2INS2D)}\quad \left\{\begin{array}{c}
\displaystyle
 \p_t\r^\h_{zz}+v^\h\cdot\nh \r^\h_{zz}=-2v^\h_z\cdot \nh \rho^\h_z-v^\h_{zz}\cdot\nh\r^\h,  \\
\displaystyle \r^\h\pa_t v^\h_{zz}+ \r^\h v^\h\cdot\nh v^\h_{zz}  -\D_\h v^\h_{zz} + \nh \Pi_{zz}=f_2 +L(t) v^\h_{zz},\\
\displaystyle \dive_\h v^\h_{zz}=0, \\
\displaystyle  (\r^\h_{zz}, v^\h_{zz})|_{t=0}=(\eta\partial_z^2
\varsigma_0,
\partial_z^2 v^\h_0).
\end{array}\right.
\with\cr f_2=-\r_z^\h\p_tv^\h_z -\p_z(\rho_z^\h v^\h_t)
-\p_z(\rho_z^\h v^\h\cdot\nh v^\h)-\r_z^\h v^\h_z\cdot\nh v^\h-\r^\h
v^\h_z\cdot\nh v^\h_z }
$$ and the linear operator $L(t)$ is given by (D1INS2D).

In this section, we basically follow the same line as that in
Section \ref{decayL2partialz} to derive the required decay estimates
for the second derivatives of the family of solutions to \eqref
{INS2dParameterb} with respect to the parameter $z.$ The main result
can be listed as follows:

\begin{thm}\label{SBprop1} {\sl Under the assumptions of Theorem
\ref{insslowvar},
 $v^\h_{zz}$ shares the same decay estimates of Inequalities \eqref{S3thm2eq1} and \eqref{S3thm2eq8}. Moreover, there holds
 \beq \label{thSBprop1eq1}
\|\r^\h_{zz}(t)\|_{H^2_\h}+\w{t}^{\f32}\|\p_t\r^\h_{zz}(t)\|_{H^1_\h}\leq
\cC_0\eta h(z). \eeq}
\end{thm}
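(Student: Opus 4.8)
The plan is to repeat, almost line by line, the two--step argument of Section~\ref{decayL2partialz}, with $v^\h_z$ replaced by $v^\h_{zz}$ and with the decay already obtained for $v^\h$ (Theorem~\ref{S3thm2}), for $v^\h_z$ (Theorem~\ref{SAprop1}) and for $\varrho^\h,\varrho^\h_z$ (Corollary~\ref{SAcol1}) now available as input. Since $v^\h_{zz}|_{t=0}=\p_z^2v^\h_0\in\cS(\R^2)$ is horizontally divergence free, $(\r^\h_{zz},v^\h_{zz},\Pi^\h_{zz})$ solves a system of the type~(LINS2D) with force $f_2$ and with $L(t)w=-\r^\h w\cdot\nh v^\h$ the same operator as in Section~\ref{decayL2partialz}, which is integrable by~\eqref{S3thm2eq1}; I would again remove $L$ by setting $\wt v^\h_{zz}\eqdefa v^\h_{zz}\exp\bigl(-\int_0^t\|L(t')\|_{L^\infty}\,dt'\bigr)$ and work with the non-positive operator $L$ ignored. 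Because $f_2$ depends \emph{linearly} on $\r^\h_{zz}$ --- indeed $\p_z(\r_z^\h v^\h_t)=\r_{zz}^\h v^\h_t+\r_z^\h\p_tv^\h_z$ and $\p_z(\r_z^\h v^\h\cdot\nh v^\h)=\r_{zz}^\h v^\h\cdot\nh v^\h+\r_z^\h v^\h_z\cdot\nh v^\h+\r_z^\h v^\h\cdot\nh v^\h_z$ --- all estimates must be run on a maximal interval $[0,T^\star_2[$, where
$$T^\star_2\eqdefa\sup\bigl\{\,t\,/\,\ \|\r^\h_{zz}\|_{L^\infty([0,t];\,L^2_\h\cap L^\infty_\h)}\le1\,\bigr\},$$
and only at the very end would $T^\star_2=+\infty$ be established.

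First I would prove the $L^2_\h$ decay of $v^\h_{zz}$, i.e. the analogues of~\eqref{zp9} and~\eqref{decaydzL2eq17}. Differentiating twice in $z$ the integrated momentum identity~\eqref{estimuflatdemoeq2} and expanding $\p_z^2(\varrho^\h v^\h)=\varrho^\h_{zz}v^\h+2\varrho^\h_zv^\h_z+\varrho^\h v^\h_{zz}$ and $\p_z^2(\r^\h v^\h\otimes v^\h)$ by the Leibniz rule yields a pointwise Fourier bound for $|\wh{\wt v^\h_{zz}}(t,\xi_\h)|$. In it, the free-evolution term is $\lesssim U_2|\xi_\h|$ with $U_2\eqdefa\sum_{k\le2}\|\p_z^kv^\h_0\|_{L^1(\R^2;|x|\,dx)}$, because $\int_{\R^2}\r_0v^\h_0(\cdot,z)\,dx_\h=0$ for every $z$ ($v^\h_0(\cdot,z)$ being divergence free and integrable, hence mean free, and $\int\varsigma_0v^\h_0\,dx_\h=0$ by~\eqref{S7eq0.1}), so $\int_{\R^2}\p_z^2(\r_0v^\h_0)(\cdot,z)\,dx_\h=0$ as well; the term carrying $\varrho^\h v^\h_{zz}$ gives the absorbable contribution $\lesssim\|\varrho_0\|_{L^\infty}^2\|\sqrt{\r^\h}\wt v^\h_{zz}(t)\|_{L^2_\h}^2$ exactly as in~\eqref{S7thm1demoeq2}; every remaining term carries a factor $\varrho^\h$ or $\varrho^\h_z$ against $v^\h$ or $v^\h_z$ (which decay by Theorems~\ref{S3thm2}--\ref{SAprop1} and Corollary~\ref{SAcol1}), or a factor $\r^\h_{zz}$ bounded by $1$ for $t<T^\star_2$. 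This produces the analogue of~\eqref{Estimateuzflat}, and feeding it into Lemma~\ref{LemmedecayWiegnerbasic} (with $T=1$) and iterating the differential inequality over $g^2(\tau)=\al\w{\tau}^{-1}$ for $\al\in\,]1,2[\,$ and then $\al\in\,]2,3[\,$, exactly as in the proof of Proposition~\ref{decaydzL2}, would give $\|v^\h_{zz}(t)\|_{L^2_\h}^2\le\cC_0h^2(z)\w{t}^{-2}$ and, via Lemma~\ref{DecayH1fond}, $\|\nh v^\h_{zz}(t)\|_{L^2_\h}^2\le\cC_0h^2(z)\w{t}^{-3}$.

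Then I would run the energy machinery of Section~\ref{decayL2partialz} verbatim. Lemma~\ref{estimenergyH2Linearise} applied with $(u,v,f)=(v^\h,v^\h_{zz},f_2)$, after estimating $(\p_tf_2\,|\,\p_tv^\h_{zz})_{L^2_\h}$ and $\bigl((\r^\h v^\h_{zz}\cdot\nh v^\h)_t\,|\,\p_tv^\h_{zz}\bigr)_{L^2_\h}$ term by term --- the only new terms being those containing $\r^\h_{zz}$ (controlled via the bootstrap, with a $\log\w{t}$ loss of~\eqref{S7eq6demoeq4}-type when $\p_t\r^\h_{zz}$ enters through the transport equation) and the purely quadratic $\r^\h v^\h_z\cdot\nh v^\h_z$ (controlled by the decay of $v^\h_z$) --- would yield the analogue of Proposition~\ref{SAlem2}: $\|\p_tv^\h_{zz}(t)\|_{L^2_\h}+\|\nh^2v^\h_{zz}(t)\|_{L^2_\h}+\|\nh\Pi^\h_{zz}(t)\|_{L^2_\h}\le\cC_0h(z)\w{t}^{-2}$ and $\|v^\h_{zz}(t)\|_{L^\infty_\h}\le\cC_0h(z)\w{t}^{-3/2}$. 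Next, viewing $\p_tv^\h_{zz}$ as a solution of~(LINS2D) with an explicit force $\wt f_2$ built from $f_2$, $\p_tf_2$ and lower-order products, as in the proof of Proposition~\ref{SAlem3}, the same argument would give $\|\nh\p_tv^\h_{zz}(t)\|_{L^2_\h}^2\le\cC_0h^2(z)\w{t}^{-5}$, a weighted bound of the form~\eqref{S3thm2eq8}, and $\|\nh^3v^\h_{zz}(t)\|_{L^2_\h}+\|\nh^2\Pi^\h_{zz}(t)\|_{L^2_\h}\lesssim\cC_0h(z)\w{t}^{-5/2}\log\w{t}$. Collecting all these gives~\eqref{S3thm2eq1} and~\eqref{S3thm2eq8} for $v^\h_{zz}$ on $[0,T^\star_2[$.

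Finally, it remains to close the continuation argument and to prove~\eqref{thSBprop1eq1}. Interpolation (Lemma~\ref{Echanget2x} with $p=4$ together with~\eqref{1.14b}) applied to the decay just obtained bounds $\|\nh v^\h_{zz}\|_{L^1_t(L^\infty_\h)}\le\cC_0h(z)$; then the transport equation of~(D2INS2D), whose source $-2v^\h_z\cdot\nh\r^\h_z-v^\h_{zz}\cdot\nh\r^\h$ is bounded in $L^1_t(L^2_\h\cap L^\infty_\h)$ by the decay estimates together with~\eqref{thmSAprop1eq1} and~\eqref{S3thm2eq3}, gives $\|\r^\h_{zz}(t)\|_{L^2_\h\cap L^\infty_\h}\le\eta\|\p_z^2\varsigma_0(\cdot,z)\|_{L^2_\h\cap L^\infty_\h}+\cC_0\eta h(z)\le\cC_0\eta$, hence $\le1/2$ once $\eta\le\eta_0$, so $T^\star_2=+\infty$. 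Propagating regularity along the flow of $v^\h$ (Theorem~3.14 of~\ccite{BCD}) in the same transport equation, using $\|\nh v^\h\|_{L^1_t(H^1_\h)}\le\cC_0 h(z)$ and controlling the source in $L^1_t(H^1_\h)$ by~\eqref{thmSAprop1eq1} and~\eqref{S3thm2eq3}, would upgrade this to $\|\r^\h_{zz}(t)\|_{H^2_\h}\le\cC_0\eta h(z)$, and inserting the velocity decay into the once-horizontally-differentiated transport equation would give $\|\p_t\r^\h_{zz}(t)\|_{H^1_\h}\le\cC_0\eta h(z)\w{t}^{-3/2}$, which is~\eqref{thSBprop1eq1}. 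The hard part will be the bookkeeping around $\r^\h_{zz}$: since $f_2$ is linear in $\r^\h_{zz}$ and $\wt f_2$ in $\p_t\r^\h_{zz}$, the bootstrap bound $\|\r^\h_{zz}\|_{L^2_\h\cap L^\infty_\h}\le1$ must be threaded through both the low-frequency Fourier estimate of step one and the $H^2_\h/H^3_\h$ energy estimates of step two while keeping every resulting decay rate at least as sharp as the corresponding rate in Theorem~\ref{S3thm2}; this is exactly what makes the continuation argument unavoidable and is the only place where the smallness of $\eta$ enters this section.
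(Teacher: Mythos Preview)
Your proposal is correct and mirrors the paper's approach almost exactly: the paper organizes the argument as Propositions~\ref{Propsect6.1} ($L^2_\h$ and $\dot H^1_\h$ decay via the Wiegner scheme on $\p_z^2$ of identity~\eqref{estimuflatdemoeq2}), \ref{SBlem3} ($H^2_\h$ decay via Lemma~\ref{estimenergyH2Linearise}, plus $\|\nh v^\h_{zz}\|_{L^1_t(L^\infty_\h)}$), Corollary~\ref{Sect6Col1} (closing $T^\star_2=+\infty$ and~\eqref{thSBprop1eq1}), and \ref{SBlem4} ($H^3_\h$ decay). The one point to adjust is the ordering of your last two steps: the paper closes the continuation argument and proves~\eqref{thSBprop1eq1} \emph{before} the $H^3_\h$ estimate, because the direct $L^2_\h$ bound on the force $\wt f_2$ in Proposition~\ref{SBlem4} uses $\|\p_t\r^\h_{zz}\|_{L^4_\h}\lesssim\|\p_t\r^\h_{zz}\|_{H^1_\h}$, which is not supplied by the bootstrap hypothesis $\|\r^\h_{zz}\|_{L^2_\h\cap L^\infty_\h}\le1$ alone but only by~\eqref{thSBprop1eq1}; swapping your final two paragraphs removes this circularity.
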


Again due to \eqref{Lestimate},
 as  in Section \ref{decayL2partialz}, we introduce
$$
\wt v^\h_{zz} (t) \eqdefa v^\h_{zz}(t) \exp \Bigl(-\int_0^t
\|L(t')\|_{L^\infty} dt'\Bigr).
$$
Then the energy estimate \eqref{Energybasic} gives
$$
\f12\frac d {dt} \|\sqrt \r^\h \wt
v^\h_{zz}(t)\|_{L^2_\h}^2 + \|\nh \wt v^\h_{zz} \|_{L^2_\h} ^2 \leq
(f_2|\wt v^\h_{zz})_{L^2_\h}.
$$
The external force~$f_2$ contains term with~$\rho_{zz}^\h$. The
global estimate $\rho_{zz}^\h$ demands the control of~$v^\h_{zz}$
in~$L^1(\R^+;L^\infty)$ which will be achieved hereafter. Thus we
again argue with a continuation argument. More precisely, all the
inequalities that follows are valid  for~$t$ less than~$T^\star_2$
defined by
\beq
\label{Tstardef2} T^\star_2\eqdefa \sup \bigl\{ t\,/
\, \|\rho^\h_{zz}\|_{L^\infty([0,t];L^2_\h\cap L^\infty_\h)} \leq 1
\bigr\}. \eeq

Next we decompose the proof of Theorem \ref{SBprop1} into the
following propositions:

\begin{prop}\label{Propsect6.1} {\sl Under the assumptions of Theorem
\ref{insslowvar}, for $t\leq T_2^\star,$ we have \ben
\label{Propsect6.1eq1}&& \|v^\h_{zz}(t)\|_{L^2_\h}^2\leq \cC_0
h^2(z)\w{t}^{-2}, \andf\\
&&\label{5.9} \|\nh v^\h_{zz}(t)\|_{L^2_\h}^2\leq \cC_0
h^2(z)\w{t}^{-3} . \een}
\end{prop}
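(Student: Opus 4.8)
The plan is to follow verbatim the structure of Proposition~\ref{decaydzL2}, since $v^\h_{zz}$ solves the same type of system (D2INS2D) as $v^\h_z$ did in (D1INS2D), only with a more complicated external force $f_2$ and with the additional terms coming from $\r^\h_z$ and its derivatives, which have now been controlled by Theorem~\ref{SAprop1}. First I would estimate $\|f_2(t)\|_{L^2_\h}$: using the definition of $f_2$, the bound $\|\r^\h_z\|_{L^\infty_\h}\le\cC_0\eta h(z)$ and the decay estimates of $\r^\h_z$, $\p_t\r^\h_z$ from Corollary~\ref{SAcol1} and Theorem~\ref{SAprop1}, together with the decay of $v^\h,v^\h_z$ and their derivatives from Theorem~\ref{S3thm2} and Propositions~\ref{decaydzL2}, \ref{SAlem2}, I expect to get $\|f_2(t)\|_{L^2_\h}\le\cC_0 h(z)\w{t}^{-2}$, the same order as $\|f_1(t)\|_{L^2_\h}$ in \eqref{estimfondfD}. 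The only genuinely new term is $\r^\h v^\h_z\cdot\nh v^\h_z$, which is quadratic in $v^\h_z$; by \eqref{decaydzL2eq17} and \eqref{SAeq2} this is $O(h^2(z)\w{t}^{-\f72})$, hence harmless (indeed $h^2\le\cC_0 h$ since $h$ is bounded).

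Then, exactly as in the proof of Proposition~\ref{decaydzL2}, I would run the energy estimate \eqref{Energybasicinteg} to get the bound on $\|\sqrt{\r^\h}\wt v^\h_{zz}(t)\|_{L^2_\h}$ together with an inequality analogous to \eqref{zp10}, and then apply Lemma~\ref{LemmedecayWiegnerbasic} with $T=1$ to produce \eqref{decaydzL2demoeq0}. The crux is the low-frequency control $\|\wt v^\h_{zz,\flat}(t)\|_{L^2_\h}$: I would differentiate the key identity \eqref{estimuflatdemoeq2} twice in $z$, multiply by $\exp(-\int_0^t\|L(t')\|_{L^\infty}dt')$, and estimate the resulting Fourier-side pieces on $S_1(t)$. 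The initial-data term is handled by the second moment hypothesis \eqref{S7eq0.1} (which, differentiated in $z$, gives $\int_{\R^2}\p_z^2(\r_0 v^\h_0)(x_\h,z)\,dx_\h=0$, so the corresponding Fourier mode vanishes to first order in $\xi_\h$); the term $\cF\PP(\varrho^\h\wt v^\h_{zz})$ is bounded by $\|\varrho_0\|_{L^\infty}^2(2\pi)^2\|\sqrt{\r^\h}\wt v^\h_{zz}(t)\|_{L^2_\h}^2$ as in \eqref{S7thm1demoeq2}; the quadratic-type terms produce convolution integrals $\bigl(\int_0^t\|\sqrt{\r^\h}\wt v^\h_{zz}\|_{L^2_\h}\,dt'\bigr)^2$ with the right negative powers of $\w{t}$; and the many new ``source'' terms involving $\r^\h_z$, $\r^\h_{zz}$, $v^\h_z$ are all lower-order thanks to Theorem~\ref{SAprop1} and Corollary~\ref{SAcol1} (using $\|\r^\h_{zz}\|_{L^2_\h}\le 1$ for $t\le T^\star_2$ wherever needed). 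This gives an analogue of \eqref{Estimateuzflat}, hence of \eqref{S7thm1demoeq20} and \eqref{decaydzL2eq15}.

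Finally I would iterate \eqref{decaydzL2eq15} exactly as in Proposition~\ref{decaydzL2}: first choose $g^2(\tau)=\al\w{\tau}^{-1}$ with $\al\in]1,2[$ to obtain $\|\sqrt{\r^\h}\wt v^\h_{zz}(t)\|_{L^2_\h}^2\le\cC_0 h^2(z)\w{t}^{-1}$, then use the resulting bounds on $\int_0^{t}\|\sqrt{\r^\h}\wt v^\h_{zz}\|_{L^2_\h}\,dt'$ and $\int_0^t\w{t'}^{-1}\|\sqrt{\r^\h}\wt v^\h_{zz}\|_{L^2_\h}\,dt'$ and feed them back with $g^2(\tau)=\al\w{\tau}^{-1}$, $\al\in]2,3[$, to reach \eqref{Propsect6.1eq1}. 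Undoing the exponential weight (which is bounded because $\int_0^\infty\|L(t')\|_{\cL(L^2_\h)}dt'<\infty$ by \eqref{Lestimate} and \eqref{S3thm2eq1}) gives the estimate for $v^\h_{zz}$ itself. Then \eqref{5.9} follows from Lemma~\ref{DecayH1fond} applied with $v=v^\h_{zz}$, $f=f_2$: the inequalities \eqref{LemDecayH1fondeq1} and \eqref{LemDecayH1fondeq3} give $t\|\nh\wt v^\h_{zz}(t)\|_{L^2_\h}^2\lesssim\|\wt v^\h_{zz}(t/2)\|_{L^2_\h}^2+\cC_0 t h^2(z)\w{t}^{-3}$, which combined with \eqref{Propsect6.1eq1} yields the $\w{t}^{-3}$ decay. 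The main obstacle I anticipate is bookkeeping: the external force $f_2$ and the doubly $z$-differentiated transport terms generate a long list of trilinear and quadrilinear expressions, and one must check that each, after inserting the decay rates from Theorem~\ref{S3thm2}, Corollary~\ref{SAcol1}, Theorem~\ref{SAprop1} and the continuation hypothesis $\|\r^\h_{zz}\|_{L^\infty_t(L^2_\h\cap L^\infty_\h)}\le 1$, decays at least like $\cC_0 h(z)\w{t}^{-2}$ in $L^2_\h$ — there is no conceptual novelty beyond Section~\ref{decayL2partialz}, but the estimates must be assembled carefully.
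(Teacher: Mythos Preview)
Your proposal is correct and follows essentially the same route as the paper's proof: estimate $\|f_2(t)\|_{L^2_\h}\le\cC_0 h(z)\w{t}^{-2}$, run the basic energy bound and Lemma~\ref{LemmedecayWiegnerbasic} with $T=1$, differentiate the key identity~\eqref{estimuflatdemoeq2} twice in $z$ to control the low-frequency part $\|\wt v^\h_{zz,\flat}\|_{L^2_\h}$, arrive at the exact analogue of~\eqref{S7thm1demoeq20}, iterate with $g^2(\tau)=\al\w\tau^{-1}$ first for $\al\in]1,2[$ then $\al\in]2,3[$, and finish~\eqref{5.9} via~\eqref{LemDecayH1fondeq1} and~\eqref{LemDecayH1fondeq3}. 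One terminological quibble: what you call ``the second moment hypothesis~\eqref{S7eq0.1}'' is in fact the zeroth and first moment conditions --- only the mean-free part (which survives differentiation in~$z$) is used here, exactly as in~\eqref{LemmedecayWiegnerdemoeq5}.
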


\begin{proof} Note that for $f_2$ given by (D2INS2D), we have
\beno \|f_2(t)\|_{L^2}&\leq
&\|\r^\h\|_{L^\infty_\h}\|v^\h_z\|_{L^\infty_\h}\|\nh
v^\h_z\|_{L^2_\h}+\|\r_z^\h\|_{L^\infty_\h}\bigl(2\|\p_tv^\h_z\|_{L^2_\h}+\|v^\h_z\|_{L^\infty_\h}\|\nh
v^\h\|_{L^2_\h}\\
&&\qquad\quad+\|v^\h\|_{L^\infty_\h}\|\nh
v^\h_z\|_{L^2_\h}\bigr)+\|\r^\h_{zz}\|_{L^\infty_\h}\bigl(\|v^\h_t\|_{L^2_\h}+\|v^\h\|_{L^\infty_\h}\|\nh
v^\h\|_{L^2_\h}\bigr), \eeno from which, Theorem \ref{S3thm2}  and
Theorem \ref{SAprop1}, we infer  for $t\leq T_2^\star$ \beq
\label{Propsect6.1eq2} \|f_2(t)\|_{L^2_\h}\leq \cC_0 h(z)\w{t}^{-2}.
\eeq Then we deduce from \eqref{Energybasicinteg} that for any
nonnegative $t_0$ and any time $t\geq t_0$ \beq
\label{Propsect6.1eq3} \f12\|\sqrt{\r^\h}\wt
v^\h_{zz}\|_{L^\infty([t_0,t]; L^2_\h)}^2+\int_{t_0}^t\|\nh\wt
v^\h_{zz}(t')\|_{L^2_\h}^2dt'\lesssim \|\sqrt{\r^\h}\wt
v^\h_{zz}(t_0)\|_{L^2_\h}^2+\cC_0h^2(z)\w{t_0}^{-2}. \eeq This gives
the estimate of $\|\sqrt{\r^\h}\wt v^\h_{zz}(t)\|_{L^2_\h}.$ In
order to the derive the decay of the $L^2_\h$ norm of~$v^\h_{zz},$
we use Lemma \ref{LemmedecayWiegnerbasic} for $T=1$  to get \beq
\label{Propsect6.1eq4}
 \f12\f{d}{dt}\|\sqrt \r^\h \wt v^\h_{zz}(t)\|_{L^2_\h}^2
 +\frac 7{8}  g^2(t)\|\sqrt \r^\h \wt v^\h_{zz}(t)\|^2_{L^2_\h}
\leq  2    g^2(t)\|\wt
v^\h_{zz,\flat}(t)\|_{L^2_\h}^2+2g^{-2}(t)\Bigl\|\f{f_2}{\sqrt{\r^\h}}(t)\Bigr\|_{L^2_\h}^2
. \eeq As in the proof of Proposition \ref{decaydzL2}, it amounts to
derive the estimate of $\|\wt v^\h_{zz,\flat}(t)\|_{L^2_\h},$ which
we present as follows. We first get, by differentiating \eqref
{estimuflatdemoeq2} twice with respect to the parameter~$z,$ that
\beno \widehat v^\h_{zz}(t,\xi_\h)&= & e^{-t|\xi|^2}\cF\mathbb
{P}\p_z^2(\r_0 v^\h_0)(\xi_\h)-
\cF\mathbb {P}\p_z^2(\varrho^\h v^\h)(t,\xi_\h)\\
&&+\int_0^te^{-(t-t')|\xi_\h|^2}\cF\mathbb
{P}\Bigl(\D\p_z^2(\varrho^\h v^\h)-\dive\p_z^2(\r^\h v^\h\otimes
v^\h)\Bigr)(t',\xi_\h)dt',\eeno so that along the same line to the
proof of \eqref {decaydzL2demoeq0}, we have \beno &&\|\wt
v^\h_{zz,\flat}(t)\|_{L^2_\h}^2 \leq 2\|\varrho_0
\|_{L^\infty}2\pi\|\sqrt{\r^\h}\wt v^\h_{zz}\|_{L^2_\h}^2+
C\Bigl(\int_{S_1(t)}\bigl|\cF\p_z^2(\r_0
v^\h_0)\bigr|^2\,d\xi_\h\\
&&\qquad\qquad\qquad+\int_{S_1(t)}\bigl(\bigl|\cF(\r_z^\h
v^\h_z)\bigr|^2+\bigl|\cF
(\r^\h_{zz} v^\h)\bigr|^2\bigr)\,d\xi_\h\\
&&{}+g^6(t)\Bigl(\int_0^t\|\p_z^2(\varrho^\h \wt
v^\h)(t')\|_{L^1_\h}\,dt'\Bigr)^2+g^4(t)\Bigl(\int_0^t\|\p_z^2\bigl(\r^\h
v^\h\otimes\wt v^\h\bigr)(t')\|_{L^1_\h}dt'\Bigr)^2\Bigr). \eeno
Similar to the estimate of \eqref{LemmedecayWiegnerdemoeq5}, one has
\beno \int_{S_1(t)}\bigl|\cF\p_z^2(\r_0v^\h_0)\bigr|^2\,d\xi_\h\leq
Cg^4(t)\Bigl(\int_{\R^2}|x||\p_z^2(\r_0v^\h_0)|\,dx\Bigr)^2.\eeno It
follows from \eqref{S3thm2eq1}  and  Theorem \ref{SAprop1} that
$$
\longformule{
\int_{S_1(t)}\bigl(\bigl|\cF(\r_z^\h v^\h_z)\bigr|^2+\bigl|\cF
(\r^\h_{zz} v^\h)\bigr|^2\bigr)\,d\xi_\h
}
{{}
\leq
Cg^2(t)\bigl(\|\r_z^\h(t)\|_{L^2_\h}^2\|v^\h_z(t)\|_{L^2_\h}^2+\|\r^\h_{zz}(t)\|_{L^2_\h}^2\|v^\h(t)\|_{L^2_\h}^2\bigr)\leq
\cC_0 h^2(z)g^2(t)\w{t}^{-2},
}
$$
and \beno \int_0^t\|\p_z^2(\varrho^\h\wt v^\h)(t')\|_{L^1_\h}dt'
&\leq &
\int_0^t\bigl(\|\r^\h_{zz}\|_{L^2_\h}\|v^\h\|_{L^2_\h}+2\|\r_z^\h\|_{L^2_\h}\|v^\h_z\|_{L^2_\h}
+\|\varrho^\h\|_{L^2_\h}\|\wt v^\h_{zz}\|_{L^2_\h}\bigr)dt'\\
&\leq & \cC_0\Bigl(h(z)\ln\w{t}+\int_0^t\|\wt
v^\h_{zz}(t')\|_{L^2_\h}\,dt'\Bigr),
\eeno
and \
\beno
\int_0^t\|\p_z^2\bigl(\r^\h v^\h\otimes \wt
v^\h\bigr)(t')\|_{L^1_\h}dt'
&\leq&
C\int_0^t\Bigl(\|\r^\h_{zz}\|_{L^2_\h}\|v^\h\|_{L^2_\h}\|v^\h\|_{L^\infty_\h}
+\|\r_z^\h\|_{L^\infty_\h}\|v^\h\|_{L^2_\h}\|v^\h_z\|_{L^2_\h}\\
&&\qquad\quad+\|\r^\h\|_{L^\infty_\h}\bigl(\|v^\h\|_{L^2_\h}\|\wt v^\h_{zz}\|_{L^2_\h}+\|v^\h_z\|_{L^2_\h}^2\bigr)\Bigr)dt'\\
& \leq &\cC_0\Bigl(h^2(z)+\int_0^t\w{t'}^{-1}\|\wt
v^\h_{zz}(t')\|_{L^2_\h}\,dt'\Bigr).
\eeno
As a consequence, we obtain
$$\longformule{
 \|\wt
v^\h_{zz,\flat}(t)\|_{L^2_\h}^2 \leq
2\|\varrho_0  \|_{L^\infty}2\pi\|\sqrt{\r}\wt v^\h_{zz}\|_{L^2_\h}^2
}
{
{}+\cC_0g^4(t)\Bigl(h^2(z)+g^2(t)\Bigl(\int_0^t\|\wt
v^\h_{zz}(t')\|_{L^2_\h}dt'\Bigr)^2 +\Bigl(\int_0^t\w{t'}^{-1}\|\wt
v^\h_{zz}(t')\|_{L^2_\h}dt'\Bigr)^2\Bigr).
}
$$
Resuming the above estimate into \eqref{Propsect6.1eq4} and using
\eqref{Propsect6.1eq2}, we conclude
$$
\longformule{ \f{d}{dt}\|\sqrt \r^\h \wt v^\h_{zz}(t)\|_{L^2_\h}^2+
g^2(t)\|\sqrt \r^\h \wt v^\h_{zz}(t)\|^2_{L^2_\h} \leq \cC_0
h^2(z)\w{t}^{-3} } { {} +\cC_0\w{t}^{-4}\Bigl(\int_0^t\|\wt
v^\h_{zz}(t')\|_{L^2_\h}dt'\Bigr)^2
+\cC_0\w{t}^{-3}\Bigl(\int_0^t\w{t'}^{-1}\|\wt
v^\h_{zz}(t')\|_{L^2_\h}dt'\Bigr)^2, }
$$ which is exactly the same as the Inequality \eqref{S7thm1demoeq20}.
  Hence by using
\eqref{Propsect6.1eq3} and a similar derivation of \eqref{zp9}, we
achieve \eqref{Propsect6.1eq1}.

On the other hand, it follows  from \eqref{LemDecayH1fondeq1} and
\eqref{Propsect6.1eq2} that for any nonnegative $t_0$ and any time
$t\geq t_0$
 \beq
 \label{PropSBlem2eq1}
 \begin{split}
&\|\nh \wt v^\h_{zz}(t)\|_{L^2_\h}^2+\int_{t_0}^t\bigl(\|\p_t\wt v^\h_{zz}(t')\|_{L^2_\h}^2
+\|\nh^2\wt v^\h_{zz}(t')\|_{L^2_\h}^2+\|\nh \wt \Pi_{zz}^\h(t')\|_{L^2_\h}\bigr)dt'\\
&\qquad\qquad\qquad\qquad\qquad\qquad\qquad\qquad\qquad
\leq \|\nh
v^\h_{zz}(t_0)\|_{L^2_\h}^2+\cC_0h^2(z)\int_{t_0}^t\w{t'}^{-4}dt'.
\end{split}
\eeq
 It follows from \eqref{LemDecayH1fondeq3} that
$$
\f{t}2\|\nh \wt v^\h_{zz}(t)\|_{L^2_\h}^2 \lesssim \|\wt
v^\h_{zz}({t}/2)\|_{L^2_\h}^2+\cC_0h^2(z)t\int_{{t}/2}^t\w{t'}^{-4}dt',
$$
 so that thanks to \eqref{Propsect6.1eq1} and
\eqref{PropSBlem2eq1}, we obtain  \eqref{5.9}. This completes the
proof of the proposition.
\end{proof}

\begin{prop}
\label{SBlem3}
{\sl
Under the assumptions of Theorem \ref{insslowvar}, for $t\leq
T_2^\star,$  one has
\ben &&\label{5.14}
\|\p_tv^\h_{zz}(t)\|_{L^2_\h}^2+\|\nh^2v^\h_{zz}(t)\|_{L^2_\h}^2+\|\nh\Pi_{zz}^\h(t)\|_{L^2_\h}^2\leq
\cC_0h^2(z)\w{t}^{-4},\\
 &&\label{5.17}
\|v^\h_{zz}(t)\|_{L^\infty_\h}\leq
\cC_0h(z)\w{t}^{-\frac 3 2}\andf \|\nh
v^\h_{zz}\|_{L^1_t(L^\infty_\h)}\leq \cC_0h(z). \een }
\end{prop}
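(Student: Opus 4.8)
The plan is to follow the proof of Propositions~\ref{SAlem2} and~\ref{SAlem3} essentially line by line, now applied to the system (D2INS2D), keeping every estimate valid on $[0,T^\star_2[$ with $T^\star_2$ the continuation time defined in~\eqref{Tstardef2}. First I would apply Lemma~\ref{estimenergyH2Linearise} with $\r=\r^\h$, $u=v^\h$, $v=v^\h_{zz}$ and external force $f_2$. Since here $L(t)v^\h_{zz}=-\r^\h v^\h_{zz}\cdot\nh v^\h$, differentiating the momentum equation of (D2INS2D) in time produces, besides the analogues of the commutator terms~\eqref{1.4}, the extra contribution $-\bigl((\r^\h v^\h_{zz}\cdot\nh v^\h)_t\,|\,\p_tv^\h_{zz}\bigr)_{L^2_\h}$, exactly as the term $-\bigl((\r^\h v^\h_z\cdot\nh v^\h)_t\,|\,\p_tv^\h_z\bigr)_{L^2_\h}$ occurred in~\eqref{Prop5.2eq1}. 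This yields a differential inequality for $\|\sqrt{\r^\h}\p_tv^\h_{zz}\|_{L^2_\h}^2$ whose right-hand side consists of $(\p_tf_2\,|\,\p_tv^\h_{zz})_{L^2_\h}$, the extra term above, $F_{1,\cT}(v^\h)\|\sqrt{\r^\h}\p_tv^\h_{zz}\|_{L^2_\h}^2$, $F_{2,\cT}(v^\h,v^\h_{zz})$, and the quintic term of Lemma~\ref{estimenergyH2Linearise}.

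The heart of the matter is the estimate of $(\p_tf_2\,|\,\p_tv^\h_{zz})_{L^2_\h}$ and of the extra term. Expanding $f_2$ (recall $\p_zv^\h_t=\p_tv^\h_z$), one finds that $\p_tf_2$ is a sum of products involving at worst $\p_t^2v^\h_z$, $\p_t\r^\h_{zz}$, $\p_t\r^\h_z$, $\p_tv^\h_z$, $\p_tv^\h_t$ and horizontal derivatives of $v^\h,v^\h_z,\r^\h,\r^\h_z,\r^\h_{zz}$. Every term containing $\p_t\r^\h_{zz}$ or $\p_t\r^\h_z$ is handled by substituting the transport equations of (D2INS2D) and (D1INS2D) and integrating by parts, just as $\bigl(\p_t\r^\h_z v^\h_t\,|\,\p_tv^\h_z\bigr)_{L^2_\h}$ was treated in Proposition~\ref{SAlem2}; after Young's inequality they are absorbed using $\|\r^\h_{zz}\|_{L^2_\h\cap L^\infty_\h}\le1$ (definition of $T^\star_2$), $\|\nh\r^\h_z\|_{L^2_\h\cap L^\infty_\h}+\|\p_t\r^\h_z\|_{L^2_\h\cap L^\infty_\h}\le\cC_0\eta h(z)$ (Corollary~\ref{SAcol1}), Theorems~\ref{S3thm2} and~\ref{SAprop1} and Propositions~\ref{decaydzL2},~\ref{SAlem2},~\ref{SAlem3},~\ref{Propsect6.1}. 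The one delicate term is the one proportional to $\r^\h_z\,\p_t^2v^\h_z$: as for $\p_j\r\,u_t$ in~\eqref{S7eq6demoeq4} and for $(\r^\h_z\p_t^2v^\h\,|\,\p_tv^\h_z)_{L^2_\h}$ in~\eqref{Prop5.2eq1a}, one uses $\|\p_t^2v^\h_z\|_{L^{2/\epsilon}_\h}$, interpolation, Young's inequality and the choice $\epsilon=\log^{-1}\w{t}$ to bound it by $\e\|\nh\p_tv^\h_{zz}\|_{L^2_\h}^2+\cC_0\bigl(\w{t}^{-2}\|\p_tv^\h_{zz}\|_{L^2_\h}^2+\log^2\w{t}\,\|\p_t^2v^\h_z\|_{L^2_\h}^2+h^2(z)\w{t}^{-6}\bigr)$. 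Finally $F_{1,\cT}(v^\h)$ and $F_{2,\cT}(v^\h,v^\h_{zz})$ are controlled by~\eqref{estimenergyH2Lineariseeq1}--\eqref{estimenergyH2Lineariseeq2} applied with $f=f_2$, the bound $\|f_2(t)\|_{L^2_\h}\le\cC_0 h(z)\w{t}^{-2}$ of~\eqref{Propsect6.1eq2}, and the choice $\cT=\|\nh v^\h(t_0)\|_{L^2_\h}$, giving $\int_{t_0}^tF_{1,\cT}(v^\h(t'))\,dt'\le\cC_0$ and $\int_{t_0}^tF_{2,\cT}(v^\h(t'),v^\h_{zz}(t'))\,dt'\le\cC_0 h^2(z)\w{t_0}^{-3}\|\nh v^\h(t_0)\|_{L^2_\h}^2$.

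With these inputs I would close the argument as in Proposition~\ref{SAlem3}. Collecting the estimates gives $\frac{d}{dt}\|\sqrt{\r^\h}\p_tv^\h_{zz}\|_{L^2_\h}^2+\|\nh\p_tv^\h_{zz}\|_{L^2_\h}^2\lesssim\cC_0\bigl(\w{t}^{-2}+F_{1,\cT}(v^\h)\bigr)\|\sqrt{\r^\h}\p_tv^\h_{zz}\|_{L^2_\h}^2+CF_{2,\cT}(v^\h,v^\h_{zz})+\cC_0 h^2(z)\w{t}^{-6}+C\log^2\w{t}\,\|\p_t^2v^\h_z\|_{L^2_\h}^2$; since $\w{t}^{-2}$ and $F_{1,\cT}(v^\h)$ are integrable in time, Gronwall's lemma gives a uniform bound on $\|\sqrt{\r^\h}\p_tv^\h_{zz}(t)\|_{L^2_\h}^2$. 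Multiplying the inequality by $(t-t_0)$, applying Gronwall once more, taking $t_0=t/2$, and invoking Proposition~\ref{Propsect6.1} together with the weighted bound $\int_0^t\w{t'}^{5_-}\|\p_t^2v^\h_z(t')\|_{L^2_\h}^2\,dt'\le\cC_0 h^2(z)$ of~\eqref{PropSAlem3eq2}, then produces $\|\p_tv^\h_{zz}(t)\|_{L^2_\h}^2\le\cC_0 h^2(z)\w{t}^{-4}$. The first inequality of Lemma~\ref{Echanget2x} with $p=2$, applied to (D2INS2D), together with the decay estimates~\eqref{S3thm2eq1},~\eqref{Propsect6.1eq1},~\eqref{5.9} and~\eqref{Propsect6.1eq2}, converts this into $\|\nh^2v^\h_{zz}(t)\|_{L^2_\h}+\|\nh\Pi_{zz}^\h(t)\|_{L^2_\h}\le\cC_0 h(z)\w{t}^{-2}$, which is~\eqref{5.14}. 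Then~\eqref{5.17} follows: $\|v^\h_{zz}\|_{L^\infty_\h}\lesssim\|v^\h_{zz}\|_{L^2_\h}^{1/2}\|\nh^2v^\h_{zz}\|_{L^2_\h}^{1/2}$ by~\eqref{1.14b},~\eqref{Propsect6.1eq1} and~\eqref{5.14} gives the $\w{t}^{-3/2}$ decay, while the $L^1_t(L^\infty_\h)$ bound comes, as in~\eqref{2.19af}, from $\|\nh v^\h_{zz}\|_{L^\infty_\h}\lesssim\|\nh v^\h_{zz}\|_{L^2_\h}^{1/3}\|\nh^2v^\h_{zz}\|_{L^4_\h}^{2/3}$, an $L^4_\h$ estimate of $\nh^2v^\h_{zz}$ obtained from Lemma~\ref{Echanget2x} with $p=4$ applied to (D2INS2D), and H\"older's inequality in time against the decay rates just established.

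The main obstacle is the bookkeeping in $(\p_tf_2\,|\,\p_tv^\h_{zz})_{L^2_\h}$: one must systematically route every occurrence of $\p_t\r^\h_{zz}$ and $\p_t\r^\h_z$ through the transport equations \emph{before} integrating by parts, so that only horizontal derivatives of $\r^\h,\r^\h_z,\r^\h_{zz}$ --- all controlled by the continuation hypothesis and by Corollary~\ref{SAcol1} --- survive, and treat the $\r^\h_z\,\p_t^2v^\h_z$ term with the logarithmic interpolation trick so that it is absorbed via the weight-$\w{t}^{5_-}$ bound~\eqref{PropSAlem3eq2} on $\|\p_t^2v^\h_z\|_{L^2_\h}$. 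No idea beyond those already used for $v^\h_z$ enters; the proof merely adds one further $z$-derivative, with the new initial data $(\eta\p_z^2\varsigma_0,\p_z^2v^\h_0)\in\cS(\R^3)$ supplying the generic $h(z)\in L^2_{\rm v}\cap L^\infty_{\rm v}$ on the right-hand sides.
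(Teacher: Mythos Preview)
Your proposal is correct and follows essentially the same route as the paper's proof: apply Lemma~\ref{estimenergyH2Linearise} to (D2INS2D), expand $(\p_tf_2\,|\,\p_tv^\h_{zz})_{L^2_\h}$ routing $\p_t\r^\h_{zz}$ and $\p_t\r^\h_z$ through the transport equations, use the logarithmic interpolation trick on the second-time-derivative terms, close via Gronwall and the $(t-t_0)$ weighting with $t_0=t/2$, then deduce~\eqref{5.14} from Lemma~\ref{Echanget2x} and~\eqref{5.17} from~\eqref{1.14b} and the $L^4_\h$ version of Lemma~\ref{Echanget2x}. The only slip is that your final differential inequality should also carry the term $C\log^2\w{t}\,\|\p_t^2v^\h\|_{L^2_\h}^2$, coming from $\r^\h_{zz}\,\p_t^2v^\h$ in $\p_t f_2$ (treated by the same logarithmic trick, with $\|\r^\h_{zz}\|_{L^2_\h\cap L^\infty_\h}\le 1$); this is harmless since the required weighted bound on $\|\p_t^2v^\h\|_{L^2_\h}$ is~\eqref{S3thm2eq8}.
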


\begin{proof} We use again Lemma \ref{estimenergyH2Linearise} to the System (D2INS2D) to get
\ben
&&\f12\f{d}{dt}\|\sqrt{\r^\h}\p_tv^\h_{zz}(t)\|_{L^2_\h}^2+\f34\|\nh\p_tv^\h_{zz}\|_{L^2_\h}^2\leq
(\p_tf_2 | \p_tv^\h_{zz})_{L^2_\h}-\bigl(\p_t(\r^\h
v^\h_{zz}\cdot\nh v^\h) |
\p_tv^\h_{zz}\bigr)_{L^2_\h}\nonumber\\
&&\qquad\qquad{}+CF_{1,\cT}(v^\h(t))\|\sqrt{\r^\h}\p_tv^\h_{zz}(t)\|_{L^2_\h}^2+CF_{2,\cT}(v^\h(t),v^\h_{zz}(t))  \label{PropSBlem3eq1}\\
&&\qquad\qquad{}+C\|\nh\p_tv^\h_{zz}(t)\|_{L^2_\h}^{\f12}\|\nh
v^\h_t(t)\|_{L^2_\h}^{\f12}\|\nh
v^\h_{zz}(t)\|_{L^2_\h}\|\sqrt{\r^\h}\p_tv^\h_{zz}(t)\|_{L^2_\h}^{\f12}\|\sqrt{\r^\h}v^\h_t(t)\|_{L^2_\h}^{\f12}.\nonumber
\een We first deal with the estimate of $(\p_tf_2 |
\p_tv^\h_{zz})_{L^2}.$ It is easy to observe that $$\longformule{
\p_tf_2=-\p_t(\r^\h_{zz}v_t^\h)-2\p_t(\r_z^\h\p_tv^\h_z)-\p_t(\r_{zz}^\h
v^\h\cdot\nh v^\h)}{{}-2\p_t(\r^\h_z v^\h_z\cdot\nh v^\h)-\p_t(\r^\h
v^\h_z\cdot\nh v^\h_z)-\p_t(\r^\h_zv^\h\cdot\nh v^\h_z).} $$ By
using the transport equation of \eqref{INS2dParameterb} and
integration by parts, we have \beno (\p_t\r^\h_{zz}v^\h_t |\
\p_tv^\h_{zz})_{L^2_\h}=\bigl(\p_z^2(\r^\h v^\h)\cdot\nh v^\h_t |
\p_tv^\h_{zz}\bigr)_{L^2_\h}+\bigl( v^\h_t | \p_z^2(\r^\h
v^\h)\cdot\nh \p_tv^\h_{zz}\bigr)_{L^2_\h}, \eeno from which and the
known decay estimates, we infer \beno \bigl|(\p_t\r^\h_{zz}v^\h_t |
\p_tv^\h_{zz})_{L^2_\h}\bigr| &\leq&
\|\r^\h\|_{L^\infty_\h}\|v^\h_{zz}\|_{L^4_\h}\bigl(\|\nh
v^\h_t\|_{L^2_\h}\|\p_tv^\h_{zz}\|_{L^4_\h}+\|v^\h_t\|_{L^4_\h}\|\nh\p_tv^\h_{zz}\|_{L^2_\h}\bigr)\\
&&
\!\!\!\!\!\!\!\!\!\!\!\!\!\!\!\!\!\!\!\!\!\!\!\!\!\!\!\!\!\!\!\!\!\!\!\!\!\!\!\!\!\!\!\!\!\!\!\!{}+\bigl(\|\r_z^\h\|_{L^\infty_\h}\|v^\h_z\|_{L^\infty_\h}+\|\r^\h_{zz}\|_{L^\infty_\h}\|v^\h\|_{L^\infty_\h}\bigr)
\bigl(\|\nh
v^\h_t\|_{L^2_\h}\|\p_tv^\h_{zz}\|_{L^2_\h}+\|v^\h_t\|_{L^2_\h}\|\nh\p_tv^\h_{zz}\|_{L^2_\h}\bigr)\\
& \leq &
\f{\e}2\|\nh\p_tv^\h_{zz}\|_{L^2_\h}^2+\f12\w{t}^{-2}\|\p_tv^\h_{zz}\|_{L^2_\h}^2+\cC_0h^2(z)\w{t}^{-6}.
\eeno
It follows from a similar derivation of \eqref{Prop5.2eq1a} that
 \beno
 \bigl|(\r^\h_{zz}\p_t^2v^\h |
\p_tv^\h_{zz})_{L^2_\h}\bigr|\leq
\f{\e}2\|\nh\p_tv^\h_{zz}\|_{L^2_\h}^2+\f12\w{t}^{-2}\|\p_tv^\h_{zz}\|_{L^2_\h}^2+C\log^2\w{t}\|\p_t^2v^\h\|_{L^2_\h}.
\eeno This gives
$$
 \bigl|\bigl(\p_t(\r^\h_{zz}v^\h_{t}) |
\p_tv^\h_{zz}\bigr)_{L^2_\h}\bigr|\leq
{\e}\|\nh\p_tv^\h_{zz}\|_{L^2_\h}^2+\w{t}^{-2}\|\p_tv^\h_{zz}\|_{L^2_\h}^2+C\log^2\w{t}\|\p_t^2v^\h\|_{L^2_\h}+\cC_0h^2(z)\w{t}^{-6}.
$$
Similarly, thanks to \eqref{2.21qwe} and \eqref{SAeq2}, we
have
 \beno  \bigl|\bigl(\p_t(\r_{z}^\h\p_tv^\h_{z}) |
\p_tv^\h_{zz}\bigr)_{L^2_\h}\bigr|
&\leq&
\|\p_t\r_z^\h\|_{L^\infty_\h}\|\p_tv^\h_z\|_{L^2_\h}\|\p_tv^\h_{zz}\|_{L^2_\h}+\bigl|(\r_z^\h\p_t^2v^\h_z | \p_tv^\h_{zz})_{L^2_\h}\bigr|\\
&\leq&
{\e}\|\na\p_tv^\h_{zz}\|_{L^2_\h}^2+\w{t}^{-\frac 3 2}\|\p_tv^\h_{zz}\|_{L^2_\h}^2\\
&&\qquad\qquad\qquad+C\log\w{t}^2\|\p_t^2v^\h_z\|_{L^2_\h}+\cC_0h^2(z)\w{t}^{-\frac  {11} 2}.
\eeno
 In order to deal with $\bigl(\p_t(\r^\h_{zz} v^\h\cdot\nh
v^\h) | \p_tv^\h_{zz}\bigr)_{L^2_\h},$ we use the transport equation
of \eqref{INS2dParameterb}
 and  integration by parts to get
 \beno
\bigl(\p_t\r^\h_{zz}v^\h\cdot\nh v^\h |
\p_tv^\h_{zz}\bigr)_{L^2_\h} &=& \bigl(\r^\h_{zz}v^\h\cdot\nh v^\h |
v^\h\cdot\nh\p_tv^\h_{zz}\bigr)_{L^2_\h}\\
&&\!\!\!\!\!\!{}+\bigl(\r^\h_{zz}\bigl((v^\h\cdot\nh v^\h)\cdot\nh
v^\h+(v^\h\otimes v^\h):(\nh\otimes \nh)v^\h\bigr)
 | \p_tv^\h_{zz}\bigr)_{L^2_\h}\\
&&{}-\bigl((2v^\h_z\cdot\nh\r_{z}^\h+v^\h_{zz}\cdot\nh\r^\h)v^\h\cdot\nh
v^\h | \p_tv^\h_{zz}\bigr)_{L^2_\h},
\eeno
 from which, \eqref{2.21qwe} and the previous decay estimates, we deduce that for $t\leq
T_2^\star$ \beno
 \bigl|\bigl(\p_t\r^\h_{zz}v^\h\cdot\nh
v^\h |\p_tv^\h_{zz}\bigr)_{L^2_\h}\bigr|
&\leq &
C\|v^\h\|_{L^\infty_\h}\Bigl(\|\nh v^\h\|_{L^\infty_\h}\bigl(\|\nh v^\h\|_{L^2_\h}+\|v^\h_z\|_{L^2_\h}+\|v^\h_{zz}\|_{L^2_\h}\bigr)\\
&&\!\!\!\!\!\!\!\!\!\!\!\!{}+\|v^\h\|_{L^\infty_\h}\|\nh^2v^\h\|_{L^2_\h}\bigr)\|\p_tv^\h_{zz}\|_{L^2_\h}+\|v^\h\|_{L^\infty_\h}\|\nh v^\h\|_{L^2_\h}\|\nh\p_tv^\h_{zz}\|_{L^2_\h}\Bigr)\\
&\leq &\f12
\w{t}^{-2}\|\p_tv^\h_{zz}\|_{L^2_\h}^2+\e\|\nh\p_tv^\h_{zz}\|_{L^2_\h}^2+\cC_0
h^2(z)\w{t}^{-6}.
 \eeno
 It is easy observe that
 \beno
&&\bigl(\r^\h_{zz}(v^\h_t\cdot\nh v^\h+v^\h\cdot\nh v^\h_t) |
\p_tv^\h_{zz}\bigr)_{L^2_\h}\bigr|\\
&&\qquad\qquad{}\leq\|\r^\h_{zz}\|_{L^\infty_\h}\bigl(\|\nh
v^\h\|_{L^4_\h}\|v^\h_t\|_{L^4_\h}+\|v^\h\|_{L^\infty_\h}\|\nh
v^\h_t\|_{L^2_\h}\bigr)\|\p_tv^\h_{zz}\|_{L^2_\h}\\
&& \qquad\qquad{}\leq \f12
\w{t}^{-2}\|\p_tv^\h_{zz}\|_{L^2_\h}^2+\cC_0 h^2(z)\w{t}^{-6}. \eeno
As a consequence, we obtain \beno
 \bigl|\bigl(\p_t(\r_{zz}^\h v^\h\cdot\nh v^\h) |
\p_tv^\h_{zz}\bigr)_{L^2_\h}\bigr|\leq
\w{t}^{-2}\|\p_tv^\h_{zz}\|_{L^2_\h}^2+\e\|\nh\p_tv^\h_{zz}\|_{L^2_\h}^2+\cC_0
h^2(z)\w{t}^{-6}. \eeno

Using again the previous decay estimates, we have
 \beno
 \bigl|\bigl(\p_t(\r_{z}^\h v^\h_z\cdot\nh v^\h) |
\p_tv^\h_{zz}\bigr)_{L^2_\h}\bigr|
&\leq&\Bigl(\|\p_t\r_z^\h\|_{L^\infty_\h}\|v^\h_z\|_{L^\infty_\h}\|\nh
v^\h\|_{L^2_\h}+\|\r_z^\h\|_{L^\infty_\h}\bigl(\|\nh
v^\h\|_{L^4_\h}\|\p_tv^\h_z\|_{L^4_\h}\\
&& \qquad\qquad\qquad\qquad\qquad\qquad{}+\|v^\h_z\|_{L^\infty_\h}\|\nh
v^\h_t\|_{L^2_\h}\bigr)\Bigr)\|\p_tv^\h_{zz}\|_{L^2_\h}\\
&\leq & \w{t}^{-2}\|\p_tv^\h_{zz}\|_{L^2_\h}^2+\cC_0
h^2(z)\w{t}^{-6}.
\eeno
 Similarly, one has
\beno
 \bigl|\bigl(\p_t(\r_{z}^\h v^\h\cdot\nh v^\h_z) |
\p_tv^\h_{zz}\bigr)_{L^2_\h}\bigr|
&\leq&
\Bigl(\|\p_t\r_z^\h\|_{L^\infty_\h}\|v^\h\|_{L^\infty_\h}\|\nh
v^\h_z\|_{L^2_\h}+\|\r_z^\h\|_{L^\infty_\h}\bigl(\|\nh
v^\h_z\|_{L^4_\h}\|v^\h_t\|_{L^4_\h}\\
&&\qquad\qquad\qquad\qquad\qquad\quad{}+\|v^\h\|_{L^\infty_\h}\|\nh
\p_tv^\h_z\|_{L^2_\h}\bigr)\Bigr)\|\p_tv^\h_{zz}\|_{L^2_\h}\\
&\leq & \w{t}^{-2}\|\p_tv^\h_{zz}\|_{L^2_\h}^2+\cC_0
h^2(z)\w{t}^{-6},
 \eeno and
 \beno
 \bigl|\bigl(\p_t(\r^\h v^\h_z\cdot\nh v^\h_z) |
\p_tv^\h_{zz}\bigr)_{L^2_\h}\bigr|
&\leq&
\Bigl(\|\r_t^\h\|_{L^\infty_\h}\|v^\h_z\|_{L^\infty_\h}\|\nh
v^\h_z\|_{L^2_\h}+\|\r^\h\|_{L^\infty}\bigl(\|\nh
v^\h_z\|_{L^4_\h}\|\p_tv^\h_z\|_{L^4_\h}\\
&&\qquad\qquad\qquad\qquad\qquad\quad{}+\|v^\h_z\|_{L^\infty_\h}\|\nh
\p_tv^\h_z\|_{L^2_\h}\bigr)\Bigr)\|\p_tv^\h_{zz}\|_{L^2_\h}\\
&\leq & \w{t}^{-2}\|\p_tv^\h_{zz}\|_{L^2_\h}^2+\cC_0
h^2(z)\w{t}^{-6},
 \eeno
 and
 \ben
 \bigl|\bigl(\p_t(\r^\h
v^\h_{zz}\cdot\nh v^\h) |
\p_tv^\h_{zz}\bigr)_{L^2_\h}\bigr|&\leq&\Bigl(\|\r_t^\h\|_{L^\infty_\h}\|v^\h_{zz}\|_{L^4_\h}\|\nh
v^\h\|_{L^4_\h}+\|\r^\h\|_{L^\infty}\|\nh
v^\h\|_{L^\infty_\h}\|\p_tv^\h_{zz}\|_{L^2_\h}\Bigr)\nonumber\\
&&\label{PropSBlem3eq3}\qquad\qquad{}\times
\|\p_tv^\h_{zz}\|_{L^2_\h}+\|\r^\h\|_{L^\infty}\|v^\h_{zz}\|_{L^4_\h}\|\nh
v^\h_t\|_{L^2_\h}\|\p_tv^\h_{zz}\|_{L^4_\h}\\
&\leq &\e\|\nh\p_tv^\h_{zz}\|_{L^2_\h}^2+
\w{t}^{-\frac 3 2}\|\p_tv^\h_{zz}\|_{L^2_\h}^2+\cC_0
h^2(z)\w{t}^{-6}.\nonumber
 \een
Hence we achieve
 \beq
 \begin{split}
\bigl|(\p_tf_2 | \p_tv^\h_{zz})_{L^2_\h}\bigr|\leq&
3\e\|\na\p_tv^\h_{zz}\|_{L^2_\h}^2+C\w{t}^{-\frac 3 2}\|\p_tv^\h_{zz}\|_{L^2_\h}^2\\
&
+C\log^2\w{t}(\|\p_t^2v^\h\|_{L^2_\h}+\|\p_t^2v^\h_z\|_{L^2_\h})+\cC_0h^2(z)\w{t}^{-\frac  {11} 2}.
\end{split} \label{PropSBlem3eq4}
\eeq
Substituting the Estimates \eqref{PropSBlem3eq3}, \eqref{PropSBlem3eq4} and a similar version
of \eqref{Prop5.2eq6}  into \eqref{PropSBlem3eq1} leads to
\beq
\begin{split}
&\f{d}{dt}\|\sqrt{\r^\h}\p_tv^\h_{zz}(t)\|_{L^2_\h}^2+\|\nh\p_tv^\h_{zz}\|_{L^2_\h}^2\leq
C\bigl(\w{t}^{-\frac 3 2}+F_{1,\cT}(v^\h(t))\bigr)\|\sqrt{\r^\h}\p_tv^\h_{zz}(t)\|_{L^2_\h}^2\\
&{}+CF_{2,\cT}(v^\h(t),v^\h_{zz}(t))+\cC_0h^2(z)\w{t}^{-\frac  {11} 2}+C\log^2\w{t}\bigl(\|\p_t^2v^\h(t)\|_{L^2_\h}^2
+\|\p_t^2v^\h_z(t)\|_{L^2_\h}^2\bigr),
\end{split} \label{PropSBlem3eq5} \eeq which is exactly the same as the Inequality \eqref{Prop5.2eq7}. With this estimate, we only need
to repeat the estimate of \eqref{SAeq2} to show \eqref{5.14}. And
the first inequality of \eqref{5.17} follows from \eqref{1.14b},
\eqref{Propsect6.1eq1} and \eqref{5.14}.

In order to prove the second inequality of \eqref{5.17},  we get, by
applying Lemma \ref{Echanget2x} to (D2INS2D), that \beno
\|\nh^2v_{zz}^\h(t)\|_{L^4_\h}\leq
C\left(\|\sqrt{\r^\h}\p_tv^\h_{zz}(t)\|_{L^4_\h}+\|v^\h(t)\|_{L^8_\h}^2\|\nh
v^\h_{zz}(t)\|_{L^2_\h}+\|f_2(t)\|_{L^4_\h}\right). \eeno Notice
that \beno \|f_2(t)\|_{L^4_\h}&\leq&
\|\r^\h\|_{L^\infty_\h}\bigl(\|\v^\h_z\|_{L^\infty_\h}\|\nh
v^\h_z\|_{L^4_\h}+\|v^\h_{zz}\|_{L^4_\h}\|\nh v^\h\|_{L^\infty_\h}\\
&&+\|\r^\h_z\|_{L^\infty_\h}\bigl(\|\p_tv^\h_z\|_{L^4_\h}+2\|v^\h_z\|_{L^\infty_\h}\|\nh
v^h\|_{L^4_\h}+\|v^\h\|_{L^\infty_\h}\|\nh v^h_z\|_{L^4_\h}\bigr)\\
&&\qquad\qquad\qquad\qquad\qquad\qquad+\|\r^\h_{zz}\|_{L^\infty_\h}\bigl(\|\v^\h_t\|_{L^4_\h}+\|v^\h\|_{L^\infty_\h}\|\nh
v^\h\|_{L^4_\h}\bigr), \eeno which together with the obtained decay
estimates implies
 \beno
  \|f_2(t)\|_{L^4_\h}\leq \cC_0
h(z)\w{t}^{-\frac  9 4}.
\eeno
Hence thanks to the 2-D
interpolation inequality: $\|v^\h\|_{L^8_\h}\lesssim
\|v^\h\|_{L^2_\h}^{\f14}\|\nh v^\h\|_{L^2_\h}^{\f34},$ ensures that
\beno \|\nh^2 v^\h_{zz}(t)\|_{L^4_\h}\leq
\cC_0\bigl(h(z)\w{t}^{-\frac  9 4}+h^{\f12}(z)\w{t}^{-1}\|\nh\p_tv^\h_{zz}(t)\|_{L^2_\h}^{\f12}\bigr).
\eeno Then repeating the proof of \eqref{2.19af} leads to the second
inequality of \eqref{5.17}. The proof of the proposition is
complete.
\end{proof}

\begin{col}\label{Sect6Col1} {\sl Under the assumptions of Theorem
\ref{insslowvar}, $T^\star_2$ given by Relation \eqref{Tstardef2}
is infinite and there holds \eqref{thSBprop1eq1}.}
\end{col}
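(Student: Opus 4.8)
The plan is to follow the continuation argument of Corollary\refer{SAcol1}. First I would prove that the time $T^\star_2$ of \eqref{Tstardef2} is infinite. Integrating the transport equation of (D2INS2D) along the divergence free --- hence measure preserving --- flow of $v^\h$ and using H\"older's inequality, one gets, for $t\leq T^\star_2$,
$$
\|\r^\h_{zz}(t)\|_{L^2_\h\cap L^\infty_\h}\leq \eta\|\p_z^2\varsigma_0(\cdot,z)\|_{L^2_\h\cap L^\infty_\h}+\int_0^t\bigl(2\|v^\h_z\|_{L^\infty_\h}\|\nh\r^\h_z\|_{L^2_\h\cap L^\infty_\h}+\|v^\h_{zz}\|_{L^\infty_\h}\|\nh\r^\h\|_{L^2_\h\cap L^\infty_\h}\bigr)(t')\,dt'.
$$
Because $T^\star_1=+\infty$ by Corollary\refer{SAcol1}, the bounds $\|\nh\r^\h_z(t)\|_{L^2_\h\cap L^\infty_\h}\leq\cC_0\eta h(z)$ (from \eqref{2.21qwe}) and $\|v^\h_z\|_{L^1_t(L^\infty_\h)}\leq\cC_0 h(z)$ (integrating \eqref{SAeq3}) are available for all times; similarly $\|\nh\r^\h(t)\|_{L^2_\h\cap L^\infty_\h}\leq\cC_0\eta h(z)$ follows from \eqref{S3thm2eq3} and the two dimensional embedding $H^2_\h\hookrightarrow L^\infty_\h$, while $\|v^\h_{zz}\|_{L^1_t(L^\infty_\h)}\leq\cC_0 h(z)$ holds on $[0,T^\star_2[$ by \eqref{5.17}. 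Substituting these yields $\|\r^\h_{zz}(t)\|_{L^2_\h\cap L^\infty_\h}\leq\cC_0\eta$ on $[0,T^\star_2[$; choosing $\eta$ small enough that the right-hand side is $\leq1/2$ and using the continuity in time of the left-hand side, the definition \eqref{Tstardef2} forces $T^\star_2=+\infty$. Hence Propositions\refer{Propsect6.1}--\ref{SBlem3}, and so Theorem\refer{SBprop1}, hold globally in time.

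Next I would establish the $H^2_\h$ estimate in \eqref{thSBprop1eq1} by propagating regularity in the transport equation $\p_t\r^\h_{zz}+v^\h\cdot\nh\r^\h_{zz}=-2v^\h_z\cdot\nh\r^\h_z-v^\h_{zz}\cdot\nh\r^\h$. Theorem~3.14 of \cite{BCD} gives
$$
\|\r^\h_{zz}(t)\|_{H^2_\h}\lesssim\Bigl(\eta\|\p_z^2\varsigma_0(\cdot,z)\|_{H^2_\h}+\int_0^t\bigl(\|v^\h_z\cdot\nh\r^\h_z\|_{H^2_\h}+\|v^\h_{zz}\cdot\nh\r^\h\|_{H^2_\h}\bigr)dt'\Bigr)\exp\bigl(C\|\nh v^\h\|_{L^1_t(H^2_\h)}\bigr),
$$
the exponent being $\leq\cC_0 h(z)$ by \eqref{S3thm2eq1}--\eqref{S3thm2eq2}. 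The task is to bound the time integral by $\cC_0\eta h(z)$: one expands each product by the Leibniz rule and, in every resulting term, places the velocity factor ($v^\h_z$, $v^\h_{zz}$, or a horizontal derivative of it of order $\leq3$) in a norm in which it has an integrable time decay --- here $\|v^\h_z(t)\|_{L^\infty_\h},\|v^\h_{zz}(t)\|_{L^\infty_\h}\lesssim\cC_0 h(z)\w{t}^{-3/2}$ by \eqref{SAeq3} and \eqref{5.17}, and the horizontal derivatives of $v^\h_z,v^\h_{zz}$ decay even faster by Theorems\refer{SAprop1} and\refer{SBprop1} --- while the density factor ($\nh\r^\h_z$, $\nh\r^\h$, or a horizontal derivative of it) is controlled by the uniform bounds $\|\r^\h_z\|_{L^\infty_t(H^3_\h)}\leq\cC_0\eta h(z)$ (Theorem\refer{SAprop1}) and $\|\varrho^\h\|_{L^\infty_t(H^4_\h)}\leq\cC_0\eta h(z)$ (Theorem\refer{S3thm2}), using $H^1_\h\hookrightarrow L^q_\h$ $(q<\infty)$ and $H^2_\h\hookrightarrow L^\infty_\h$ where necessary. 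Each term is then $\cC_0\eta h(z)$ times an integrable function of $t$, whence $\|\r^\h_{zz}(t)\|_{H^2_\h}\leq\cC_0\eta h(z)$.

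Finally, for the $H^1_\h$ bound on $\p_t\r^\h_{zz}$ I would use the transport equation directly in the form $\p_t\r^\h_{zz}=-v^\h\cdot\nh\r^\h_{zz}-2v^\h_z\cdot\nh\r^\h_z-v^\h_{zz}\cdot\nh\r^\h$, differentiate once in $x_\h$, and estimate each term just as above, now using in addition $\|v^\h(t)\|_{L^\infty_\h}\lesssim\cC_0 h(z)\w{t}^{-3/2}$ (from \eqref{S3thm2eq1}) and the bound $\|\r^\h_{zz}(t)\|_{H^2_\h}\leq\cC_0\eta h(z)$ just obtained. The slowest-decaying contribution comes from the undifferentiated velocity factors measured in $L^\infty_\h$, which decay precisely like $\w{t}^{-3/2}$; this gives $\|\p_t\r^\h_{zz}(t)\|_{H^1_\h}\leq\cC_0\eta h(z)\w{t}^{-3/2}$ and completes \eqref{thSBprop1eq1}. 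The only real obstacle is the bookkeeping inherent in the continuation scheme: one must check that no constant in the estimate leading to $\|\r^\h_{zz}\|_{L^2_\h\cap L^\infty_\h}\leq\cC_0\eta$ depends on $T^\star_2$ --- they depend only on the profiles and on the already-proved global bounds for $(\r^\h,v^\h)$ and $(\r^\h_z,v^\h_z)$ --- and that throughout one systematically exploits the $L^\infty_\h$, rather than merely $L^2_\h$, decay of $v^\h_z$ and $v^\h_{zz}$, so that every time integral converges.
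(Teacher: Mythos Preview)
Your proposal is correct and follows essentially the same approach as the paper. The only difference is organizational: the paper goes directly to the $H^2_\h$ bound on $\rho^\h_{zz}$ via Theorem~3.14 of \cite{BCD} and then deduces the $L^2_\h\cap L^\infty_\h$ bound needed for the continuation argument by the Sobolev embedding $H^2_\h\hookrightarrow L^\infty_\h$, whereas you first close the continuation argument at the $L^2_\h\cap L^\infty_\h$ level and only afterwards establish the $H^2_\h$ estimate; this is a harmless reordering and the ingredients used are the same.
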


\begin{proof} Indeed it follows from Theorem 3.1.4 of \cite{BCD} and
the transport equation of (D2INS2D) that \beno
\|\r^\h_{zz}(t)\|_{L^\infty_t(H^2_\h)} & \leq &
C\Bigl(\|\p_z^2\r_0\|_{H^2_\h}+\int_0^t\bigl(\|v^h_z(t')\|_{L^\infty}\|\nh\r^\h_z(t')\|_{H^2_\h}\\
&&{}+\|\nh\r^\h_z(t')\|_{L^\infty}\|\nh v^\h_z(t')\|_{H^1_\h}
+\|v^h_{zz}(t')\|_{L^\infty_\h}\|\nh\r^\h(t')\|_{H^2_\h}\\
&&\qquad{}+\|\nh\r^\h(t')\|_{L^\infty_\h}\|\nh
v^\h_{zz}(t')\|_{H^1_\h}\bigr)dt'\Bigr)\exp\Bigl(C\|\nh
v^\h\|_{L^1_t(H^2_\h)}\Bigr), \eeno which together with
\eqref{S3thm2eq3} and \eqref{thmSAprop1eq1} ensures that
 \beq\label{Sect6Col1eq1} \|\r_{zz}^\h(t)\|_{L^2_\h\cap
 L^\infty_\h}\leq C
 \|\r^\h_{zz}(t)\|_{L^\infty_t(H^2_\h)}\leq
\cC_0\eta h(z)\leq \f12 \quad\mbox{for}\quad t\leq T^\star_2, \eeq
provided that $\eta$ is small enough. This in turn shows that
$T^\star_2$ given by Relation \eqref{Tstardef2} is infinite.
Moreover, it follows from the transport equation of (D2INS2D) that
$$
\longformule{ \|\p_t\r^\h_{zz}(t)\|_{L^2_\h}\leq
\|v^\h(t)\|_{L^\infty_\h}\|\nh\r^\h_{zz}(t)\|_{L^2_\h}+2\|v^\h_z(t)\|_{L^\infty_\h}\|\nh\r^\h_{z}(t)\|_{L^2_\h}
} { {} +\|v^\h_{zz}(t)\|_{L^\infty_\h}\|\nh\r^\h(t)\|_{L^2_\h}\leq
\cC_0\eta h(z)\w{t}^{-\frac 3 2}. }
$$
 Similarly, by taking one more horizontal derivative to the
transport equation of (D2INS2D), we obtain \beno
\|\nh\p_t\r^\h_{zz}(t)\|_{L^2_\h}&\leq&
\|v^\h(t)\|_{L^\infty_\h}\|\nh^2\r^\h_{zz}(t)\|_{L^2_\h}+\|\nh
v^\h(t)\|_{L^\infty_\h}\|\nh\r^\h_{zz}(t)\|_{L^2_\h} \\
&&+2\|\nh
v^\h_z(t)\|_{L^\infty_\h}\|\nh\r^\h_{z}(t)\|_{L^2_\h}+2\|v^\h_{z}(t)\|_{L^\infty_\h}\|\nh^2\r^\h_z(t)\|_{L^2_\h}
\\
&&+\|\nh
v^\h_{zz}(t)\|_{L^2_\h}\|\nh\r^\h(t)\|_{L^\infty_\h}+2\|v^\h_{zz}(t)\|_{L^\infty_\h}\|\nh^2\r^\h(t)\|_{L^2_\h}\\
&\leq& \cC_0\eta h(z)\w{t}^{-\frac 3 2}. \eeno
 This together with \eqref{Sect6Col1eq1} ensures \eqref{thSBprop1eq1}.
\end{proof}

Finally let us turn to the decay estimates of the third derivatives.

\begin{prop}\label{SBlem4} {\sl Under the assumptions of Theorem \ref{insslowvar},
  one has
  \ben \label{PropSBlem4eq1}
\|\nh\p_t v^\h_{zz}(t)\|_{L^2_\h}^2  & \leq &   \cC_0
h^2(z)\w{t}^{-5},\\
 \label{PropSBlem4eq1b}
\int_{t}^\infty \bigl(\|\p_t^2v^\h_{zz}\|^2 _{L^2_\h} + \|\nh^2\p_t
v^\h_{zz}\|^2_{L^2_\h}+\|\nh\partial_t\Pi_{zz}^\h\|_{L^2_\h}^2\bigr)
dt' &\leq & \cC_0
h^2(z)\w{t}^{-5},\\
\label{PropSBlem4eq2}\int_0^t\w{t'}^{5_-}\bigl(\|\p_t^2v^\h_{zz}\|^2
_{L^2_\h} + \|\nh^2\p_t
v^\h_{zz}\|^2_{L^2_\h}+\|\nh\partial_t\Pi_{zz}^\h\|_{L^2_\h}^2\bigr) dt' &\leq &  \cC_0 h^2(z),\\
\label{PropSBlem4eq3} \|\nh^3
v^\h_{zz}(t)\|_{L^2_\h}+\|\nh^2\Pi_{zz}^\h(t)\|_{L^2} &\leq & \cC_0
h(z)\w{t}^{-\f52}\log\w{t}. \een}
\end{prop}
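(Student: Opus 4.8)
The plan is to repeat, mutatis mutandis, the argument of Proposition~\ref{SAlem3}, with the family $v^\h_z$ replaced by $v^\h_{zz}$ and all lower‑order quantities ($v^\h$, $v^\h_z$, and their density companions) already controlled by Theorem~\ref{S3thm2}, Theorem~\ref{SAprop1}, Proposition~\ref{Propsect6.1}, Proposition~\ref{SBlem3} and Corollary~\ref{Sect6Col1}. First I would note that, exactly as $\p_tv^\h_z$ was shown to solve (LINS2D) in the proof of Proposition~\ref{SAlem3}, differentiating (D2INS2D) in time (and using the density equation $\r^\h_t=-\dive_\h(\r^\h v^\h)$) shows that $\p_tv^\h_{zz}$ solves (LINS2D) with velocity $v^\h$, with the operator $L(t)$ of (D1INS2D) — whose $\cL(L^2_\h)$ norm is integrable by \eqref{Lestimate} and \eqref{S3thm2eq1} — and with an external force $\wt f_2$ which, after Leibniz's rule, is a finite sum of products of one factor among $\r^\h,\r_t^\h,\r_z^\h,\p_t\r_z^\h,\r^\h_{zz},\p_t\r^\h_{zz}$ with a factor built out of $v^\h,v^\h_t,\p_t^2v^\h,v^\h_z,\p_tv^\h_z,\p_t^2v^\h_z,v^\h_{zz},\p_tv^\h_{zz}$ and at most two horizontal derivatives.

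Next, using the $2$D interpolation inequalities \eqref{1.14a} and \eqref{1.14b}, together with the decay estimates listed above and the $L^\infty_\h\cap L^2_\h$ bounds on $\r^\h_{zz},\p_t\r^\h_{zz}$ from Corollary~\ref{Sect6Col1}, I expect to obtain, for all $0\le t_0\le t$,
$$\int_{t_0}^t\|\wt f_2(t')\|_{L^2_\h}^2\,dt'\le \cC_0h^2(z)\w{t_0}^{-5},$$
the slowest contributions being the linear terms of the type $\r^\h_{zz}\p_t^2v^\h$ and $\r^\h_z\p_t^2v^\h_z$; these are handled, as in \eqref{Prop5.2eq1a}, by the interpolation $\|\p_tv^\h_{zz}\|_{L^{2/\e}_\h}\lesssim\e^{-1}\|\p_tv^\h_{zz}\|_{L^2_\h}^{\e}\|\nh\p_tv^\h_{zz}\|_{L^2_\h}^{1-\e}$ with $\e\sim\log^{-1}\w{t}$, after which $\int\log^2\w{t'}\|\p_t^2v^\h\|_{L^2_\h}^2$ and $\int\log^2\w{t'}\|\p_t^2v^\h_z\|_{L^2_\h}^2$ are absorbed by \eqref{propdecayD3eq1} and \eqref{2.22}. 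Granting this bound on $\wt f_2$, Lemma~\ref{DecayH1fond} does the rest: \eqref{LemDecayH1fondeq1} applied to $v=\p_tv^\h_{zz}$ controls $\|\nh\p_tv^\h_{zz}(t)\|_{L^2_\h}^2$ plus the full time integral in \eqref{PropSBlem4eq1b} by $\|\nh\p_tv^\h_{zz}(t_0)\|_{L^2_\h}^2+\cC_0h^2(z)\w{t_0}^{-5}$; \eqref{LemDecayH1fondeq3} with $t_0=t/2$, combined with \eqref{5.14} and the force estimate, gives $t\|\nh\p_tv^\h_{zz}(t)\|_{L^2_\h}^2\lesssim\cC_0h^2(z)\w{t}^{-4}$; together these yield \eqref{PropSBlem4eq1}, and feeding it back (with $t_0=t$) into the first inequality yields \eqref{PropSBlem4eq1b}. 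For \eqref{PropSBlem4eq2} I would invoke \eqref{LemDecayH1fondeq2} with $s=5_-$ and $v=\p_tv^\h_{zz}$, bounding $\int\w{t'}^{5_-}\|\wt f_2\|_{L^2_\h}^2$ by $\cC_0h^2(z)$ (again with a logarithmic loss, now using \eqref{S3thm2eq8} and \eqref{PropSAlem3eq2}) and $\int\w{t'}^{4_-}\|\nh\p_tv^\h_{zz}\|_{L^2_\h}^2\,dt'/T$ by $\cC_0h^2(z)$ via \eqref{PropSBlem4eq1}.

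Finally, for \eqref{PropSBlem4eq3} I would differentiate the momentum equation of (D2INS2D) in the horizontal variables and apply Lemma~\ref{Echanget2x} with $v=\p_jv^\h_{zz}$: this bounds $\|\nh^3v^\h_{zz}\|_{L^2_\h}+\|\nh^2\Pi_{zz}^\h\|_{L^2_\h}$ by $\|\nh\p_tv^\h_{zz}\|_{L^2_\h}$, by $\|v^\h\|_{L^4_\h}^2\|\nh^2v^\h_{zz}\|_{L^2_\h}$, and by products of horizontal derivatives of $\r^\h,\r^\h_z,\r^\h_{zz}$ with derivatives of $v^\h,v^\h_t,v^\h_z,v^\h_{zz}$. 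All of these except the linear terms $\nh\r^\h_{zz}\,v^\h_t$ and $\nh\r^\h\,\p_tv^\h_{zz}$ are $\le\cC_0h(z)\w{t}^{-5/2}$ by the decay estimates proved so far, while the two delicate ones are treated by the $\e$–interpolation plus convexity argument of \eqref{S7eq6demoeq4}, using now \eqref{thSBprop1eq1}, \eqref{2.21qwe} and the $H^1_\h$–control of $\p_tv^\h_{zz}$ just obtained, which produces the factor $\log\w{t}$; interpolation then gives \eqref{PropSBlem4eq3}. As in Sections~\ref{decayL2partialz} and \ref{decayL2partialz2}, the main obstacle throughout is the bookkeeping of the density–weighted linear terms: one must make sure every occurrence of $\r^\h_{zz}$ or $\p_t\r^\h_{zz}$ is paired with a velocity factor decaying fast enough that, after the logarithmic interpolation loss, all time integrals still converge with the stated powers of $\w{t}$ — which is precisely what Corollary~\ref{Sect6Col1} (finiteness of $T^\star_2$ and the bounds \eqref{thSBprop1eq1}) was designed to guarantee.
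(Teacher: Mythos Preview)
Your approach coincides with the paper's: regard $\p_t v^\h_{zz}$ as a solution of (LINS2D) with force $\wt f_2$, establish $\int_{t_0}^t\|\wt f_2\|_{L^2_\h}^2\,dt'\le\cC_0 h^2(z)\w{t_0}^{-5}$ and apply Lemma~\ref{DecayH1fond} exactly as in Proposition~\ref{SAlem3} for \eqref{PropSBlem4eq1}--\eqref{PropSBlem4eq2}, then spatially differentiate (D2INS2D) and use Lemma~\ref{Echanget2x} for \eqref{PropSBlem4eq3}. Two minor corrections: in the force estimate the terms $\r^\h_{zz}\p_t^2v^\h$ and $\r^\h_z\p_t^2v^\h_z$ need no $\epsilon$-interpolation at all (since $\|\r^\h_{zz}\|_{L^\infty_\h},\|\r^\h_z\|_{L^\infty_\h}\le 1$, one bounds them directly and then integrates via \eqref{propdecayD3eq1} and \eqref{2.22}), and in the proof of \eqref{PropSBlem4eq3} there is a third delicate linear term, $\nh\r^\h_z\,\p_tv^\h_z$, which also requires the interpolation argument of \eqref{S7eq6demoeq4}.
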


\begin{proof} By virtue  of (D2INS2D), $\p_tv^\h_{zz}$ is a solution of
(LINS2D) with external force $\wt f_2(t)$ given by
\beno \wt
f_2(t)&=&-\r_t^\h\p_tv^\h_{zz}-(\r^\h v^\h)_t\cdot\nh
v^\h_{zz}-2\p_t(\r_z^\h\p_tv^\h_z)-\p_t(\r_{zz}^\h v^\h_t)\\
&&-\p_t\p_z(\r_z^\h v^\h\cdot\nh v^\h)-\p_t(\r_z^\h v^\h_z\cdot\nh
v^\h)-\p_t(\r^\h v^\h_z\cdot\nh v^\h_z)-\p_t(\r^\h v^\h_{zz}\cdot\nh
v^\h), \eeno so that \beno \|\wt f_2(t)\|_{L^2_\h} &\leq
&\|\r^\h\|_{L^\infty}\Bigl(\|v^\h_t\|_{L^4_\h}\|\nh
v^\h_{zz}\|_{L^4_\h}+\|\p_tv^\h_z\|_{L^4_\h}\|\nh
v^\h_z\|_{L^4_\h}+\|\p_tv^\h_{zz}\|_{L^4_\h}\|\nh v^\h\|_{L^4_\h}\\
&&{}
+\| v^\h_z\|_{L^\infty_\h}\|\nh\p_tv^\h_z\|_{L^2_\h}+\|
v^\h_{zz}\|_{L^\infty_\h}\|\nh
v^\h_t\|_{L^2_\h}\Bigr)+\|\r_z^\h\|_{L^\infty}\Bigl(\|\p_t^2v^\h_z\|_{L^2_\h}\\
&&{}
+\|v^\h\|_{L^\infty_\h}\|\nh\p_tv^\h_z\|_{L^2_\h}+2\|v^\h_z\|_{L^\infty_\h}\|\nh
v^\h_t\|_{L^2_\h}+2\| \nh v^\h\|_{L^4_\h}\| \p_tv^\h_z\|_{L^4_\h}\\
&&{} +\|\nh v^\h_z\|_{L^4_\h}\|
v^\h_t\|_{L^4_\h}\Bigr)+\|\r_{zz}^\h\|_{L^\infty}\Bigl(\|\p_t^2v^\h\|_{L^2_\h}
+\|v^\h\|_{L^\infty_\h}\|\nh v^\h_t\|_{L^2_\h}\\
&&{}
+\|\nh
v^\h\|_{L^4_\h}\|v^\h_t\|_{L^4_\h}\Bigr)+\|\r_t^\h\|_{L^\infty_\h}\Bigl(\|\p_tv^\h_{zz}\|_{L^2_\h}+\|
v^\h\|_{L^\infty_\h}\|\nh v^\h_{zz}\|_{L^2_\h}\\
&&{}
+\| v^\h_z\|_{L^\infty_\h}\|\nh v^\h_z\|_{L^2_\h}+\|
v^\h_{zz}\|_{L^\infty_\h}\|\nh v^\h\|_{L^2_\h}\Bigr)\\
&&{}
+\|\p_t\r_z^\h\|_{L^\infty_\h}\Bigl(\|\p_tv^\h_{z}\|_{L^2_\h}+\|
v^\h\|_{L^\infty_\h}\|\nh v^\h_{z}\|_{L^2_\h}+2\|
v^\h_z\|_{L^\infty_\h}\|\nh
v^\h\|_{L^2_\h}\Bigr)\\
&&{}
+\|\p_t\r_{zz}^\h\|_{L^4_\h}\bigl(\| v^\h_t\|_{L^4_\h}+\|
v^\h\|_{L^\infty_\h}\|\nh v^\h\|_{L^4_\h}\bigr).
\eeno
 Then according to  the estimates in the previous sections, we deduce
that \beno \|\wt f(t')\|_{L^2}^2\leq \cC_0
h^2(z)\w{t}^{-7}+C\|\p_t^2v^\h(t')\|_{L^2_\h}^2+C\|\p_t^2v^\h_z(t')\|_{L^2_\h}^2,
\eeno which together with \eqref{propdecayD3eq1} and \eqref{2.22}
 ensures that for any nonnegative $t_0$ and
any time $t\geq t_0$ \beq \label{PropSBlem4eq4} \int_{t_0}^t\|\wt
f_2(t')\|_{L^2}^2dt'\leq \cC_0 h^2(z)\w{t_0}^{-5}. \eeq With
\eqref{PropSBlem4eq4}, we repeat the proofs of \eqref{2.22} and
\eqref{PropSAlem3eq2}  to get (\ref{PropSBlem4eq1} -
\ref{PropSBlem4eq2}). We omit the details here.

Finally in order to prove \eqref{PropSBlem4eq3}, we get, by
differentiate the momentum equation of (D2INS2D) with respect to
$x_j,$ that $$\longformule{
\D_\h\p_jv^\h_{zz}-\p_j\nh\Pi_{zz}^\h=\p_j(\r^\h\p_tv^\h_{zz})+\p_j(\r^\h
v^\h\cdot\nh v^\h_{zz})+2\p_j(\r_z^\h\p_tv^\h_z)+\p_j(\r_{zz}^\h
v^\h_t)}{{} +\p_j\p_z(\r_z^\h v^\h\cdot\nh v^\h)+\p_j(\r_z^\h
v^\h_z\cdot\nh v^\h)+\p_j(\r^\h v^\h_z\cdot\nh v^\h_z)+\p_j(\r^\h
v^\h_{zz}\cdot\nh v^\h).} $$  Applying Lemma \ref{Echanget2x} gives
\beno
&&\|\nh^3v^\h_{zz}\|_{L^2_\h}+\|\nh^2\Pi_{zz}^\h\|_{L^2_\h}\lesssim\|\nh\r^\h\p_tv^\h_{zz}\|_{L^2_\h}+
\|\nh\r_z^\h\p_tv^\h_z\|_{L^2_\h}+
\|\nh\r_{zz}^\h v^\h_t\|_{L^2_\h}\\
&&\qquad+\|\r^\h\|_{L^\infty}\Bigl(\|\nh\p_tv^\h_{zz}\|_{L^2_\h}
+2\|\nh v^\h\|_{L^4_\h}\|\nh
v^\h_{zz}\|_{L^4_\h}+\|v^\h\|_{L^\infty_\h}\|\nh^2v^\h_{zz}\|_{L^2_\h}\\
&&\qquad+\|\nh
v^\h_z\|_{L^4_\h}^2+\|v^\h_z\|_{L^\infty_\h}\|\nh^2v^\h_z\|_{L^2_\h}+\|v^\h_{zz}\|_{L^\infty_\h}\|\nh^2v^\h\|_{L^2_\h}\Bigr)
+\|\r_z^\h\|_{L^\infty_\h}\Bigl(\|\nh \p_tv^\h_z\|_{L^2_\h}
\\
&&\qquad+3\|\nh v^\h_z\|_{L^4_\h}\|\nh
v^\h\|_{L^4_\h}+\|v^\h\|_{L^\infty_\h}\|\nh^2v^\h_z\|_{L^2_\h}+2\|v^\h_z\|_{L^\infty_\h}\|\nh^2v^\h\|_{L^2_\h}\Bigr)\\
&&\qquad+\|\r_{zz}^\h\|_{L^\infty_\h}\Bigl(\|\nh
v^\h_t\|_{L^2_\h}+\|\nh
v^\h\|_{L^4_\h}^2+\|v^\h\|_{L^\infty_\h}\|\nh^2v^\h\|_{L^2_\h}\Bigr)\\
&&\qquad+\|\nh\r^\h\|_{L^\infty_\h}\Bigl(\|v^\h\|_{L^\infty_\h}\|\nh
v^\h_{zz}\|_{L^2_\h}+\|v^\h_z\|_{L^\infty_\h}\|\nh
v^\h_z\|_{L^2_\h}+\|v^\h_{zz}\|_{L^\infty_\h}\|\nh v^\h\|_{L^2_\h}\Bigr)\\
&&\qquad+2\|\nh \r_z^\h\|_{L^\infty_\h}\|v^\h_z\|_{L^\infty_\h}\|\nh
v^\h\|_{L^2_\h}+\|\nh \r_{zz}^\h\|_{L^4_\h}\|v^\h\|_{L^4_\h}\|\nh
v^\h\|_{L^2_\h}, \eeno from which and the previous decay estimates
and \eqref{thSBprop1eq1}, we infer
$$
\|\nh^3v^\h_{zz}\|_{L^2_\h}+\|\nh^2\Pi_{zz}^\h\|_{L^2_\h}\lesssim
\|\nh\r^\h\p_tv^\h_{zz}\|_{L^2_\h}+\|\nh\r_z^\h\p_tv^\h_z\|_{L^2_\h}+\|\nh\r_{zz}^\h
v^\h_t\|_{L^2_\h}+\cC_0h(z)\w{t}^{-\frac 5 2}.
$$
Then using \eqref{S3thm2eq3}, \eqref{thmSAprop1eq1} and
\eqref{thSBprop1eq1}, we deduce \eqref{PropSBlem4eq3} by  repeating
the proof of \eqref{S7eq6demoeq4}. This completes the proof of the
proposition.
\end{proof}

By summing up Propositions \ref{Propsect6.1}, \ref{SBlem3} and
\ref{SBlem4}, and Corollary \ref{Sect6Col1}, we complete the proof
of Theorem \ref{SBprop1}.

\setcounter{equation}{0}
\section {Estimates of $v^\h$ in terms of anisotropic Besov  norms} \label{Besovvh}

By summing up Theorems \ref{S3thm2}, \ref{SAprop1} and
\ref{SBprop1}, we conclude the following theorem concerning the
decay estimates of solutions to \eqref{INS2dParameterb}.

\begin{thm}
\label {S7thm1} {\sl Under the assumptions of Theorem
\ref{insslowvar}, System \eqref{INS2dParameterb} has a unique global
solution $(\r^\h, v^\h,\nh \Pi^\h)$ so that there hold \beq
\label{S7eq0.2}
\begin{split} &\sum_{\ell=0}^2\biggl(\w{t}\|\p_z^\ell
v^\h(t,\cdot,z)\|_{L^2_\h}+\w{t}^{\f32}\bigl(\|\nh \p_z^\ell
v^\h(t,\cdot,z)\|_{L^2_\h}+\|\p_z^\ell v^\h(t,\cdot,z)\|_{L^\infty_\h}\bigr)\\
&\qquad+ \w{t}^{2}\bigl(\|\p_z^\ell v^\h_t(t,\cdot,z)\|_{L^2_\h}+
\|\nh^2\p_z^\ell v^\h(t,\cdot,z)\|_{L^2_\h}+\|\nh \p_z^\ell
\Pi^\h(t,\cdot,z)\|_{L^2_\h}\bigr)\\
&\qquad+\w{t}^{\f52}\log^{-1}\w{t}\bigl(\|\nh^3\p_z^\ell
v^\h(t,\cdot,z)\|_{L^2_\h}+\|\nh^2\p_z^\ell
\Pi^\h(t,\cdot,z)\|_{L^2_\h}\bigr)\\
&\qquad+ \w{t}^{\f52} \|\nh\p_z^\ell
v^\h_t(t,\cdot,z)\|_{L^2_\h}+\w{t}^{2}\log^{-\f12}\w{t}\|\nh\p_z^\ell
v^\h(t,\cdot,z)\|_{L^\infty_\h}\biggr)\leq \cC_0h(z),
\end{split} \eeq and \beq \label{S7eq0.2as} \sum_{\ell=0}^2\int_0^t\w{t'}^{5_-}\bigl(\|\p_t^2\p_z^\ell v^\h(t')\|^2
_{L^2_\h} + \|\nh^2\p_z^\ell
v^\h_{t}(t')\|^2_{L^2_\h}+\|\nh\partial_z^\ell
\p_t\Pi^\h(t')\|_{L^2_\h}^2\bigr) dt' \leq   \cC_0 h^2(z). \eeq We
also have  \beq \label {S7eq0.3}
\begin{split}
\|&\varrho^\h(t,\cdot,z)\|_{H^4_\h}+\|\r^\h_z(t,\cdot,z)\|_{H^3_\h}+\|\r^\h_{zz}(t,\cdot,z)\|_{H^2_\h}\\
&{}\quad+\w{t}^{\f32}\bigl(\|\r^\h_t(t,\cdot,z)\|_{H^3_\h}+\|\p_z\r^\h_t(t,\cdot,z)\|_{H^2_\h}+\|\p_{z}^2\r^\h_t(t,\cdot,z)\|_{H^1_\h}\bigr)\leq
\cC_0\eta h(z).
\end{split}
\eeq
}
 \end{thm}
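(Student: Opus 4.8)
The plan is to obtain the claimed properties as the synthesis of the three decay theorems already proved, organized according to the number $\ell\in\{0,1,2\}$ of vertical derivatives falling on $v^\h$. First I would record global well-posedness together with the smooth dependence on the parameter. Under the assumptions of Theorem~\ref{insslowvar} — that is, with $\varsigma_0,v_0^\h\in\cS(\R^3)$, $\dive_\h v_0^\h=0$, and the moment conditions \eqref{S7eq0.1} — one has, for $\eta\le\eta_0$ with $\eta_0$ depending only on $\|\varsigma_0\|_{L^\infty}$, that $\frac34\le 1+\eta\varsigma_0(\cdot,z)\le\frac54$ and that $v_0^\h(\cdot,z)$ is a divergence free Schwartz field. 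Hence for each fixed $z$ the two-dimensional inhomogeneous Navier--Stokes system \eqref{INS2dParameterb} is globally well-posed with a unique smooth solution $(\r^\h,v^\h,\nh\Pi^\h)$ by the classical theory; differentiating the system once and twice in $z$ and solving the resulting linear systems (D1INS2D) and (D2INS2D) produces $v^\h_z$ and $v^\h_{zz}$, so the existence and uniqueness part of the statement is settled.

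Next I would assemble the estimates, case by case in $\ell$. The $\ell=0$ block of \eqref{S7eq0.2} is precisely \eqref{S3thm2eq1} of Theorem~\ref{S3thm2}, the $\ell=0$ term of \eqref{S7eq0.2as} is \eqref{S3thm2eq8}, and the $\varrho^\h$ and $\r^\h_t$ contributions to \eqref{S7eq0.3} are \eqref{S3thm2eq3}--\eqref{S3thm2eq4}. The $\ell=1$ block of \eqref{S7eq0.2} and the $\ell=1$ term of \eqref{S7eq0.2as} are the content of Theorem~\ref{SAprop1}, which asserts that $v^\h_z$ obeys the exact analogues of \eqref{S3thm2eq1} and \eqref{S3thm2eq8} (the latter in the sharp form \eqref{PropSAlem3eq2}), while the $\r^\h_z$ and $\p_t\r^\h_z$ parts of \eqref{S7eq0.3} are \eqref{thmSAprop1eq1}. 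The $\ell=2$ block is Theorem~\ref{SBprop1} in exactly the same way, with the sharp integral bound \eqref{PropSBlem4eq2} and with the $\r^\h_{zz}$, $\p_t\r^\h_{zz}$ parts of \eqref{S7eq0.3} given by \eqref{thSBprop1eq1}. Summing over $\ell=0,1,2$ and enlarging $h$ so that a single positive function in $L^2_{\rm v}\cap L^\infty_{\rm v}$ dominates the (at most three) profiles produced above then yields \eqref{S7eq0.2}, \eqref{S7eq0.2as} and \eqref{S7eq0.3} simultaneously.

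The step I expect to be the real point — not a deep obstacle, since the heavy analysis sits in the three prior theorems, but the thing that must be checked rather than waved through — is the uniformity in $z$ of the hypotheses and constants, together with the product structure $\cC_0\,h(z)$ of the right-hand sides. Concretely I would verify that the hypotheses of Theorem~\ref{ins2Ddecay} hold for every $z$: the mean-free condition $\int_{\R^2}\rho_0(x_\h,z)\,v_0^\h(x_\h,z)\,dx_\h=0$ is \eqref{1.7dfg}, itself a consequence of \eqref{S7eq0.1}; the weighted $L^1$ quantity $\int_{\R^2}|x_\h|\,|v_0^\h(x_\h,z)|\,dx_\h$ is finite; and the density bound $\frac34\le\rho_0(\cdot,z)\le\frac54$ has been noted. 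Moreover, because $\varsigma_0$ and $v_0^\h$ are Schwartz on $\R^3$, every data functional entering the statements — the energies $E_0,E_1,E_2,E_3$, the first-moment quantities, and the Sobolev norms of $\varsigma_0(\cdot,z)$, $v_0^\h(\cdot,z)$, $\p_z v_0^\h(\cdot,z)$ and $\p_z^2 v_0^\h(\cdot,z)$ — is bounded by $\cC_0\,h(z)$ (respectively $\cC_0\,h(z)^2$) for a suitable $h\in L^2_{\rm v}\cap L^\infty_{\rm v}$, since the $z$-sections of a Schwartz function decay rapidly in every such norm. This also shows that the thresholds $T_0,\dots,T_3$ appearing in Theorems~\ref{ins2Ddecay}--\ref{SBprop1} stay bounded and can be absorbed into $\cC_0$ once $\eta$ is small. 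With these uniformity facts the three theorems apply at once for all $z$, and the synthesis above is justified.
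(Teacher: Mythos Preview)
Your proposal is correct and matches the paper's approach exactly: the paper simply states that Theorem~\ref{S7thm1} follows ``by summing up Theorems~\ref{S3thm2}, \ref{SAprop1} and \ref{SBprop1}'' with no further argument, and your write-up spells out precisely this synthesis, assigning the $\ell=0,1,2$ blocks of \eqref{S7eq0.2}--\eqref{S7eq0.3} to those three theorems in the obvious way. Your additional remarks on uniformity in $z$ and the product structure $\cC_0\,h(z)$ are a welcome clarification of points the paper leaves implicit.
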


Now we shall transform the above decay estimates of $v^\h$ to the
$L^1$ or $L^2$ in time estimate of the Besov norms to $v^\h.$
 For the
convenience of the readers, we recall the following anisotropic
Bernstein type lemma from \cite{CZ1, Pa02}:

\begin{lem}
\label{lemBern} {\sl Let $\cB_{\h}$ (resp.~$\cB_{\v}$) a ball
of~$\R^2_{\h}$ (resp.~$\R_{\v}$), and~$\cC_{\h}$ (resp.~$\cC_{\v}$)
a ring of~$\R^2_{\h}$ (resp.~$\R_{\v}$); let~$1\leq p_2\leq p_1\leq
\infty$ and ~$1\leq q_2\leq q_1\leq \infty.$ Then there holds:

\smallbreak\noindent If the support of~$\wh a$ is included
in~$2^k\cB_{\h}$, then
\[
\|\partial_{{\rm h}}^\alpha a\|_{L^{p_1}_{\rm h}(L^{q_1}_{\rm v})}
\lesssim 2^{k\left(|\al|+2\left(\f1{p_2}-\f1{p_1}\right)\right)}
\|a\|_{L^{p_2}_{\rm h}(L^{q_1}_{\rm v})}.
\]
If the support of~$\wh a$ is included in~$2^\ell\cB_{\v}$, then
\[
\|\partial_{z}^\beta a\|_{L^{p_1}_{\rm h}(L^{q_1}_{\rm v})} \lesssim
2^{\ell\left(\beta+(\f1{q_2}-\f1{q_1})\right)} \| a\|_{L^{p_1}_{\rm
h}(L^{q_2}_{\rm v})}.
\]
If the support of~$\wh a$ is included in~$2^k\cC_{\h}$, then
\[
\|a\|_{L^{p_1}_{\rm h}(L^{q_1}_{\rm v})} \lesssim
2^{-kN}\sup_{|\al|=N} \|\partial_{{\rm h}}^\al a\|_{L^{p_1}_{\rm
h}(L^{q_1}_{\rm v})}.
\]
If the support of~$\wh a$ is included in~$2^\ell\cC_{\v}$, then
\[
\|a\|_{L^{p_1}_{\rm h}(L^{q_1}_{\rm v})} \lesssim 2^{-\ell N}
\|\partial_{z}^N a\|_{L^{p_1}_{\rm h}(L^{q_1}_{\rm v})}.
\]
}
\end{lem}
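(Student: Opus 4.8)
The four inequalities are the anisotropic counterparts of the classical Bernstein inequality, and the plan is to deduce each of them from the one-block Euclidean version by freezing the complementary variable. The structural point that makes this work is that in every case the Fourier localization acts on \emph{one} group of variables only --- the horizontal block $\xi_\h\in\R^2_\h$ for the first and third inequalities, the vertical variable $\zeta\in\R_\v$ for the second and fourth --- so the convolution kernels produced below depend only on that group, and Young's inequality can be applied for a.e.\ value of the frozen variable before taking the remaining Lebesgue norm; since no constraint mixes the two blocks, Minkowski's inequality is not even needed. The assumptions $p_2\le p_1$ and $q_2\le q_1$ are used precisely to guarantee that the Young exponent produced at each step is $\ge 1$.

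\textbf{Horizontal estimates.} For the first inequality, pick $\psi_\h\in\cD(\R^2_\h)$ with $\psi_\h\equiv 1$ on $\cB_\h$; then $\wh a(\xi_\h,\zeta)=\psi_\h(2^{-k}\xi_\h)\wh a(\xi_\h,\zeta)$, so that for every fixed $z$ one has $\partial_\h^\al a(\cdot,z)=2^{k(2+|\al|)}\,(\partial^\al\check\psi_\h)(2^k\cdot)\star_\h a(\cdot,z)$, the convolution being in the horizontal variable alone. Young's inequality in $x_\h$ with $1+\f1{p_1}=\f1r+\f1{p_2}$ gives $\|\partial_\h^\al a(\cdot,z)\|_{L^{p_1}_\h}\lesssim 2^{k(|\al|+2(\f1{p_2}-\f1{p_1}))}\|a(\cdot,z)\|_{L^{p_2}_\h}$, and taking the $L^{q_1}_\v$ norm in $z$ yields the claim. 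For the third inequality, fix $\theta_\h\in\cD(\R^2_\h\setminus\{0\})$ equal to $1$ on $\cC_\h$; using $|\xi_\h|^{2N}=\sum_{|\al|=N}\binom N\al\xi_\h^{2\al}$ one gets on the annulus the identity $1=\sum_{|\al|=N}g_\al(\xi_\h)(i\xi_\h)^\al$ with $g_\al(\xi_\h)=\binom N\al\theta_\h(\xi_\h)(-i\xi_\h)^\al|\xi_\h|^{-2N}\in\cD(\R^2_\h\setminus\{0\})$, hence $\check g_\al\in\cS(\R^2_\h)\subset L^1_\h$. Rescaling to the ring $2^k\cC_\h$ turns this into $a(\cdot,z)=\sum_{|\al|=N}\check g_{\al,k}\star_\h(\partial_\h^\al a)(\cdot,z)$ with $\|\check g_{\al,k}\|_{L^1_\h}=2^{-kN}\|\check g_\al\|_{L^1_\h}$; Young's inequality fiberwise in $z$ followed by the $L^{q_1}_\v$ norm gives the stated bound.

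\textbf{Vertical estimates.} These are obtained by the same argument with the roles of the two variable blocks exchanged, freezing $x_\h$ instead of $z$. For the second inequality one writes, with $\psi_\v\in\cD(\R_\v)$ equal to $1$ on $\cB_\v$, $\partial_z^\b a(x_\h,\cdot)=2^{\ell(1+\b)}(\partial^\b\check\psi_\v)(2^\ell\cdot)\star_\v a(x_\h,\cdot)$ and applies Young's inequality in $z$ with $1+\f1{q_1}=\f1s+\f1{q_2}$, which produces the factor $2^{\ell(\b+\f1{q_2}-\f1{q_1})}$, before taking the $L^{p_1}_\h$ norm in $x_\h$. For the fourth inequality one uses that on a ring $\cC_\v\subset\R_\v$ avoiding the origin the function $\zeta\mapsto\theta_\v(\zeta)(i\zeta)^{-N}$ is smooth and compactly supported for any $\theta_\v\in\cD(\R_\v\setminus\{0\})$ equal to $1$ on $\cC_\v$; this yields, after rescaling, $a(x_\h,\cdot)=\check g_\ell\star_\v(\partial_z^N a)(x_\h,\cdot)$ with $\|\check g_\ell\|_{L^1_\v}\lesssim 2^{-\ell N}$, and Young's inequality in $z$ followed by the horizontal norm finishes the proof.

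There is no genuine obstacle here: the statement is a standard tool, recorded for the convenience of the reader, and the only care required is the bookkeeping of the dilation exponents --- which is in any case forced by homogeneity --- together with the verification that each Young exponent is admissible, which is exactly what $p_2\le p_1$ and $q_2\le q_1$ provide. Complete details may be found in \cite{CZ1, Pa02}.
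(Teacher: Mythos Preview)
The paper does not give its own proof of this lemma; it is simply recalled from \cite{CZ1, Pa02}, so there is nothing to compare against. Your argument is the standard one and is correct in substance.

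One small correction is worth making. Your claim that ``Minkowski's inequality is not even needed'' is accurate for the two vertical estimates but not for the two horizontal ones. In the norm $\|\cdot\|_{L^{p_1}_\h(L^{q_1}_\v)}$ the vertical variable is the \emph{inner} one, while the convolution kernel in the horizontal case acts in $x_\h$, the \emph{outer} variable. Freezing $z$, applying Young in $x_\h$, and only then taking $L^{q_1}_\v$ yields a bound in $L^{q_1}_\v(L^{p_1}_\h)$, which is the wrong order. The cure is routine: for fixed $x_\h$, apply Minkowski's integral inequality to get
\[
\bigl\|\partial_\h^\al a(x_\h,\cdot)\bigr\|_{L^{q_1}_\v}\le \int |K_k(x_\h-y_\h)|\,\|a(y_\h,\cdot)\|_{L^{q_1}_\v}\,dy_\h,
\]
and then apply Young in $x_\h$ to the scalar convolution on the right. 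For the vertical estimates your order of operations is already consistent with the stated norms, since there the convolution variable coincides with the inner norm.
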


In view of Definition \ref{anibesov},  as a corollary of Lemma
\ref{lemBern}, we have the following inequality, if~$1\leq p_2\leq
p_1$,
 \beq
 \label{S0eq2}
\|a\|_{\cB^{s_1-2\left(\frac 1{p_2}- \frac
1{p_1}\right),s_2-\left(\frac 1 {p_2}-\frac 1 {p_1}\right)}_{p_1}}
\lesssim \|a\|_{\cB^{s_1,s_2}_{p_2}}.
 \eeq

 To consider the
product of a distribution in the isotropic Soblev space with a
distribution in the anisotropic Besov space, we  need  the following
interpolation inequalities:

\begin{lem}
\label {S0lem3}
{\sl
We have the following interpolation inequality for the $L^\infty$ norm.
\beq
 \label  {interpolBesovanisolinfty}
 \|f\|_{L^\infty} \lesssim \|f\|_{\cB^{-\frac 12+\frac 2p,\frac 1p}_p}^{\frac 23} \|\nabla_\h f \|_{\cB^{\frac 2 p,\frac1 p}_p}^{\frac 13}.
 \eeq
 Moreover, let $1<q\leq p\leq \infty,$ $-2/q+2/p<s_1<1-\bigl(2/q-2/p\bigr)$ and
$s_2$ in~$ ]0,1[,$  one~has
 \beq
 \label{S0eq3}
\begin{split}
&\|f\|_{\cB^{s_1,s_2}_p}\leq
C_{p,q}\|f\|_{L^p_\v(L^q_\h)}^{\bigl(1-s_1-2\bigl(\f1q-\f1p\bigr)\bigr)(1-s_2)}\|\p_zf\|_{L^p_\v(L^q_\h)}^{\bigl(1-s_1-2\bigl(\f1q-\f1p\bigr)\bigr)s_2}\\
&\qquad\qquad\qquad\qquad\qquad\qquad{}\times \|\nh
f\|_{L^p_\v(L^q_\h)}^{\bigl(s_1+2\bigl(\f1q-\f1p\bigr)\bigr)(1-s_2)}\|\nh
\p_zf\|_{L^p_\v(L^q_\h)}^{\bigl(s_1+2\bigl(\f1q-\f1p\bigr)\bigr)s_2}.
\end{split}
\eeq }
\end{lem}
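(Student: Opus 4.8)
The plan is to prove both estimates by the standard two-parameter Littlewood--Paley interpolation: decompose $f=\sum_{(k,\ell)\in\Z^2}\Delta_k^{\rm h}\Delta_\ell^{\rm v}f$, estimate each dyadic block by the anisotropic Bernstein inequalities of Lemma \ref{lemBern} together with the uniform boundedness of $\Delta_k^{\rm h}$ and $\Delta_\ell^{\rm v}$ on the mixed Lebesgue spaces, and then interpolate the dyadic bounds. For the functions under consideration the reconstruction $f=\sum_{k,\ell}\Delta_k^{\rm h}\Delta_\ell^{\rm v}f$ holds in $\cS'$ (the usual homogeneous realization issues being handled as in \cite{BCD}).

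For \eqref{interpolBesovanisolinfty}, I would start from $\|f\|_{L^\infty}\leq\sum_{k,\ell}\|\Delta_k^{\rm h}\Delta_\ell^{\rm v}f\|_{L^\infty}$ and apply Lemma \ref{lemBern} first horizontally and then vertically to get $\|\Delta_k^{\rm h}\Delta_\ell^{\rm v}f\|_{L^\infty}\lesssim 2^{k\frac2p+\ell\frac1p}\|\Delta_k^{\rm h}\Delta_\ell^{\rm v}f\|_{L^p}$. The one trick is the factorization
$$2^{k\frac2p+\ell\frac1p}=\Bigl(2^{k(-\frac12+\frac2p)+\ell\frac1p}\Bigr)^{\frac23}\,\Bigl(2^{k(1+\frac2p)+\ell\frac1p}\Bigr)^{\frac13},$$
after which Hölder's inequality for the sum over $(k,\ell)$ with exponents $\tfrac32$ and $3$ bounds $\|f\|_{L^\infty}$ by the product of $\|f\|_{\cB^{-\frac12+\frac2p,\frac1p}_p}$ and $\sum_{k,\ell}2^{k(1+\frac2p)+\ell\frac1p}\|\Delta_k^{\rm h}\Delta_\ell^{\rm v}f\|_{L^p}$; the latter sum is comparable to $\|\nh f\|_{\cB^{\frac2p,\frac1p}_p}$ since the horizontal spectrum of $\Delta_k^{\rm h}\Delta_\ell^{\rm v}f$ lies in a ring of size $2^k$, so that $2^k\|\Delta_k^{\rm h}\Delta_\ell^{\rm v}f\|_{L^p}\sim\|\Delta_k^{\rm h}\Delta_\ell^{\rm v}\nh f\|_{L^p}$ by the last assertion of Lemma \ref{lemBern}.

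For \eqref{S0eq3}, set $\sigma\eqdef s_1+2\bigl(\tfrac1q-\tfrac1p\bigr)$, so that the hypotheses become $0<\sigma<1$ and $0<s_2<1$. Passing from $L^q_{\rm h}$ to $L^p_{\rm h}$ by horizontal Bernstein (since $q\leq p$) and then using vertical and horizontal Bernstein on each block, one obtains, with $A\eqdef\|f\|_{L^p_{\rm v}(L^q_{\rm h})}$, $B\eqdef\|\p_z f\|_{L^p_{\rm v}(L^q_{\rm h})}$, $C\eqdef\|\nh f\|_{L^p_{\rm v}(L^q_{\rm h})}$ and $D\eqdef\|\nh\p_z f\|_{L^p_{\rm v}(L^q_{\rm h})}$,
$$2^{ks_1+\ell s_2}\|\Delta_k^{\rm h}\Delta_\ell^{\rm v}f\|_{L^p}\lesssim 2^{k\sigma+\ell s_2}\min\bigl(A,\,2^{-\ell}B,\,2^{-k}C,\,2^{-k-\ell}D\bigr).$$
Summing first in $k$: with $P_\ell\eqdef\min(A,2^{-\ell}B)$ and $Q_\ell\eqdef\min(C,2^{-\ell}D)$, cutting the geometric sum at the index $k$ for which $2^k\sim Q_\ell/P_\ell$ and using $0<\sigma<1$ gives $\sum_k 2^{k\sigma}\min(P_\ell,2^{-k}Q_\ell)\lesssim P_\ell^{1-\sigma}Q_\ell^{\sigma}\leq\min\bigl(A^{1-\sigma}C^{\sigma},\,2^{-\ell}B^{1-\sigma}D^{\sigma}\bigr)$; summing this in $\ell$ the same way, using $0<s_2<1$, yields $\bigl(A^{1-\sigma}C^{\sigma}\bigr)^{1-s_2}\bigl(B^{1-\sigma}D^{\sigma}\bigr)^{s_2}$, which is exactly the right-hand side of \eqref{S0eq3} after expanding $\sigma$.

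The computations above are all routine, and the only delicate points are bookkeeping ones, which I expect to be the main (minor) obstacle: one must check the summability of the dyadic decomposition at low and high frequencies — precisely where the strict inequalities $0<\sigma<1$, $0<s_2<1$ (and, for the first estimate, the sign of $-\tfrac12+\tfrac2p$ in the relevant range of exponents) enter — and verify that all implied constants depend only on $(p,q)$, through the Bernstein constants and the operator norms of $\Delta_k^{\rm h},\Delta_\ell^{\rm v}$ on $L^p_{\rm v}(L^q_{\rm h})$ for $1<q<\infty$. No genuine analytic difficulty is expected.
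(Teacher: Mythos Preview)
Your proof is correct and follows essentially the same route as the paper: dyadic decomposition plus anisotropic Bernstein, then interpolate. The only cosmetic difference is that the paper introduces explicit frequency cutoffs $K,L_1,L_2$ and optimizes over them, whereas you package the same optimization via H\"older on the $\ell^1$ sum (for \eqref{interpolBesovanisolinfty}) and the $\min$/geometric-sum identity $\sum_k 2^{k\sigma}\min(P,2^{-k}Q)\sim P^{1-\sigma}Q^\sigma$ (for \eqref{S0eq3}).
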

\begin{proof}  In order to prove the first inequality, let us  write according to Lemma \ref{lemBern} that
\beno
\|f\|_{L^\infty} &\leq  & \sum_{(k,\ell)\in\Z^2} \|\D^\h_k \D^\v_\ell f\|_{L^\infty}\\
&\lesssim  & 2^{\frac K 2} \sum_{\substack {k< K \\ \ell\in\Z}}
2^{k\bigl(-\frac 12 +\frac 2 p\bigr) } 2^{\frac \ell p}  \|\D^\h_k
\D^\v_\ell f\|_{L^p} +
2^{-K} \sum_{\substack {k\geq  K \\ \ell\in\Z}} 2^{k\frac 2 p } 2^{\frac \ell p}  \|\D^\h_k \D^\v_\ell \nabla_\h f\|_{L^p}  \\
& \lesssim & 2^{\frac K 2} \|f\|_{\cB^{-\frac 12+\frac 2p,\frac
1p}_p}+ 2^{-K}  \|\nabla_\h f \|_{\cB^{\frac 2 p,\frac1 p}_p}.
 \eeno
 The appropriate choice of~$K$ ensures \eqref {interpolBesovanisolinfty}. Let us prove the second one.
According Definition \ref{anibesov}, we have \beno
 \|f\|_{\cB^{s_1,s_2}_p}=\sum_{k,\ell\in\Z^2}2^{ks_1}2^{\ell
 s_2}\|\D_k^\h\D_\ell^\v f\|_{L^p}. \eeno
For any integers $K, L_1,$ which will be chosen late, we get, by
applying Lemma \ref{lemBern}, that \beno \sum_{{k\leq K, \ell\leq
L_1}}2^{ks_1}2^{\ell
 s_2}\|\D_k^\h\D_\ell^\v f\|_{L^p}& \lesssim & \sum_{{k\leq K, \ell\leq
 L_1}}2^{k\bigl(
 s_1+\f2q-\f2p\bigr)}2^{\ell s_2}\|\D_k^\h\D_\ell^\v f\|_{L^p_\v(L^q_\h)}\\
& \lesssim & 2^{K\bigl(
 s_1+\f2q-\f2p\bigr)}2^{L_1
 s_2}\|f\|_{L^p_\v(L^q_\h)},
 \eeno
 by using the fact that $s_1$ is greater than~$ -2/q+2/p$ and $s_2$ is positive.

 Similarly since $s_1$ is greater than~$-2/q+2/p$ and $s_2$ is less than~$1,$ one has
\beno \sum_{{k\leq K, \ell> L_1}}2^{ks_1}2^{\ell
 s_2}\|\D_k^\h\D_\ell^\v f\|_{L^p}& \lesssim & \sum_{{k\leq K,
 \ell>
 L_1}}2^{k\bigl(
 s_1+\f2q-\f2p\bigr)}2^{-\ell(1-
 s_2)}\|\D_k^\h\D_\ell^\v \p_zf\|_{L^p_\v(L^q_\h)}\\
& \lesssim & 2^{K\bigl(
 s_1+\f2q-\f2p\bigr)}2^{-L_1(1-s_2)}\|\p_zf\|_{L^p_\v(L^q_\h)}.
  \eeno

Along the same line,  since $s_1$ is less than~$\ds
1-\bigl(2/q-2/p\bigr)$ and $ s_2$ is in~$ ]0,1[,$  for some
integer~$L_2$ to be chosen hereafter, we write \beno \sum_{{k> K,
\ell\leq L_2}}2^{ks_1}2^{\ell
 s_2}\|\D_k^\h\D_\ell^\v f\|_{L^p}
 & \lesssim &
 \sum_{{k> K, \ell\leq
 L_2}}2^{-k\bigl(1-s_1-\f2q+\f2p\bigr)}2^{\ell
 s_2}\|\D_k^\h\D_\ell^\v \nh f\|_{L^p_\v(L^q_\h)}\\
 &\lesssim &
  2^{-K\bigl(1-s_1-\f2q+\f2p\bigr)}2^{L_2
 s_2}\|\nh f\|_{L^p_\v(L^q_\h)},
  \eeno
and \beno \sum_{{k> K, \ell> L_2}}2^{ks_1}2^{\ell
 s_2}\|\D_k^\h\D_\ell^\v f\|_{L^p}
 & \lesssim &
 \sum_{{k> K, \ell>
 L_2}}2^{-k\bigl(1-s_1-\f2q+\f2p\bigr)}2^{-\ell(1-
 s_2)}\|\D_k^\h\D_\ell^\v \nh\p_zf\|_{L^p_\v(L^q_\h)}\\
 &\lesssim &
 2^{-K\bigl(1-s_1-\f2q+\f2p\bigr)}2^{-L_2(1-
 s_2)}\|\nh\p_zf\|_{L^p_\v(L^q_\h)}.
  \eeno

 As a consequence, we obtain
 \beno
\|f\|_{\cB^{s_1,s_2}_p} &\lesssim &
2^{K\bigl(s_1+\f2q-\f2p\bigr)}2^{L_1
 s_2}\bigl(\|f\|_{L^p_\v(L^q_\h)}+2^{-L_1}\|\p_zf\|_{L^p_\v(L^q_\h)}\bigr)\\
&&{}+2^{-K\bigl(1-s_1-\f2q+\f2p\bigr)}2^{L_2
 s_2}\bigl(\|\nh f\|_{L^p_\v(L^q_\h)}+2^{-L_2}\|\nh\p_zf\|_{L^p_\v(L^q_\h)}\bigr).
 \eeno
 Taking $L_1, L_2$ in the above inequality so that
$$
 2^{L_1}\sim
 \f{\|\p_zf\|_{L^p_\v(L^q_\h)}}{\|f\|_{L^p_\v(L^q_\h)}}\andf
 2^{L_2}\sim \f{\|\nh\p_zf\|_{L^p_\v(L^q_\h)}}{\|\nh
 f\|_{L^p_\v(L^q_\h)}}\,\virgp
 $$
 we get
 \beno
\|f\|_{\cB^{s_1,s_2}_p}
&\lesssim &
2^{K\bigl(s_1+\f2q-\f2p\bigr)}\|f\|_{L^p_\v(L^q_\h)}^{1-s_2}\|\p_zf\|_{L^p_\v(L^q_\h)}^{s_2}\\
&&{}+2^{-K\bigl(1-s_1-\f2q+\f2p\bigr)}\|\nh
f\|_{L^p_\v(L^q_\h)}^{1-s_2}\|\nh\p_zf\|_{L^p_\v(L^q_\h)}^{s_2}.
 \eeno
Taking $K$ in the above inequality so that
$$
 2^K\sim \f{\|\nh
f\|_{L^p_\v(L^q_\h)}^{1-s_2}
\|\nh\p_zf\|_{L^p_\v(L^q_\h)}^{s_2}}{\|f\|_{L^p_\v(L^q_\h)}^{1-s_2}\|\p_zf\|_{L^p_\v(L^q_\h)}^{s_2}}
$$
gives rise to \eqref{S0eq3}. This finishes the proof of Lemma
\ref{S0lem3}.
\end{proof}

\begin{lem}\label{S7lem4.5}
{\sl  Let $p$ be in~$ ]2,\infty[$,   $s_1$ and~$s_2$ in~$ ]0,1[$ and  $s'$ in~$\left]2/p-1,2/p\right[.$ Let
 $(\r^\h,v^\h,\nh\Pi^\h)$ be the global unique solution of \eqref{INS2dParameterb}.
Then under the assumptions of  Theorem \ref{insslowvar}, we have
\ben &&\label{S7eq12qw}
\|\varrho^\h\|_{L^\infty(\R^+;\cB^{s_1,s_2}_2\cap
\cB^{2+s_1,s_2}_2)}+\|\r^\h_z\|_{L^\infty(\R^+;\cB^{s_1,s_2}_2\cap
\cB^{1+s_1,s_2}_2)}\leq
\cC_0\eta,\\
 &&\label{S7eq12qwp}
\|\r^\h_t(t)\|_{\cB^{s_1,s_2}_2\cap
\cB^{1+s_1,s_2}_2}+\|\p_z\r^\h_t(t)\|_{\cB^{s_1,s_2}_2}\leq
\cC_0\eta\w{t}^{-\frac 3 2},\\
&&\label{S7eq11} \|v^\h_t(t)\|_{\cB^{s_1,s_2}_2}+\|\p_z
v^\h_t(t)\|_{\cB^{s_1,s_2}_2}\leq
\cC_0\w{t}^{-\bigl(2+\f{s_1}2\bigr)},\\
 &&\label{S7eq8}
\|v^\h(t)\|_{\cB^{s_1,s_2}_p}+\|\p_zv^\h(t)\|_{\cB^{s_1,s_2}_p}\leq
\cC_0 \w{t}^{-\bigl(\f32+\f{s_1}2-\f1p\bigr)},\\
&&\label{S7eq9} \| v^\h(t)\|_{\cB^{1+s_1,s_2}_p}+\|\p_z
v^\h(t)\|_{\cB^{1+s_1,s_2}_p}\leq
\cC_0\w{t}^{-\bigl(2+\f{s_1}2-\f1p\bigr)}\log^{\bigl(1-\f2p\bigr)s_1}\w{t} \andf\\
&& \label{S7eq10}
\sum_{\ell=0}^1\Bigl(\|\p_z^\ell\uh(t)\|_{\cB^{2+s',s_2}_p}+\|\nh\p_z^\ell
\Pi^\h(t)\|_{\cB^{s',s_2}_p}\Bigr)\leq
\cC_0\w{t}^{-\bigl(\f52+\f{s'}2-\f1p\bigr)}\log^{\bigl(1+s'-\f2p\bigr)}\w{t}.
\een }
\end{lem}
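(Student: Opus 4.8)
The plan is to obtain all of \eqref{S7eq12qw}--\eqref{S7eq10} by inserting the decay estimates of Theorem \ref{S7thm1} into the interpolation inequality \eqref{S0eq3} of Lemma \ref{S0lem3}. Two elementary reductions are used throughout. First, because the dyadic blocks $\Delta_k^\h,\Delta_\ell^\v$ of Definition \ref{anibesov} are homogeneous (annulus) localisations, the two-sided Bernstein inequalities of Lemma \ref{lemBern} give $\|a\|_{\cB^{m+s,s'}_p}\sim\|\nh^m a\|_{\cB^{s,s'}_p}$ for every nonnegative integer $m$; thus every estimate carrying a first index of the form $m+s$ with $m\in\{1,2\}$ reduces to the case of first index $s\in\,]0,1[$ applied to $\nh^m$ of the relevant function. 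Second, the profile weight $h$ of Theorem \ref{S7thm1} lies in $L^2_\v\cap L^\infty_\v$, hence in $L^r_\v$ for all $r\in[2,\infty]$, so any bound $\|f(t,\cdot,z)\|_{L^q_\h}\le\cC_0 h(z)\w{t}^{-\al}$ becomes, after taking the $L^p_\v$ norm in $z$, $\|f(t)\|_{L^p_\v(L^q_\h)}\le\cC_0\w{t}^{-\al}$.

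First I would treat the cases involving only $L^2$-based Besov spaces, namely \eqref{S7eq12qw}, \eqref{S7eq12qwp} and \eqref{S7eq11}, by applying \eqref{S0eq3} with $q=p=2$, for which the admissibility window of the first index is exactly $]0,1[$. For $\varrho^\h$ in $\cB^{s_1,s_2}_2$ the four inputs are $\varrho^\h,\ \r^\h_z,\ \nh\varrho^\h,\ \nh\r^\h_z$ in $L^2(\R^3)$, all $\le\cC_0\eta$ by the $H^4_\h,H^3_\h$ bounds of \eqref{S7eq0.3} together with $h\in L^2_\v$; the $\cB^{2+s_1,s_2}_2$ part is the same argument applied to $\nh^2\varrho^\h$, whose inputs $\nh^2\varrho^\h,\nh^2\r^\h_z,\nh^3\varrho^\h,\nh^3\r^\h_z$ are again controlled by \eqref{S7eq0.3}; the bounds for $\r^\h_z$, $\r^\h_t$ and $\p_z\r^\h_t$ are identical, the $\w{t}^{-3/2}$ factor in \eqref{S7eq12qwp} being inherited verbatim from \eqref{S7eq0.3}. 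For \eqref{S7eq11} one applies \eqref{S0eq3} (still $q=p=2$) to $v^\h_t$ and to $\p_z v^\h_t$, using from \eqref{S7eq0.2} that $\|\p_z^\ell v^\h_t\|_{L^2(\R^3)}\lesssim\cC_0\w{t}^{-2}$ and $\|\nh\p_z^\ell v^\h_t\|_{L^2(\R^3)}\lesssim\cC_0\w{t}^{-5/2}$ for $\ell\in\{0,1,2\}$; the interpolation weights then produce the exponent $-2(1-s_1)-\tfrac52 s_1=-(2+\tfrac{s_1}{2})$, which is \eqref{S7eq11}.

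For the genuinely $L^p$ estimates the choice of the auxiliary exponent $q$ in \eqref{S0eq3} is the crux. In \eqref{S7eq8} and \eqref{S7eq9} the first index runs over all of $]0,1[$, which forces $q=p$; then the required $L^p_\h$ decay must first be extracted from the $L^2_\h$ decay of \eqref{S7eq0.2} via the two-dimensional Gagliardo--Nirenberg inequality $\|g\|_{L^p_\h(\R^2)}\lesssim\|g\|_{L^2_\h}^{2/p}\|\nh g\|_{L^2_\h}^{1-2/p}$. For \eqref{S7eq8} this yields $\|\p_z^\ell v^\h\|_{L^p(\R^3)}\lesssim\cC_0\w{t}^{-(3/2-1/p)}$ and $\|\nh\p_z^\ell v^\h\|_{L^p(\R^3)}\lesssim\cC_0\w{t}^{-(2-1/p)}$, whence \eqref{S0eq3} (with $\tfrac1q-\tfrac1p=0$) gives $-(3/2-1/p)(1-s_1)-(2-1/p)s_1=-(3/2+\tfrac{s_1}2-\tfrac1p)$. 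For \eqref{S7eq9} one writes $\|v^\h\|_{\cB^{1+s_1,s_2}_p}\sim\|\nh v^\h\|_{\cB^{s_1,s_2}_p}$ and repeats the argument on $\nh v^\h$, the new input being $\|\nh^2\p_z^\ell v^\h\|_{L^p(\R^3)}\lesssim\cC_0\w{t}^{-(5/2-1/p)}\log^{1-2/p}\w{t}$ (Gagliardo--Nirenberg from the $\nh^3$ bound in \eqref{S7eq0.2}), which is what creates the logarithm in \eqref{S7eq9}. Finally, in \eqref{S7eq10} the hypothesis $s'\in\,]2/p-1,2/p[$ is \emph{exactly} the admissibility window of \eqref{S0eq3} with $q=2$, so there the $L^p_\v(L^2_\h)$ norms enter directly: writing $\|v^\h\|_{\cB^{2+s',s_2}_p}\sim\|\nh^2 v^\h\|_{\cB^{s',s_2}_p}$ and applying \eqref{S0eq3} to $\nh^2\p_z^\ell v^\h$ with $q=2$, the inputs $\|\nh^2\p_z^\ell v^\h\|_{L^p_\v(L^2_\h)}\lesssim\cC_0\w{t}^{-2}$ and $\|\nh^3\p_z^\ell v^\h\|_{L^p_\v(L^2_\h)}\lesssim\cC_0\w{t}^{-5/2}\log\w{t}$ give, using $2(\tfrac12-\tfrac1p)=1-\tfrac2p$, the exponent $-2(\tfrac2p-s')-\tfrac52(s'+1-\tfrac2p)=-(\tfrac52+\tfrac{s'}2-\tfrac1p)$ and the logarithmic power $s'+1-\tfrac2p$; the bound for $\nh\p_z^\ell\Pi^\h$ is the same word for word, fed by the pressure estimates in \eqref{S7eq0.2}.

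The computations themselves are routine interpolation bookkeeping, so the main place I expect to spend effort is the branching on $q$: one is pushed into $q=p$, with a Gagliardo--Nirenberg step and a logarithmic loss at the $\nh^3$ level, precisely when the first Besov index is allowed to range over all of $]0,1[$ (estimates \eqref{S7eq8}, \eqref{S7eq9}), whereas the restricted window $s'\in\,]2/p-1,2/p[$ of \eqref{S7eq10} permits $q=2$ and lets the horizontal $L^2$ decay of Theorem \ref{S7thm1} enter with no loss. Matching the final logarithmic exponents, all of which trace back to the third horizontal derivative bounds of \eqref{S7eq0.2}, is the one spot where the accounting must be done carefully.
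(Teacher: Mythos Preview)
Your proposal is correct and matches the paper's proof essentially step for step: the paper also obtains \eqref{S7eq12qw}--\eqref{S7eq11} directly from Lemma~\ref{S0lem3} with $q=p=2$ and \eqref{S7eq0.2}--\eqref{S7eq0.3}, derives \eqref{S7eq8}--\eqref{S7eq9} by first passing to $L^p_\h$ via the two-dimensional Gagliardo--Nirenberg inequality and then applying \eqref{S0eq3} with $q=p$, and handles \eqref{S7eq10} by invoking \eqref{S0eq3} with $q=2$ on $\nh^2\p_z^\ell v^\h$ and $\nh\p_z^\ell\Pi^\h$. Your explicit articulation of why the admissibility window forces the $q=p$ versus $q=2$ branching is in fact clearer than the paper's presentation.
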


\begin{proof}
The  Inequalities (\ref{S7eq12qw}-\ref{S7eq11}) follow directly from
Lemma  \ref{S0lem3} and from Inequalities \eqref{S7eq0.3} and
\eqref{S7eq0.2}. Whereas note that in two space dimension, there
holds \beq \label{4.9} \forall p\in ]2,\infty[\,,\
\|f\|_{L^p_\h}\lesssim \|f\|_{L^2_\h}^{\f2p}\|\nh
f\|_{L^2_\h}^{1-\f2p}. \eeq Then in view of \eqref{S7eq0.2}, we
infer for $\ell$ in~$ \{0,1,2\},$ \beno \|\p_z^\ell\uh(t)\|_{L^p} &
\lesssim &
\|\p_z^\ell\uh(t)\|_{L^p_\v(L^2_\h)}^{\f2p}\|\nh\p_z^\ell\uh(t)\|_{L^p_\v(L^2_\h)}^{1-\f2p}\leq
\cC_0\w{t}^{-\bigl(\f32-\f1p\bigr)},\\
\|\nh\p_z^\ell\uh(t)\|_{L^p}
& \lesssim&
\|\nh\p_z^\ell\uh(t)\|_{L^p_\v(L^2_\h)}^{\f2p}\|\nh^2\p_z^\ell\uh(t)\|_{L^p_\v(L^2_\h)}^{1-\f2p}\leq
\cC_0\w{t}^{-\bigl(2-\f1p\bigr)} \andf\\
 \|\nh^2\p_z^\ell\uh(t)\|_{L^p}
 &\lesssim&
\|\nh^2\p_z^\ell\uh(t)\|_{L^p_\v(L^2_\h)}^{\f2p}\|\nh^3\p_z^\ell\uh(t)\|_{L^p_\v(L^2_\h)}^{1-\f2p}\leq
\cC_0\w{t}^{-\bigl(\f52-\f1p\bigr)}\log^{1-\f2p}\w{t}.
\eeno
 Hence, by virtue of Lemma \ref{S0lem3}, we
infer for $\ell$ in~$ \{0,1\}$ and $s_1$ ans~$s_2$ in~$ ]0,1[,$
\beno
\|\p_z^\ell\uh(t)\|_{\cB^{s_1,s_2}_p}
& \leq &
\|\p_z^\ell\uh(t)\|_{L^p}^{(1-s_1)(1-s_2)}\|\p_z^{\ell+1}\uh(t)\|_{L^p}^{(1-s_1)s_2}\\
&& \qquad\qquad\qquad\qquad{}
\times \|\nh \p_z^\ell\uh(t)\|_{L^p}^{s_1(1-s_2)}\|\nh\p_z^{\ell+1}\uh(t)\|_{L^p}^{s_1s_2}\\
& \leq &  \cC_0 \w{t}^{-\left(\f32-\f1p\right)(1-s_1)} \times \w
t^{-\left(2-\frac 1 p \right) s_1}.
 \eeno
 This proves \eqref{S7eq8}. A similar argument  yields \eqref{S7eq9}.

Since $s'$ is in~$\left]2/p-1,2/p\right[,$ by applying Lemma
\ref{S0lem3} and \eqref{S7eq0.2},  we get for $\ell$ in~$ \{0,1\},$
\beno \|\nh^2 \p_z^\ell\uh(t)\|_{\cB^{s',s_2}_p} & \leq &
\|\nh^2\p_z^\ell\uh(t)\|_{L^p_\v(L^2_\h)}^{\bigl(\f2p-s'\bigr)(1-s_2)}\|\nh^2\p_z^{\ell+1}\uh(t)\|_{L^p_\v(L^2_\h)}^{\bigl(\f2p-s'\bigr)s_2}\\
&&\qquad\qquad{}
\times\|\nh^3\p_z^\ell
\uh(t)\|_{L^p_\v(L^2_\h)}^{\bigl(1+s'-\f2p\bigr)(1-s_2)}\|\nh^3\p_z^{\ell+1}\uh(t)\|_{L^p_\v(L^2_\h)}^{\bigl(1+s'-\f2p\bigr)s_2}\\
& \leq &
\cC_0\w{t}^{-\bigl(\f52+\f12\bigl(s'-\f2p\bigr)\bigr)}\log^{\bigl(1+s'-\f2p\bigr)}\w{t}.
\eeno The same estimate holds for $\nh\p_z^\ell\Pi^\h.$ This leads
to \eqref{S7eq10}, and  the proof of the lemma is complete.
\end{proof}

\begin{rmk}
{\sl It is easy to observe that $a^\h$ satisfies the same estimate
as \eqref{S7eq12qw}, that is \beq \label{S7eq12qwqp}
\|a^\h\|_{L^\infty(\R^+; \cB^{s_1,s_2}_2\cap
\cB^{2+s_1,s_2}_2)}+\|\p_z a^\h\|_{L^\infty(\R^+;
\cB^{s_1,s_2}_2\cap \cB^{1+s_1,s_2}_2)}\leq \cC_0\eta,
 \eeq for any $s_1$ and~$s_2$ in~$ ]0,1[.$
 }
\end{rmk}

Let us now turn to the proof of Proposition
\ref{consequenceSection345}.

\begin{proof}[Proof of Proposition \ref{consequenceSection345}]
It follows from Lemma \ref{S7lem4.5} and interpolation inequality in
Besov spaces that for $\ell$ in~$ \{0,1\}$
\ben
\|\p_zv^\h(t)\|_{\cB^{\f34,\f34}_2}
& \leq&
\cC_0\w{t}^{-\frac   {11} 8},\nonumber\\
\label{S7eq12}
\|\p_z^\ell v^\h(t)\|_{\cB^{1,\f12}_2}
& \lesssim &
\|\p_z^\ell v^\h(t)\|_{\cB^{\f12,\f12}_2}^{\f12} \|\nh\p_z^\ell
v^\h(t)\|_{\cB^{\f12,\f12}_2}^{\f12}\leq
\cC_0\w{t}^{-\frac 3 2} \andf\\
\|\nh v^\h(t)\|_{\cB^{1,\f12}_2} & \lesssim & \|\nh
v^\h(t)\|_{\cB^{\f12,\f12}_2}^{\f12} \|\nh^2
v^\h(t)\|_{\cB^{\f12,\f12}_2}^{\f12}\leq
\cC_0\w{t}^{-2}\log^{\f14}\w{t}. \nonumber
\een
This implies
\beq
\label{consequenceSection345eq1}
\begin{split}
\|v^\h\|_{L^2(\R^+;\cB^{1,\f12}_2)}+&\|\p_zv^\h\|_{L^2(\R^+;\cB^{1,\f12}_2)}+\|\p_zv^\h\|_{L^1(\R^+;\cB^{1,\f12}_2)}\\
&+\|\p_zv^\h\|_{L^1(\R^+;\cB^{\f34,\f34}_2)}+\|\nh
v^\h\|_{L^1(\R^+;\cB^{1,\f12}_2)}\leq \cC_0.\end{split} \eeq

It remains to handle $\|\p_z\Pi^\h\|_{L^1(\R^+;\cB^{\f12,
\f12}_2)}.$ Indeed it is easy to observe from \eqref{INS2dParameter}
that \beq\label{consequenceSection345eq2} \D_\h
\Pi^\h=-\dive_\h\bigl(a^\h(\nh \Pi^\h-\D_\h
\uh)\bigr)-\dive_\h\dive_\h(\uh\otimes\uh), \eeq from which, we
deduce from  the law of  product \eqref{lawofproductanisobasic} that
$$
\longformule{
\|\Pi^\h\|_{L^1(\R^+;\cB^{\f12,\f12}_2)}\leq C\Bigl(\|\uh\otimes
\uh\|_{L^1\R^+;(\cB^{\f12,\f12}_2)}
}
{ {}
+\|a^\h\|_{L^\infty(\R^+;\cB^{1,\f12}_2)}\bigl(\|
\Pi^\h\|_{L^1(\R^+;\cB^{\f12,\f12}_2)}+\|\nh
\uh\|_{L^1(\R^+;\cB^{\f12,\f12}_2)}\bigr) \Bigr).
}
$$
 Whereas it follows from \eqref{S7eq12qwqp} that
 \beq
\label{S7eq12qwqpa} \|a^\h\|_{L^\infty(\R^+;
\cB^{1,\f12}_2)}+\|\p_za^\h\|_{L^\infty(\R^+; \cB^{1,\f12}_2)}\leq
\cC_0\eta. \eeq When we take $\eta$ so small that $C\cC_0\eta\leq
\f12,$ \eqref{S7eq12qwqpa} implies \beq\label{S5eq10}
\begin{split} \| \Pi^\h\|_{L^1(\R^+;\cB^{\f12,\f12}_2)}\leq &C\Bigl(
\|a^\h\|_{L^\infty(\R^+;\cB^{1,\f12}_2)}\|
\nh\uh\|_{L^1(\R^+;\cB^{\f12,\f12}_2)}\\
&\qquad\qquad\qquad+\|\uh\|_{L^\infty_t(\cB^{\f12,\f12}_2)}
\|\uh\|_{L^1(\cB^{1,\f12}_2)}\Bigr)\leq \cC_0.
\end{split}
\eeq Similar to the proof of \eqref{S5eq10}, we also have
$$
\longformule{\!\!\!\| \Pi^\h_z\|_{L^1(\R^+;\cB^{\f12,\f12}_2)}\leq
C\Bigl(\|\p_z a^\h\|_{L^\infty(\R^+;\cB^{1,\f12}_2)}\bigl(\|
\Pi^\h\|_{L^1(\R^+;\cB^{\f12,\f12}_2)}+\|
\nh\uh\|_{L^1(\R^+;\cB^{\f12,\f12}_2)}\bigr)}{{} +
\|a^\h\|_{L^\infty(\R^+;\cB^{1,\f12}_2)}\|
\nh\p_z\uh\|_{L^1(\R^+;\cB^{\f12,\f12}_2)}+\|\p_z\uh\|_{L^\infty_t(\cB^{\f12,\f12}_2)}
\|\uh\|_{L^1(\cB^{1,\f12}_2)}\Bigr).}
$$
Hence by virtue of Lemma \ref{S7lem4.5}, \eqref{S7eq12qwqpa} and
\eqref{S5eq10}, we infer
 \beno \|
\Pi^\h_z\|_{L^1(\R^+;\cB^{\f12,\f12}_2)}\leq \cC_0. \eeno Together
with \eqref{consequenceSection345eq1}, we  complete the proof of
Proposition \ref{consequenceSection345}.
\end{proof}

\setcounter{equation}{0}
\section {The equation on $w_\e$ and estimates of some error terms}
\label {estimw}

 The purpose of this section is to study  the
equation that determines the correction term in~$\uapp$. Let us
recall it. \beq \label{eqonwas} \left \{
\begin{array}{c}
\partial_t w^\h_\e  -\D_\e w^\h_\e    =  -\nabla_\h \Pi^1_\e \\
\partial_t w^3_\e  -\D_\e w^3_\e    =  -\e^2\partial_z  \Pi^1_\e  +\partial_z   \Pi^\h_L\\
\dive w_\e =0\andf {w_\e}_{|t=0} = 0.
\end{array}
\right. \with  \Pi^\h_L \eqdefa -\D_\h^{-1} \dive _\h \partial_t
\bigl( \varrho^\h  v^\h   \bigr). \eeq We have the following
proposition.
\begin{prop}
\label {estimwdetaila} {\sl  Let~$(w_\e,\Pi^1_\e)$ be the unique
solution of the above system, then we have
$$
\longformule{ \|(\e w^\h_\e ,w^3_\e )\|_{ L^2(\R^+;\cB^{1,\frac
12}_2)} +\|\na_\e(\e w^\h_\e ,w^3_\e )\|_{ L^2(\R^+;\cB^{0,\frac
12}_2)} +\|\nabla _\e  (\e w^\h_\e ,w^3_\e
)\|_{L^1(\R^+;\cB^{1,\frac 12}_{2})}   } { {}  \leq
C\|\varrho_0 v^\h_0\|_{\cB^{-1,\frac 32}_2}
 + C\|\varrho^\h  v^\h\|_{L^2(\R^+;\cB^{0,\frac 32}_2)\cap L^1(\R^+;\cB^{1,\frac 32}_2)
 },
}
$$ and
$$ \e\|(\e w^\h_\e ,w^3_\e )\|_{ L^4(\R^+;\cB^{\f12,\frac
12}_2)}  \leq C\|\varrho_0 v^\h_0\|_{\cB^{0,\frac 12}_2}
 + C\|\varrho^\h  v^\h\|_{L^4(\R^+;\cB^{\f12,\frac 12}_2)}.
 $$ Moreover for any positive~$\al$  less than~$1$, we have
$$
\bigl\| \D_\e (\e w^\h_\e  ,w^3_\e )\|_{L^1(\R^+;\cB^{\al,\frac
12})} +\bigl\|\e\nabla_\e\Pi^1_\e \bigr\|_{L^1(\R^+;\cB^{\al,\frac
12})} \leq C_\al\bigl\|\partial_t ( \varrho^\h  v^\h)\|_{L^1(\R^+; \cB_2^{-1+\al,\frac 32})}.
 $$}
\end{prop}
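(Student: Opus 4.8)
\smallskip
\noindent\textbf{Proof strategy.}
The plan is to eliminate the pressure $\Pi^1_\e$, reduce the system governing $(\e w^\h_\e,w^3_\e)$ to a forced heat equation for the anisotropic Laplacian $\D_\e$, and then combine the Duhamel formula with an integration by parts in time and the smoothing properties of the semigroup $e^{t\D_\e}$. First I would apply $\dive=\dive_\h+\p_z$ to \eqref{eqonwas}; since $\dive w_\e=0$ this gives $\D_\e\Pi^1_\e=\p_z^2\Pi^\h_L$, hence $\Pi^1_\e=\D_\e^{-1}\p_z^2\Pi^\h_L$. Inserting $\Pi^\h_L=-\D_\h^{-1}\dive_\h\p_t(\varrho^\h v^\h)$ and using the identity $\D_\e-\e^2\p_z^2=\D_\h$, a direct computation shows that $U_\e\eqdefa(\e w^\h_\e,w^3_\e)$ solves $\p_tU_\e-\D_\e U_\e=M_\e\p_t(\varrho^\h v^\h)$ with $U_\e|_{t=0}=0$, where $M_\e\eqdefa(\e B_\e,A_\e)$, $A_\e\eqdefa-\p_z\D_\e^{-1}\dive_\h$ and $B_\e\eqdefa\nabla_\h\D_\e^{-1}\p_z^2\D_\h^{-1}\dive_\h$; likewise $\e\nabla_\e\Pi^1_\e=-\e\nabla_\e\D_\e^{-1}\p_z^2\D_\h^{-1}\dive_\h\p_t(\varrho^\h v^\h)$.

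The key observation, which makes every bound uniform in $\e$, is that on a horizontal frequency block $|\xi_\h|\sim 2^k$ and a vertical one $|\zeta|\sim 2^\ell$ the symbols of $A_\e$, $\e B_\e$, $\e\nabla_\e\D_\e^{-1}\p_z^2\D_\h^{-1}\dive_\h$ and $\nabla_\e M_\e$ are $\lesssim 2^{\ell-k}$, $\lesssim 2^{\ell-k}$, $\lesssim 2^{\ell-k}$ and $\lesssim 2^\ell$ respectively, with constants independent of $\e$; this follows from the elementary inequality $|\xi_\h|^2+\e^2|\zeta|^2\geq 2\e|\xi_\h|\,|\zeta|$, which lets the denominator $|\xi_\h|^2+\e^2\zeta^2$ absorb any factor $\e|\xi_\h||\zeta|$. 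Consequently $A_\e$, $\e B_\e$ and $\e\nabla_\e\D_\e^{-1}\p_z^2\D_\h^{-1}\dive_\h$ map $\cB^{s,s'}_2$ into $\cB^{s+1,s'-1}_2$, $\nabla_\e M_\e$ maps $\cB^{s,s'}_2$ into $\cB^{s,s'-1}_2$, while $\e A_\e$ and $\e^2 B_\e$ are bounded on $\cB^{s,s'}_2$, all uniformly in $\e$. The second ingredient is the standard family of anisotropic heat estimates, obtained by dyadic localization and Young's inequality in time from the block bound $\|\D_\e e^{\tau\D_\e}\|_{(k,\ell)}\lesssim\lambda_{k,\ell}e^{-c\tau\lambda_{k,\ell}}$ with $\lambda_{k,\ell}=2^{2k}+\e^2 2^{2\ell}\gtrsim 2^{2k}$: for $q\in[1,\infty)$ and all $s,s'$,
\[
\|e^{t\D_\e}g\|_{L^q(\R^+;\cB^{s+\f2q,s'}_2)}\lesssim_q\|g\|_{\cB^{s,s'}_2},\qquad
\Bigl\|\int_0^te^{(t-t')\D_\e}\D_\e F(t')\,dt'\Bigr\|_{L^q(\R^+;\cB^{s,s'}_2)}\lesssim_q\|F\|_{L^q(\R^+;\cB^{s,s'}_2)}.
\]
These are the $L^2$-based anisotropic analogues of the estimates already used for the $\cF_p(T)$ norm (cf.\ \eqref{cFL1Besovaniso}).

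With this material I would conclude as follows. For the first two displayed inequalities, integrate by parts in time in the Duhamel representation of $U_\e$, using $\p_{t'}e^{(t-t')\D_\e}=-\D_\e e^{(t-t')\D_\e}$ and the fact that $M_\e$ commutes with the semigroup, to obtain
\[
U_\e(t)=M_\e(\varrho^\h v^\h)(t)-e^{t\D_\e}M_\e(\varrho_0v^\h_0)+\int_0^t\D_\e e^{(t-t')\D_\e}M_\e(\varrho^\h v^\h)(t')\,dt'.
\]
Applying the multiplier bounds to the first and third terms and the heat smoothing to the second --- and the same after acting with $\nabla_\e$ --- each of the five summands on the left of the first inequality is dominated by $\|\varrho_0v^\h_0\|_{\cB^{-1,\f32}_2}+\|\varrho^\h v^\h\|_{L^2(\R^+;\cB^{0,\f32}_2)\cap L^1(\R^+;\cB^{1,\f32}_2)}$, the $L^1(\cB^{1,\f32}_2)$ norm being precisely what is needed for the three terms carrying the extra factor $\nabla_\e$, and the $L^2(\cB^{0,\f32}_2)$ norm together with $L^2$ maximal regularity for the others. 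The second inequality is obtained in the same way with $M_\e$ replaced by $\e M_\e=(\e^2B_\e,\e A_\e)$, which is bounded on $\cB^{\f12,\f12}_2$, and with $L^4$ heat smoothing (gaining $\f12$ horizontal derivative), which produces $\|\varrho_0v^\h_0\|_{\cB^{0,\f12}_2}+\|\varrho^\h v^\h\|_{L^4(\R^+;\cB^{\f12,\f12}_2)}$. For the last inequality one does \emph{not} integrate by parts: directly, $\|\D_\e U_\e\|_{L^1(\R^+;\cB^{\al,\f12}_2)}\lesssim\|M_\e\p_t(\varrho^\h v^\h)\|_{L^1(\R^+;\cB^{\al,\f12}_2)}\lesssim\|\p_t(\varrho^\h v^\h)\|_{L^1(\R^+;\cB^{-1+\al,\f32}_2)}$ by the $L^1$ maximal regularity estimate and the multiplier bound for $M_\e$, and $\e\nabla_\e\Pi^1_\e$ is estimated by the corresponding multiplier bound on $\e\nabla_\e\D_\e^{-1}\p_z^2\D_\h^{-1}\dive_\h$.

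The main point requiring care --- rather than a single hard step --- is the verification that all of the symbol estimates above are genuinely uniform in $\e$: this is where the anisotropic geometry is essential, and where a crude estimate would create negative powers of $\e$. A close second is the $L^1$-in-time heat maximal regularity, which relies on the $\ell^1$ structure of the norm $\cB^{s,s'}_2$. The integration by parts in time, which is what forces $\varrho_0v^\h_0$ and $\varrho^\h v^\h$ themselves (rather than $\p_t(\varrho^\h v^\h)$) to appear on the right-hand side of the first two inequalities, is the main structural device but is routine once the above is in place.
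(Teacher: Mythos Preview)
Your proposal is correct and follows essentially the same route as the paper: eliminate $\Pi^1_\e$ via $\D_\e\Pi^1_\e=\partial_z^2\Pi^\h_L$, write Duhamel for $U_\e=(\e w^\h_\e,w^3_\e)$, integrate by parts in time for the first two inequalities, and use the Duhamel formula directly for the third. The only cosmetic difference is packaging: you phrase everything through uniform-in-$\e$ multiplier bounds for $M_\e$, $\nabla_\e M_\e$, $\e M_\e$, whereas the paper works pointwise in Fourier space (formula \eqref{estimwdetaildemoeq0} and \eqref{opiu}) and handles $\e\partial_z w^3_\e$ separately via the divergence-free condition $\e\partial_z w^3_\e=-\e\dive_\h w^\h_\e$ rather than through the $\nabla_\e M_\e$ symbol bound --- but the underlying estimates are identical.
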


\begin{proof}
Let us first compute~$\Pi^1_\e $. Applying the divergence operator
to the System \eqref{eqonwas}   gives \beq \label {eqpressurew}
-\D_\e\Pi^1_\e +\partial_z ^2\Pi^\h_L=0. \eeq
 This together with \eqref{eqonwas} gives
  \beq
  \label {estimwdetaildemoeq-1}
\begin{split}
w^\h_\e  & =  \int_0^t e^{(t-t')\D_\e} \nabla_\h \D_\e^{-1} \partial_z ^2 \D_\h^{-1} \dive_\h  \partial_t \bigl( \varrho^\h  v^\h   \bigr)(t') dt'\andf\\
w^3_\e  & =  -\int_0^t e^{(t-t')\D_\e}  \partial_z   \D_\e^{-1}
\dive_\h  \partial_t \bigl( \varrho^\h  v^\h   \bigr)(t') dt'.
\end{split}
\eeq
By integration by parts, we get
\beq
\label {estimwdetaildemoeq-0.5}
\begin{split}
w^\h_\e  & =  \nabla_\h \D_\e^{-1} \partial_z ^2 \D_\h^{-1} \dive_\h
\bigl( \varrho^\h  v^\h   \bigr)(t)
-   e^{t \D_\e} \nabla_\h \D_\e^{-1} \partial_z ^2 \D_\h^{-1} \dive_\h  (\varrho_0 v^\h_0) \\
&  \qquad\qquad\qquad\qquad\qquad{} +\int_0^t e^{(t-t')\D_\e} \nabla_\h \partial_z ^2 \D_\h^{-1} \dive_\h \bigl( \varrho^\h  v^\h   \bigr)(t') dt'\andf\\
w^3_\e  & =  - \partial_z   \D_\e^{-1}  \dive_\h   \bigl( \varrho^\h
v^\h   \bigr)(t) +e^{t\D_\e} \partial_z   \D_\e^{-1}  \dive_\h
(\varrho_0
 v^\h_0 ) \\
&\qquad\qquad\qquad\qquad\qquad\qquad\qquad\quad{} - \int_0^t
e^{(t-t')\D_\e}  \partial_z    \dive_\h \bigl( \varrho^\h  v^\h
\bigr)(t') dt'.
\end{split}
\eeq
Written in term of Fourier transform with the notation~$\xi=(\xi_\h,\zeta)$ and using the fact that~$\dive_\h v^\h = 0$,
\beq
\label {estimwdetaildemoeq0}
\begin{split}
& \bigl| \bigl(\e\wh w^\h_\e (t,\xi),\wh w^3_\e (t,\xi)\bigr)\bigr|
\leq |\xi_\h|^{-1} \bigl|\cF\bigl(\partial_z
(\varrho^\h v^\h)\bigr)(t,\xi)\bigr|
\\
&\qquad\qquad\qquad\qquad{}
+ e^{-t(|\xi_\h^2+\e^2\zeta  ^2)} |\xi_\h|^{-1} \bigl|\cF\bigl(\partial_z (\varrho_0  v_0^\h)\bigr)(\xi)\bigr|\\
&\qquad\qquad\qquad\qquad\qquad{}+\int_0^t e^{-(t-t')(|\xi_\h^2+\e^2\zeta  ^2)} (\e|\zeta  |+|\xi_\h|) |\zeta|\,
\bigl|\cF\bigl( \varrho^\h  v^\h   \bigr)(t',\xi)\bigr| \,dt'\,.
\end{split}
\eeq
Applying the cutoff operator in the frequency space in the horizontal and the vertical directions gives, for any~$r$ in~$[1,2]$,
$$
\displaylines{ 2^{k \frac 2 r +\frac \ell 2 } \|\D_k^\h\D_\ell^\v
(\e w^\h_\e ,w^3_\e )(t)\|_{L^2} \leq C 2^{k \left( \frac 2 r
-1\right)+\frac {3\ell} 2 } \|\D_k^\h\D_\ell^\v
(\varrho^\h v^\h)(t)\|_{L^2} +\cW_{k,\ell}(t) \with\cr
 \cW_{k,\ell}(t) \eqdefa
 e^{-ct(2^{2k}+\e^22^{2\ell})} 2^{k \left( \frac 2 r -1\right)+\frac {3\ell} 2 } \|\D_k^\h\D_\ell^\v (\varrho_0 v_0^\h)\|_{L^2}\cr
 {}
 + \int_0^t   e^{-c(t-t') (2^{2k}+\e^22^{2\ell})}
 (2^k+\e2^\ell) 2^{k \frac 2 r +\frac{3\ell} 2 } \|\D_k^\h\D_\ell^\v (\varrho^\h v^\h)(t')\|_{L^2}dt'.
 }
$$
As~$r$ is in~$[1,2]$, we get by convolution inequality, \beno \|
\cW_{k,\ell}\|_{L^r(\R^+;L^2)}  & \leq & C  2^{-k +\frac {3\ell} 2 }
\|\D_k^\h\D_\ell^\v (\varrho_0   v_0^\h)\|_{L^2}
\\
&&\qquad\qquad{}+ (2^k+\e2^\ell)^{1- \frac 2  r } 2^{k \frac 2 r +\frac{3\ell} 2 } \|\D_k^\h\D_\ell^\v (\varrho^\h v^\h)\|_{L^1(\R^+;L^2)}\\
& \leq & C  2^{-k +\frac {3\ell} 2 } \|\D_k^\h\D_\ell^\v (\varrho_0
 v_0^\h)\|_{L^2}
+  2^{k +\frac{3\ell} 2 } \|\D_k^\h\D_\ell^\v (\varrho^\h v^\h)\|_{L^1(\R^+;L^2)}.
\eeno
By summation with respect to the indices~$k$ and~$\ell$, we get thanks to the Minkowski inequality,
$$
\sum_{k,\ell} \|\cW_{k,\ell}\|_{L^r(\R^+;L^2)} \leq C\|\varrho_0
v_0^\h\|_{\cB^{-1,\frac 32}_2} + C\|\varrho^\h
v^\h\|_{L^1(\R^+;\cB^{1,\frac 32})}.
$$
By definition of the~$\cB^{s,s'}_p$ norms, we infer that, for~$r$ in~$[1,2]$,
\beq
\label {firstpartoestimwdetaileq1}
\begin{split}
\|(\e w^\h_\e ,w^3_\e )\|_{ L^r(\R^+;\cB^{\frac 2 r,\frac
12}_2)}\leq & \sum_{(k,\ell)\in\Z^2}2^{k \frac 2 r +\frac \ell 2 }
\|\D_k^\h\D_\ell^\v (\e w^\h_\e ,w^3_\e )\|_{L^r(\R^+;L^2)} \\
\lesssim & \|\varrho_0 v^\h_0\|_{\cB^{-1,\frac 32}_2}
 +
\|\varrho^\h  v^\h\|_{L^r(\R^+;\cB^{\frac 2 r -1,\frac 32}_2)\cap L^1(\R^+;\cB^{1,\frac 32}_2) } .
\end{split}
\eeq Now let us estimate~$\|\e\p_z(\e w^\h_\e ,w^3_\e )\|_{
L^2(\R^+;\cB^{0,\frac 12}_2)}$ and $\| \e\p_z(\e w^\h_\e ,w^3_\e
)\|_{L^1(\R^+;\cB^{1,\frac 12}_2)}$. As~$w$ is a divergence free
vector field, we have \beno   \e \|\partial_z w^3_\e
\|_{L^2(\R^;\cB^{0,\frac 12}_2)} & \leq &
 C \|\e \dive_\h w^h \|_{L^2(\R^+;\cB^{0,\frac 12}_2)}\leq C \|\e w^h \|_{L^2(\R^+;\cB^{1,\frac
 12}_2)},\\
 \e \|\partial_z w^3_\e \|_{L^1(\R^;\cB^{1,\frac 12}_2)}  &\leq &
 C \|\e \dive_\h w^h \|_{L^1(\R^+;\cB^{1,\frac 12}_2)}\leq C \|\e w^\h_\e \|_{L^1(\R^;\cB^{2,\frac 12}_2)}.
\eeno Then Inequality\refeq {firstpartoestimwdetaileq1}  applied
with $r$ equal to~$2$ and~$r$ equal to~$1$ gives
\beq
\label {estimwdetaildemoeq1}
\begin{split} \e \|\partial_z w^3_\e \|_{L^2(\R^;\cB^{0,\frac 12}_2)}
+&\|\e\partial_z w^3_\e \|_{L^1(\R^;\cB^{1,\frac 12}_2)}\\
 \leq &C
\|\varrho_0 v^\h_0\|_{\cB^{-1,\frac 32}_2}
 +
C\|\varrho^\h  v^\h\|_{L^2(\R^+;\cB^{0,\frac 32}_2)\cap
L^1(\R^+;\cB^{1,\frac 32}_2) } . \end{split} \eeq Now let us
estimate~$\|\e^2 \p_z w^\h_\e \|_{L^1(\R^+;\cB^{1,\frac 12}_2)}$.
From Equality\refeq {estimwdetaildemoeq-0.5}, we infer that
\beq\label{opiu}
\begin{split}
 \e\|\D_k^\h\D_\ell^\v (\e w^\h_\e,& w^3_\e)(t) \|_{L^2} \lesssim
 \|\D_k^\h\D_\ell^\v(\varrho^\h v^\h)(t)\|_{L^2}+e^{-ct(2^{2k}+\e^22^{2\ell})}\|\D_k^\h\D_\ell^\v(\varrho_0 v_0^\h)\|_{L^2}\\
 &\qquad\quad+
 \int_0^t   e^{-c(t-t') (2^{2k}+\e^22^{2\ell})}\bigl(\e
 2^{\ell}+2^k\bigr)
 \e 2^{\ell} \|\D_k^\h\D_\ell^\v
 (\varrho^\h v^\h)(t')\|_{L^2}dt'.\end{split} \eeq
Taking the~$L^1$ norm in time gives
$$
 2^{k +\frac{3 \ell}  2 }\e
\|\D_k^\h\D_\ell^\v  (\e w^\h_\e,w^3_\e) \|_{L^1(\R^+;L^2)} \lesssim
2^{-k+\f{3\ell}2}\|\D_k^\h\D_\ell^\v(\varrho_0 v_0^\h)\|_{L^2} +
2^{k +\frac{3 \ell} 2 }
 \|\D_k^\h\D_\ell^\v (\varrho^\h v^\h)\|_{L^1(\R^+;L^2)}.
 $$
  By summation  with respect to the indices~$k$ and~$\ell$, we infer that
$$
\e\| (\e w^\h_\e,w^3_\e) \|_{L^1(\R^+;\cB^{1,\frac 32}_2)} \lesssim
\|\varrho_0 v^\h_0\|_{\cB^{-1,\frac 32}_2}
 +
\|\varrho^\h v^\h\|_{L^1(\R^+;\cB^{1,\frac 32}_2)}.
$$
Together with Inequalities\refeq {firstpartoestimwdetaileq1}
\and\refeq {estimwdetaildemoeq1}, this gives the first inequality of
the proposition.

To prove the second inequality, we get, by taking the~$L^4$ norm in
time of \eqref{opiu}, that
$$
2^{\f{k}2}2^{\f{\ell}2}\e\|\D_k^\h\D_\ell^\v (\e
w_\e^\h,w^3_\e)\|_{L^4(\R^+;L^2)}\lesssim
2^{\f{\ell}2}\|\D_k^\h\D_\ell^\v(\varrho_0v_0^\h)\|_{L^2}+2^{\f{k}2}2^{\f{\ell}2}\|\D_k^\h\D_\ell^\v
(\varrho^\h v^\h)\|_{L^4(\R^+;L^2)}. $$
 Summing up the above inequality  with respect to the indices~$k$
 and~$\ell$ yields $$ \e\| (\e w^\h_\e,w^3_\e)\|_{ L^4(\R^+;\cB^{\f12,\frac
12}_2)}  \leq C\|\varrho_0 v^\h_0\|_{\cB^{0,\frac 12}_2}
 + C\|\varrho^\h  v^\h\|_{L^4(\R^+;\cB^{\f12,\frac 12}_2)}.
 $$
 This proves the second inequality
of the proposition.

\medbreak Let us prove the third inequality.
Using\refeq{estimwdetaildemoeq-1}, we can write that
$$
\bigl | \cF\D_\e (\e \wh w^\h_\e ,w^3_\e )(t,\xi) \bigr | \leq
C\int_0^t e^{-(t-t')(|\xi_\h|^2+\e^2\zeta^2)}
(\e|\zeta|+|\xi_\h|)|\zeta| \bigl |\cF(\partial_t( \varrho^\h v^\h
)) (t',\xi) \bigr | dt'.
$$
Then applying the cut off operators in both horizontal and vertical
frequencies gives
$$
\longformule{ 2^{k\al +\frac \ell 2}\| \D_k^\h\D_\ell^\v  \D_\e (\e
\wh w^\h_\e ,w^3_\e )(t)\|_{L^2} } { {}\leq \int_0^t
e^{-c(t-t')(2^{2k}+\e^22^{2\ell}) } (\e 2^\ell +2^k) 2^{k\al +\frac
{3\ell}  2}\|\D_k^\h\D_\ell^\v  \partial_t ( \varrho^\h
v^\h)(t')\|_{L^2} dt'. }
$$
Using Young's inequality, we get
$$
2^{k\al +\frac \ell 2}\| \D_k^\h\D_\ell^\v  \D_\e (\e  w^\h_\e
,w^3_\e )\|_{L^1(\R^+; L^2)} \leq C  2^{k(-1+\al)  +\frac {3\ell} 2}
\|\D_k^\h\D_\ell^\v  \partial_t ( \varrho^\h
v^\h)\|_{L^1(\R^+;L^2)}.
$$
By summation the above inequality with respect to the indices~$k$
and~$\ell,$ we get
\beq \label {}
 \| \D_\e (\e  w^\h_\e ,w^3_\e
)\|_{L^1(\R^+; \cB^{\al,\frac 12}_2)} \leq C_\al \bigl\|\partial_t
(\varrho^\h v^\h)\bigr\|_{L^1(\R^+;\cB^{-1+\al,\frac 32}_2)}. \eeq
In order to get the estimates on the pressure term~$\Pi^1_\e $, let
us observe that Relation\refeq  {eqpressurew} implies that
$$
\e \nabla_\e \Pi^1_\e
=\left ( \begin{matrix}  \ds {\fauxatop} -\e
\nabla_\h \D_\e^{-1} \partial_z \dive_h\D_\h^{-1} \partial_z
\partial_t \bigl( \varrho^\h  v^\h   \bigr) \\ \e^2  \D_\e^{-1}
\partial_z ^2\dive_\h\D_\h^{-1} \partial_z \partial_t \bigl(
\varrho^\h  v^\h   \bigr)
\end{matrix} \right)
$$
As we have
$$
\frac { \e |\zeta  | } {|\xi_\h|^2+\e^2\zeta  ^2} \leq \frac 1 {|\xi_\h|} \andf  \frac { \e^2 |\zeta  |^2 } {|\xi_\h|^2+\e^2\zeta  ^2} \leq 1 $$
 we infer that, for any~$\al$ in~$]0,1[$,
 $$
 \e \|\nabla_\e \Pi^1_\e \|_{L^1(\R^+;\cB_2^{\al,\frac 12} )} \leq \bigl\|\partial_z \partial_t \bigl( \varrho^\h  v^\h   \bigr)
\bigr\|_{L^1(\R^+;\cB_2^{-1+\al, \frac 12})} .
 $$
 Then  the proposition is proved. \end{proof}

 Let us now turn to the proof of Proposition \ref{estimwdetail}.

\begin{proof}[Proof of Proposition \ref{estimwdetail}] It follows
from the law of product \eqref{lawofproductanisobasic} and Lemma
\ref{S7lem4.5} that \beno \|\varrho_0 v_0^\h\|_{\cB^{0,\f12}_2}
&\lesssim& \|\varrho_0
\|_{\cB^{\f12,\f12}_2}\|v_0^\h\|_{\cB^{\f12,\f12}_2}\leq
\cC_0\eta,\\
\|\varrho^\h v^\h\|_{L^4(\R^+;\cB^{\f12,\f12}_2)} &\lesssim &
\|\varrho^\h\|_{L^\infty(\R^+;\cB^{\f34,\f12}_2)}\|v^\h\|_{L^4(\R^+;\cB^{\f34,\f12}_2)}\leq\cC_0\eta,
\eeno and
$$
\longformule{
 \|\varrho^\h v^\h\|_{L^2(\R^+;\cB^{0,\f32}_2)}\leq
\|\varrho^\h\|_{L^\infty(\R^+;\cB^{\f12,\f12}_2)}\|\p_zv^\h\|_{L^1(\R^+;\cB^{\f12,\f12}_2)}
} { {}
+\|\p_z\r^\h\|_{L^\infty(\R^+;\cB^{\f12,\f12}_2)}\|v^\h\|_{L^1(\R^+;\cB^{\f12,\f12}_2)}\leq
\cC_0 \eta. }
$$ Similarly, we deduce from law of product
\eqref{lawofproductanisobasic} and \eqref{S7eq12} that
$$
\longformule{ \|\varrho^\h v^\h\|_{L^1(\R^+;\cB^{1,\f32}_2)}\leq
\|\varrho^\h\|_{L^\infty(\R^+;\cB^{1,\f12}_2)}\|\p_zv^\h\|_{L^1(\R^+;\cB^{1,\f12}_2)}
} { {}
+\|\p_z\r^\h\|_{L^\infty(\R^+;\cB^{1,\f12}_2)}\|v^\h\|_{L^1(\R^+;\cB^{1,\f12}_2)}\leq
\cC_0\eta. }
$$
Hence by virtue of the first two inequalities of Proposition
\ref{estimwdetaila} and the remark following \eqref{1.7dfg}, we
conclude the first inequality of Proposition \ref{estimwdetail}.

On the other hand, for any $\al$ in~$ ]0,1[,$ we get, by applying the
law of product \eqref{lawofproductanisobasic}, that
$$
\longformule{ \|\p_z(\p_t\varrho^\h
v^\h)\|_{L^1(\R^+;\cB^{-1+\al,\f12}_2)}\lesssim
\|\p_t\p_z\r^\h\|_{L^\infty(\R^+;\cB^{\f{\al}2,\f12}_2)}\|v^\h\|_{L^1(\R^+;\cB^{\f{\al}2,\f12}_2)}
} { {}
+\|\p_t\r^\h\|_{L^\infty(\R^+;\cB^{\f{\al}2,\f12}_2)}\|\p_zv^\h\|_{L^1(\R^+;\cB^{\f{\al}2,\f12}_2)},
}
$$
 and
 $$
\longformule{ \|\p_z(\varrho^\h
\p_tv^\h)\|_{L^1(\R^+;\cB^{-1+\al,\f12}_2)}\lesssim
\|\p_z\r^\h\|_{L^\infty(\R^+;\cB^{\f{\al}2,\f12}_2)}\|\p_tv^\h\|_{L^1(\R^+;\cB^{\f{\al}2,\f12}_2)}
} { {}
+\|\varrho^\h\|_{L^\infty(\R^+;\cB^{\f{\al}2,\f12}_2)}\|\p_z\p_tv^\h\|_{L^1(\R^+;\cB^{\f{\al}2,\f12}_2)},
}
$$
so that by applying Lemma \ref{S7lem4.5}, we obtain
$$\bigl\|\partial_t \bigl( \varrho^\h  v^\h
\bigr)\|_{L^1(\R^+; \cB_2^{-1+\al,\frac 32})}\leq \|\p_z(\r^\h_t
v^\h)\|_{L^1(\R^+;\cB^{-1+\al,\f12}_2)}+\|\p_z(\varrho^\h
v^\h_t)\|_{L^1(\R^+;\cB^{-1+\al,\f12}_2)}\leq \cC_0. $$ This
together the third inequality of Proposition \ref {estimwdetaila}
leads to the second inequality of  Proposition \ref {estimwdetail}.
\end{proof}

\begin{col}
\label {estimE2E3} {\sl Under the assumptions of Theorem \ref
{insslowvar}, there holds \eqref{estimE1}.}
\end{col}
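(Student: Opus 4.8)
The statement to prove, Corollary \ref{estimE2E3}, is exactly Inequality \eqref{estimE1}, i.e. $\|E^1_\e\|_{L^1(\R^+;\cB^{0,\f12}_2)}\le\cC_0$, where $E^1_\e$ is the sum of the four rescaled terms in \eqref{analyzeErrortermeq1}. Since the target space $\cB^{0,\f12}_2$ has vertical index $\f12=\f1p$ with $p=2$, the vertical dilation identity \eqref{homovertBss'} gives $\|[f]_\e\|_{\cB^{0,\f12}_2}\sim\|f\|_{\cB^{0,\f12}_2}$ uniformly in $\e$; hence it suffices to bound in $L^1(\R^+;\cB^{0,\f12}_2)$ the four unscaled quantities $v^\h\cdot\nabla_\h(\e w^\h_\e,w^3_\e)$, $\,(0,\p_z\Pi_Q^\h)$, $\,\e w_\e\cdot\nabla(v^\h,0)$ and $\e^2w_\e\cdot\nabla(\e w^\h_\e,w^3_\e)$. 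All powers of $\e$ appearing here are at most $\e_0\le1$, so they are only used to absorb constants and never to gain smallness.

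For the three product terms I would expand each scalar contraction into a product $a\,b$ in which $a$ carries one of the space--time norms already established — namely the bounds on $v^\h$, $\p_z v^\h$, $\nabla v^\h$ of Proposition \ref{consequenceSection345}, and the bounds on $(\e w^\h_\e,w^3_\e)$ and $\nabla_\e(\e w^\h_\e,w^3_\e)$ of Proposition \ref{estimwdetail} — while $b$ carries a horizontal or vertical derivative of the other factor. I would then apply the anisotropic law of product \eqref{lawofproductanisobasic} with $p=2$, typically in the form $\|ab\|_{\cB^{0,\f12}_2}\lesssim\|a\|_{\cB^{1,\f12}_2}\|b\|_{\cB^{0,\f12}_2}$ or $\lesssim\|a\|_{\cB^{\f12,\f12}_2}\|b\|_{\cB^{\f12,\f12}_2}$, together with H\"older in time. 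Concretely, $v^\h\in L^2_t(\cB^{1,\f12}_2)$ pairs with $\nabla_\h(\e w^\h_\e,w^3_\e)\in L^2_t(\cB^{0,\f12}_2)$; $\e w^\h_\e\in L^2_t(\cB^{1,\f12}_2)$ pairs with $\nabla_\h v^\h\in L^2_t(\cB^{0,\f12}_2)$; and for the pieces carrying $\p_z v^\h$ or $\e\p_z(\e w^\h_\e,w^3_\e)$ I would use the $\e$-weighted bound $\e\|(\e w^\h_\e,w^3_\e)\|_{L^4_t(\cB^{\f12,\f12}_2)}\le\cC_0$ of Proposition \ref{estimwdetail} against an $L^{4/3}_t(\cB^{\f12,\f12}_2)$ bound for $v^\h,\p_z v^\h$, the latter obtained by interpolating the pointwise-in-time decay supplied by Lemma \ref{S7lem4.5} and \eqref{S7eq12}. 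Each such product is then $\le\cC_0$ in $L^1(\R^+;\cB^{0,\f12}_2)$.

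The pressure term $(0,\p_z\Pi_Q^\h)$ is the main obstacle, since Proposition \ref{consequenceSection345} only controls $\p_z\Pi^\h$ in $\cB^{\f12,\f12}_2$, a space that does not embed into $\cB^{0,\f12}_2$ (low horizontal frequencies are weighted more heavily in the latter). Instead I would use the Biot--Savart representation $\Pi_Q^\h=-\D_\h^{-1}\dive_\h\dive_\h(\r^\h v^\h\otimes v^\h)$ and the fact that $\D_\h^{-1}\dive_\h\dive_\h$ is a homogeneous horizontal Fourier multiplier of degree $0$, hence bounded on every $\cB^{s,s'}_2$ (each horizontal dyadic block being frequency-localised away from the origin). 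This reduces the task to bounding $\|\p_z(\r^\h v^\h\otimes v^\h)\|_{L^1(\R^+;\cB^{0,\f12}_2)}$; writing $\r^\h=1+\varrho^\h$ and expanding by Leibniz, the resulting bi- and trilinear pieces are handled by \eqref{lawofproductanisobasic} using the $L^\infty_t$ bounds on $\varrho^\h,\varrho^\h_z$ from \eqref{S7eq12qw}--\eqref{S7eq12qwqp} and the pointwise decay of $v^\h,v^\h_z$ from Lemma \ref{S7lem4.5} (for instance $\|v^\h_z\otimes v^\h\|_{\cB^{0,\f12}_2}\lesssim\|v^\h_z\|_{\cB^{\f12,\f12}_2}\|v^\h\|_{\cB^{\f12,\f12}_2}\lesssim\cC_0\w{t}^{-\f52}$). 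Since every decay exponent produced this way is strictly below $-1$, time-integrability yields $\|\p_z\Pi_Q^\h\|_{L^1(\R^+;\cB^{0,\f12}_2)}\le\cC_0$. Adding the four estimates gives \eqref{estimE1}, which completes the proof of Corollary \ref{estimE2E3}.
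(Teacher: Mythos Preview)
Your proposal is correct and follows essentially the same route as the paper: the rescaling identity \eqref{homovertBss'}, the anisotropic product law \eqref{lawofproductanisobasic} combined with H\"older in time against the bounds of Propositions \ref{consequenceSection345} and \ref{estimwdetail}, and the reduction of $\p_z\Pi_Q^\h$ via the degree-zero horizontal multiplier $\D_\h^{-1}\dive_\h\dive_\h$ to $\p_z(\r^\h v^\h\otimes v^\h)$. The only cosmetic difference is that for the pressure piece the paper pairs $L^2_t\times L^2_t$ norms of $v^\h$ and $\p_z v^\h$ (using Proposition \ref{consequenceSection345} directly) rather than pointwise-in-time decay from Lemma \ref{S7lem4.5}; both arguments are valid and rest on the same ingredients.
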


\begin{proof} We first deduce from the law of product
\eqref{lawofproductanisobasic} that
\beno
\|v^\h\cdot\na_\h(\e w_\e^\h,
w^3_\e)\|_{L^1(\R^+;\cB^{0,\f12}_2)}
& \lesssim &
\|v^\h\|_{L^2(\R^+;\cB^{1,\f12}_2)}\|(\e
w_\e^\h,w^3_\e)\|_{L^2(\R^+;\cB^{1,\f12}_2)},\\
\|\e^2 w_\e\cdot\na (\e
w_\e^\h,w^3_\e)\|_{L^1(\R^+;\cB^{0,\f12}_2)}
& \lesssim &
\|\e w_\e\|_{L^2(\R^+;\cB^{1,\f12}_2)}\|\na_\e (\e
w^\h_\e,w^3_\e)\|_{L^2(\R^+;\cB^{0,\f12}_2)},
\eeno
and
$$
\longformule{ \|\e w_\e\cdot\na
(v^\h,0)\|_{L^1(\R^+;\cB^{0,\f12}_2)} \lesssim \|\e
w_\e^\h\|_{L^2(\R^+;\cB^{1,\f12}_2)}\|v^\h\|_{L^2(\R^+;\cB^{1,\f12}_2)}
} {{}
+\e\|w^3_\e\|_{L^4(\R^+;\cB^{\f12,\f12}_2)}\|\p_zv^\h\|_{L^{\f43}(\R^+;\cB^{\f12,\f12}_2)}.
}
$$
 Moreover, it follows from \eqref{decomppressureh} and
\eqref{lawofproductanisobasic} that
\beno
\|\p_z\Pi_Q^\h\|_{L^1(\R^+;\cB^{0,\f12}_2)}
&\lesssim &
\|\p_z(\r^\h v^\h\otimes v^\h)\|_{L^1(\R^+;\cB^{0,\f12}_2)}\\
 & \lesssim &
\|\r_z^\h\|_{L^\infty(\R^+;\cB^{\f12,\f12}_2)}\|v^\h\|_{L^2(\R^+;\cB^{\f12,\f12}_2)}\|v^\h\|_{L^2(\R^+;\cB^{1,\f12}_2)}\\
&&\qquad{}+
\bigl(1+\|\varrho^\h\|_{L^\infty(\R^+;\cB^{\f12,\f12}_2)}\bigr)\|v^\h\|_{L^2(\R^+;\cB^{\f12,\f12}_2)}\|\p_zv^\h\|_{L^2(\R^+;\cB^{1,\f12}_2)}.
\eeno Hence by virtue of \eqref {analyzeErrortermeq1}, Proposition
\ref{estimwdetail}, and Lemma \ref{S7lem4.5}, we conclude the proof
of  \eqref{estimE1}.
\end{proof}

\setcounter{equation}{0}
\section {The control of the term~$b_\e$}
\label {control_b}

The purpose of this section  is the control of the term~$b_\e$ which
satisfies Equation\refeq {transportequationb}  as described by
Proposition\refer  {propbRt}.  Namely we want to decompose the solution~$b_\e$ of
Equation\refeq {transportequationb} which is
\beq
\label {transportequationbrecall}
\p_tb_\e+u_\e\cdot\na b_\e = - R_\e\cdot\nabla
[a^\h]_\e-\e[w_\e\cdot\na a^\h]_\e \with {b_\e}_{|t=0} =0
\eeq
as~$b_\e=\overline b_\e+\wt b_\e$ such that
$$
\|\overline b_\e(t) \|_{\cB^{\f2p,\f1p}_p} \leq  \cC_0\eta(1+\cR_0)
\w{t}^{\frac12} \andf
 \|\wt b_\e(t)\|_{L^p} \leq
\cC_0\e^{1-\f1p}(1+\cR_0)^2\w{ t}.
$$
In order to do it, let
us introduce  the following decomposition of~$b_\e=b_{1,\e}+b_{2,\e}+b_{3,\e} $ with
 \beq
\label {propbRtdemoeq0}
\begin{split}
\partial_t b_{1,\e}  + u_\e\cdot \nabla b_{1,\e} =&  -\e R^3_\e[\partial_z a^\h]_\e \,,\\
\partial_tb_{2,\e} + [v^\h]_\e\cdot \nabla _\h b_{2,\e} =& -R^\h_\e\cdot\nabla_\h [a^\h]_\e-\e [w_\e\cdot\nabla a^\h]_\e
 \andf \\
 \partial_tb_{3,\e} + u_\e\cdot \nabla b_{3,\e} =& -\e \bigl((\e w^\h_\e,w^3_\e)+R_\e)\cdot\nabla  b_{2,\e} .
\end{split}
\eeq

Let us first estimate~$\|b_{1,\e}(t)\|_{L^p}$.   As the vector
field~$u_\e$ is divergence free, we have
\beno
\|b_{1,\e}(t)\|_{L^p}
 \leq  \e^{1-\frac 1p}  \int_0^t \|R^3_\e(t')\|_{L^\infty}
\|\partial_z a^\h(t')\|_{L^p} dt'.
\eeno
Interpolation inequality\refeq  {interpolBesovanisolinfty}  of Lemma\refer {S0lem3}  and the induction
hypothesis \eqref {defininductiontime1} implies that, for any~$t$
less than~$\overline T_\e$, we have \ben \int_0^t
\|R(t')\|_{L^\infty} dt'  & \leq  & \int_0^t \|R(t')\|^{\frac 2 3}
_{\cB^{-\frac 12 +\frac 2 p,\frac 1 p}_p}
 \|\nabla _\h R(t')\|^{\frac 1 3} _{\cB^{\frac 2 p,\frac 1 p}_p} dt'\nonumber\\
 & \leq  & t^{\frac 12}
 \Bigl(\int_0^t \|R(t')\| _{\cB^{-\frac 12 +\frac 2 p,\frac 1 p}_p}^4dt'\Bigr)^{\f14\times \frac 2 3}
 \Bigl(\int_0^t\|\nabla _\h R(t')\|_{\cB^{\frac 2 p,\frac 1 p}_p} dt'\Bigr)^{\frac 1 3}\label {propbRtdemoeq1nm}\\
 & \leq   & \cR_0 t^{\frac 12}.\nonumber
\een Then using   the Estimate \eqref{S7eq0.3}, we infer that, for
any~Ê$t$ less than~$\overline T_\e $, \beq \label {propbRtdemoeq1}
 \|b_{1,\e}(t)\|_{L^p}    \leq  \cC_0\eta\e^{1-\frac 1p} t^{\frac 12}\cR_0.
\eeq

In order to estimate~$\|b_{3,\e}(t)\|_{L^p}$, we need to estimate~$\|\nabla b_{2,\e}(t)\|_{L^p}$. Let us observe that
$$
\longformule{
\partial_t \nabla b_{2,\e} + [v^\h]_\e\cdot \nabla _\h \nabla b_{2,\e} = -\nabla R^\h_\e\cdot\nabla_\h [a^\h]_\e
- R^\h_\e\cdot\nabla_\h \nabla [a^\h]_\e}{{} -\nabla [v^\h]_\e\cdot
\nabla _\h  b_{2,\e}-\e[\na_\e w_\e\cdot\na a^\h]_\e-\e[
w_\e\cdot\na\na_\e a^\h]_\e.}
$$
Using the fact that~$v^\h$ is divergence free, we get, \beq \label
{propbRtdemoeq1a}\begin{split} \f{d}{dt}\|\na
b_{2,\e}(t)\|_{L^p}\leq \e^{-\f1p}\Bigl(&\bigl(\|\na
R^\h_\e\|_{L^\infty}+\|\e\na_\e
w_\e\|_{L^\infty}\bigr)\|\na a^\h\|_{L^p} \\
&+\bigl(\|R^\h_\e\|_{L^\infty}+\|\e w_\e\|_{L^\infty}\bigr)\|\na^2
a^\h\|_{L^p}\Bigr)+\|\na v^\h\|_{L^\infty}\|\nh b_{2,\e}\|_{L^p}.
\end{split} \eeq Estimate\refeq {S7eq0.3}  together with Sobolev
embedding implies that \beq \label {propbRtdemoeq1b} \forall
t<\overline T_\e\,,\ \|\nabla a^\h(t)\|_{L^p} \leq \eta\, \cC_0
\andf \|\nabla^2 a^\h(t)\|_{L^p} \leq \eta\, \cC_0 . \eeq Induction
hypothesis\refeq {defininductiontime1} and Proposition\refer
{estimwdetail} implies that
$$
\int_0^{\overline T_\e} \bigl(\|\na
R^\h_\e(t)\|_{L^\infty}+\|\e\na_\e w_\e(t)\|_{L^\infty}\bigr)dt
\lesssim \cR_0+\cC_0.
$$
Together with\refeq {propbRtdemoeq1b} this implies that \beq \label
{propbRtdemoeq2c} \int_0^{\overline T_\e} \bigl(\|\na
R^\h_\e(t)\|_{L^\infty}+\|\e\na_\e w_\e(t)\|_{L^\infty}\bigr) \|\na
a^\h(t) \|_{L^p}dt \lesssim  \eta \, \cC_0(1+ \cR_0). \eeq
 Proposition \ref{estimwdetail} yields that \beq \label
{propbRtdemoeq2cmn} \int_0^t\|\e w_\e(t')\|_{L^\infty}dt'\lesssim
t^{\f12}\|(\e w^\h_\e,w^3_\e)\|_{L^2(\R^+;\cB^{1,\f12})}\leq \cC_0
t^{\frac 12}. \eeq Proposition \ref{consequenceSection345} claims in
particular that~$\|\nabla v^\h(t)\|_{L^\infty}$  is an integrable
function on~$R^+$ the integral of which is less than some~$\cC_0$.
Applying Gronwall's Lemma to \eqref{propbRtdemoeq1a} and using the
Estimates \eqref{propbRtdemoeq1nm}, \refeq {propbRtdemoeq2c} and
\eqref{propbRtdemoeq2cmn}, we get for any~$t$ less than~$\overline
T_\e $,
$$
\|\nabla b_{2,\e}(t)\|_{L^p}  \leq \cC_0\e^{-\frac 1p
}\bigl(1+\cR_0\bigr)\w{ t}^{\frac 12}.
$$
Now let us consider the equation on~$b_{3,\e}$
in\refeq{propbRtdemoeq0}. As~$u_\e$ is divergence free, we get, by
applying again  the Estimates \eqref{propbRtdemoeq1nm} and
\eqref{propbRtdemoeq2cmn}, that \beq \label {propbRtdemoeq2}
\begin{split}
\|b_{3,\e}(t)\|_{L^p}   \leq &
\e\int_0^t \bigl(\|R_\e(t')\|_{L^\infty}  +\|\e
w_\e^\h(t')\|_{L^\infty}+\|w_\e^3(t')\|_{L^\infty}\bigr)
\|\nabla b_{2,\e}(t')\|_{L^p}  dt'\\
\leq &  \cC_0\e^{1-\frac 1p}\bigl(1+\cR_0\bigr)^2\w{t}.
\end{split}
\eeq Defining~$\wt b_\e=b_{1,\e}+b_{3,\e}$ ensures the Inequality
\eqref{4.18} of Proposition\refer  {propbRt}. As  there no power
of~$\e$ in the right hand side of the equation on~$b_{2,\e}$ we must
use another norm to measure the size of~$b_{2,\e}$ The fact  that
the convection vector field involved in the equation of~$b_{2,\e}$
has no vertical component will allow us to propagate the anisotropic
regularity thanks to the following lemma.

\begin{prop}
\label {propageanisoh}
{\sl  Given a smooth vector field $v^\h$ with
$\dive_\h v^\h=0,$ we consider the following transport equation with
a parameter $z$
\begin{equation}
\left\{\begin{array}{c}
 \displaystyle \p_tb(t,x_\h,z)+v^{\rm h}(t,x_\h,z)\cdot\na_{\rm
h}b(t,x_\h,z) =g(t,x_\h,z), 
\\
\displaystyle b(0,x_\h,z)=b_0(x_{\rm h},z).
\end{array}\right. \label{3.1}
\end{equation}
Let $p$ be in~$]2,4[$. Let us define
$$
 \cV_p(t) \eqdefa  \sup_{z'\in \R} \int_0^t \|\nabla_\h
v^\h(t',\cdot,z')\|_{(\cB^{\frac 2p}_p)_\h}dt'.
$$
Then for $s$ in~$ ]0, 2/p],$ we have
 \beq
 \label {propageanisoheq1}
 \begin{split}
 &\exp\bigl(-C\cV_p(t)\bigr)  \|b\|_{L^\infty_t(\cB^{s,\f1p}_p)}\leq
C\|b_0\|_{\cB^{s,\f1p}_p}+ C\|g\|_{L^1_t(\cB^{s,\f1p}_p)}
\\
&\qquad\qquad\qquad\qquad{}+C\|v^\h\|_{L^1_t(\cB^{\f2p,\f1p}_p)}\bigl(\|\nh
b_0\|_{L^\infty_{\rm v}(\cB^{s}_p)_\h}+\|\nh g\|_{L^\infty_{\rm
v}(L^1_t(\cB^{s}_p)_\h)}\bigr).
\end{split}
\eeq
}
\end{prop}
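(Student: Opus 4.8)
The plan is to localize the transport equation \eqref{3.1} in horizontal and vertical frequencies and estimate the resulting family of equations with a parameter. The crucial structural fact is that the convection field $v^\h$ has no vertical component, so $\D_\ell^\v$ commutes with the transport operator $\p_t + v^\h\cdot\nh$; only the horizontal localization $\D_k^\h$ produces a commutator. First I would set $b_{k,\ell}\eqdef \D_k^\h\D_\ell^\v b$ and write
$$
\p_t b_{k,\ell} + v^\h\cdot\nh b_{k,\ell} = \D_\ell^\v\bigl(\D_k^\h g\bigr) + [v^\h\cdot\nh, \D_k^\h]\D_\ell^\v b \eqdef g_{k,\ell} + R_{k,\ell}.
$$
Multiplying by $|b_{k,\ell}|^{p-2}b_{k,\ell}$, integrating over $\R^2_\h$ (for fixed $z$), and using $\dive_\h v^\h=0$, the convection term disappears and we obtain, after dividing by $\|b_{k,\ell}(t,\cdot,z)\|_{L^p_\h}^{p-1}$ and integrating in time,
$$
\|b_{k,\ell}(t,\cdot,z)\|_{L^p_\h} \le \|b_{k,\ell}(0,\cdot,z)\|_{L^p_\h} + \int_0^t\bigl(\|g_{k,\ell}(t')\|_{L^p_\h} + \|R_{k,\ell}(t')\|_{L^p_\h}\bigr)dt'.
$$

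The heart of the matter is the commutator estimate. I would use the standard two-dimensional commutator bound (Bony's decomposition, as in Lemma 2.100 of \cite{BCD}, applied in the horizontal variables with $z$ frozen): for $s\in\,]0,1]$,
$$
2^{ks}\|[v^\h\cdot\nh,\D_k^\h]w\|_{L^p_\h} \lesssim c_k(t,z)\,\|\nh v^\h(t,\cdot,z)\|_{(\cB^{2/p}_p)_\h}\,\|w(t,\cdot,z)\|_{(\cB^s_p)_\h}
$$
with $\sum_k c_k \le 1$, \emph{plus} a low-frequency remainder of the type $\|\nh v^\h\|_{(\cB^{2/p}_p)_\h}\|S_0^\h w\|_{L^p_\h}$ which is responsible for the last term in \eqref{propageanisoheq1}. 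Applying this with $w=\D_\ell^\v b$, multiplying by $2^{\ell/p}$, summing in $(k,\ell)$ and taking the supremum in $z$, one controls $\sum_{k,\ell}2^{ks+\ell/p}\|R_{k,\ell}(t)\|_{L^p_\h}$ by $\|\nh v^\h(t)\|_{(\cB^{2/p}_p)_\h\text{-sup}_z}\bigl(\|b(t)\|_{\cB^{s,1/p}_p} + \|\nh^{-1}\text{low freq part}\|\bigr)$. The low-frequency part is handled by noting $\|S_0^\h \D_\ell^\v b\|_{L^p_\h}$ is propagated separately: it satisfies the same transport equation, so it is bounded by its initial value plus $\int_0^t\|S_0^\h\D_\ell^\v g\|$, and summing $2^{\ell/p}$ of that against $\|v^\h\|_{L^1_t(\cB^{2/p,1/p}_p)}$ produces exactly the term $C\|v^\h\|_{L^1_t(\cB^{2/p,1/p}_p)}(\|\nh b_0\|_{L^\infty_\v(\cB^s_p)_\h}+\|\nh g\|_{L^\infty_\v(L^1_t(\cB^s_p)_\h)})$.

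Putting these pieces together gives, with $M(t)\eqdef \|b\|_{L^\infty([0,t];\cB^{s,1/p}_p)}$,
$$
M(t) \le C\|b_0\|_{\cB^{s,1/p}_p} + C\|g\|_{L^1_t(\cB^{s,1/p}_p)} + C\|v^\h\|_{L^1_t(\cB^{2/p,1/p}_p)}\bigl(\|\nh b_0\|_{L^\infty_\v(\cB^s_p)_\h}+\|\nh g\|_{L^\infty_\v(L^1_t(\cB^s_p)_\h)}\bigr) + C\int_0^t \sup_{z'}\|\nh v^\h(t',\cdot,z')\|_{(\cB^{2/p}_p)_\h}M(t')\,dt',
$$
and Gronwall's lemma in the form $M(t)\exp(-C\cV_p(t)) \le \text{(the first three terms)}$ yields \eqref{propageanisoheq1}. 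The restriction $p\in\,]2,4[$ enters so that the horizontal Besov space $(\cB^{2/p}_p)_\h$ embeds in $L^\infty_\h$ (needed to make sense of $v^\h\cdot\nh$ and to run the commutator estimate with the Bernstein losses under control), and the restriction $s\le 2/p$ is what the commutator estimate tolerates. The main obstacle I expect is bookkeeping the low-frequency remainder in the commutator so that it genuinely matches the $\|v^\h\|_{L^1_t(\cB^{2/p,1/p}_p)}$-weighted term on the right of \eqref{propageanisoheq1} rather than generating a term that must be reabsorbed; this forces one to treat $S_0^\h$ of the solution as a separately-transported quantity rather than folding it into $M(t)$.
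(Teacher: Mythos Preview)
Your approach has a genuine gap at the very first step. You claim that since $v^\h$ has no vertical component, $\D_\ell^\v$ commutes with the transport operator $\p_t + v^\h\cdot\nh$. It is true that the transport contains no $\partial_z$, but $v^\h=v^\h(t,x_\h,z)$ \emph{depends on the parameter $z$}: equation \eqref{3.1} is a family of 2D transport equations whose velocity varies with $z$. Since $\D_\ell^\v$ is a convolution in the $z$-variable, it does \emph{not} commute with multiplication by $v^\h$, and the missing term is $[\D_\ell^\v, v^\h]\cdot\nh b$. It is precisely this vertical commutator --- not a horizontal low-frequency remainder --- that produces the extra term $\|v^\h\|_{L^1_t(\cB^{2/p,1/p}_p)}\bigl(\|\nh b_0\|_{L^\infty_\v(\cB^s_p)_\h}+\|\nh g\|_{L^\infty_\v(L^1_t(\cB^s_p)_\h)}\bigr)$ in \eqref{propageanisoheq1}; the vertical index $1/p$ on $v^\h$ can only arise from differencing $v^\h$ in $z$. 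For $s\in\,]0,2/p]$ the horizontal commutator estimate in $(\cB^{2/p}_p)_\h$ is in fact clean and carries no low-frequency tail, so your proposed mechanism for the extra term does not exist.

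The paper handles the vertical variable by finite differences rather than Littlewood--Paley. One first records, via Theorem~3.14 of \cite{BCD} applied in $\R^2_\h$ for each fixed $z$, that $\|b(t,\cdot,z)\|_{(\cB^\s_p)_\h}$ is propagated for any $\s\in[0,1+2/p]$ with the factor $\exp(C\cV_p(t))$. Then one writes the equation satisfied by $\d_{-z}b\eqdefa \tau_{-z}b-b$, namely
$$
\p_t\d_{-z}b + v^\h\cdot\nh\d_{-z}b = \d_{-z}g - \d_{-z}v^\h\cdot\nh\tau_{-z}b,
$$
and applies the same 2D transport estimate in $(\cB^s_p)_\h$. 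The cross term is bounded via the horizontal product law by $\|\d_{-z}v^\h\|_{(\cB^{2/p}_p)_\h}\,\sup_{t',z'}\|\nh b(t',\cdot,z')\|_{(\cB^s_p)_\h}$; the second factor is controlled by the first step with $\s=s+1$ (this is where $s\le 2/p$ is used). Finally one takes the $L^p_{z'}$-norm, divides by $|z|^{1/p}$, and integrates against $dz/|z|$: this recovers $\|b\|_{\cB^{s,1/p}_p}$ on the left and turns $\int_0^t\|\d_{-z}v^\h\|_{L^p_\v(\cB^{2/p}_p)_\h}dt'$ into $\|v^\h\|_{L^1_t(\cB^{2/p,1/p}_p)}$ on the right.
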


\begin{proof} Let us first observe that  Theorem 3.14 of\ccite{BCD} implies that for any~$\s$ in~$\ds \bigl[0,1+2/p\bigr]$, we have, for any~$z$ in~$\R$
\beq
\label{S6eq9b}
\begin{split}
\|b(t,\cdot,z)\|_{(\cB^{\s}_p)_\h} \leq
&\Bigl(\|b_0(\cdot,z)\|_{(\cB^{\s}_p)_\h} +\int_0^t
\|g(t',\cdot,z)\|_{(\cB^{\s}_p)_\h}dt'\Bigr) \exp
\bigl(C\cV_p(t)\bigr).
\end{split}
\eeq
Let us define $(\tau_{-z}b)(x_\h, z') \eqdefa
b(x_\h,z'+z)$ and~$\d_{-z} a \eqdefa  \tau_{-z} a-a.$ Then in view of \eqref{3.1}, one~has
\beno
\p_t\tau_{-z}b+\tau_{-z}v^\h\cdot\nh\tau_{-z}b=\tau_{-z}g.
\eeno
Subtracting \eqref{3.1} from the above equation, we get
 \beq
 \label {propageanisohdemoeq1}
 \p_t\d_{-z}b+v^\h\cdot\nh\d_{-z}b+\d_{-z}v^\h\cdot\nh
 \tau_{-z} b=\d_{-z}g.
 \eeq
Applying again Theorem 3.14 of\ccite{BCD}, we infer that \beq \label
{S6eq9bb}
\begin{split}
&  \exp\bigl(-C\cV_p(t)\bigr) \|\d_{-z}b(t,\cdot ,z') \|_{(\cB^{s}_p)_\h} \leq
\|\d_{-z}b_0(\cdot ,z') \|_{(\cB^{s}_p)_\h}
\\
&\qquad\qquad{}+ C \int_0^t
\|\d_{-z}g(t',\cdot,z')\|_{(\cB^{s}_p)_\h} dt' +\int_0^t
\bigl\|\d_{-z}v^\h(t',\cdot,z')\cdot\nh \tau_{-z}
b(t')\bigr\|_{(\cB^{s}_p)_\h} dt'.
\end{split}
\eeq The law of product in~$\R^2,$ which claims that for~$s$
in~$[0,2/p],$ ~$\|ab\|_{\cB^s_p} \leq C \|a\|_{\cB^{\frac 2p} _p}
\|b\|_{\cB^s_p},$  together with Inequality\refeq{S6eq9b}
 implies that \beno
I(t,z,z')& \eqdefa & \int_0^t \bigl\|\d_{-z}v^\h(t',\cdot,z')\cdot\nh \tau_{-z}b(t',\cdot, z')\bigr\|_{(\cB^{s}_p)_\h}  dt' \\
& \leq  & \sup_{\substack{ t'\in [0,t]\\ z'\in \R}}\|\nh b(t',\cdot, z')\bigr\|_{(\cB^{s}_p)_\h}
 \int_0^t \|\d_{-z}v^\h(t',\cdot,z')\bigr\|_{(\cB^{\frac 2 p}_p)_\h}dt'\\
 & \leq  &
\Bigl(  \|\nh b_0\|_{L^\infty_\v(\cB^{s}_p)_\h}+ \sup_{z\in
\R}\int_0^t \|\nh g(t',\cdot,z)\|_{(\cB^{\s}_p)_\h}dt'\Bigr)
\exp \bigl(C\cV_p(t)\bigr)\\
& & \qquad\qquad\qquad\qquad\qquad\qquad\qquad
\qquad{}\times \int_0^t \|\d_{-z}v^\h(t',\cdot,z')\bigr\|_{(\cB^{\frac 2 p}_p)_\h}dt'.
\eeno
Plugging this into Inequality\refeq {S6eq9bb} gives
$$
\longformule
{
\exp\bigl(-C \cV_p(t)\bigr)   \|\d_{-z}b(t,\cdot ,z') \|_{(\cB^{s}_p)_\h} \leq
\|\d_{-z}b_0(\cdot ,z') \|_{(\cB^{s}_p)_\h}  + C \int_0^t
 \|\d_{-z}g(t',\cdot,z')\|_{(\cB^{s}_p)_\h} dt'
 }
 {
{}+ \Bigl(\|\nh b_0\|_{L^\infty_\v(\cB^{s}_p)_\h}+\sup_{z\in
\R}\int_0^t  \|\nh g(t',\cdot,z)\|_{(\cB^s_p)_\h}dt'\Bigr) \int_0^t
\|\d_{-z}v^\h(t',\cdot,z')\|_{(\cB^{\f2p}_p)_\h} dt'. }
$$
Taking $L^p$ norm of the  above inequality with respect to the
vertical variable~$z'$ yields
$$
\longformule{ \exp\bigl(-C\cV_p(t)\bigr)
\|\d_{-z}b(t)\|_{L^p_\v(\cB^{s}_p)_\h} \leq
\|\d_{-z}b_0\|_{L^p_\v(\cB^{s}_p)_\h}  + C \int_0^t
\|\d_{-z}g(t')\|_{L^p_{\rm v} (\cB^{s}_p)_\h} dt' } { {}+ C
\Bigl(\|\nh b_0\|_{L^\infty_\v(\cB^{s}_p)_\h}+\sup_{z\in \R}\int_0^t
\|\nh
g(t',\cdot,z)\|_{(\cB^s_p)_\h}dt'\Bigr)\int_0^t\|\d_{-z}v^\h(t')\|_{L^p_{\rm
v}(\cB^{\f2p}_p)_\h} dt' . }
$$
 Dividing the above inequality by $|z|^{\f1p}$ and taking the $L^p$ norm of
the resulting inequality with the measure  $\ds\f{dz}{|z|}$ over~$\R,$ we obtain Inequality\refeq {propageanisoheq1} and thus the proposition.
\end{proof}

\begin{proof}[Continuation of the proof to Proposition \ref{propbRt}]
Let us observe that Proposition\refer  {propageanisoh} implies that
for $t$ less than~$\overline T_\e$,
$$\longformule{
\|b_{2,\e}\|_{L^\infty_t(\cB^{\f2p,\f1p}_p)}\leq
\cC_0\Bigl(\|R^\h_\e\cdot\nh[a^\h]_\e\|_{L^1_t(\cB^{\f2p,\f1p}_p)}+\e\|w_\e\cdot\na
a^\h\|_{L^1_t(\cB^{\f2p,\f1p}_p)}}{{}+\|v^\h\|_{L^1_t(\cB^{\f2p,\f1p}_p)}\bigl(
\|\nh(R^\h_\e\cdot\nh[a^\h]_\e)\|_{L^\infty_\v(L^1_t(\cB^{\f2p}_p)_\h)}+\e\|\nh(w_\e\cdot\na
a^\h)\|_{L^\infty_\v(L^1_t(\cB^{\f2p}_p)_\h)}\bigr)\Bigr).}$$ Hence
it follows from the law of product \eqref{lawofproductanisobasic},
Proposition \ref {estimwdetail} and the the induction hypothesis
\eqref {defininductiontime1} that \beq \label{propbRteq1a}
\begin{split}
\|b_{2,\e}\|_{L^\infty_t(\cB^{\f2p,\f1p}_p)}\leq
&\Bigl(\|R^\h_\e\|_{L^1_t(\cB^{\f2p,\f1p}_p)}+\|\e
w_\e\|_{L^1_t(\cB^{\f2p,\f1p}_p)}+\|\nh
R^\h_\e\|_{L^1_t(\cB^{\f2p,\f1p}_p)}\\
& \ +\|\e\nh w_\e\|_{L^1_t(\cB^{\f2p,\f1p}_p)}\Bigr)\left(\|\na
a^\h\|_{L^\infty_t(\cB^{\f2p,\f1p}_p)}+\|\na
a^\h\|_{L^\infty_\v(L^\infty_t(\cB^{1+\f2p}_p)_\h)}\right)\\
\leq & \cC_0(1+\cR_0)\w{t}^{\f12}\left(\|\na
a^\h\|_{L^\infty_t(\cB^{\f2p,\f1p}_p)}+\|\na
a^\h\|_{L^\infty_\v(L^\infty_t(\cB^{1+\f2p}_p)_\h)}\right).
\end{split} \eeq Yet it follows from Lemma \ref{lemBern}, \eqref{S7eq12qwqp} and \eqref{S7eq0.3} that
$$
\displaylines { \|\na a^\h\|_{L^\infty_t(\cB^{\f2p,\f1p}_p)}\lesssim
\|\na a^\h\|_{L^\infty_t(\cB^{1,\f12}_2)}\lesssim \|\na
a^\h\|_{L^\infty_t(\cB^{\f12,\f12}_2)}^{\f12}\|\nh\na
a^\h\|_{L^\infty_t(\cB^{\f12,\f12}_2)}^{\f12}\leq \cC_0\eta, \cr
\|\na a^\h\|_{L^\infty_\v(L^\infty_t(\cB^{1+\f2p}_p)_\h)}\lesssim
\|\na a^\h\|_{L^\infty_\v(L^\infty_t(\cB^{2}_2)_\h)}\lesssim \|\na
a^\h\|_{L^\infty_\v(L^\infty_t(\dot H^{1}_\h))}^{\f12}\|\na
a^\h\|_{L^\infty_\v(L^\infty_t(\dot H^{3}_\h))}^{\f12}\leq\cC_0\eta.
}
$$
This implies that for all~$t$  less than~$\overline T_\e $
$$
\|b_{2,\e}
\|_{L^\infty_t(\cB^{\frac 2p,\frac 1p}_{p}) } \leq
\cC_0\eta(1+\cR_0)\w{ t}^{\frac 12}.
$$
Taking $\overline{b}_\e=b_{2,\e}$ leads to \eqref{4.13}. We thus
 complete the
proof of Proposition \ref{propbRt}.
\end{proof}

\begin{col}\label{Sectcolcontrolb}
{\sl Under the assumptions of Theorem \ref {insslowvar},  the
inequalities of Assertion\refeq {estiE4}  holds namely, for~$p$ in~$ ]3,4[$ and $\d$ in~$\bigl]0, 1-3/p\bigr[.$
$$
\|E^{4,1}_\e\|_{{L^1_T(\cB^{-1+\f2p,\f1p}_p)}}\leq
\cC_0\eta(1+\cR_0)\andf
\|E^{4,2}_\e\|_{{L^1_T(\cB^{-1+\d+\f3p,-\d}_p)}}\leq
\cC_0\e^{1-\f1p}(1+\cR_0)^2,
$$
}
\end{col}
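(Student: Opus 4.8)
\textbf{Proof proposal for Corollary \ref{Sectcolcontrolb}.}

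The plan is to split $E^4_\e$ according to the decomposition $b_\e = \overline b_\e + \wt b_\e$ of Proposition \ref{propbRt}. Recalling from \eqref{analyzeErrortermeq1} that $E^4_\e = -b_\e\bigl([\D_\h v^\h - \nabla_\h\Pi^\h]_\e,0\bigr)$, I set
$$
E^{4,1}_\e \eqdefa -\overline b_\e\bigl([\D_\h v^\h - \nabla_\h\Pi^\h]_\e,0\bigr)\andf E^{4,2}_\e \eqdefa -\wt b_\e\bigl([\D_\h v^\h - \nabla_\h\Pi^\h]_\e,0\bigr).
$$
For the $E^{4,1}_\e$ term I would use the law of product \eqref{lawofproductanisobasic}: writing the product in $\cB^{-1+\f2p,\f1p}_p$, I need $\overline b_\e$ in $\cB^{\f2p,\f1p}_p$ (which is exactly the space controlled by \eqref{4.13}) and $[\D_\h v^\h - \nabla_\h\Pi^\h]_\e$ in a horizontal-regularity $-1$ space, say $\cB^{-1+\f2p,\f1p}_p$; the vertical indices add correctly since $\f1p + 0 - \f1p = 0$ is not what we want, so one must be careful and instead pair $\overline b_\e \in \cB^{\f2p,\f1p}_p$ with $[\cdots]_\e \in \cB^{-1+\f2p,0}_p$, the sum of vertical indices being $\f1p$. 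The homogeneity relation \eqref{homovertBss'} converts $\|[\D_\h v^\h - \nabla_\h\Pi^\h]_\e\|_{\cB^{-1+\f2p,0}_p}$ into $\e^{-\f1p}\|\D_\h v^\h - \nabla_\h\Pi^\h\|_{\cB^{-1+\f2p,0}_p}$, and the extra factor $\e$ sitting in front of $(b_\e,R_\e,\Pi_\e)$ in the Ansatz \eqref{S2eq0aq} cancels it. Then I would integrate in time: the decay estimates for $\D_\h v^\h$ and $\nabla_\h\Pi^\h$ from Lemma \ref{S7lem4.5} (inequalities \eqref{S7eq9}, \eqref{S7eq10}) give an integrable-in-time bound, of size $\cC_0$, while \eqref{4.13} grows only like $\w{t}^{1/2}$; since the velocity/pressure terms decay faster than $\w{t}^{-1/2}$ in $L^1$, the product is integrable in time and one obtains $\|E^{4,1}_\e\|_{L^1_T(\cB^{-1+\f2p,\f1p}_p)}\leq \cC_0\eta(1+\cR_0)$.

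For the $E^{4,2}_\e$ term, $\wt b_\e$ is only controlled in $L^p$ (the isotropic Lebesgue space) by \eqref{4.18}, with a bound $\cC_0\e^{1-\f1p}(1+\cR_0)^2\w{t}$. So here I must place $\wt b_\e$ in the space $\cB^{0,0}_p \hookleftarrow L^p$ — more precisely, using the embedding of $L^p$ into a negative-horizontal-regularity Besov space in two dimensions, combined with the anisotropic structure; then pair it via \eqref{lawofproductanisobasic} with $[\D_\h v^\h-\nabla_\h\Pi^\h]_\e$ taken in a space with enough horizontal smoothing so that the product lands in $\cB^{-1+\d+\f3p,-\d}_p$, the target space of the second estimate. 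The shift to the $\d$-dependent target space is dictated precisely because $\wt b_\e$ lives only in $L^p$ and cannot be paired in the critical space. After using \eqref{homovertBss'} to extract the power of $\e$ and the factor $\e$ from the Ansatz, and using the $\w t$-growth of $\wt b_\e$ against the faster decay of $v^\h$ and $\Pi^\h$, time integration yields $\|E^{4,2}_\e\|_{L^1_T(\cB^{-1+\d+\f3p,-\d}_p)}\leq \cC_0\e^{1-\f1p}(1+\cR_0)^2$.

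The main obstacle, and the step requiring the most care, is the bookkeeping of the anisotropic Besov indices in the law of product \eqref{lawofproductanisobasic}: one must check that all the constraints (the two sums of horizontal indices and of vertical indices are nonnegative, each factor's horizontal index is $\leq 2/p$ and vertical index $\leq 1/p$) are simultaneously satisfied, and that the resulting product indices match the target spaces $\cB^{-1+\f2p,\f1p}_p$ and $\cB^{-1+\d+\f3p,-\d}_p$. A subtlety is that the second target has a \emph{negative} vertical index $-\d$, so the pairing must put the ``missing'' vertical regularity on the $\wt b_\e$ factor; this is consistent with \eqref{cFL1Besovaniso}, which is why the index $\d\in\,]0,1-3/p[$ appears in the hypothesis. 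Once the indices are verified, the rest is a routine combination of \eqref{homovertBss'}, the $L^p\to$ Besov embedding, Lemma \ref{S7lem4.5}, Proposition \ref{propbRt}, and a time integration using the polynomial decay of $v^\h$ and $\Pi^\h$ against the at-most-linear growth of $b_\e$.
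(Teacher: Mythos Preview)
Your overall decomposition $E^4_\e = E^{4,1}_\e + E^{4,2}_\e$ via $b_\e = \overline b_\e + \wt b_\e$ matches the paper exactly, but the bookkeeping you flag as ``the main obstacle'' is where your argument actually fails.

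For $E^{4,1}_\e$ you first consider pairing $\overline b_\e \in \cB^{\f2p,\f1p}_p$ with $[\D_\h v^\h - \nabla_\h\Pi^\h]_\e \in \cB^{-1+\f2p,\f1p}_p$, then reject it because ``$\f1p + 0 - \f1p = 0$ is not what we want''. This computation is wrong: both vertical indices are $\f1p$, and the product law \eqref{lawofproductanisobasic} gives $\f1p + \f1p - \f1p = \f1p$, which \emph{is} the target. Your first instinct was correct and is precisely what the paper does. The crucial point you missed is that by \eqref{homovertBss'} the norm $\cB^{s,\f1p}_p$ is \emph{invariant} under the vertical dilation $[\cdot]_\e$, so $\|[\D_\h v^\h - \nabla_\h\Pi^\h]_\e\|_{\cB^{-1+\f2p,\f1p}_p} \sim \|\D_\h v^\h - \nabla_\h\Pi^\h\|_{\cB^{-1+\f2p,\f1p}_p}$ with no power of $\e$ whatsoever. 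The paper then shows (using the pressure equation and Lemma~\ref{S7lem4.5}) that this last quantity decays like $\w{t}^{-2}\log^{(1-2/p)2/p}\w{t}$, which against the $\w{t}^{1/2}$ growth of $\overline b_\e$ is integrable.

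Your fallback (pairing with $\cB^{-1+\f2p,0}_p$ and appealing to ``the extra factor $\e$ sitting in front of $(b_\e,R_\e,\Pi_\e)$ in the Ansatz'') is doubly wrong. First, the product $\cB^{\f2p,\f1p}_p \times \cB^{-1+\f2p,0}_p$ lands in $\cB^{-1+\f2p,0}_p$, not $\cB^{-1+\f2p,\f1p}_p$. Second, and more seriously, there is \emph{no} spare factor of $\e$ available: the $\e$ in the Ansatz \eqref{S2eq0aq} was already divided out when $E_\e$ was defined in \eqref{firstdefinErrorterm} as $\frac{1}{\e}(\partial_t\uapp + \cdots)$. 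The term $E^4_\e = -b_\e([\D_\h v^\h - \nabla_\h\Pi^\h]_\e,0)$ carries no $\e$ prefactor. The same error reappears in your treatment of $E^{4,2}_\e$: the factor $\e^{1-\f1p}$ in the final bound comes entirely from \eqref{4.18}, not from any scaling of $[\cdot]_\e$ nor from the Ansatz. For $E^{4,2}_\e$ the paper pairs $\wt b_\e \in L^p$ with $[\D_\h v^\h - \nabla_\h\Pi^\h]_\e$ measured in $\cB^{-1+\d+\f5p,-\d+\f1p}_p$ (note again the vertical index contains $\f1p$), and uses a weighted-in-time estimate \eqref{S8eq1p} to absorb the linear growth $\w{t}$ of $\wt b_\e$.
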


\begin{proof}  We deduce
 from a similar derivation of \eqref{S5eq10} that
 \beno \|
\Pi^\h(t)\|_{\cB^{\f2p,\f1p}_p}\leq C\Bigl(
\|a^\h\|_{L^\infty(\R^+;\cB^{\f2p,\f1p}_p)}\|
\nh\uh(t)\|_{\cB^{\f2p,\f1p}_p}+\|\uh(t)\|_{\cB^{\f2p,\f1p}_p}^2\Bigr),
\eeno which together with \eqref{S7eq9} and \eqref{S7eq12qwqp}
ensures that
 \beq
\label{S8eq1o}
\|\D_\h\uh(t)\|_{\cB^{-1+\f2p,\f1p}_p}+\|\nh\Pi^\h(t)\|_{\cB^{-1+\f2p,\f1p}_p}
\leq \cC_0\w{t}^{-2}\log^{\bigl(1-\f2p\bigr)\f2p}\w{t}. \eeq
Moreover, as $p$ belongs to~$ ]3,4[$ and  $\d$ to~$
\bigl]0,1-3/p\bigr[,$ we have
$$
-1+\d+\frac 5 p\in \Bigl]0\virgp\,\frac 2p\Bigr[\andf -\d+\frac 1 p\in ]0,1[.
$$
Then it follows from Inequality \eqref{S7eq10} that
 \beq
 \label{S8eq1p}
\begin{split}
\int_{\R^+}\w{t}&\Bigl(\|\D_\h\uh(t)\|_{\cB^{-1+\d+\f5p,-\d+\f1p}_p}+\|\nh
\Pi^\h(t)\|_{\cB^{-1+\d+\f5p,-\d+\f1p}_p}\Big) dt\\
&\qquad\qquad\qquad\qquad\qquad\qquad\qquad\leq
\int_{\R^+}\w{t}^{-\bigl(1+\f\d2+\f3{2p}\bigr)}\log^{\d+\f3p}dt\leq
\cC_0.
\end{split}
\eeq

On the other hand, it follows from the the law of product
\eqref{lawofproductanisobasic} that
$$
\longformule{
 \bigl\|\bar b\bigl([\D_\h\uh]_\e+[\nh
\Pi^\h]_\e\bigr)\bigr\|_{L^1_t(\cB^{-1+\f2p,\f1p}_p)}
}
{{}
 \lesssim
\int_0^t\|\bar{b}(t')\|_{\cB^{\f2p,\f1p}_p}
\bigl(\|\D_\h\uh(t')\|_{\cB^{-1+\f2p,\f1p}_p}+\|\nh \Pi^\h(t')\|_{\cB^{-1+\f2p,\f1p}_p}\bigr)dt',
}
$$
from which, \eqref{4.13} and \eqref{S8eq1o}, we infer  the first
inequality of the Corollary. Similarly, again~$p$ belongs to~$
]3,4[$ and  $\d$ to~$ \bigl]0,1-3/p\bigr[,$ the the law of product
\eqref{lawofproductanisobasic} ensures that
$$
\longformule
{
 \bigl\|\tilde
b\bigl([\D_\h\uh]_\e+[\nh
\Pi^\h]_\e\bigr)\bigr\|_{L^1_t(\cB^{-1+\d+\f3p,-\d}_p)}
}
{{}
 \lesssim
\int_0^t\|\tilde{b}(t')\|_{L^p}\bigl(\|\D_\h\uh(t')\|_{\cB^{-1+\d+\f5p,-\d+\f1p}_p}+\|\nh
\Pi^\h(t')\|_{\cB^{-1+\d+\f5p,-\d+\f1p}_p}\bigr)dt',
}
$$
 which together with \eqref{4.18} and \eqref{S8eq1p} gives rise to the second inequality of the corollary.
\end{proof}

\setcounter{equation}{0}
 \section {Conclusion of the proof of the main theorem}
 \label {conclusif}

\begin{proof}[Proof of Theorem\refer{insslowvar}]
 Let us first observe that law of products implies that, if $p\in ]3,4[$ and ~$\|a\|_{\cB^{\frac 2p,\frac 1p}_p}$ is less
 than~$c_p,$ which is the case for ~$\|a_\e(t)\|_{\cB^{\frac 2p,\frac 1p}_p}$ for $t\leq \overline{T}_\e$ thanks to Corollary \ref{fulltransport0}
 provided that
 $\eta$ is sufficiently small,
 $\PP_a$ given by Definition\refer{definmodifiedLerayproj} maps continuously from ~$\cB^{s_1,s_2}_p$ into itself for any~$s_1$ in~$\left]-2/p,2/p\right]$ and
$s_2$ in~$\left]-1/p,1/p\right],$  which~reads
 \beq
 \label {conclusifeq1}
  \|\PP_a g\|_{\cB^{s_1,s_2}_{p}} \lesssim
\|g\|_{\cB^{s_1,s_2}_{p}} . \eeq  Let us now fix $p$ in~$ ]3,4[$ and
$\d$ in~$ \bigl]0,1-3/p\bigr[,$ which is determined by
\eqref{estiE2}. For $t$ less than~$\overline T_\e $ defined by
Relation~\eqref {defininductiontime1}, we denote
\beq\label{conclusifeq2}
\begin{split} g_\lam(t) \eqdefa&
 g(t) \exp\Bigl(-\lam \int_0^t U_{\e,{\rm app}}(t')dt'\Bigr)\with
  U_{\e,{\rm app}}(t)\eqdefa \|u_{\e,{\rm
app}}(t)\|_{\cB^{-\f12+\f2p,\f1p}_p}^{4}\\
&+ \|u_{\e,{\rm app}}(t)\|^2_{\cB^{\frac 2p,\frac
1p}_p}+\|\p_zv^\h(t)\|_{\cB^{-\f12+\f2p,\f1p}_p}^{\f43}+\e\|(\e\p_z
w^\h, \p_z w^3)(t)\|^2_{\cB^{0,\frac 12}_2}.
\end{split}  \eeq
 Then we deduce from Equality\eqref{S1eq5} that
$$
\longformule{
R_{\e,\la}(t)=\int_0^t\exp\Bigl(-\lam \int_{t'}^t U_{\e,{\rm
app}}(t'') dt'' \Bigr)e^{(t-t')\D}\mathbb {P}_{a_\e}\Bigl(a_\e\D
R_{\e,\la}
}
{ {}- \dive \bigl(u_{\e,{\rm app}}\otimes R_{\e,\la} +R_{\e,\la}\otimes
\uapp +\e R_\e\otimes R_{\e,\la}\bigr)-E_{\e,\la}\Bigr)(t')dt'.
}
$$
So that for the norm $\|\cdot\|_{X(t)}$ given by Definition
\ref{anibesov}, we have \beq\label{conclusifeq3}
\begin{split}
\|R_{\e,\la}\|_{X(t)}\leq \Bigl\|&\exp\Bigl(-\lam \int_{t'}^t
U_{\e,{\rm app}}(t'') dt'' \Bigr)\mathbb {P}_{a_\e}\bigl(a_\e\D
R_{\e,\la}\\
& - \dive \bigl(u_{\e,{\rm app}}\otimes R_{\e,\la}
+R_{\e,\la}\otimes \uapp +\e R_\e\otimes
R_{\e,\la}\bigr)-E_{\e,\la}\bigr)\Bigr\|_{\cF_p(t)}. \end{split}
\eeq It is easy to observe from Inequality\refeq{cFL1Besovaniso},
the law of product \eqref{lawofproductanisobasic} and  Corollary
\ref{fulltransport0} that \beno \bigl\|\mathbb {P}_{a_\e}(a_\e \D
R_{\e,\la})\bigr\|_{\cF_p(t)} &\leq& C\Bigl(\|a_\e\D_\h
R_{\e,\la}\|_{L^1_t(\cB^{-1+\f2p,\f1p}_p)}
  +\|a_\e\p_3^2R_{\e,\la}\|_{L^{1}_t(\cB^{-1+\d+\frac 3 p,-\d}_p)}\Bigr)\\
&\leq & C\|a_\e\|_{L^\infty_t(\cB^{\f2p,\f1p}_p)}\Bigl(\|
R_{\e,\la}\|_{L^1_t(\cB^{1+\f2p,\f1p}_p)}+\|\p_3^2R_{\e,\la}\|_{L^{1}_t(\cB^{-1+\d+\frac
3 p,-\d}_p)}\Bigr)\\
&\leq & \cC_0\eta\exp(C\cR_0)\Bigl(\|
R_{\e,\la}\|_{L^1_t(\cB^{1+\f2p,\f1p}_p)}+\|\p_3^2R_{\e,\la}\|_{L^{1}_t(\cB^{-1+\d+\frac
3 p,-\d}_p)}\Bigr).
 \eeno
Along the same lines, we get
$$
\longformule{ \bigl\|\mathbb {P}_{a_\e}\dive(\e R_\e\otimes
R_{\e,\la}\bigr))\bigr\|_{\cF_p(t)}\lesssim \bigl\|\mathbb
{P}_{a_\e}\bigl(\e R_\e^\h\cdot\nh R_{\e,\la}+\e R_\e^3\p_3
R_{\e,\la}\bigr)\bigr\|_{L^1_t(\cB^{-1+\f2p,\f1p}_p)}}{{} \lesssim
\e\Bigl(\|R_\e\|_{L^2_t(\cB^{\f2p,\f1p}_p)}\|R_{\e,\la}\|_{L^2_t(\cB^{\f2p,\f1p}_p)}
+\|R_\e\|_{L^4_t(\cB^{-\f12+\f2p,\f1p}_p)}\|\p_3R_{\e,\la}\|_{L^{\f43}_t(\cB^{-\f12+\f2p,\f1p}_p)}\Bigr).
} $$ Using Inequality\refeq {cFL1Besovaniso},  we deduce  from
Inequalities~\eqref{estimE1},~\eqref{estiE2}, \eqref{estiE3}
and~\eqref{estiE4}  that \beno \bigl\|\mathbb {P}_{a_\e}
E_\e\bigr\|_{\cF_p(t)}\leq
\cC_0\Bigl(1+\bigl(\eta+\e^{1-\d-\f1p}\bigr)\exp(C\cR_0)\Bigr).
\eeno Now let us turn to the estimates of the last two terms in the
rightanside of Inequality~\eqref{conclusifeq3}. We deduce again from
Inequality\refeq{cFL1Besovaniso} that \beno &&\Bigl\|\exp\Bigl(-\lam
\int_{t'}^t U_{\e,{\rm app}}(t'') dt'' \Bigr)\dive_\h
\bigl(u_{\e,{\rm app}}\otimes R_{\e,\la}^\h
+R_{\e,\la}\otimes \uapp^\h\bigr)\Bigr\|_{\cF_p(t)}\\
&&\quad{}\lesssim \int_0^t\exp\Bigl(-\lam \int_{t'}^t U_{\e,{\rm app}}(t'')
dt'' \Bigr)\|R_{\e,\la}\otimes \uapp(t')\|_{\cB^{\f2p,\f1p}_p}dt'\\
&&\quad{}\lesssim \int_0^t\exp\Bigl(-\lam \int_{t'}^t U_{\e,{\rm app}}(t'')
dt'' \Bigr)\|\uapp(t')\|_{\cB^{\f2p,\f1p}_p}\|R_{\e,\la}(t')\|_{\cB^{\f2p,\f1p}_p}dt'\\
&&\quad{}\lesssim \biggl(\int_0^t\exp\Bigl(-2\lam \int_{t'}^t
U_{\e,{\rm app}}(\tau) dt''
\Bigr)\|\uapp(t')\|_{\cB^{\f2p,\f1p}_p}^2dt'\biggr)^{\f12}\|R_{\e,\la}\|_{L^2_t(\cB^{\f2p,\f1p}_p)},
\eeno which together with \eqref{conclusifeq2} ensures that
$$
\Bigl\|\exp\Bigl(-\lam \int_{t'}^t U_{\e,{\rm app}}(t'') dt''
\Bigr)\dive_\h \bigl(u_{\e,{\rm app}}\otimes R_{\e,\la}^\h
+R_{\e,\la}\otimes \uapp^\h\bigr)\Bigr\|_{\cF_p(t)}\lesssim
\f1{\la^{\f12}}\|R_{\e,\la}\|_{L^2_t(\cB^{\f2p,\f1p}_p)}.
$$
Along the same lines, we have
$$
\longformule{ \Bigl\|\exp\Bigl(-\lam \int_{t'}^t U_{\e,{\rm
app}}(t'') dt'' \Bigr)\p_3 \bigl(u_{\e,{\rm app}}\otimes
R_{\e,\la}^3 +R_{\e,\la}\otimes \uapp^3\bigr)\Bigr\|_{\cF_p(t)} } {
{} \lesssim \int_0^t\exp\Bigl(-\lam \int_{t'}^t U_{\e,{\rm
app}}(t'') dt'' \Bigr)\|\bigl(\p_3 R_{\e,\la}\otimes \uapp(t')+
R_{\e,\la}\otimes \p_3\uapp\bigr)(t')\|_{\cB^{-1+\f2p,\f1p}_p}dt', }
$$
By the definition of $\uapp$ given by \eqref{S2eq0aq}, we infer
\beno &&\int_0^t\exp\Bigl(-\lam \int_{t'}^t U_{\e,{\rm app}}(t'')
dt''\Bigr)\| R_{\e,\la}\otimes
\p_3\uapp(t')\|_{\cB^{-1+\f2p,\f1p}_p}dt'\\
&&\qquad{}\lesssim  \int_0^t\exp\Bigl(-\lam \int_{t'}^t U_{\e,{\rm
app}}(t'')
dt''\Bigr)\Bigl(\e\|\p_zv^\h(t')\|_{\cB^{-\f12+\f2p,\f1p}_p}\|R_{\e,\la}(t')\|_{\cB^{-\f12+\f2p,\f1p}_p}\\
&&\qquad\qquad\qquad\qquad\qquad\qquad\qquad\qquad\quad+\e^2\bigl\|(\e\p_zw^\h,\p_zw^3)\|_{\cB^{0,\f12}_2}
\|R_{\e,\la}(t')\|_{\cB^{\f2p,\f1p}_p}\Bigr)dt'. \eeno Then due to
Definition\refeq{conclusifeq2} of~$U_{\e,{\rm app}},$ H\"older
inequality implies that
$$
\longformule{
\int_0^t\exp\Bigl(-\lam \int_{t'}^t U_{\e,{\rm app}}(t'')
dt''\Bigr)\| R_{\e,\la}\otimes
\p_3\uapp(t')\|_{\cB^{-1+\f2p,\f1p}_p}dt'
}
{ {}\lesssim
\f{1}{\la^{\f34}}\|R_{\e,\la}\|_{L^{4}_t(\cB^{-\f12+\f2p,\f1p}_p)}+\f 1 {\la^{\f12}}\|R_{\e,\la}\|_{L^{2}_t(\cB^{\f2p,\f1p}_p)}.
}
$$
Following the same lines we get
$$
\int_0^t\exp\Bigl(-\lam \int_{t'}^t U_{\e,{\rm app}}(t'')
dt'' \Bigr)\| \p_3 R_{\e,\la}\otimes
\uapp(t')\|_{\cB^{-1+\f2p,\f1p}_p}dt'
\lesssim
\f{1}{\la^{\f14}}\|\p_3R_{\e,\la}\|_{L^{\f43}_t(\cB^{-\f12+\f2p,\f1p}_p)}.
$$
Resuming the above estimates into Inequality~\eqref{conclusifeq3} and using the
definition of the norm~$\|\cdot\|_{X(t)}$ given by Definition\refer{anibesov}, we infer  that, for any~$\lam$ greater than~$1$,
$$
\longformule{\|R_{\e,\la}\|_{X(t)}\leq
\cC_0\Bigl(1+\bigl(\eta+\e^{1-\d-\f1p}\bigr)\exp(C\cR_0)\Bigr)
+\Bigl(\cC_0\eta\exp(C\cR_0)+\f{C}{\la^{\f14}}\Bigr)
\|R_{\e,\la}\|_{X(t)} } {{}
+C\e\Bigl(\|R_{\e}\|_{L^{4}_t(\cB^{-\f12+\f2p,\f1p}_p)}+\|R_{\e}\|_{L^{2}_t(\cB^{\f2p,\f1p}_p)}\Bigr)\|R_{\e,\la}\|_{X(t)}.
}
$$
which together with the induction assumption \eqref
{defininductiontime1} ensures that for $t$ less than ~$ \overline
T_\e $ and for any~$\lam$ greater than~$1$, \beq
\label{conclusifeq4}
\left(1-C\Bigl(\cC_0\eta\exp(C\cR_0)+\f{1}{\la^{\f14}}+\e\cR_0\Bigr)\right)
\|R_{\e,\la}\|_{X(t)}\leq
\cC_0\Bigl(1+\bigl(\eta+\e^{1-\d-\f1p}\bigr)\exp(C\cR_0)\Bigr). \eeq
Let us take $\e,\eta$ sufficiently small and $\lam$ sufficiently
large so that
$$
\displaylines{ \eta< \f1{6C\cC_0}, \quad \f{C}{\la^{\f14}}\leq \f16
\andf 
\cR_0\leq \f1{C}\min\Bigl\{-\ln\bigl(2\e^{1-\d-1/p}\bigr),
-\ln(2\eta),-\ln(6C\cC_0\eta), 1/{6\e}\Bigr\}. }
$$ Then we deduce from Inequality\eqref{conclusifeq4} that~$\|R_{\e,\la}\|_{X(t)}\leq 4\cC_0,
$ from which and from Inequality\refeq {conclusifeq2}, we infer
 \beno
\|R_{\e}\|_{X(t)}\leq \|R_{\e,\la}\|_{X(t)} \exp\Bigl(\lam \int_0^t
U_{\e,{\rm app}}(t')dt'\Bigr)\leq 4\cC_0\exp\Bigl(\lam \int_0^t
U_{\e,{\rm app}}(t')dt'\Bigr). \eeno However, let us notice from
Equality\refeq{S2eq0aq} and Lemma \ref{lemBern} that \beno
\|u_{\e,{\rm app}}(t)\|_{\cB^{-\f12+\f2p,\f1p}_p} & \lesssim &
\|v^\h(t)\|_{\cB^{\f12,\f12}_2}+\e\|(\e w^\h,w^3)(t)\|_{\cB^{\f12,\f12}_2},\\
\|u_{\e,{\rm app}}(t)\|_{\cB^{\frac 2p,\frac 1p}_p}
 &\lesssim &
\|v^\h(t)\|_{\cB^{1,\f12}_2}+\e\|(\e
w^\h,w^3)(t)\|_{\cB^{1,\f12}_2}, \eeno which together with
Proposition \ref {estimwdetail}, \eqref{S7eq8} and \eqref{S7eq12}
ensures that \beq \label{conclusifeq5} \int_{\R^+} U_{\e,{\rm
app}}(t')dt'\leq \cC_0\andf \|R_{\e}\|_{X(t)}\leq
4\cC_0\exp\bigl(\cC_0\lam \bigr)\eqdefa \cC_0'. \eeq We take
$\cR_0=2\cC_0'$ and take $\e,\eta$ so small that \beq
\label{conclusifeq6} \eta< \f1{6C\cC_0} \andf 2\cC_0'\leq
\f1{C}\min\Bigl\{-\ln\bigl(2\e^{1-\d-1/p}\bigr),
-\ln(2\eta),-\ln(6C\cC_0\eta), 1/{6\e}\Bigr\}. \eeq Then we deduce
from Inequality \eqref{conclusifeq5} that
$$
\forall t\leq \overline T_\e\,,\ \|R_{\e}\|_{X(t)}\leq \f{\cR_0}2\,\cdotp
$$
The necassary condition for blow up implies that~$\overline T_\e $  equals to infinity.
This completes the proof of Theorem\refer{insslowvar}.
\end{proof}

\renewcommand{\theequation}{\thesection.\arabic{equation}}
\setcounter{equation}{0}

\appendix
\setcounter{equation}{0}
\section{The proof of
\eqref{cFL1Besovaniso}}

\begin{proof}[Proof of \refer{cFL1Besovaniso}] For $j=0,1,2,$ we
get, by applying Lemma 2.4 of \cite{BCD}, that
\beno
\Bigl\|\D_k^\h\D_\ell^\v\int_0^te^{(t-t')\D}\p_3^jf(t')dt'\Bigr\|_{L^q_T(L^p)}
&\lesssim &
2^{j\ell}\Bigl\|\int_0^te^{-c(t-t')\bigl(2^{2k}+2^{2\ell}\bigr)}\|\D_k^\h\D_\ell^\v
f(t')\|_{L^p}dt'\Bigr\|_{L^q_T}\\
& \lesssim &
\f{2^{j\ell}}{\bigl(2^{2k}+2^{2\ell}\bigr)^{\f1q}}\|\D_k^\h\D_\ell^\v
f\|_{L^1_T(L^p)}\\
& \lesssim &
d_{k,\ell}\f{2^{-k\al}2^{\ell(j-\beta)}}{\bigl(2^{2k}+2^{2\ell}\bigr)^{\f1q}}\|
f\|_{L^1_T(\cB^{\al,\beta}_p)},
\eeno
where $(d_{k,\ell})_{k,\ell\in\Z^2}$ denotes a generic element
of $\ell^1(\Z^2)$ so that $\sum_{k,\ell\in\Z^2}d_{k,\ell}=1.$ This
together with Definition \ref{anibesov} ensures that \beno
\Bigl\|\int_0^te^{(t-t')\D}\p_3^jf(t')dt'\Bigr\|_{L^q_T(\cB^{s,s'}_p)}\lesssim
\sum_{k,\ell\in\Z^2}d_{k,\ell}\f{2^{k(s-\al)}2^{\ell(s'+j-\beta)}}{\bigl(2^{2k}+2^{2\ell}\bigr)^{\f1q}}\|
f\|_{L^1_T(\cB^{\al,\beta}_p)}. \eeno In the particular case when
$$ \al\leq s, \quad \beta\leq s'+j \andf \al+\beta=s+s'+j-\f2q,$$ we
have \beq\label{appendixa}
\Bigl\|\int_0^te^{(t-t')\D}\p_3^jf(t')dt'\Bigr\|_{L^q_T(\cB^{s,s'}_p)}\lesssim
\| f\|_{L^1_T(\cB^{\al,\beta}_p)}. \eeq This  together with the
definition of the norm $\|\cdot\|_{\cF_p(T)}$ given by Definition
\ref{anibesov} leads to Inequality\refeq{cFL1Besovaniso}.
\end{proof}

\medbreak \noindent {\bf Acknowledgments.} Part of this work was
done when J.-Y. Chemin was visiting Morningside Center of the
Academy of Mathematics and Systems Sciences, CAS and when P. Zhang
was visiting J. L. Lions Laboratory of Universit\'e Pierre et Marie
Curie. We appreciate the hospitality and the financial support from
MCM,  National Center for Mathematics and Interdisciplinary Sciences
and Universit\'e Pierre et Marie Curie. P. Zhang is partially
supported by NSF of China under Grant 11371347, the fellowship from
Chinese Academy of Sciences and innovation grant from National
Center for Mathematics and Interdisciplinary Sciences.
\medskip


\begin{thebibliography}{9999}
\bibitem{Abidi} H. Abidi, \'Equations de Navier-Stokes avec densit\'e et
viscosit\'e variables dans l'espace critique, {\it Revista
Matem\'atica Ibero\-americana,} {\bf 23}, 2007, pages 537--586.

\bibitem{AP} H. Abidi and M. Paicu, Existence globale pour un
fluide inhomog\`ene,  {\it Annales de l'Institut Fourier}, {\bf 57},
2007, pages 883--917.

\bibitem{AGZ2} H. Abidi, G. Gui and P. Zhang,
 On the wellposedness of $3-$D inhomogeneous Navier-Stokes equations
in the critical spaces,  {\it Archive for Rational Mechanics and
Analysis},  {\bf 204},  2012, pages 189--230.

\bibitem{AGZ3} H. Abidi, G. Gui and P. Zhang, Wellposedness of $3-$D
inhomogeneous Navier-Stokes equations with highly oscillating
initial velocity field, {\it Journal de Math\'ematiques Pures et
Appliqu\'ees},  {\bf 100}, 2013, pages 166--203.


\bibitem {BCD} H. Bahouri, J.-Y. Chemin and R. Danchin, {\it Fourier Analysis and
Nonlinear Partial Differential Equations}, Grundlehren der
Mathematischen Wissenschaften, Springer, 2010.


\bibitem{Bo81} J.-M. Bony, Calcul symbolique et propagation des
   singularit\'es pour les \'equations aux d\'eriv\'ees partielles non
   lin\'eaires, {\it Annales de l'\'Ecole Normale Sup\'erieure,} {\bf
   14}, 1981, pages 209-246.

\bibitem {brandolese}
L. Brandolese, Space-time decay of Navier-Stokes flows invariant
under rotations, {\it Mathematische Annalen}, {\bf  329}, 2004,
pages 685-706.

 \bibitem {cg3}
 J.-Y. Chemin and I. Gallagher, Large, global solutions to the
Navier-Stokes equations, slowly varying in one direction,  {\it Transactions of the American Mathematical Society}, {\bf 362}, 2010,  pages 2859-2873.

 \bibitem {cgp}
 J.-Y. Chemin, I. Gallagher and  M. Paicu,
Global regularity for some classes of large solutions to the Navier-Stokes equations,
{\it Annals of Mathematics}, {\bf 173},  2011,  pages 986--1012.


\bibitem {cgz}
 J.-Y. Chemin,  I. Gallagher and  P. Zhang,  Sums of large global solutions to the incompressible  Navier-Stokes equations,
  {\it Journal f\"ur reine und angewandte Mathematik},  {\bf 681}, 2013, pages 65-82.


\bibitem{CPZ1} J.-Y. Chemin,  M. Paicu and P. Zhang, Global large solutions  to 3-D inhomogeneous
 Navier-Stokes system with one slow variable, {\it Journal of  Differential Equations}, {\bf 256}, 2014, 223-252.


\bibitem{CDGG} J.-Y. Chemin, B. Desjardins, I. Gallagher and
 E. Grenier, Fluids with anisotropic viscosity, {\it Mod\'elisation
 Math\'ematique et Analyse Num\'erique}, {\bf 34}, 2000, pages 315-335.



\bibitem{CZ1}
J.-Y. Chemin and P.  Zhang, On the global wellposedness  to the 3-D
incompressible anisotropic
 Navier-Stokes equations, {\it Communications in  Mathematical  Physics}, {\bf 272}, 2007, pages 529--566.

 \bibitem{CZ5}
J.-Y. Chemin and P.  Zhang, On the critical one component regularity
for 3-D
 Navier-Stokes system,  arXiv:1310.6442[math.AP], to appear in {\it Annales de l'\'Ecole Normale Sup\'erieure}.

 \bibitem{CZ6}
J.-Y. Chemin and P.  Zhang, Remarks on the global solutions of 3-D
 Navier-Stokes system with one slow variable, {\it Communications in Partial Differential
Equations}, {\bf 40},  2015, pages 878-896.

 \bibitem{D1} R. Danchin, Density-dependent incompressible viscous fluids in critical spaces,
{\it Proceedings of the Royal Society of Edinburgh. Section A.},
{\bf 133},  2003, pages 1311--1334.

\bibitem{DM12}
R. Danchin and P.  Mucha,  A Lagrangian approach for the
incompressible Navier-Stokes equations with variable density, {\it
Communications on Pure and Applied Mathematics}, {\bf 65}, 2012,
pages 1458-1480.

\bibitem{dm2} R. Danchin and P.~B.~ Mucha, Incompressible flows with
piecewise constant density,  {\it  Archive for Rational Mechanics
and Analysis}, {\bf 207}, 2013, pages  991-1023.

\bibitem{DZ14} R. Danchin and P.  Zhang,  Inhomogeneous Navier-Stokes
equations in the half-space, with only bounded density, {\it Journal
of Functional Analysis}, {\bf 267}, 2014, pages 2371-2436.

\bibitem{HM06} C. He and T. Miyakawa,  On two-dimensional Navier-Stokes flows with rotational symmetries, {\it
Funkcial. Ekvac.}, {\bf 49},  2006,  pages 163-192.

\bibitem {HPZ}
   {J. Huang, M. Paicu and P. Zhang},
  {Global well-posedness of incompressible inhomogeneous fluid
              systems with bounded density or non-{L}ipschitz velocity},
 {\it  Archive for Rational Mechanics and
Analysis},
   {\bf 209},
2013,
 pages {631--682}.

\bibitem {GIP} I. Gallagher, D. Iftimie and F. Planchon,
 Asymptotics and stability for global solutions to the Navier-Stokes equations,
   {\it Annales de l'Institut Fourier (Grenoble)}, {\bf 53}, 2003, pages 1387-1424.

\bibitem{LS} O.-A.~  Lady\v zenskaja and  V.-A.~ Solonnikov,  The unique solvability
of an initial-boundary value problem for viscous incompressible
inhomogeneous fluids. (Russian) Boundary value problems of
mathematical physics, and related questions of the theory of
functions, 8, {\it Zap. Nau\v cn. Sem. Leningrad. Otdel. Mat. Inst.
Steklov. (LOMI)}, {\bf 52}, 1975, 52-109, 218-219.

\bibitem{Lions96} P.-L.  Lions, {\it Mathematical topics in fluid mechanics. Vol.
1. Incompressible models.} Oxford Lecture Series in Mathematics and
its Applications, {\bf 3}. Oxford Science Publications. The
Clarendon Press, Oxford University Press, New York, 1996.

\bibitem{Pa02} M. Paicu, \'Equation anisotrope
de Navier-Stokes dans des espaces  critiques,  {\it Revista
Matem\'atica Ibero\-americana,} {\bf 21}, 2005, pages   179--235.

\bibitem{PZ2} M. Paicu and P. Zhang, Global solutions to the 3-D incompressible inhomogeneous
 Navier-Stokes system, {\it Journal of Functional Analysis},  {\bf 262}, 2012, pages 3556--3584.

\bibitem{PZZ1} M. Paicu, P. Zhang and Z. Zhang, Global well-posedness of inhomogeneous Navier-Stokes equations
with bounded density, {\it Communications in Partial Differential
Equations}, {\bf 38}, 2013,  pages 1208-1234.

\bibitem{PZZ3} M. Paicu and P. Zhang, On some  large global solutions  to 3-D density-dependent
 Navier-Stokes system with slow variable: well-prepared data,
  {\it Annales de l'Institut Henri Poincare  Non Linear
  Analysis}, http://dx.doi.org/10.1016/j.anihpc.2014.03.006.

  \bibitem{Schonbek} M. Schonbek,
Lower Bounds of Rates of Decay for Solutions to the Navier-Stokes
equations, {\it Journal of the  American  Mathematical  Society},
{\bf 4}, 1991, pages  423-449.

\bibitem{Schonbek2}
M. Schonbek and T.  Schonbek, On the boundedness and decay of
moments of solutions of the Navier Stokes equations. {\it Advances
in Differential Equations}, {\bf  5}, 2000, pages  861-898.


\bibitem{Simon} J. Simon, Nonhomogeneous viscous incompressible fluids: existence of velocity, density, and pressure,
 {\em SIAM Journal on Mathematical Analysis}, {\bf 21}, 1990,
pages  1093--1117.



\bibitem {Wiegner} M. Wiegner, Decay results for weak solutions to the Navier-Stokes equations on $\R^n$,
{\it Journal of the  London Mathematical  Society}, {\bf 35}, 1987, pages 303--313.

\end{thebibliography}
\end{document}